\newtheorem{theorem}{Theorem}[section]
\newtheorem{lemma}{Lemma}[section]
\newtheorem{definition}{Definition}[section]
\newtheorem{corollary}{Corollary}[section]
\newtheorem{remark}{Remark}[section]
\newtheorem{example}{Example}[section]
\newcommand{\lcm}{\mathrm{lcm}}
\newcommand{\es}{\varnothing}
\newcommand{\ExpecCond}[2]{\mathbb{E}[#1 \kern0.1em|\kern0.1em #2]}
\newcommand{\bigExpecCond}[2]{\mathbb{E}\bigl[#1 \kern-0.1em \bigm| \kern-0.1em #2\bigr]}
\newcommand{\BigExpecCond}[2]{\mathbb{E}\Bigl[#1 \kern-0.1em \Bigm| \kern-0.1em #2\Bigr]}
\newcommand{\biggExpecCond}[2]{\mathbb{E}\biggl[#1 \kern-0.1em \biggm| \kern-0.1em #2\biggr]}
\newcommand{\BiggExpecCond}[2]{\mathbb{E}\Biggl[#1 \kern-0.1em \Biggm| \kern-0.1em #2\Biggr]}
\newcommand{\ExpecCondwrt}[3]{\mathbb{E}_{#1}[#2 \kern0.1em|\kern0.1em #3]}
\newcommand{\bigExpecCondwrt}[3]{\mathbb{E}_{#1}\bigl[#2 \kern-0.1em \bigm| \kern-0.1em #3\bigr]}
\newcommand{\BigExpecCondwrt}[3]{\mathbb{E}_{#1}\Bigl[#2 \kern-0.1em \Bigm| \kern-0.1em #3\Bigr]}
\newcommand{\biggExpecCondwrt}[3]{\mathbb{E}_{#1}\biggl[#2 \kern-0.1em \biggm| \kern-0.1em #3\biggr]}
\newcommand{\BiggExpecCondwrt}[3]{\mathbb{E}_{#1}\Biggl[#2 \kern-0.1em \Biggm| \kern-0.1em #3\Biggr]}
\newcommand{\OneTo}[1]{[#1]}
\newcommand{\eqdef}{\triangleq} % definition
\newcommand{\card}[1]{|#1|}
\newcommand{\bigcard}[1]{\bigl|#1\bigr|}
\newcommand{\gbinom}[3]{\Bigl[ \hspace*{-0.2cm} \begin{array}{c} {\hspace*{0.2cm} #1} \\ {\hspace*{-0.3cm} #2} \hspace*{-0.5cm} \end{array}\Bigr]_{#3}}  % Gaussian binomial coeeficient
\DeclareMathOperator{\Vertex}{\mathsf{V}}
\DeclareMathOperator{\Edge}{\mathsf{E}}
\DeclareMathOperator{\Adjacency}{\mathbf{A}}
\DeclareMathOperator{\Identity}{\mathbf{I}}
\DeclareMathOperator{\Independentset}{\set{I}}
\DeclareMathOperator{\Cliqueset}{\set{C}}
\DeclareMathOperator{\Kneser}{\mathsf{K}}
\DeclareMathOperator{\Complete}{\mathsf{K}}
\DeclareMathOperator{\Path}{\mathsf{P}}
\DeclareMathOperator{\Cycle}{\mathsf{C}}
\DeclareMathOperator{\SRG}{\mathsf{srg}}
\DeclareMathOperator{\Clique}{\omega}
\DeclareMathOperator{\Chromatic}{\chi}
\DeclareMathOperator{\Cliquecover}{\sigma}
\newcommand{\naturals}{\ensuremath{\mathbb{N}}}
\newcommand{\Rationals}{\ensuremath{\mathbb{Q}}}
\newcommand{\Reals}{\ensuremath{\mathbb{R}}}
\newcommand{\set}{\ensuremath{\mathcal}}
\newcommand{\Gr}[1]{\mathsf{#1}}                          % graph
\newcommand{\CGr}[1]{\overline{\mathsf{#1}}}              % complement graph
\newcommand{\V}[1]{\Vertex(#1)}                           % vertex set of a graph
\newcommand{\E}[1]{\Edge(#1)}                             % edge set of a graph
\newcommand{\A}{\Adjacency}                               % adjacency matrix of a graph
\newcommand{\Id}[1]{\Identity_{#1}}                       % identity matrix
\newcommand{\indset}[1]{\Independentset(#1)}              % set of independent sets in a graph
\newcommand{\indnum}[1]{\alpha(#1)}                       % independence number
\newcommand{\indnumbig}[1]{\alpha\bigl(#1\bigr)}
\newcommand{\indnumBigg}[1]{\alpha\Biggl(#1\Biggr)}
\newcommand{\findnum}[1]{\alpha_{\mathrm{f}}(#1)}         % fractional independence number of a graph
\newcommand{\clset}[1]{\Cliqueset(#1)}                    % set of cliques in a graph
\newcommand{\clnum}[1]{\Clique(#1)}                       % clique number
\newcommand{\fclnum}[1]{\Clique_{\mathrm{f}}(#1)}         % fractional clique number
\newcommand{\chrnum}[1]{\Chromatic(#1)}                   % chromatic number
\newcommand{\fchrnum}[1]{\Chromatic_{\mathrm{f}}(#1)}     % fractional chromatic number
\newcommand{\clcnum}[1]{\Cliquecover(#1)}                 % clique-cover number
\newcommand{\fclcnum}[1]{\Cliquecover_{\mathrm{f}}(#1)}   % fractioanl clique-cover number
\newcommand{\CoG}[1]{\Complete_{#1}}       % complete graph
\newcommand{\KG}[2]{\Kneser(#1,#2)}        % Kneser graph
\newcommand{\qKG}[3]{\Kneser_{#3}(#1,#2)}  % q-Kneser graph
\newcommand{\PathG}[1]{\Path_{#1}}         % path of given length
\newcommand{\CG}[1]{\Cycle_{#1}}           % cycle graph of a given length
\newcommand{\srg}[4]{\SRG(#1,#2,#3,#4)}    % strongly regular graph
\newcommand{\TG}[2]{\mathrm{T}({#1},{#2})} % Tadpole graph
\DeclareMathOperator{\Degree}{\text{d}}
\newcommand{\dgr}[1]{\Degree_{#1}}   % degree of a vertex
\renewcommand{\dcases}
\renewcommand{\@fnsymbol}[1]{%
  \ifcase#1\or
    *\or
    **%
  \else
    \@ctrerr
  \fi}
\title{\bf{\huge{Advances in the Shannon Capacity of Graphs}}}
\author{%
Nitay~Lavi\thanks{%
Nitay Lavi is with the Viterbi Faculty of Electrical and Computer Engineering, Technion-Israel
Institute of Technology, Haifa 3200003, Israel.
}
\and
Igal~Sason\thanks{%
Igal Sason is with the Viterbi Faculty of Electrical and Computer Engineering and the Faculty
of Mathematics, Technion-Israel Institute of Technology, Haifa 3200003, Israel. \newline
{\bf{Corresponding author}}: Igal Sason, Email: eeigal@technion.ac.il; Tel: +97248294699. \\[0.1cm]
\hspace*{0.2cm} $\bigstar$ \bf{Final version. Accepted for publication in {\em AIMS Mathematics},
Special Issue: {\em Mathematical Foundations of Information Theory}, January 2026.}}
}
\begin{document}
\maketitle

\vspace*{-0.6cm}
\begin{abstract}
We derive exact values and new bounds for the Shannon capacity of two families of graphs: the
$q$-Kneser graphs and the tadpole graphs. We also construct a countably infinite family of connected
graphs whose Shannon capacity is not attained by the independence number of any finite strong power.
Building on recent work of Schrijver, we establish sufficient conditions under which the Shannon capacity
of a polynomial in graphs, formed via disjoint unions and strong products, equals the corresponding
polynomial of the individual capacities, thereby reducing the evaluation of such capacities to that
of their components.
Finally, we prove an inequality relating the Shannon capacities of the strong product of graphs and their
disjoint union, which yields alternative proofs of several known bounds as well as new tightness conditions.
In addition to contributing to the computation of the Shannon capacity of graphs, this paper is intended
to serve as an accessible entry point to those wishing to work in this area.
\end{abstract}

\bigskip
\noindent {\bf Keywords.}
Shannon capacity of graphs, zero-error information theory, strong graph product,
independence number, Kneser graphs, tadpole graphs.

\vspace*{0.2cm}
\noindent {\bf 2020 Mathematics Subject Classification.} 05C35, 05C50, 05C69, 05C76, 94A15.

%\newpage
\section{Introduction}\label{section:introduction}
The Shannon capacity of a graph, introduced by Claude Shannon in 1956 \cite{Shannon56}, is a central concept bridging
zero-error information theory and graph theory. It characterizes the maximum rate of zero-error
communication over a noisy channel when the channel is modeled by a graph whose vertices represent the input symbols, and
two vertices are adjacent if and only if the corresponding symbols can be confused by the channel with positive probability. This
notion establishes a fundamental link between zero-error problems in information theory and graph theory, as surveyed extensively
in \cite{Alon02,Alon19,Jurkiewicz14,KornerO98,CsiszarK01}.

The calculation of the Shannon capacity of a graph is notoriously difficult (see, e.g., \cite{Alon98,AlonL06,BocheD25,GuoW90}), and only the
capacity of a some families of graphs are known (e.g., Kneser graphs \cite{Lovasz79}). The computational complexity of the Shannon capacity has
extended the research towards finding computable bounds on the capacity and exploring its properties \cite{Alon98,BuysPZ25,BoerBZ24,BiT19,BukhC19,Haemers79,HuTS18,Knuth94,Lovasz79,CsonkaS24,Simonyi21}.

While exploring properties of the Shannon capacity, it is worth keeping in mind the more general concept of the asymptotic spectrum of graphs,
introduced by Zuiddam \cite{Zuiddam19,WigdersonZ23,BuysPZ25,BoerBZ24}, which delineates a space of graph parameters that remain invariant under
graph isomorphisms. This space is characterized by the following unique properties: additivity under disjoint union of graphs, multiplicativity
under strong product of graphs, normalization for a simple graph with a single vertex, and monotonicity under graph complement homomorphisms.
Building upon Strassen's theory of asymptotic spectra \cite{Strassen88}, a novel dual characterization of the Shannon capacity of a graph
is derived in \cite{Zuiddam19}, expressed as the minimum over the elements of its asymptotic spectrum. By confirming that various graph invariants,
including the Lov\'{a}sz $\vartheta$-function \cite{Lovasz79} and the fractional Haemers' bound \cite{BukhC19}, are elements of the asymptotic
spectrum of a graph (spectral points), it can be deduced that these elements indeed serve as upper bounds on the Shannon capacity of a graph.
For further exploration, the comprehensive paper by Wigderson and Zuiddam \cite{WigdersonZ23} provides a survey on Strassen’s theory of the
asymptotic spectra and its application areas, including the Shannon capacity of graphs.

Several further properties of the Shannon capacity have been studied. For example, the recent work \cite{AbiadDF26} analyzes the Shannon capacity
of the distance-$k$ power of a graph using tools from spectral graph theory and linear optimization methods. Despite this progress, many fundamental
questions remain open. Among the most basic is the determination of the Shannon capacity of odd cycle graphs of length greater than~5
\cite{Lovasz79,Bohman05b,BoerBZ24,PolakS19}.

This paper addresses several research directions in the study of the Shannon capacity of graphs, and it is organized as follows.
\begin{itemize}
\item Section~\ref{section:preliminaries} provides preliminaries that are required for the analysis in this paper.
Its focus is on graph invariants, and classes of graphs that are used throughout this paper.
\item Section~\ref{section:polynomial_decomposition} builds on a recent paper by Schrijver \cite{Schrijver23}, and
it explores conditions under which, for a family of graphs, the Shannon capacity of any polynomial in these graphs
equals the corresponding polynomial of their individual Shannon capacities. This equivalence can substantially simplify
the computation of the Shannon capacity for some of structured graphs. Two sufficient conditions are presented, followed
by a comparison of their differences and illustrative examples of their use.
\item Section~\ref{section:tadpole} explores Tadpole graphs. Exact values and bounds on the capacity of Tadpole graphs are
derived, and a direct relation between the capacity of odd-cycles and the capacity of a countably infinite subfamily of
the Tadpole graphs is proved, providing an important property of that subfamily that is further discussed in the following section.
\item Section~\ref{section: unattainability of Shannon capacity at any finite power} determines sufficient conditions for
the unattainability of the Shannon capacity by the independence number of any finite strong power of a graph. It first presents
in an alternative streamlined way an approach by Guo and Watanabe \cite{GuoW90}. It then introduces two other original approaches.
One of the novelties in this section is the construction of a countably infinite family of {\em connected} graphs whose capacity is
unattainable by any finite strong power of these graphs.
\item Section~\ref{section:q-kneser} determines the Shannon capacity of the $q$-Kneser graphs in a manner analogous to the Lov\'{a}sz
calculation of the Shannon capacity of the classical Kneser graphs \cite{Lovasz79}. The derivation relies on two known results, namely
a generalized Erd\H{o}s–Ko–Rado theorem for finite vector spaces and characterizations of the spectrum of the $q$-Kneser graphs.
This broadens the class of graphs for which the exact Shannon capacity is explicitly determined.
\item Section~\ref{section:inequality} introduces a new inequality that relates the Shannon capacity of the strong product of graphs to that 
of their disjoint union, and it identifies several conditions under which equality is attained. As an illustration of the applicability of this 
inequality, the section also derives an alternative proof of a lower bound on the Shannon capacity of the disjoint union of a 
graph and its complement, originally due to Alon~\cite{Alon98}, together with new sufficient conditions for its attainability, as well as 
analogous bounds for the Lov\'{a}sz $\vartheta$-function.
\item Section~\ref{section: outlook} outlines directions for future research that are largely related to the results of this paper.
\end{itemize}

% ################################################################################################################################ %
% ----------------------------------------------------- Section 2: Preliminaries ------------------------------------------------- %
% ################################################################################################################################ %

\section{Preliminaries}\label{section:preliminaries}

\subsection{Basic definitions and graph families}\label{subsection:basic_properties}

\subsubsection{Terminology}

Let $\Gr{G} = (\Vertex, \Edge)$ be a graph, where $\Vertex=\V{\Gr{G}}$ is the {\em vertex set} of $\Gr{G}$,
and $\Edge=\E{\Gr{G}}$ is the {\em edge set} of $\Gr{G}$.
\begin{itemize}
\item An {\em undirected graph} is a graph whose edges are undirected.
\item A {\em self-loop} is an edge that connects a vertex to itself.
\item A {\em simple graph} is a graph having no self-loops and no multiple edges between any pair of vertices.
\item A {\em finite graph} is a graph with a finite number of vertices.
\item The {\em order} of a finite graph is the number of its vertices, $\card{\V{\Gr{G}}} = n$.
\item The {\em size} of a finite graph is the number of its edges, $\card{\E{\Gr{G}}} = m$.
\item Vertices $i, j \in \V{\Gr{G}}$ are {\em adjacent} if they are the endpoints of an edge
in $\Gr{G}$, which is denoted by $\{i,j\} \in \E{\Gr{G}}$ or $i \sim j$.
\item An {\em empty graph} is a graph without edges, so its size is equal to zero.
\item The {\em degree of a vertex} $v$ in $\Gr{G}$ is the
number of adjacent vertices to $v$ in $\Gr{G}$, denoted by $\dgr{v} = \dgr{v}(\Gr{G})$.
\item A graph is {\em regular} if all its vertices have an identical degree.
\item A {\em $d$-regular} graph is a regular graph whose all vertices have a fixed degree~$d$.
\item A {\em walk} is a sequence of vertices in a graph $\Gr{G}$, where
every two consecutive vertices in the sequence are adjacent in $\Gr{G}$.
\item A {\em path} is a walk with no repeated vertices.
\item A {\em cycle} $\Cycle$ is obtained from a path $\Path$ by adding an edge connecting
the two endpoints of $\Path$ (i.e., an edge connecting the two degree-1 vertices of $\Path$).

\item The {\em length of a path or a cycle} is equal to its number of edges. A {\em triangle}
is a cycle of length~3.

\item A path on $n$ vertices is denoted by $\PathG{n}$, and its size is equal to $n-1$.

\item A cycle on $n$ vertices is called an $n$-cycle, and it
is denoted by $\CG{n}$ with an integer $n \geq 3$. The order and size of $\CG{n}$ are both equal to $n$.

\item A {\em connected graph} is a graph where every two distinct vertices are connected by a path.

\item An {\em $r$-partite graph} is a graph whose vertex set is a disjoint union of $r$ subsets
such that no two vertices in the same subset are adjacent. If $r=2$, then $\Gr{G}$ is a {\em bipartite graph}
(e.g., $\CG{n}$ is a bipartite graph if and only if $n \geq 4$ is even).

\item A {\em complete graph} on $n$ vertices, denoted by $\CoG{n}$, is a graph whose all $n$
distinct vertices are pairwise adjacent. Hence, $\CoG{n}$ is an $(n-1)$-regular graph of order $n$.

\item A {\em complete $r$-partite graph}, denoted by $\Complete_{n_1, \ldots, n_r}$ with
$n_1, \ldots n_r \in \naturals$, is an $r$-partite graph
whose vertex set is partitioned into $r$ disjoint subsets of cardinalities $n_1, \ldots, n_r$,
such that every two vertices in the same subset are not adjacent, and every two vertices in
distinct subsets are adjacent.
\end{itemize}
Throughout this paper, graphs are finite, simple, and undirected.
We also use the standard notation $\OneTo{n} \eqdef \{1, \ldots, n\}$ for all $n \in \naturals$.

\begin{definition}[Subgraphs and graph connectivity]
\label{definition:subgraphs}
A graph $\Gr{F}$ is a {\em subgraph} of a graph $\Gr{G}$, and it is
denoted by $\Gr{F} \subseteq \Gr{G}$, if
$\V{\Gr{F}} \subseteq \V{\Gr{G}}$ and $\E{\Gr{F}} \subseteq \E{\Gr{G}}$.
\begin{itemize}
\item A {\em spanning subgraph} of $\Gr{G}$ is obtained by edge deletions
from $\Gr{G}$, while its vertex set is left unchanged.
A {\em spanning tree} in $\Gr{G}$ is a spanning subgraph of $\Gr{G}$ that forms a tree.
\item An {\em induced subgraph} is obtained by removing vertices
from the original graph, followed by the deletion of their incident edges.
\end{itemize}
\end{definition}

\begin{definition}[Isomorphic graphs]
\label{definition:isomorphic graphs}
Graphs $\Gr{G}$ and $\Gr{H}$ are {\em isomorphic} if there exists a bijection
$f \colon \V{\Gr{G}} \to \V{\Gr{H}}$ (i.e., a one-to-one and onto mapping) such that
$\{i,j\} \in \E{\Gr{G}}$ if and only if $\{f(i), \, f(j)\} \in \E{\Gr{H}}$.
It is denoted by $\Gr{G} \cong \Gr{H}$, and $f$ is said to be an {\em isomorphism}
from $\Gr{G}$ to $\Gr{H}$.
\end{definition}

\begin{definition}[Complement and self-complementary graphs]
\label{definition:complement and s.c. graphs}
The {\em complement} of a graph $\Gr{G}$, denoted by $\CGr{G}$, is a graph
whose vertex set is $\V{\Gr{G}}$, and its edge set is the complement set $\overline{\E{\Gr{G}}}$.
Every vertex in $\V{\Gr{G}}$ is nonadjacent to itself in $\Gr{G}$ and $\CGr{G}$, so
$\{i,j\} \in \E{\CGr{G}}$ if and only if $\{i, j\} \notin \E{\Gr{G}}$ with $i \neq j$.
A graph $\Gr{G}$ is {\em self-complementary} if $\Gr{G} \cong \CGr{G}$ (i.e., $\Gr{G}$
is isomorphic to $\CGr{G}$).
\end{definition}

\begin{example}
\label{example: self-complementary}
It can be verified that $\PathG{4}$ and $\CG{5}$ are self-complementary graphs.
\end{example}

\subsubsection{Graph operations}
This subsection presents the basic graph operations used throughout this paper.

\begin{definition}[Strong product of graphs]
\label{definition:strong_product}
Let $\Gr{G}$ and $\Gr{H}$ be simple graphs. The strong product $\Gr{G}\boxtimes \Gr{H}$ is a graph whose vertices set is $\V{\Gr{G}}\times \V{\Gr{H}}$,
and two distinct vertices $(g_1,h_1),(g_2,h_2)$ are adjacent if, in each coordinate, they are either equal or adjacent. This means that one of the following
three conditions is needed to be satisfied:
\begin{enumerate}
  \item $g_1 = g_2$ and $\{h_1,h_2\}\in \E{\Gr{H}}$,
  \item $\{g_1,g_2\}\in \E{\Gr{G}}$ and $h_1 = h_2$,
  \item $\{g_1,g_2\}\in \E{\Gr{G}}$ and $\{h_1,h_2\}\in \E{\Gr{H}}$.
\end{enumerate}
\end{definition}
The interested reader is referred to \cite{HammackIK11} for an extensive textbook on graph products and their properties.

Define the {\em $k$-fold strong power} of $\Gr{G}$ as the strong product of $k$ copies of $\Gr{G}$:
\begin{align}
  \Gr{G}^k \triangleq \Gr{G} \boxtimes \ldots \boxtimes \Gr{G}.
\end{align}

Throughout this paper, we also use the disjoint union of graphs as the addition operation on graphs.
\begin{definition}[Disjoint union of graphs]
\label{definition:disjoint_union}
Let $\Gr{G}$ and $\Gr{H}$ be two simple graphs. The disjoint union $\Gr{G}+\Gr{H}$ is a graph whose vertices set is
$\V{\Gr{G}}\cup \V{\Gr{H}}$, and the edges set is $\E{\Gr{G}}\cup \E{\Gr{H}}$.
Also, for $m\in\naturals$, let
\begin{align}
  m \Gr{G} \triangleq \Gr{G} + \ldots + \Gr{G},
\end{align}
which is defined as the disjoint union of $m$ copies of $\Gr{G}$.
\end{definition}

\subsubsection{Basic graph invariants under isomorphism}

\begin{definition}[Independent sets]
\label{definition:independent_set}
  Let $\Gr{G}$ be a simple graph. Define
  \begin{itemize}
    \item A set $\Independentset\subseteq\V{\Gr{G}}$ is an independent set in $\Gr{G}$ if every pair of vertices in $\Independentset$ are nonadjacent in $\Gr{G}$.
    \item The set $\indset{\Gr{G}}$ is the set of independent sets in $\Gr{G}$.
  \end{itemize}
\end{definition}
Now we define the independence number of a graph.
\begin{definition}[Independence number]
\label{definition:independence_number}
  The independence number of a graph $\Gr{G}$, denoted by $\indnum{\Gr{G}}$, is the order of a largest independent set in $\Gr{G}$, i.e.
  \begin{align}\label{eq:independence_number}
    \indnum{\Gr{G}}\triangleq\max\{\card{\Independentset}:\Independentset\in\indset{\Gr{G}}\}.
  \end{align}
\end{definition}

\begin{definition}[Cliques]
\label{definition:clique}
  Let $\Gr{G}$ be a simple graph. Define
  \begin{itemize}
    \item A set $\Cliqueset\subseteq\V{\Gr{G}}$ is a clique in $\Gr{G}$ if every pair of vertices in $\Cliqueset$ are adjacent in $\Gr{G}$.
    \item The set $\clset{\Gr{G}}$ is the set of cliques in $\Gr{G}$.
  \end{itemize}
\end{definition}
\begin{definition}[Clique number]
\label{definition:clique_number}
  The clique number of a graph $\Gr{G}$, denoted by $\clnum{\Gr{G}}$, is the order of a largest clique in $\Gr{G}$, i.e.
  \begin{align}\label{eq:clique_number}
    \clnum{\Gr{G}}\triangleq\max\{\card{\Cliqueset}:\Cliqueset\in\clset{\Gr{G}}\}.
  \end{align}
\end{definition}

\begin{definition}[Chromatic number]
\label{definition:chromatic_number}
  The chromatic number of a graph $\Gr{G}$, denoted by $\chrnum{\Gr{G}}$, is the smallest cardinality of a partition of the vertex set of
  $\Gr{G}$ into independent sets. Equivalently, it is the minimum number of colors needed to color the vertices of $\Gr{G}$ such that the
  endpoints of every edge receive distinct colors.
\end{definition}

\begin{definition}[Clique-cover number]
\label{definition:clique-cover_number}
  The clique-cover number of $\Gr{G}$, denoted by $\clcnum{\Gr{G}}$, is the smallest number of cliques needed to cover all the vertices of $\Gr{G}$.
  Hence, $\clcnum{\Gr{G}} = \chrnum{\CGr{G}}$.
\end{definition}

Next, we provide required properties of the independence number of a graph.

\begin{theorem}\label{theorem: independence_number_strong_product}
  Let $\Gr{G}$ and $\Gr{H}$ be simple graphs. Then,
  \begin{align}
    & \indnum{\Gr{G}\boxtimes\Gr{H}}\geq\indnum{\Gr{G}} \, \indnum{\Gr{H}},   \label{eq:independence_number_strong_product} \\[0.1cm]
    & \indnum{\Gr{G}+\Gr{H}}=\indnum{\Gr{G}}+\indnum{\Gr{H}}. \label{eq:independence_number_disjoint_union}
  \end{align}
\end{theorem}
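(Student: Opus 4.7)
My plan is to prove the two assertions by direct constructions that mirror the definitions of the strong product and the disjoint union. For the first inequality, I will exhibit an explicit independent set in $\Gr{G}\boxtimes\Gr{H}$ of size $\indnum{\Gr{G}}\indnum{\Gr{H}}$; for the second equality I will prove the two inequalities separately, the $\geq$ direction by construction and the $\leq$ direction by partitioning.

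For \eqref{eq:independence_number_strong_product}, I would fix maximum independent sets $\Independentset_{\Gr{G}} \in \indsetmax{\Gr{G}}$ and $\Independentset_{\Gr{H}} \in \indsetmax{\Gr{H}}$ and claim that the Cartesian product $\Independentset_{\Gr{G}} \times \Independentset_{\Gr{H}} \subseteq \V{\Gr{G}\boxtimes\Gr{H}}$ is an independent set in $\Gr{G}\boxtimes\Gr{H}$. To verify this, I would take two distinct vertices $(g_1,h_1),(g_2,h_2)$ in the product and check that none of the three adjacency conditions in Definition~\ref{definition:strong_product} can hold: condition~(1) fails because $h_1,h_2 \in \Independentset_{\Gr{H}}$ are nonadjacent in $\Gr{H}$, condition~(2) fails because $g_1,g_2 \in \Independentset_{\Gr{G}}$ are nonadjacent in $\Gr{G}$, and condition~(3) fails for the same reason. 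Since the product has cardinality $\indnum{\Gr{G}} \, \indnum{\Gr{H}}$, the inequality \eqref{eq:independence_number_strong_product} then follows from the definition of $\indnum{\Gr{G}\boxtimes\Gr{H}}$ as a maximum.

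For \eqref{eq:independence_number_disjoint_union}, I would first establish the $\geq$ direction by taking $\Independentset_{\Gr{G}} \in \indsetmax{\Gr{G}}$ and $\Independentset_{\Gr{H}} \in \indsetmax{\Gr{H}}$; since $\V{\Gr{G}}$ and $\V{\Gr{H}}$ are disjoint in $\Gr{G}+\Gr{H}$, and $\E{\Gr{G}+\Gr{H}} = \E{\Gr{G}} \cup \E{\Gr{H}}$ by Definition~\ref{definition:disjoint_union}, no cross edges exist, so $\Independentset_{\Gr{G}} \cup \Independentset_{\Gr{H}}$ is independent in $\Gr{G}+\Gr{H}$ and has cardinality $\indnum{\Gr{G}} + \indnum{\Gr{H}}$. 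For the reverse inequality, I would take any $\Independentset \in \indsetmax{\Gr{G}+\Gr{H}}$ and partition it as $\Independentset = (\Independentset \cap \V{\Gr{G}}) \sqcup (\Independentset \cap \V{\Gr{H}})$; each part is an independent set in the corresponding graph since the edge sets $\E{\Gr{G}},\E{\Gr{H}}$ are inherited, hence $\card{\Independentset} \leq \indnum{\Gr{G}} + \indnum{\Gr{H}}$.

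There is no serious obstacle here: both claims reduce to straightforward verifications from the definitions of the strong product and the disjoint union, and the argument is mostly bookkeeping. The one subtlety worth being careful about is, in the first part, keeping track of the case $g_1=g_2$ (which forces $h_1\neq h_2$ because the vertices are distinct) so that the three adjacency conditions are addressed exhaustively.
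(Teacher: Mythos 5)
Your proof is correct and complete. The paper states this theorem without proof, treating it as a known preliminary fact; your argument is the standard textbook one (Cartesian product of independent sets for the strong product, and the union/partition argument for the disjoint union), and it is carried out correctly, including the careful handling of the case $g_1=g_2$ when ruling out adjacency condition~(3) via the absence of self-loops in a simple graph.
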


\begin{theorem}\label{theorem: independence_number_subgraphs}
  Let $\Gr{G}$ be a simple graph, and let $\Gr{H}_1$ and $\Gr{H}_2$ be induced and spanning subgraphs of $\Gr{G}$, respectively. Then,
  \begin{align}\label{eq:independence_number_subgraphs}
    \indnum{\Gr{H}_1} \leq \indnum{\Gr{G}} \leq \indnum{\Gr{H}_2}.
  \end{align}
  Furthermore, for every $k\in\naturals$,
  \begin{align}
    \indnum{\Gr{H}_1^k}\leq\indnum{\Gr{G}^k}\leq\indnum{\Gr{H}_2^k}.
  \end{align}
\end{theorem}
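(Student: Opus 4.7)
The plan is to prove the two inequalities in the first chain directly from the definitions of induced and spanning subgraphs, and then to lift them to the $k$-fold strong power by showing that the subgraph relation (of both types) is preserved under the strong product.

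\textbf{Step 1: The base case.} For the induced-subgraph inequality $\indnum{\Gr{H}_1}\leq\indnum{\Gr{G}}$, I would take a maximum independent set $\Independentset\subseteq\V{\Gr{H}_1}$ and observe that $\Independentset\subseteq\V{\Gr{G}}$ with no edges in $\Gr{G}$ between its elements: indeed, because $\Gr{H}_1$ is an induced subgraph, any edge of $\Gr{G}$ with both endpoints in $\V{\Gr{H}_1}$ is already an edge of $\Gr{H}_1$, and there are none inside $\Independentset$. Hence $\Independentset\in\indset{\Gr{G}}$ and $\indnum{\Gr{H}_1}=\card{\Independentset}\leq\indnum{\Gr{G}}$. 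For the spanning-subgraph inequality $\indnum{\Gr{G}}\leq\indnum{\Gr{H}_2}$, I would take a maximum independent set in $\Gr{G}$ and note that since $\V{\Gr{H}_2}=\V{\Gr{G}}$ and $\E{\Gr{H}_2}\subseteq\E{\Gr{G}}$, removing edges cannot create new adjacencies, so this set is still independent in $\Gr{H}_2$.

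\textbf{Step 2: Lift to the $k$-th strong power.} The key observation, which I would state and verify as a small auxiliary claim, is that the property of being an induced (respectively, spanning) subgraph is preserved under the strong product. Concretely, I would show:
\begin{itemize}
\item If $\Gr{H}_1$ is an induced subgraph of $\Gr{G}$, then $\Gr{H}_1^k$ is an induced subgraph of $\Gr{G}^k$. This follows because the vertex set $\V{\Gr{H}_1}^k\subseteq\V{\Gr{G}}^k$, and two vertices $(u_1,\ldots,u_k),(v_1,\ldots,v_k)\in\V{\Gr{H}_1}^k$ satisfy, for each coordinate $i$, $u_i\sim v_i$ in $\Gr{H}_1$ if and only if $u_i\sim v_i$ in $\Gr{G}$; hence the adjacency rule of Definition~\ref{definition:strong_product} produces the same edges in $\Gr{H}_1^k$ as those of $\Gr{G}^k$ restricted to $\V{\Gr{H}_1}^k$.
\item If $\Gr{H}_2$ is a spanning subgraph of $\Gr{G}$, then $\Gr{H}_2^k$ is a spanning subgraph of $\Gr{G}^k$: the vertex sets coincide, and if a pair of distinct vertices is adjacent in $\Gr{H}_2^k$, then in each coordinate they either agree or are adjacent in $\Gr{H}_2\subseteq\Gr{G}$, so they are adjacent in $\Gr{G}^k$ as well.
\end{itemize}

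\textbf{Step 3: Conclude.} Having established that $\Gr{H}_1^k\subseteq\Gr{G}^k$ is induced and $\Gr{H}_2^k\subseteq\Gr{G}^k$ is spanning, I would apply the first part of the theorem (already proved in Step 1) to the graphs $\Gr{H}_1^k$, $\Gr{G}^k$, $\Gr{H}_2^k$ to obtain $\indnum{\Gr{H}_1^k}\leq\indnum{\Gr{G}^k}\leq\indnum{\Gr{H}_2^k}$.

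I do not anticipate any real obstacle; the entire argument is a bookkeeping exercise tracing through Definitions~\ref{definition:subgraphs}, \ref{definition:strong_product}, and \ref{definition:independent_set}. The only point that requires a small amount of care is verifying, in the induced case, the \emph{if and only if} at the adjacency level that prevents $\Gr{H}_1^k$ from picking up spurious or losing legitimate edges relative to $\Gr{G}^k$; this is precisely where the inducedness hypothesis is used, whereas spanning is used only as an inclusion of edges.
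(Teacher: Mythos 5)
Your proof is correct. The paper states this theorem in its preliminaries without supplying a proof (it is a standard fact), so there is no paper proof to compare against; your two-step argument --- establishing the base inequalities from the definitions, then lifting them by showing that the induced/spanning subgraph relation is preserved under strong powers --- is exactly the natural and expected argument, with the key inducedness bookkeeping correctly identified as the point where the coordinate-wise ``if and only if'' on adjacency is needed.
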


\subsubsection{Fractional invariants of graphs}
To properly define four fractional invariants of a graph, the following sets of functions are first introduced.
\begin{definition}\label{definition:fractional_sets}
  Let $\Gr{G}$ be a simple graph. Define four sets of functions as follows:
  \begin{itemize}
    \item $\set{F}_I(\Gr{G})$ is the set of non-negative functions, $f \colon \V{\Gr{G}} \to \Reals$, such that for
    every $\set{I} \in \indset{\Gr{G}}$,
        \begin{align}\label{eq:fractionl_set_independent_small}
          \sum_{v \in \set{I}} f(v) \leq 1.
        \end{align}
    \item $\set{F}_C(\Gr{G})$ is the set of non-negative functions, $f \colon \V{\Gr{G}} \to \Reals$, such that for
    every $\set{C} \in \clset{\Gr{G}}$,
        \begin{align}\label{eq:fractionl_set_clique_small}
          \sum_{v \in \set{C}} f(v) \leq 1.
        \end{align}
    \item $\set{G}_I(\Gr{G})$ is the set of non-negative functions, $g \colon \indset{\Gr{G}} \to \Reals$, such
    that for every $v \in \V{\Gr{G}}$,
        \begin{align}\label{eq:fractionl_set_independent_big}
          \sum_{\set{I} \in \indset{\Gr{G}}: \, v \in \set{I}} g(\set{I}) \geq 1,
        \end{align}
        where the summation on the left-hand side of \eqref{eq:fractionl_set_independent_big} is over all independent
        sets in $\Gr{G}$ that include the vertex $v$.
    \item $\set{G}_C(\Gr{G})$ is the set of non-negative functions, $g \colon \clset{\Gr{G}}\to\Reals$, such that for
    every $v\in\V{\Gr{G}}$,
        \begin{align}\label{eq:fractionl_set_clique_big}
          \sum_{\set{C} \in \clset{\Gr{G}}: \, v \in \set{C}} g(\set{C}) \geq 1,
        \end{align}
        where the summation on the left-hand side of \eqref{eq:fractionl_set_clique_big} is over all cliques in $\Gr{G}$
        that include the vertex $v$.
  \end{itemize}
\end{definition}

Next, four basic fractional invariants of graphs are defined by using linear programming.
\begin{definition}[Fractional invariants of graphs]
\label{definition:fractional_invariants}
  For a finite, undirected, and simple graph $\Gr{G}$, fractional invariants of graphs are defined as follows.
  \begin{itemize}
    \item The fractional independence number of $\Gr{G}$ is
    \begin{align}\label{eq:fractional_independence_number}
      \findnum{\Gr{G}} = \sup \Bigg\{\sum_{v \in \V{\Gr{G}}} f(v): \, f \in \set{F}_C(\Gr{G}) \Bigg\}.
    \end{align}
    Namely, $\findnum{\Gr{G}}$ is the supremum of the total weight assigned to the vertices of $\Gr{G}$,
    where nonnegative real weights are assigned so that for every clique of $\Gr{G}$, the sum of the weights
    of the vertices in that clique is at most~1.
    \item The fractional clique-cover number of $\Gr{G}$ is
    \begin{align}\label{eq:fractional_clique_cover_number}
      \fclcnum{\Gr{G}} = \inf \Bigg\{\sum_{\set{C} \in \clset{\Gr{G}}} g(\set{C}): \, g \in \set{G}_C(\Gr{G}) \Bigg\}.
    \end{align}
    Namely, $\fclcnum{\Gr{G}}$ is the infimum of the total weight assigned to the cliques of $\Gr{G}$, where nonnegative
    real weights are assigned so that for every vertex of $\Gr{G}$, the sum of the weights of all cliques containing that
    vertex is at least~1.
    \item The fractional clique number of $\Gr{G}$ is
    \begin{align}\label{eq:fractional_clique_number}
      \fclnum{\Gr{G}} = \sup \Bigg\{\sum_{v \in \V{\Gr{G}}} f(v): \, f \in \set{F}_I(\Gr{G}) \Bigg\}.
    \end{align}
    Namely, $\fclnum{\Gr{G}}$ is the supremum of the total weight assigned to the vertices of $\Gr{G}$, where nonnegative
    real weights are assigned so that for every independent set of $\Gr{G}$, the sum of the weights of the vertices in that
    independent set is at most~1.
    \item The fractional chromatic number of $\Gr{G}$, denoted by $\fchrnum{\Gr{G}}$, is given by
    \begin{align}\label{eq:fractional_chromatic_number}
      \fchrnum{\Gr{G}} = \inf \Bigg\{\sum_{\set{I} \in \indset{\Gr{G}}} g(\set{I}): \, g\in\set{G}_I(\Gr{G})\Bigg\}.
    \end{align}
    Namely, $\fchrnum{\Gr{G}}$ is the infimum of the total weight assigned to the independent sets of $\Gr{G}$, where
    nonnegative real weights are assigned so that for every vertex of $\Gr{G}$, the sum of the weights of all independent
    sets containing that vertex is at least~1.
  \end{itemize}
\end{definition}

The next theorem follows from strong duality in linear programming.
\begin{theorem}\label{theorem: fractional_invariants_strong_duality}
\cite{ScheinermanU13}
  Let $\Gr{G}$ be a simple graph. Then,
  \begin{align}
    & \findnum{\Gr{G}} = \fclcnum{\Gr{G}}, \\
    & \fchrnum{\Gr{G}} = \fclnum{\Gr{G}}.
  \end{align}
\end{theorem}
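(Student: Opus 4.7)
The plan is to recognize that both equalities are direct applications of strong duality of linear programming to a pair of primal-dual LPs arising from the vertex-versus-subset incidence structure of $\Gr{G}$.

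Concretely, for the first equality I would write $\findnum{\Gr{G}}$ as the LP
\begin{align*}
\text{maximize } \sum_{v\in\V{\Gr{G}}} f(v) \quad \text{subject to} \quad \sum_{v\in\set{C}} f(v) \le 1 \;\; \forall\, \set{C}\in\clset{\Gr{G}}, \;\; f\ge 0,
\end{align*}
and $\fclcnum{\Gr{G}}$ as the LP
\begin{align*}
\text{minimize } \sum_{\set{C}\in\clset{\Gr{G}}} g(\set{C}) \quad \text{subject to} \quad \sum_{\set{C}\ni v} g(\set{C}) \ge 1 \;\; \forall\, v\in\V{\Gr{G}}, \;\; g\ge 0.
\end{align*}
I would then check that these are indeed LP duals of one another: taking the matrix $A$ to be the vertex-by-clique incidence matrix with $A_{v,\set{C}} = \mathbbm{1}\{v\in\set{C}\}$, both LPs have the canonical form $\max\, \mathbf{1}^{\!\top}\! f$ s.t.\ $A^{\!\top}\!\! f\le\mathbf{1},\, f\ge 0$, and its dual $\min\,\mathbf{1}^{\!\top} g$ s.t.\ $A g\ge\mathbf{1},\, g\ge 0$.

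Before invoking strong duality, I would verify feasibility and boundedness, which is where one must be a touch careful. Finiteness of $\Gr{G}$ ensures that both LPs are finite-dimensional. The primal is feasible ($f\equiv 0$) and bounded above because every singleton $\{v\}$ is a clique, so $f(v)\le 1$ and hence $\sum_v f(v)\le \card{\V{\Gr{G}}}$. The dual is feasible (take $g(\{v\})=1$ on singleton cliques, $g=0$ elsewhere) and trivially bounded below by $0$. Strong duality of linear programming then yields $\findnum{\Gr{G}} = \fclcnum{\Gr{G}}$.

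The second equality is entirely analogous: interchange the roles of cliques and independent sets, so $A_{v,\set{I}} = \mathbbm{1}\{v\in\set{I}\}$ becomes the vertex-by-independent-set incidence matrix, and the same LP-duality argument gives $\fchrnum{\Gr{G}} = \fclnum{\Gr{G}}$, with feasibility/boundedness verified exactly as above since singletons are both cliques and independent sets. There is no substantive obstacle here; the only thing worth flagging is the bookkeeping to match the definitions in Definition~\ref{definition:fractional_invariants} to the standard primal-dual LP template, which is the step most likely to trip up a careless reader.
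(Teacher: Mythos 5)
Your proposal is correct and takes precisely the approach the paper has in mind: the paper simply states ``Using strong duality of linear programming we get the following important theorem'' with no further detail, and your write-up is the standard fleshing-out of that remark, correctly identifying the primal--dual LP pairs via the vertex-by-clique (resp.\ vertex-by-independent-set) incidence matrix and verifying feasibility and boundedness before invoking strong duality.
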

From now on, we will primarily use the fractional independence number and the fractional chromatic number.
When computing these quantities, we will use both of their equivalent linear programming formulations.

Another useful property, which is immediate from Definition~\ref{definition:fractional_invariants}, is given next.
\begin{theorem}\label{theorem: fractional_invariants_complement_equality}
\cite{ScheinermanU13}
  For a simple graph $\Gr{G}$, the following holds:
  \begin{align}
    \findnum{\Gr{G}} = \fclcnum{\Gr{G}} = \fchrnum{\CGr{G}} = \fclnum{\CGr{G}}.
  \end{align}
\end{theorem}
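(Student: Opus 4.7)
The plan is to leverage Theorem~\ref{theorem: fractional_invariants_strong_duality} together with the elementary observation that complementation interchanges the roles of independent sets and cliques. From Theorem~\ref{theorem: fractional_invariants_strong_duality} we already have $\findnum{\Gr{G}} = \fclcnum{\Gr{G}}$ and $\fchrnum{\CGr{G}} = \fclnum{\CGr{G}}$, so it suffices to establish any one of the two cross-equalities, e.g.\ $\findnum{\Gr{G}} = \fclnum{\CGr{G}}$.

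First, I would record the key set-level identities: since a subset $\set{S}\subseteq\V{\Gr{G}}=\V{\CGr{G}}$ is pairwise adjacent in $\Gr{G}$ iff it is pairwise nonadjacent in $\CGr{G}$, one has
\begin{align}
  \indset{\Gr{G}} = \clset{\CGr{G}}, \qquad \clset{\Gr{G}} = \indset{\CGr{G}}.
\end{align}
Feeding these into Definition~\ref{definition:fractional_sets} gives the family identities $\set{F}_C(\Gr{G}) = \set{F}_I(\CGr{G})$ and $\set{G}_C(\Gr{G}) = \set{G}_I(\CGr{G})$, because the inequality constraints indexed by $\clset{\Gr{G}}$ in \eqref{eq:fractionl_set_clique_small} and \eqref{eq:fractionl_set_clique_big} coincide with those indexed by $\indset{\CGr{G}}$ over the common vertex set.

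Next, I would substitute these family identities directly into Definition~\ref{definition:fractional_invariants}. The supremum defining $\findnum{\Gr{G}}$ in \eqref{eq:fractional_independence_number} is taken over $\set{F}_C(\Gr{G}) = \set{F}_I(\CGr{G})$, which by \eqref{eq:fractional_clique_number} is exactly $\fclnum{\CGr{G}}$; the analogous substitution for the infimum gives $\fclcnum{\Gr{G}} = \fchrnum{\CGr{G}}$. Combining these with Theorem~\ref{theorem: fractional_invariants_strong_duality} then closes the chain of four equalities.

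I do not anticipate any real obstacle, as the argument is a bookkeeping exercise rather than a genuine LP manipulation: once the identities $\indset{\Gr{G}} = \clset{\CGr{G}}$ and $\clset{\Gr{G}} = \indset{\CGr{G}}$ are invoked, the two linear programs defining $\findnum{\Gr{G}}$ and $\fclnum{\CGr{G}}$ are literally the same program, and likewise for $\fclcnum{\Gr{G}}$ and $\fchrnum{\CGr{G}}$. The only point worth stating explicitly is that $\V{\Gr{G}} = \V{\CGr{G}}$, so the objective functions $\sum_{v\in\V{\cdot}}f(v)$ also coincide.
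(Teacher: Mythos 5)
Your argument is correct and is precisely the elaboration of what the paper leaves implicit (the paper states the result is "obvious from Definition~\ref{definition:fractional_invariants}" and gives no proof). The identities $\indset{\Gr{G}} = \clset{\CGr{G}}$ and $\clset{\Gr{G}} = \indset{\CGr{G}}$ make the LP formulations literally coincide, which together with Theorem~\ref{theorem: fractional_invariants_strong_duality} closes the chain of equalities exactly as you describe.
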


Some properties of the fractional independence number are presented next.
\begin{theorem}\label{theorem: fractional_independence_strong_product}
  \cite{Hales73} Let $\Gr{G}$ and $\Gr{H}$ be simple graphs. Then,
  \begin{align}\label{eq:fractional_independence_strong_product}
    & \findnum{\Gr{G}\boxtimes\Gr{H}} = \findnum{\Gr{G}} \, \findnum{\Gr{H}}, \\[0.1cm]
    & \indnum{\Gr{G}\boxtimes\Gr{H}}\leq\findnum{\Gr{G}} \, \indnum{\Gr{H}}.
    \end{align}
\end{theorem}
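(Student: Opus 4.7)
The plan rests on the fact that cliques in a strong product factorise: $\Cliqueset \subseteq \V{\Gr{G}\boxtimes\Gr{H}}$ is a clique if and only if its projections $\Cliqueset_{\Gr{G}}$ and $\Cliqueset_{\Gr{H}}$ are cliques in $\Gr{G}$ and $\Gr{H}$ respectively, and $\Cliqueset \subseteq \Cliqueset_{\Gr{G}} \times \Cliqueset_{\Gr{H}}$. I would prove this factorisation directly from Definition~\ref{definition:strong_product} as a preliminary observation, since it drives both parts of the statement.

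\noindent\textbf{Equality of the fractional independence numbers.} I would prove two inequalities via the LP formulations in Definition~\ref{definition:fractional_invariants}, together with the duality $\findnum{\cdot}=\fclcnum{\cdot}$ from Theorem~\ref{theorem: fractional_invariants_strong_duality}. For the lower bound, take near-optimal $f\in \set{F}_C(\Gr{G})$ and $f'\in \set{F}_C(\Gr{H})$ with $\sum_v f(v)$ and $\sum_v f'(v)$ close to $\findnum{\Gr{G}}$ and $\findnum{\Gr{H}}$, and set $F(g,h) \dfn f(g)\,f'(h)$ on $\V{\Gr{G}\boxtimes\Gr{H}}$. Using the clique-factorisation observation, for any clique $\Cliqueset$ in the product,
\begin{align*}
\sum_{(g,h)\in\Cliqueset} F(g,h) \;\le\; \Bigl(\sum_{g\in\Cliqueset_{\Gr{G}}} f(g)\Bigr)\Bigl(\sum_{h\in\Cliqueset_{\Gr{H}}} f'(h)\Bigr) \;\le\; 1,
\end{align*}
so $F\in \set{F}_C(\Gr{G}\boxtimes\Gr{H})$, and $\sum F = \bigl(\sum f\bigr)\bigl(\sum f'\bigr)$ forces $\findnum{\Gr{G}\boxtimes\Gr{H}}\ge \findnum{\Gr{G}}\,\findnum{\Gr{H}}$. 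For the reverse inequality, I would use the dual form $\findnum{\cdot}=\fclcnum{\cdot}$: take near-optimal clique-cover weights $g\in\set{G}_C(\Gr{G})$ and $g'\in \set{G}_C(\Gr{H})$ and define weights on product cliques by $G(\Cliqueset_{\Gr{G}}\times\Cliqueset_{\Gr{H}}) \dfn g(\Cliqueset_{\Gr{G}})\,g'(\Cliqueset_{\Gr{H}})$ and $0$ on any other clique. Coverage of a vertex $(g_0,h_0)$ follows by factoring the sum into the two independent coverings, and the total weight is $\bigl(\sum g\bigr)\bigl(\sum g'\bigr)$, giving $\fclcnum{\Gr{G}\boxtimes\Gr{H}}\le \fclcnum{\Gr{G}}\,\fclcnum{\Gr{H}}$.

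\noindent\textbf{The inequality on the independence number.} Let $\Independentset$ be a maximum independent set in $\Gr{G}\boxtimes\Gr{H}$, so that $\card{\Independentset}=\indnum{\Gr{G}\boxtimes\Gr{H}}$. The key observation is that for any clique $\Cliqueset$ in $\Gr{G}$, the projection of $\Independentset\cap(\Cliqueset\times\V{\Gr{H}})$ onto $\V{\Gr{H}}$ is injective and forms an independent set in $\Gr{H}$: any two distinct points $(g_1,h_1),(g_2,h_2)$ in this intersection have $g_1,g_2$ adjacent or equal in $\Gr{G}$, so non-adjacency in the product forces $h_1\neq h_2$ and $\{h_1,h_2\}\notin\E{\Gr{H}}$. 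Hence $\bigcard{\Independentset\cap(\Cliqueset\times\V{\Gr{H}})}\le \indnum{\Gr{H}}$. Now choose a fractional clique cover $g^\star\in\set{G}_C(\Gr{G})$ attaining $\fclcnum{\Gr{G}}=\findnum{\Gr{G}}$ (using Theorem~\ref{theorem: fractional_invariants_strong_duality}) and double-count:
\begin{align*}
\indnum{\Gr{G}\boxtimes\Gr{H}} \;=\; \sum_{(g_0,h_0)\in\Independentset} 1 \;\le\; \sum_{(g_0,h_0)\in\Independentset}\sum_{\Cliqueset\ni g_0} g^\star(\Cliqueset) \;=\; \sum_{\Cliqueset\in\clset{\Gr{G}}} g^\star(\Cliqueset)\,\bigcard{\Independentset\cap(\Cliqueset\times\V{\Gr{H}})} \;\le\; \indnum{\Gr{H}}\,\findnum{\Gr{G}}.
\end{align*}

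\noindent\textbf{Anticipated obstacle.} The only non-routine step is the clique factorisation in the strong product and the careful identification of the structure of $\Independentset\cap(\Cliqueset\times\V{\Gr{H}})$; once these are in place, the rest is a standard primal/dual LP manipulation. A minor technical issue is handling suprema/infima in the LP definitions rather than maxima/minima, but since the LPs are finite-dimensional with bounded feasible regions one can replace them by optima without loss.
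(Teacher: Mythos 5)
The paper states this theorem as a citation to Hales (1973) without supplying a proof, so there is no in-paper argument to compare against. Your proof is correct and is essentially the standard linear-programming argument: the clique-factorisation in the strong product holds (a subset of $\V{\Gr{G}\boxtimes\Gr{H}}$ is a clique exactly when it is contained in $\Cliqueset_{\Gr{G}}\times\Cliqueset_{\Gr{H}}$ for some cliques $\Cliqueset_{\Gr{G}}\in\clset{\Gr{G}}$, $\Cliqueset_{\Gr{H}}\in\clset{\Gr{H}}$; the clause ``and $\Cliqueset\subseteq\Cliqueset_{\Gr{G}}\times\Cliqueset_{\Gr{H}}$'' you append is automatic once $\Cliqueset_{\Gr{G}},\Cliqueset_{\Gr{H}}$ are the projections of $\Cliqueset$, so the substantive content is that the two projections are cliques), the tensor-product constructions of primal and dual feasible solutions give the two inequalities needed for $\findnum{\Gr{G}\boxtimes\Gr{H}}=\findnum{\Gr{G}}\findnum{\Gr{H}}$, and the double-counting of a maximum independent set against an optimal fractional clique cover $g^\star$ of $\Gr{G}$, together with the observation that $\Independentset\cap(\Cliqueset\times\V{\Gr{H}})$ projects injectively onto an independent set of $\Gr{H}$, gives the second inequality. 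Your closing remark that the LPs attain their optima is also correct: the feasible regions are nonempty, finite-dimensional, and bounded, so suprema and infima are achieved.
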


\subsubsection{Graph spectrum}
\label{subsection: Graph spectrum}

\begin{definition}[Adjacency matrix]
\label{definition: adjacency matrix}
Let $\Gr{G}$ be a simple undirected graph on $n$ vertices. The {\em adjacency matrix} of $\Gr{G}$,
denoted by $\A = \A(\Gr{G})$, is an $n \times n$ symmetric matrix $\A = (\mathrm{A}_{i,j})$
where $\mathrm{A}_{i,j} = 1$ if $\{i,j\} \in \E{\Gr{G}}$, and $\mathrm{A}_{i,j}=0$ otherwise
(so the entries in the principal diagonal of $\A$ are zeros).
\end{definition}

\begin{definition}[Graph spectrum]
\label{definition: graph spectrum}
Let $\Gr{G}$ be a simple undirected graph on $n$ vertices. The {\em spectrum} of $\Gr{G}$ is defined
as the multiset of eigenvalues of the adjacency matrix of $\Gr{G}$.
\end{definition}

\subsubsection{Some structured families of graphs}
Vertex- and edge-transitivity, defined as follows, play an important role in characterizing graphs.

\begin{definition}[Automorphism]
\label{definition:automorphism}
An {\em automorphism} of a graph $\Gr{G}$ is an isomorphism from $\Gr{G}$ to itself.
\end{definition}

\begin{definition}[Vertex-transitivity]
\label{definition:vertex-transitive graphs}
A graph $\Gr{G}$ is said to be {\em vertex-transitive} if for every two vertices $i, j \in \V{\Gr{G}}$,
there is an automorphism $f \colon \V{\Gr{G}} \to \V{\Gr{G}}$ such that $f(i) = j$.
\end{definition}

\begin{definition}[Edge-transitivity]
\label{definition:edge-transitive graphs}
A graph $\Gr{G}$ is {\em edge-transitive} if for every two edges $e_1, e_2 \in \E{\Gr{G}}$,
there is an automorphism $f \colon \V{\Gr{G}} \to \V{\Gr{G}}$ that maps the endpoints of $e_1$ to
the endpoints of $e_2$.
\end{definition}

\begin{definition}[Kneser graphs]
\label{definition:kneser_graph}
  Let $\OneTo{n}$ be the set with natural numbers from $1$ to $n$, and let $1\leq r \leq n$.
  The Kneser graph $\KG{n}{r}$ is the graph whose vertex set is composed of the different
  $r$-subsets of $\OneTo{n}$, and every two vertices $u,v$ are adjacent if and only if the
  respective $r$-subsets are disjoint.
\end{definition}
Kneser graphs are vertex- and edge-transitive.

\begin{definition}[Perfect graphs]
\label{definition:perfect_graph}
    A graph $\Gr{G}$ is perfect if for every induced subgraph $\Gr{H}$ of $\Gr{G}$,
    \begin{align} \label{eq: perfect graphs}
      \clnum{\Gr{H}}=\chrnum{\Gr{H}}.
    \end{align}
\end{definition}

\begin{definition}[Universal graphs]
\label{definition:universal_graph}
    A graph $\Gr{G}$ is universal if for every graph~$\Gr{H}$,
    \begin{align}  \label{eq: universal graphs}
      \indnum{\Gr{G}\boxtimes\Gr{H}}=\indnum{\Gr{G}} \, \indnum{\Gr{H}}.
    \end{align}
\end{definition}

\begin{lemma}\label{lemma:universal_power}
    If $\Gr{G}$ is a universal graph, then $\Gr{G}^k$ is universal for all $k\in\naturals$.
\end{lemma}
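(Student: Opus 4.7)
The plan is to prove the lemma by induction on $k$, using the definition of a universal graph together with the associativity of the strong product (up to graph isomorphism, which leaves the independence number invariant).

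The base case $k=1$ is immediate: $\Gr{G}^1 = \Gr{G}$ is universal by assumption. For the inductive step, I would assume $\Gr{G}^k$ is universal and verify the defining identity \eqref{eq: universal graphs} for $\Gr{G}^{k+1}$ against an arbitrary simple graph $\Gr{H}$. Writing $\Gr{G}^{k+1} \cong \Gr{G} \boxtimes \Gr{G}^k$ and using associativity of $\boxtimes$, I would rewrite
\begin{align}
  \indnum{\Gr{G}^{k+1}\boxtimes\Gr{H}}
  = \indnumbig{\Gr{G}\boxtimes(\Gr{G}^k\boxtimes\Gr{H})}.
\end{align}
Applying universality of $\Gr{G}$ with the auxiliary graph $\Gr{G}^k\boxtimes\Gr{H}$ yields
\begin{align}
  \indnumbig{\Gr{G}\boxtimes(\Gr{G}^k\boxtimes\Gr{H})}
  = \indnum{\Gr{G}} \, \indnumbig{\Gr{G}^k\boxtimes\Gr{H}},
\end{align}
and then the induction hypothesis gives $\indnum{\Gr{G}^k\boxtimes\Gr{H}} = \indnum{\Gr{G}^k} \, \indnum{\Gr{H}}$.

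Finally, I would observe that the same two applications of universality, taken with $\Gr{H}$ replaced by the one-vertex graph (or, more directly, with $\Gr{H} = \Gr{G}^k$ in the defining identity for $\Gr{G}$), give $\indnum{\Gr{G}^{k+1}} = \indnum{\Gr{G}} \, \indnum{\Gr{G}^k}$. Combining these two identities produces $\indnum{\Gr{G}^{k+1}\boxtimes\Gr{H}} = \indnum{\Gr{G}^{k+1}} \, \indnum{\Gr{H}}$, which is exactly the universality of $\Gr{G}^{k+1}$, completing the induction.

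There is no real obstacle here: the argument is essentially a bookkeeping exercise on top of the defining property of universality. The only point that deserves a brief justification is the use of associativity of the strong product together with the invariance of $\alpha(\cdot)$ under isomorphism, so that the bracketing $\Gr{G}\boxtimes(\Gr{G}^k\boxtimes\Gr{H})$ can legitimately be read as $\Gr{G}^{k+1}\boxtimes\Gr{H}$; this is a standard fact about $\boxtimes$ recorded, e.g., in \cite{HammackIK11}.
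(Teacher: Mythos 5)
Your proof is correct and spells out exactly the induction on $k$ that the paper invokes in one line (``This follows easily by Definition~\ref{definition:universal_graph} and mathematical induction on~$k$''). The bookkeeping via associativity of $\boxtimes$, the two applications of universality of $\Gr{G}$, and the use of the induction hypothesis are all sound and match the intended argument.
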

\begin{proof}
This follows easily by Definition~\ref{definition:universal_graph} and mathematical induction on~$k$.
\end{proof}

A corollary by Hales \cite{Hales73} regarding the connection between the graph universality and fractional independence number is presented next.
\begin{theorem}\label{theorem: universal_findnum}
  \cite{Hales73} A graph $\Gr{G}$ is universal if and only if $\indnum{\Gr{G}}=\findnum{\Gr{G}}$.
\end{theorem}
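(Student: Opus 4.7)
The plan is to handle the two implications separately. The direction $\indnum{\Gr{G}} = \findnum{\Gr{G}} \Rightarrow \Gr{G}$ universal is a one-line sandwich: for every simple graph $\Gr{H}$,
\begin{align*}
\indnum{\Gr{G}}\,\indnum{\Gr{H}} \leq \indnum{\Gr{G}\boxtimes\Gr{H}} \leq \findnum{\Gr{G}}\,\indnum{\Gr{H}} = \indnum{\Gr{G}}\,\indnum{\Gr{H}},
\end{align*}
where the lower bound is Theorem~\ref{theorem: independence_number_strong_product} and the upper bound is Theorem~\ref{theorem: fractional_independence_strong_product}; the hypothesis forces equality throughout.

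For the converse, assume $\Gr{G}$ is universal. The strategy is to exhibit a single graph $\Gr{H}$ for which $\indnum{\Gr{G}\boxtimes\Gr{H}}/\indnum{\Gr{H}} = \findnum{\Gr{G}}$; universality will then collapse this ratio to $\indnum{\Gr{G}}$ and force $\indnum{\Gr{G}} = \findnum{\Gr{G}}$. To build the witness, choose a rational optimal solution $f^{*}\colon \V{\Gr{G}}\to\Rationals$ of the linear program~\eqref{eq:fractional_independence_number}---such a solution exists because the LP has rational data---and clear denominators to write $f^{*}(v) = n_v/N$ with non-negative integers $n_v$ and $N\in\naturals$ satisfying $\sum_{v\in\V{\Gr{G}}} n_v = N\findnum{\Gr{G}}$ and $\sum_{v\in C} n_v \leq N$ for every clique $C$ of $\Gr{G}$. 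Take $\Gr{H}$ to be the independent-set blowup of $\CGr{G}$: its vertex set is $\{(v,i) : v\in\V{\Gr{G}},\ 1\leq i \leq n_v\}$, with $(u,i)\sim(v,j)$ in $\Gr{H}$ iff $u\neq v$ and $u\not\sim v$ in $\Gr{G}$.

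The argument then reduces to two verifications. First, every independent set of $\Gr{H}$ has first coordinates that are pairwise equal or adjacent in $\Gr{G}$, so its first coordinates lie in a single clique of $\Gr{G}$, and the LP constraint caps the total size by $N$; hence $\indnum{\Gr{H}} \leq N$. Second, the diagonal $S = \{(v,(v,i)) : v\in\V{\Gr{G}},\ 1\leq i \leq n_v\}$ is an independent set of size $N\findnum{\Gr{G}}$ in $\Gr{G}\boxtimes\Gr{H}$, as can be checked by a routine case analysis on the three adjacency conditions of Definition~\ref{definition:strong_product}---the crucial observations being that copies of the same vertex are non-adjacent in $\Gr{H}$ (which handles equal first coordinates) and that $\Gr{G}$ and $\CGr{G}$ share no edge (which kills the third adjacency condition for distinct first coordinates). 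Combining these with the upper bound $\indnum{\Gr{G}\boxtimes\Gr{H}} \leq \findnum{\Gr{G}}\,\indnum{\Gr{H}}$ of Theorem~\ref{theorem: fractional_independence_strong_product} pins down $\indnum{\Gr{H}} = N$ and $\indnum{\Gr{G}\boxtimes\Gr{H}} = N\findnum{\Gr{G}}$. Applying the universality hypothesis yields $\indnum{\Gr{G}}\cdot N = N\findnum{\Gr{G}}$, and the conclusion follows.

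The only non-routine step is identifying the correct $\Gr{H}$: it must be rich enough for the diagonal to harvest the full fractional mass $N\findnum{\Gr{G}}$, yet sparse enough that its own independence number is governed exactly by the cliquewise LP inequality. Once the independent-set blowup of $\CGr{G}$ is spotted as the right object, the remaining work is adjacency bookkeeping inside the strong product and a single invocation of the multiplicativity of $\findnum$.
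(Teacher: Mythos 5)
The paper does not prove this theorem; it is stated with a citation to Hales (1973) and no proof is given, so there is no in-paper argument to compare against. Your proof is correct and self-contained. The forward direction is a clean sandwich via Theorems~\ref{theorem: independence_number_strong_product} and~\ref{theorem: fractional_independence_strong_product}. For the converse, your witness graph $\Gr{H}$ (the independent-set blowup of $\CGr{G}$ with multiplicities $n_v$ read from a rational optimal solution of the LP~\eqref{eq:fractional_independence_number}) does the job: the cliquewise constraints give $\indnum{\Gr{H}} \leq N$, the diagonal set is independent in $\Gr{G}\boxtimes\Gr{H}$ (your three-way case analysis on Definition~\ref{definition:strong_product} is correct, including the observation that copies of the same vertex are non-adjacent in $\Gr{H}$ and that distinct adjacent first coordinates kill condition~3), and universality then forces $\findnum{\Gr{G}}=\indnum{\Gr{G}}$. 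One minor streamlining: you do not need the extra appeal to Theorem~\ref{theorem: fractional_independence_strong_product} to pin down $\indnum{\Gr{H}}=N$ exactly; the two one-sided estimates $\indnum{\Gr{H}}\leq N$ and $\indnum{\Gr{G}\boxtimes\Gr{H}}\geq N\findnum{\Gr{G}}$, fed into the universality identity $\indnum{\Gr{G}\boxtimes\Gr{H}}=\indnum{\Gr{G}}\,\indnum{\Gr{H}}$, already give $N\findnum{\Gr{G}}\leq\indnum{\Gr{G}}\,N$, hence $\findnum{\Gr{G}}\leq\indnum{\Gr{G}}$, and the reverse inequality always holds. You should also remark that the LP has a rational optimum because a singleton is a clique (bounding the feasible polytope) and the data are integral, which justifies clearing denominators.
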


\begin{definition}[Strongly regular graphs]
\label{definition:strongly_regular_graph}
  A graph $\Gr{G}$ is strongly regular with parameters $\srg{n}{d}{\lambda}{\mu}$ if it satisfies the following:
  \begin{itemize}
    \item The order of $\Gr{G}$ is $n$.
    \item $\Gr{G}$ is $d$-regular.
    \item Every pair of adjacent vertices has exactly $\lambda$ common neighbors.
    \item Every pair of distinct, nonadjacent vertices has exactly $\mu$ common neighbors.
  \end{itemize}
\end{definition}

\begin{definition}[Paley graphs]
\label{definition:paley_graph}
  Let $q=p^n$ be a prime power with $p$ prime, $n\in\naturals$, and $q\equiv 1\text{ mod 4}$.
  The Paley graph of order $q$, denoted by $P(q)$, is defined as follows:
  \begin{itemize}
    \item The vertex set of $P(q)$ is $\mathbb{F}_{q} = \{0,1,\ldots,q-1\}$.
    \item Two distinct vertices $a, b \in \mathbb{F}_{q}$ are adjacent if $a-b$ is a square in $\mathbb{F}_{q}$.
  \end{itemize}
\end{definition}

\begin{theorem}\label{theorem: paley_graph_properties}  \cite{GodsilR}
  Let $q\equiv 1 \text{ mod 4}$ be a prime power, and let $P(q)$ be a Paley graph. Then, the following hold:
  \begin{itemize}
    \item $P(q)$ is a self-complementary graph.
    \item $P(q)$ is strongly regular with parameters $\srg{q}{\frac{q-1}{2}}{\frac{q-5}{4}}{\frac{q-1}{4}}$.
    \item $P(q)$ is vertex-transitive.
    \item $P(q)$ is edge-transitive.
  \end{itemize}
\end{theorem}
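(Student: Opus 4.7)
The plan is to treat the four items in turn, leaning on the algebraic structure of $\mathbb{F}_{q}$. Throughout, let $\chi\colon\mathbb{F}_{q}\to\{-1,0,1\}$ be the quadratic residue character: $\chi(0)=0$ and, for $a\neq 0$, $\chi(a)=1$ iff $a\in(\mathbb{F}_{q}^\times)^2$. The essential arithmetic input is that $\chi(-1)=1$ whenever $q\equiv 1\pmod 4$, which ensures that the adjacency relation $a\sim b\Leftrightarrow\chi(a-b)=1$ is symmetric and underpins all of the automorphism constructions below.

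The two transitivity assertions and self-complementarity would follow from the affine maps $\varphi_{a,b}(x)=ax+b$ of $\mathbb{F}_{q}$. Whenever $a\in(\mathbb{F}_{q}^\times)^2$, the identity $\chi(ax-ay)=\chi(a)\chi(x-y)=\chi(x-y)$ shows that $\varphi_{a,b}$ is an automorphism of $P(q)$. Taking $a=1$ and $b=v-u$ sends $u$ to $v$, giving vertex-transitivity. For edge-transitivity, given two edges $\{u_1,v_1\}$ and $\{u_2,v_2\}$, both $v_i-u_i$ are squares, so $a=(v_2-u_2)(v_1-u_1)^{-1}$ is a square; then $\varphi_{a,b}$ with $b=u_2-au_1$ carries the first edge onto the second. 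For self-complementarity, pick a non-square $a\in\mathbb{F}_{q}^\times\setminus(\mathbb{F}_{q}^\times)^2$; now $\chi(a(x-y))=-\chi(x-y)$, so $\varphi_{a,0}$ swaps edges and non-edges, providing an isomorphism $P(q)\cong\overline{P(q)}$.

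The substantive part is the strongly regular claim. The order $q$ is immediate by definition, and the degree of any vertex equals $|(\mathbb{F}_{q}^\times)^2|=(q-1)/2$. For $\lambda$ and $\mu$, I would count the common neighbors of two distinct vertices $a,b$ by
\begin{align*}
N(a,b)=\sum_{c\neq a,b}\frac{1+\chi(c-a)}{2}\cdot\frac{1+\chi(c-b)}{2}
\end{align*}
and expand into four character sums. The constant piece contributes $q-2$; the two linear sums $\sum_{c\neq a,b}\chi(c-a)$ and $\sum_{c\neq a,b}\chi(c-b)$ each equal $-\chi(b-a)$ by $\sum_{x\in\mathbb{F}_{q}}\chi(x)=0$ together with $\chi(-1)=1$; and the cross term $\sum_{c\neq a,b}\chi((c-a)(c-b))$ is the main target. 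After the substitution $e=c-a$, $t=b-a$ and the factoring $e(e-t)=e^2(1-t/e)$, it reduces via the change of variables $y=t/e$ to $\sum_{y\in\mathbb{F}_{q}^\times}\chi(1-y)=-1$. Assembling the pieces,
\begin{align*}
N(a,b)=\tfrac{1}{4}\bigl(q-3-2\chi(b-a)\bigr),
\end{align*}
which specializes to $\lambda=(q-5)/4$ when $a\sim b$ and $\mu=(q-1)/4$ when $a\not\sim b$.

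The main obstacle is evaluating the quadratic cross character sum cleanly; everything else amounts to the elementary action of affine maps on $\mathbb{F}_{q}$ and routine bookkeeping. The exclusions $c\neq a,b$ in the character sums require only minor care, since the convention $\chi(0)=0$ makes those would-be boundary terms vanish automatically, so no extra correction is needed.
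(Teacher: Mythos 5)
The paper does not prove this theorem at all; it is stated as a known fact with a citation to Godsil and Royle's \emph{Algebraic Graph Theory}, so there is no in-paper argument to compare against. Your proposal supplies the standard, self-contained proof from the arithmetic of $\mathbb{F}_q$ and the quadratic residue character $\chi$, and the argument is correct in all four parts. In particular: the affine maps $\varphi_{a,b}(x)=ax+b$ with $a$ a square do give vertex- and edge-transitivity; a non-square dilation gives the complement isomorphism; the degree count $(q-1)/2$ is immediate; and in the strongly-regular count, the reduction of the cross term $\sum_{c\neq a,b}\chi((c-a)(c-b))$ via $e=c-a$, $y=t/e$ to $\sum_{y\in\mathbb{F}_q^\times}\chi(1-y)=-1$ is exactly the classical Jacobi-type evaluation, with the $y=1$ boundary term vanishing as you note because $\chi(0)=0$. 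Assembling gives $N(a,b)=\tfrac14\bigl(q-3-2\chi(b-a)\bigr)$, yielding $\lambda=(q-5)/4$ and $\mu=(q-1)/4$. You also correctly flag the role of $\chi(-1)=1$ (equivalently $q\equiv 1\pmod 4$), which is needed both for the adjacency relation to be symmetric and for the linear sums $\sum_{c\neq a,b}\chi(c-a)=-\chi(b-a)$ to combine cleanly. In short, you have filled in a correct and essentially canonical proof for a result the paper only cites.
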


\subsection{The Shannon capacity of graphs}
\label{subsection:shannon_capacity}
The Shannon capacity of a graph $\Gr{G}$ was introduced in \cite{Shannon56} as a measure of the largest effective
alphabet size permitting zero-error communication over a given noisy channel. A discrete memoryless
channel (DMC) with finite input and output alphabets is specified by a finite input set $\set{X}$, a finite output set $\set{Y}$,
and a non-empty fan-out set $\set{S}_x \triangleq \bigl\{y \in \set{Y}: \, p_{Y|X}(y|x) > 0 \bigr\}$ for each $x \in \set{X}$.
In each channel use, the sender transmits an input $x \in \set{X}$, and the receiver observes an arbitrary output
in the corresponding fan-out set $\set{S}_x$.
The channel can be represented by a \textit{confusion graph} $\Gr{G}$ that is defined as follows:
\begin{itemize}
  \item $\V{\Gr{G}} = \set{X}$ represents the set of symbols of the channel’s input alphabet.
  \item $\E{\Gr{G}}$ is the edge set of $\Gr{G}$, where two distinct vertices are adjacent if the corresponding
  two input symbols from $\set{X}$ are not distinguishable by the channel, i.e., they can produce an identical output symbol with positive probability.
  In other words, two vertices in $\Gr{G}$ are adjacent if the fan-out sets of the corresponding input symbols intersect, so
  \begin{align}
  \E{\Gr{G}} = \Bigl\{\{x,x'\}: \, x, x' \in \set{X}, \; x \neq x', \; \set{S}_x \cap \set{S}_{x'} \neq \es \Bigr\}.
  \end{align}
\end{itemize}

The largest cardinality of a set of input symbols that can be communicated without error in a single use of the channel equals the independence
number $\indnum{\Gr{G}}$. Indeed, in this single-use setting, the sender and the receiver agree in advance on an independent set $\set{I}$ of a
maximum size $\indnum{\Gr{G}}$, the sender transmits only inputs in $\set{I}$, every received output is in the fan-out set of exactly one input
in $\set{I}$, and the receiver can correctly determine the transmitted input.

Consider the transmission of $k$-length strings over a channel, where the channel is used $k\geq1$ times. The sender transmits a sequence
$x_1, \ldots, x_k$, and the receiver observes an output sequence $y_1, \ldots, y_k$, where $y_i \in \set{S}_{x_i}$ for all $i \in \OneTo{k}$.
In this setup, $k$ uses of the channel are viewed as a single use of a super-channel whose input alphabet is $\set{X}^k$, its output alphabet is $\set{Y}^k$,
and the fan-out set of $(x_1, \ldots, x_k) \in \set{X}^k$ is given by the Cartesian product $\set{S}_{x_1} \times \ldots \times\set{S}_{x_k}$.
Note that two distinct sequences $(x_1, \ldots, x_k), (x_1', \ldots, x_k')\in\set{X}^k$ are distinguishable by the channel if and only if
\begin{align}
  S_{x_i}\cap S_{x_i'}=\es
\end{align}
for {\em some} index $i \in \OneTo{k}$.
Thus, it is possible to represent the super-channel by the $k$th confusion graph that is defined as the $k$-fold strong power of $\Gr{G}$.
Using the $k$th confusion graph, the largest amount of information that can be sent by $k$ uses of the channel with error-free communication
is given by the independence number of the $k$th confusion graph, i.e., it is equal to $\indnum{\Gr{G}^k}$.
The maximum information rate per symbol that is achieved by $k$ uses of the channel, and with zero-error communication, is equal to
\begin{align}
  \frac{1}{k} \log \indnum{\Gr{G}^k} = \log\sqrt[k]{\indnum{\Gr{G}^k}}, \quad k \in \naturals.
\end{align}
Finally, by omitting the logarithm (as a monotonically increasing function) and supremizing over $k \in \naturals$, the Shannon capacity of the graph $\Gr{G}$
is defined as follows.

\begin{definition}[Shannon capacity]
\label{definition:shannon_capacity}
  Let $\Gr{G}$ be a simple graph. The Shannon capacity of $\Gr{G}$ is defined as
  \begin{align}
    \Theta(\Gr{G}) & \triangleq \sup_{k \in \naturals}\sqrt[k]{\indnum{\Gr{G}^k}} \label{eq:shannon_capacity} \\
    &= \lim_{k \to \infty} \sqrt[k]{\indnum{\Gr{G}^k}}, \label{eq2:shannon_capacity}
  \end{align}
  where equality \eqref{eq2:shannon_capacity} holds by inequality~\eqref{eq:independence_number_strong_product} and Fekete's lemma.
\end{definition}

Next, we provide several properties of the Shannon capacity, which are used throughout this paper.
\begin{theorem}\label{theorem: shannon_power}
  Let $\Gr{G}$ be a simple graph and let $m \in \naturals$. Then,
  \begin{align}
    \Theta(\Gr{G}^m) = \Theta(\Gr{G})^m. \label{eq:shannon_power}
  \end{align}
\end{theorem}
\begin{proof}
Equality~\eqref{eq:shannon_power} holds by \eqref{eq2:shannon_capacity}.
\end{proof}

\begin{theorem}\label{theorem: shannon_strong_product}
  Let $\Gr{G}_1$ and $\Gr{G}_2$ be simple graphs. Then,
  \begin{align}\label{eq:shannon_strong_product}
    \Theta(\Gr{G}_1\boxtimes\Gr{G}_2) \geq \Theta(\Gr{G}_1) \, \Theta(\Gr{G}_2).
  \end{align}
\end{theorem}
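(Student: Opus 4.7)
The plan is to reduce the claim directly to the super-multiplicativity of the independence number under the strong product (the left inequality in Theorem~\ref{theorem: independence_number_strong_product}), together with the known fact that the strong product is commutative and associative up to isomorphism.

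First, I would observe that for any $k \in \naturals$ one has the graph isomorphism
\begin{align}
(\Gr{G}_1 \boxtimes \Gr{G}_2)^k \;\cong\; \Gr{G}_1^k \boxtimes \Gr{G}_2^k,
\end{align}
which follows from Definition~\ref{definition:strong_product} by rearranging the $2k$ factors and noting that isomorphism on each coordinate level preserves the adjacency condition in the strong product (this is part of the standard theory from \cite{HammackIK11}). Since the independence number is a graph invariant under isomorphism, combining this with \eqref{eq:independence_number_strong_product} of Theorem~\ref{theorem: independence_number_strong_product} yields
\begin{align}
\indnum{(\Gr{G}_1 \boxtimes \Gr{G}_2)^k} \;=\; \indnum{\Gr{G}_1^k \boxtimes \Gr{G}_2^k} \;\geq\; \indnum{\Gr{G}_1^k}\,\indnum{\Gr{G}_2^k}.
\end{align}

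Second, I would take the $k$-th root of both sides and pass to the limit as $k \to \infty$. By Definition~\ref{definition:shannon_capacity}, the limit exists for each of $\Gr{G}_1, \Gr{G}_2$ and $\Gr{G}_1 \boxtimes \Gr{G}_2$, and the product of limits is the limit of the products, giving
\begin{align}
\Theta(\Gr{G}_1 \boxtimes \Gr{G}_2) \;=\; \lim_{k\to\infty} \sqrt[k]{\indnum{(\Gr{G}_1\boxtimes \Gr{G}_2)^k}} \;\geq\; \lim_{k\to\infty} \sqrt[k]{\indnum{\Gr{G}_1^k}} \cdot \lim_{k\to\infty} \sqrt[k]{\indnum{\Gr{G}_2^k}} \;=\; \Theta(\Gr{G}_1)\,\Theta(\Gr{G}_2),
\end{align}
which is the desired inequality.

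There is essentially no serious obstacle here; the only step requiring any care is the first isomorphism, which is routine but deserves an explicit mention so that the super-multiplicativity statement is applied to graphs of the correct form. Everything else is a direct application of results already established in the excerpt.
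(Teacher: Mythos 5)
Your proof is correct and is the standard argument for this preliminary fact, which the paper states without proof. The two key ingredients—the isomorphism $(\Gr{G}_1 \boxtimes \Gr{G}_2)^k \cong \Gr{G}_1^k \boxtimes \Gr{G}_2^k$ (commutativity and associativity of $\boxtimes$) followed by super-multiplicativity of $\alpha$ from Theorem~\ref{theorem: independence_number_strong_product}—are exactly the right tools, and the passage to the limit is justified since Fekete's lemma guarantees convergence of each sequence (as noted in the remark after Definition~\ref{definition:shannon_capacity}).
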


\begin{theorem}[Shannon's inequality]
\label{theorem: shannon_disjoint_union}
  \cite{Shannon56} Let $\Gr{G}_1$ and $\Gr{G}_2$ be simple graphs. Then,
  \begin{align}\label{eq:shannon_disjoint_union}
    \Theta(\Gr{G}_1 + \Gr{G}_2) \geq \Theta(\Gr{G}_1) + \Theta(\Gr{G}_2).
  \end{align}
\end{theorem}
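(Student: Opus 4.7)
The plan is to lower-bound $\indnum{(\Gr{G}_1 + \Gr{G}_2)^n}$ by decomposing $(\Gr{G}_1+\Gr{G}_2)^n$ into a disjoint union of strong products, extracting the correct asymptotic rate via Stirling's approximation, and then letting $n \to \infty$ along the multiples of a large integer $p$.

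First I would establish, by induction on $n$ based on the absence of edges between $\V{\Gr{G}_1}$ and $\V{\Gr{G}_2}$ in $\Gr{G}_1+\Gr{G}_2$, the ``binomial'' decomposition
\[
(\Gr{G}_1 + \Gr{G}_2)^n \;\cong\; \sum_{i=0}^{n} \binom{n}{i}\, \Gr{G}_1^{\,i} \boxtimes \Gr{G}_2^{\,n-i},
\]
with the standard convention $\Gr{G}^{0} = \CoG{1}$ so that $\Gr{G}^{0} \boxtimes \Gr{H} \cong \Gr{H}$, and with each binomial coefficient counting the number of mutually disjoint isomorphic copies. Combining additivity of the independence number under disjoint union in \eqref{eq:independence_number_disjoint_union} with the strong-product inequality in \eqref{eq:independence_number_strong_product} then yields
\[
\indnum{(\Gr{G}_1+\Gr{G}_2)^{n}} \;\geq\; \sum_{i=0}^{n} \binom{n}{i}\, \indnum{\Gr{G}_1^{\,i}}\,\indnum{\Gr{G}_2^{\,n-i}}.
\]

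Next I would fix a large $p \in \naturals$, set $a_p \eqdef \indnum{\Gr{G}_1^{p}}$ and $b_p \eqdef \indnum{\Gr{G}_2^{p}}$, and restrict the above sum to the indices $i = kp$ with $0 \leq k \leq N$, where $n = Np$. Since $\Gr{G}_1^{kp} = (\Gr{G}_1^{p})^{k}$, a repeated application of \eqref{eq:independence_number_strong_product} gives $\indnum{\Gr{G}_1^{kp}} \geq a_p^{k}$ and, similarly, $\indnum{\Gr{G}_2^{(N-k)p}} \geq b_p^{N-k}$, so
\[
\indnum{(\Gr{G}_1+\Gr{G}_2)^{Np}} \;\geq\; \sum_{k=0}^{N} \binom{Np}{kp}\, a_p^{k}\, b_p^{N-k}.
\]
Writing $\alpha_p \eqdef a_p^{1/p}$ and $\beta_p \eqdef b_p^{1/p}$, each retained summand coincides with the $(kp)$-th term of the full binomial expansion $(\alpha_p+\beta_p)^{Np} = \sum_{i=0}^{Np}\binom{Np}{i}\alpha_p^{i}\beta_p^{Np-i}$. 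This expansion, viewed (after normalization) as a binomial law with success parameter $\alpha_p/(\alpha_p+\beta_p)$, concentrates around $i^{\star} = Np\,\alpha_p/(\alpha_p+\beta_p)$; choosing the integer $k$ so that $kp$ lies within distance $p$ of $i^{\star}$ and invoking Stirling's approximation bounds the corresponding term from below by $(\alpha_p+\beta_p)^{Np}/\bigl(C_p\cdot \mathrm{poly}(N)\bigr)$ for a constant $C_p$ depending only on $p$. Taking the $(Np)$-th root and letting $N \to \infty$ therefore gives $\Theta(\Gr{G}_1+\Gr{G}_2) \geq \alpha_p + \beta_p$.

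Finally, since $\alpha_p \to \Theta(\Gr{G}_1)$ and $\beta_p \to \Theta(\Gr{G}_2)$ as $p \to \infty$ by Definition~\ref{definition:shannon_capacity}, letting $p \to \infty$ in the previous bound yields the desired $\Theta(\Gr{G}_1+\Gr{G}_2) \geq \Theta(\Gr{G}_1)+\Theta(\Gr{G}_2)$. The main obstacle is precisely the Stirling estimate in the second step: retaining only a single (say, central or extremal) term of the disjoint-union decomposition yields at best $\Theta(\Gr{G}_1+\Gr{G}_2) \geq \max\bigl(\Theta(\Gr{G}_1),\Theta(\Gr{G}_2)\bigr)$, and one must exploit the concentration of the binomial weights $\binom{Np}{kp}$ near their mean in order to recover the full sum $\alpha_p+\beta_p$ rather than its maximum.
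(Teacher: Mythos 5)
Your argument is correct. Since the paper itself offers no proof of Theorem~\ref{theorem: shannon_disjoint_union}---it cites Shannon (1956) and points to Schrijver (2023) for an elegant proof---the comparison must be against those references rather than the paper's own text; the route you follow (binomial decomposition of $(\Gr{G}_1+\Gr{G}_2)^{Np}$, superadditivity of the independence number under strong products, and extraction of the exponential rate from the weighted binomial term near the mode) is essentially the classical one. One clarification on your closing remark: you do not actually aggregate many terms to ``exploit concentration''; you keep a \emph{single} term, namely the one at an index $kp$ within distance $p$ of $i^\star = Np\,\alpha_p/(\alpha_p+\beta_p)$, and its $(Np)$-th root already tends to $\alpha_p+\beta_p$. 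A clean way to justify that step without Stirling is to note that the maximum term of the expansion of $(\alpha_p+\beta_p)^{Np}$ is at least the average $(\alpha_p+\beta_p)^{Np}/(Np+1)$, while shifting at most $p$ indices away from the maximizer multiplies the term by a factor $1+O(p^2/N)\to 1$ as $N\to\infty$; the $(Np+1)^{-1/(Np)}$ correction then vanishes in the limit. So the argument derives its strength from choosing the \emph{right} single index rather than from concentration per se---your contrast with ``central'' ($\approx 2\sqrt{\alpha_p\beta_p}$) or ``extremal'' ($\approx \max(\alpha_p,\beta_p)$) single-term choices is apt, but the fix is relocating the index, not summing mass around the mean.
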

\begin{proof}
An elegant proof of Shannon's inequality \eqref{eq:shannon_disjoint_union} is presented in \cite{Schrijver23}.
\end{proof}

In light of Definition~\ref{definition:disjoint_union}, if $\Gr{G}$ and $\Gr{H}$ are the confusion graphs of two
channels, then their disjoint union represents the sum of the channels corresponding to the situation where either
one of the two channels may be used, a new choice being made for each transmitted symbol. The Shannon capacity of the disjoint union
of graphs may be strictly larger than the sum of their capacities \cite{Alon98}, disproving Shannon's conjecture in the following
strong sense.
\begin{theorem}[The Shannon capacity of disjoint union of graphs] \cite[Theorem~1.1]{Alon98}
\label{theorem: capacity of union}
For every $k \in \naturals$, there is a graph $\Gr{G}$ so that the Shannon capacity of the graph and that of its complement $\CGr{G}$
satisfy $\Theta(\Gr{G}) \leq k$ and $\Theta(\CGr{G}) \leq k$, whereas
\begin{align}
\Theta(\Gr{G} + \CGr{G}) \geq k^{(1+o(1)) \, \frac{\ln k}{8 \ln \ln k}},
\end{align}
and the $o(1)$-term tends to zero as we let $k$ tend to infinity.
\end{theorem}
The following known result is central in this paper, so its short and nice proof from \cite{Schrijver23} is included here.
\begin{theorem}[Duality theorem]
\label{theorem: shannon_union_equivalent_product}
  \cite{WigdersonZ23,Schrijver23} Let $\Gr{G}$ and $\Gr{H}$ be simple graphs. Then,
  \begin{align}\label{eq:shannon_union_equivalent_product}
    \Theta(\Gr{G}+\Gr{H}) = \Theta(\Gr{G}) + \Theta(\Gr{H}) \iff \Theta(\Gr{G}\boxtimes \Gr{H}) = \Theta(\Gr{G}) \, \Theta(\Gr{H}).
  \end{align}
\end{theorem}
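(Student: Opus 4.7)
The plan is to leverage the distributive identity $(\Gr{G}+\Gr{H})^n \cong \sum_{k=0}^{n}\binom{n}{k}\,\Gr{G}^k\boxtimes\Gr{H}^{n-k}$, which follows by induction from the distributivity of the strong product over disjoint union, together with the additivity of $\indnum{\cdot}$ on disjoint unions. The two directions then diverge: $(\Rightarrow)$ is extracted from the single expansion at $n=2$, whereas $(\Leftarrow)$ requires propagating the product hypothesis to all mixed powers.

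For the direction $(\Rightarrow)$, I would specialize the identity to $n=2$, obtaining $(\Gr{G}+\Gr{H})^2 \cong \Gr{G}^2 + 2\,(\Gr{G}\boxtimes\Gr{H}) + \Gr{H}^2$. Applying Theorem~\ref{theorem: shannon_power} on the left and the iterated forms of Theorems~\ref{theorem: shannon_scalar} and~\ref{theorem: shannon_disjoint_union} on the right yields
\begin{align*}
\bigl(\Theta(\Gr{G}+\Gr{H})\bigr)^2 \;=\; \Theta\bigl((\Gr{G}+\Gr{H})^2\bigr) \;\geq\; \Theta(\Gr{G})^2 + 2\,\Theta(\Gr{G}\boxtimes\Gr{H}) + \Theta(\Gr{H})^2.
\end{align*}
Under the hypothesis $\Theta(\Gr{G}+\Gr{H})=\Theta(\Gr{G})+\Theta(\Gr{H})$, the left-hand side equals $\bigl(\Theta(\Gr{G})+\Theta(\Gr{H})\bigr)^2$, which forces $\Theta(\Gr{G}\boxtimes\Gr{H}) \leq \Theta(\Gr{G})\Theta(\Gr{H})$; combined with Theorem~\ref{theorem: shannon_strong_product}, this yields the desired equality.

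For $(\Leftarrow)$, let $a \eqdef \Theta(\Gr{G})$, $b \eqdef \Theta(\Gr{H})$, and set $X_k \eqdef \Theta(\Gr{G}^k\boxtimes\Gr{H}^{n-k})$ for $0\leq k\leq n$. Theorems~\ref{theorem: shannon_strong_product} and~\ref{theorem: shannon_power} give the pointwise lower bound $X_k \geq a^k b^{n-k}$. Applying the same theorems to the isomorphism
\begin{align*}
(\Gr{G}^k\boxtimes\Gr{H}^{n-k})\,\boxtimes\,(\Gr{G}^{n-k}\boxtimes\Gr{H}^k) \;\cong\; (\Gr{G}\boxtimes\Gr{H})^n,
\end{align*}
and invoking the hypothesis $\Theta(\Gr{G}\boxtimes\Gr{H})=ab$, one obtains $X_k\,X_{n-k} \leq \Theta(\Gr{G}\boxtimes\Gr{H})^n = (ab)^n$. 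Multiplying the two lower bounds yields $X_k X_{n-k} \geq a^k b^{n-k}\cdot a^{n-k}b^k = (ab)^n$, so the sandwich is tight and $X_k = a^k b^{n-k}$ for every $0\leq k\leq n$.

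With this in hand, using $\indnum{F}\leq \Theta(F)$ for any graph $F$ (immediate from Definition~\ref{definition:shannon_capacity}) together with the additivity of $\indnum{\cdot}$ under disjoint unions,
\begin{align*}
\indnum{(\Gr{G}+\Gr{H})^n} \;=\; \sum_{k=0}^{n}\binom{n}{k}\,\indnum{\Gr{G}^k\boxtimes\Gr{H}^{n-k}} \;\leq\; \sum_{k=0}^{n}\binom{n}{k}\,a^k b^{n-k} \;=\; (a+b)^n.
\end{align*}
Taking $n$-th roots and passing to the limit gives $\Theta(\Gr{G}+\Gr{H}) \leq a+b$, which together with Theorem~\ref{theorem: shannon_disjoint_union} closes the equality. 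The main step to anticipate is the sandwich on $X_k X_{n-k}$ that drives $(\Leftarrow)$: pairing $\Gr{G}^k\boxtimes\Gr{H}^{n-k}$ with its ``conjugate'' $\Gr{G}^{n-k}\boxtimes\Gr{H}^k$, whose strong product is exactly $(\Gr{G}\boxtimes\Gr{H})^n$, is what allows a hypothesis about a single product to propagate to every mixed power; once this pairing is noticed, the rest is bookkeeping with the basic properties of $\indnum{\cdot}$ and $\Theta(\cdot)$ already collected in the preliminaries.
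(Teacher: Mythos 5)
Your proof is correct and follows essentially the same route as the paper: the $(\Rightarrow)$ direction via the $n=2$ binomial expansion is identical, and the $(\Leftarrow)$ direction uses the same key idea of pairing a mixed power $\Gr{G}^k\boxtimes\Gr{H}^{n-k}$ with its conjugate to bound it by $\Theta(\Gr{G}\boxtimes\Gr{H})^n$ — the paper pairs against the known factors $\Theta(\Gr{G})^{n-k}\Theta(\Gr{H})^k$ to get the upper bound directly, whereas you pair two unknowns and squeeze, but both then finish with the binomial expansion of $\indnum{(\Gr{G}+\Gr{H})^n}$ and a limit. The differences are cosmetic, not conceptual.
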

\begin{proof}
See Appendix~\ref{appendix: proof of duality theorem}.
\end{proof}

\begin{theorem}\label{theorem: shannon_scalar}
  Let $\Gr{G}$ be a simple graph and let $m \in \naturals$. Then,
  \begin{align}
    \Theta(m \Gr{G}) = m \, \Theta(\Gr{G}).  \label{eq:shannon_scalar}
  \end{align}
\end{theorem}
\begin{proof}
Equality~\eqref{eq:shannon_scalar} follows from Theorem~\ref{theorem: shannon_union_equivalent_product} by
relying on equality \eqref{eq:shannon_power}.
\end{proof}

\begin{theorem}\label{theorem: shannon_capacity_subgraphs}
  Let $\Gr{G}$ be a simple graph, and let $\Gr{H}_1$ and $\Gr{H}_2$ be induced and spanning subgraphs of $\Gr{G}$, respectively. Then,
  \begin{align}\label{eq:shannon_capacity_subgraphs}
    \Theta(\Gr{H}_1) \leq \Theta(\Gr{G}) \leq \Theta(\Gr{H}_2).
  \end{align}
\end{theorem}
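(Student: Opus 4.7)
The plan is to reduce the claim directly to the monotonicity of the independence number under taking induced and spanning subgraphs, which is already stated for all strong powers in Theorem~\ref{theorem: independence_number_subgraphs}. The Shannon capacity is by Definition~\ref{definition:shannon_capacity} the limit (equivalently, the supremum) of $\sqrt[k]{\indnum{\Gr{G}^k}}$, so any inequality that holds between independence numbers of $k$-th powers for every $k$ transfers immediately to an inequality between the corresponding Shannon capacities after taking $k$-th roots and passing to the limit.

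More concretely, I would proceed as follows. First, invoke Theorem~\ref{theorem: independence_number_subgraphs} to obtain, for every $k \in \naturals$, the two inequalities
\begin{align*}
\indnum{\Gr{H}_1^k} \leq \indnum{\Gr{G}^k} \leq \indnum{\Gr{H}_2^k}.
\end{align*}
Second, take $k$-th roots throughout, which preserves the inequalities since the independence numbers are non-negative and the map $x \mapsto x^{1/k}$ is monotone on $[0,\infty)$. Third, pass to the limit $k \to \infty$; by Definition~\ref{definition:shannon_capacity}, each of the three sequences $\sqrt[k]{\indnum{\Gr{H}_1^k}}$, $\sqrt[k]{\indnum{\Gr{G}^k}}$, $\sqrt[k]{\indnum{\Gr{H}_2^k}}$ converges to the Shannon capacity of the corresponding graph, yielding
\begin{align*}
\Theta(\Gr{H}_1) \leq \Theta(\Gr{G}) \leq \Theta(\Gr{H}_2),
\end{align*}
as required.

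There is essentially no hard step: the entire content of the result is already packed into Theorem~\ref{theorem: independence_number_subgraphs}. The only minor thing worth a sentence of justification is that the relations ``induced subgraph'' and ``spanning subgraph'' are preserved under strong powers (i.e.\ that $\Gr{H}_1^k$ is an induced subgraph of $\Gr{G}^k$ and $\Gr{H}_2^k$ is a spanning subgraph of $\Gr{G}^k$), which is exactly why Theorem~\ref{theorem: independence_number_subgraphs} applies at every power $k$ and is already incorporated into its statement. Everything else is a routine limit argument, so the total proof will be a few lines.
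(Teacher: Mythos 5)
Your proof is correct, and since the paper states Theorem~\ref{theorem: shannon_capacity_subgraphs} in the preliminaries without providing an explicit proof, there is nothing in the paper to compare it against; your argument is the natural one given the surrounding material. You invoke Theorem~\ref{theorem: independence_number_subgraphs} (which already packages the inequality $\indnum{\Gr{H}_1^k}\leq\indnum{\Gr{G}^k}\leq\indnum{\Gr{H}_2^k}$ for all powers $k$), take $k$-th roots, and pass to the limit using Definition~\ref{definition:shannon_capacity}. This is exactly the intended deduction, and your remark that the only structural fact needed — preservation of the induced/spanning relation under strong powers — is already baked into the cited theorem's statement is accurate, so no further justification is required.
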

\begin{proof}
This holds by the definitions of induced and spanning subgraphs, together with \eqref{eq:shannon_capacity}.
\end{proof}

\subsection{The Lov\'{a}sz \texorpdfstring{$\vartheta$}{theta}-function of graphs}\label{subsection:lovasz_function}
Before we present the Lov\'{a}sz $\vartheta$-function of graphs, we define an orthogonal representation of a graph \cite{Lovasz79} (see also \cite[Chapter~11]{Lovasz19}).
\begin{definition}[Orthogonal representations]
\label{definition:orthogonal_representation}
  Let $\Gr{G}$ be a simple graph. An {\em orthogonal representation} of $\Gr{G}$ in $\Reals^d$ assigns each vertex $i\in \V{\Gr{G}}$ to a nonzero vector $u_i\in\Reals^d$ such that, for every distinct nonadjacent vertices $i,j\in \V{\Gr{G}}$, the vectors $u_i,u_j$ are orthogonal.
  An orthogonal representation is called an {\em orthonormal representation} if all the representing vectors of $\Gr{G}$ have a unit length.
\end{definition}
In an orthogonal representation of a graph $\Gr{G}$, distinct nonadjacent vertices are mapped to orthogonal vectors, but adjacent vertices may not be necessarily mapped to non-orthogonal vectors. If the latter condition also holds, then it is called a {\em faithful} orthogonal representation.

\begin{definition}[Lov\'{a}sz $\vartheta$-function]
\label{definition:lovasz_function}
  Let $\Gr{G}$ be a simple graph of order $n$. The Lov\'{a}sz $\vartheta$-function of $\Gr{G}$ is defined as
  \begin{align}\label{eq:lovasz_function}
    \vartheta(\Gr{G}) \triangleq \min_{u,\mathbf{c}}\max_{1\leq i \leq n}\frac{1}{(\mathbf{c}^{\mathrm{T}} \mathbf{u}_i)^2},
  \end{align}
  where the minimum is taken over all orthonormal representations $\{\mathbf{u}_i:i\in \V{\Gr{G}}\}$ of $\Gr{G}$ and
  all unit vectors $\mathbf{c}$.
  The unit vector $\mathbf{c}$ attaining the minimum is called the handle of the orthonormal representation.
\end{definition}
An orthonormal representation of the pentagon $\CG{5}$, along with its handle $\mathbf{c}$, is shown in Figure~\ref{figure: Lovasz umbrella}.
This figure is reproduced, with permission of the author \cite{Lovasz26}, by combining Figures~1.3 and~1.4 in \cite{Lovasz19}.

\begin{figure}[ht]
      \centering
      \begin{minipage}[b]{0.3\textwidth}
        \centering
        \includegraphics[width=1\textwidth]{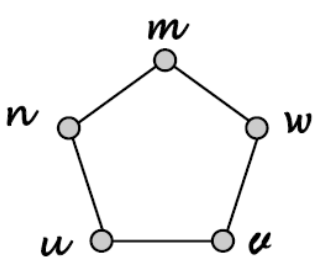}
      \end{minipage}
      \hspace*{0.3cm}
      \begin{minipage}[b]{0.5\textwidth}
        \centering
        \includegraphics[width=1\textwidth]{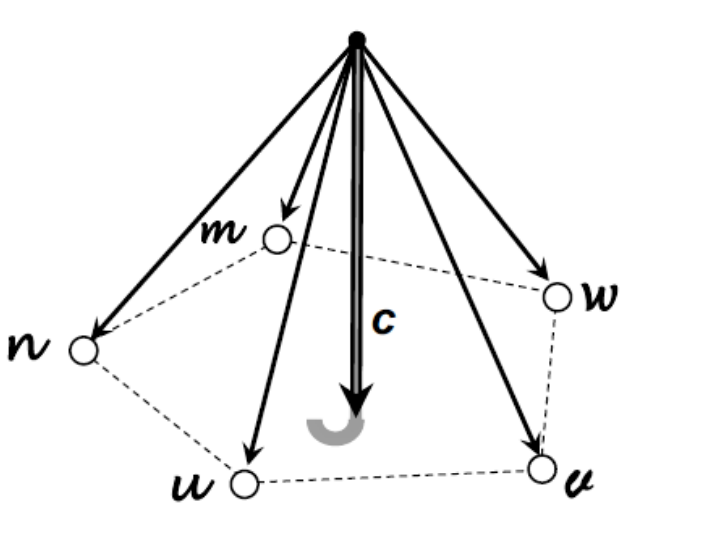}
      \end{minipage}
      \caption{A $5$-cycle graph and its orthonormal representation (Lov\'{a}sz umbrella).}
      \label{figure: Lovasz umbrella}
    \end{figure}

The Lov\'{a}sz $\vartheta$-function can be expressed as a solution of a semidefinite programming (SDP)
problem. To that end, let ${\bf{A}} = (A_{i,j})$ be the $n \times n$ adjacency matrix of $\Gr{G}$ with
$n \triangleq \card{\V{\Gr{G}}}$. The Lov\'{a}sz
$\vartheta$-function $\vartheta(\Gr{G})$ can be expressed by the following convex optimization problem:
\vspace*{0.1cm}
\begin{eqnarray}
\label{eq: SDP problem - Lovasz theta-function}
\mbox{\fbox{$
\begin{array}{l}
\text{maximize} \; \; \mathrm{Tr}({\bf{B}} \, {\bf{J}}_n)  \\
\text{subject to} \\
\begin{cases}
{\bf{B}} \succeq 0, \\
\mathrm{Tr}({\bf{B}}) = 1, \\
A_{i,j} = 1  \; \Rightarrow \;  B_{i,j} = 0, \quad i,j \in \OneTo{n}.
\end{cases}
\end{array}$}}
\end{eqnarray}
The SDP formulation in \eqref{eq: SDP problem - Lovasz theta-function} yields
the existence of an algorithm that computes $\vartheta(\Gr{G})$, for every graph $\Gr{G}$, with a
precision of $r$ decimal digits, and a computational complexity that is polynomial in $n$ and $r$.
Thus, the Lov\'{a}sz $\vartheta$-function can be computed in polynomial time in $n$, and by \cite{Lovasz79},
it is an upper bound on the Shannon capacity, whose computation requires the computation of an infinite
series of independence number, which is a known NP-hard problem.

Adding the inequality constraints $B_{i,j} \geq 0$ for all $i,j \in \OneTo{n}$ to \eqref{eq: SDP problem - Lovasz theta-function}
yields the Schrijver $\vartheta$-function of $\Gr{G}$, denoted by $\vartheta'(\Gr{G})$, which therefore yields
\begin{align}\label{eq:lovasz_bigger_schrijver}
  \vartheta'(\Gr{G})\leq\vartheta(\Gr{G}).
\end{align}
In light of \eqref{eq:lovasz_bigger_schrijver}, one may ask whether the Schrijver $\vartheta$-function can serve as a tighter upper bound on the Shannon capacity of graphs.
However, a recent paper by the second author \cite{Sason25} shows that the Schrijver $\vartheta$-function does not, in general, provide an upper bound on the Shannon capacity:
an explicit graph is constructed whose Shannon capacity is strictly larger than its Schrijver $\vartheta$-value.
Nevertheless, the Schrijver $\vartheta$-function is an upper bound on the independence number, and can therefore be used to derive an improved bound on the independence number.

Next, we provide an alternative representation of the Lov\'{a}sz $\vartheta$-function of a graph \cite{Lovasz79}.
\begin{theorem}\label{theorem: lovasz_equivalent_complement_formula}
  \cite{Lovasz79} Let $(\mathbf{v}_1,\cdots,\mathbf{v}_n)$ range over all orthonormal representations of $\CGr{G}$,
  and let $\mathbf{d}$ range over all unit vectors. Then,
  \begin{align}\label{eq:lovasz_equivalent_complement_formula}
    \vartheta(\Gr{G}) = \max\sum_{i=1}^{n}(\mathbf{d}^{\mathrm{T}}\mathbf{v}_i)^2.
  \end{align}
\end{theorem}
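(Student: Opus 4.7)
\emph{Plan.} Denote the right-hand side of \eqref{eq:lovasz_equivalent_complement_formula} by $\Psi(\Gr{G})$. The strategy is to prove $\Psi(\Gr{G})\leq\vartheta(\Gr{G})$ and $\vartheta(\Gr{G})\leq\Psi(\Gr{G})$ separately: the first by a tensor-product argument combined with Bessel's inequality, and the second by starting from an optimizer of the SDP representation~\eqref{eq: SDP problem - Lovasz theta-function} and applying a Cholesky factorisation.

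\emph{Upper bound.} Fix an orthonormal representation $(\mathbf{u}_1,\ldots,\mathbf{u}_n)$ of $\Gr{G}$ with handle $\mathbf{c}$ attaining $\vartheta(\Gr{G})$ in~\eqref{eq:lovasz_function}, together with an arbitrary orthonormal representation $(\mathbf{v}_1,\ldots,\mathbf{v}_n)$ of $\CGr{G}$ and a unit vector $\mathbf{d}$. I claim that the tensors $\mathbf{w}_i\eqdef\mathbf{u}_i\otimes\mathbf{v}_i$ form an orthonormal system: for $i\neq j$, either $\{i,j\}\notin\E{\Gr{G}}$ so $\mathbf{u}_i^{\mathrm{T}}\mathbf{u}_j=0$, or $\{i,j\}\in\E{\Gr{G}}$, in which case $i,j$ are nonadjacent in $\CGr{G}$ and hence $\mathbf{v}_i^{\mathrm{T}}\mathbf{v}_j=0$, making $\mathbf{w}_i^{\mathrm{T}}\mathbf{w}_j=(\mathbf{u}_i^{\mathrm{T}}\mathbf{u}_j)(\mathbf{v}_i^{\mathrm{T}}\mathbf{v}_j)=0$; also $\|\mathbf{w}_i\|=1$. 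Bessel's inequality applied to the unit vector $\mathbf{c}\otimes\mathbf{d}$ then gives
\begin{align*}
  \sum_{i=1}^{n}(\mathbf{c}^{\mathrm{T}}\mathbf{u}_i)^{2}(\mathbf{d}^{\mathrm{T}}\mathbf{v}_i)^{2}\leq 1,
\end{align*}
and optimality of $(\mathbf{u}_i,\mathbf{c})$ in~\eqref{eq:lovasz_function} forces $(\mathbf{c}^{\mathrm{T}}\mathbf{u}_i)^{2}\geq 1/\vartheta(\Gr{G})$ for every $i$. Thus $\sum_i(\mathbf{d}^{\mathrm{T}}\mathbf{v}_i)^{2}\leq\vartheta(\Gr{G})$, and taking the supremum over $(\mathbf{v}_i)$ and $\mathbf{d}$ yields $\Psi(\Gr{G})\leq\vartheta(\Gr{G})$.

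\emph{Lower bound.} Let $\mathbf{B}$ be an optimizer of~\eqref{eq: SDP problem - Lovasz theta-function}, so $\mathrm{Tr}(\mathbf{B}\mathbf{J}_n)=\vartheta(\Gr{G})$. Positive-semidefiniteness forces any row and column with $B_{ii}=0$ to vanish, so after a harmless removal of such vertices one may assume $\beta_i\eqdef B_{ii}>0$ for every $i$, with $\sum_i\beta_i=1$. Rescale to $B'_{ij}=B_{ij}/\sqrt{\beta_i\beta_j}$; then $\mathbf{B}'$ is positive-semidefinite (a diagonal congruence of $\mathbf{B}$), has $B'_{ii}=1$, and still satisfies $B'_{ij}=0$ whenever $\{i,j\}\in\E{\Gr{G}}$. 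A Cholesky factorisation $\mathbf{B}'=V^{\mathrm{T}}V$ exhibits the columns $\mathbf{v}_i$ of $V$ as an orthonormal representation of $\CGr{G}$. The crux is selecting the handle: take $\boldsymbol{\alpha}=(\sqrt{\beta_1},\ldots,\sqrt{\beta_n})^{\mathrm{T}}$ (a unit vector) and $\mathbf{d}=V\boldsymbol{\alpha}/\|V\boldsymbol{\alpha}\|$. Then $\|V\boldsymbol{\alpha}\|^{2}=\boldsymbol{\alpha}^{\mathrm{T}}\mathbf{B}'\boldsymbol{\alpha}=\sum_{i,j}B_{ij}=\vartheta(\Gr{G})$ by construction, and Cauchy--Schwarz gives $\|\mathbf{B}'\boldsymbol{\alpha}\|\geq\boldsymbol{\alpha}^{\mathrm{T}}\mathbf{B}'\boldsymbol{\alpha}=\vartheta(\Gr{G})$, so
\begin{align*}
  \sum_{i=1}^{n}(\mathbf{d}^{\mathrm{T}}\mathbf{v}_i)^{2}=\mathbf{d}^{\mathrm{T}}VV^{\mathrm{T}}\mathbf{d}=\frac{\|\mathbf{B}'\boldsymbol{\alpha}\|^{2}}{\|V\boldsymbol{\alpha}\|^{2}}\geq\vartheta(\Gr{G}),
\end{align*}
whence $\Psi(\Gr{G})\geq\vartheta(\Gr{G})$. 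The principal difficulty is guessing the handle $\mathbf{d}$ in the second direction; the tensor-product step and the SDP--Cholesky bookkeeping are routine once the construction is in place.
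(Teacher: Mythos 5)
The paper states Theorem~\ref{theorem: lovasz_equivalent_complement_formula} with a citation to \cite{Lovasz79} and does not prove it, so there is no in-paper proof to compare against; I therefore assess your argument on its own. Your proof is correct and is essentially Lov\'{a}sz's own route to this dual characterization. For $\Psi\le\vartheta$, the tensor system $\mathbf{w}_i=\mathbf{u}_i\otimes\mathbf{v}_i$ is orthonormal because for each pair $i\neq j$ exactly one of the two factors is forced to vanish, Bessel with the unit vector $\mathbf{c}\otimes\mathbf{d}$ gives $\sum_i(\mathbf{c}^{\mathrm T}\mathbf{u}_i)^2(\mathbf{d}^{\mathrm T}\mathbf{v}_i)^2\le 1$, and the optimal handle indeed satisfies $(\mathbf{c}^{\mathrm T}\mathbf{u}_i)^2\ge 1/\vartheta(\Gr{G})$ for all $i$ since the $\max$ in~\eqref{eq:lovasz_function} equals $\vartheta(\Gr{G})$. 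For $\vartheta\le\Psi$, the diagonal rescaling $B'_{ij}=B_{ij}/\sqrt{\beta_i\beta_j}$, the Cholesky/Gram factorisation yielding an orthonormal representation of $\CGr{G}$, the choice $\boldsymbol{\alpha}=(\sqrt{\beta_i})_i$, and the Cauchy--Schwarz step $\|\mathbf{B}'\boldsymbol{\alpha}\|\ge\boldsymbol{\alpha}^{\mathrm T}\mathbf{B}'\boldsymbol{\alpha}=\vartheta(\Gr{G})$ all check out, and $\|V\boldsymbol\alpha\|^2=\vartheta(\Gr{G})>0$ so $\mathbf{d}$ is well defined.

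Two minor points worth spelling out explicitly. First, the ``harmless removal'' of vertices with $B_{ii}=0$: after producing vectors for $i\in S=\{i:\beta_i>0\}$, you must still exhibit an orthonormal representation of the full $\CGr{G}$ and a handle achieving at least $\vartheta(\Gr{G})$; this is done by appending one fresh orthogonal coordinate per removed vertex, setting $\mathbf{v}_k$ to the new basis vector and padding $\mathbf{d}$ with zeros, so the removed vertices contribute $0$ to the sum. Second, you implicitly use that the minimum in~\eqref{eq:lovasz_function} is attained (a standard compactness fact) and that the SDP optimum~\eqref{eq: SDP problem - Lovasz theta-function} equals $\vartheta(\Gr{G})$; both are taken as given by the paper, but it is good hygiene to say you are invoking them. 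With those remarks, the argument is complete.
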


Next, we provide several properties of the Lov\'{a}sz $\vartheta$-function, regarding graph operations and subgraphs.
\begin{theorem}\label{theorem: lovasz_strong_product}
  \cite{Lovasz79} Let $\Gr{G}_1$ and $\Gr{G}_2$ be simple graphs. Then,
  \begin{align}\label{eq:lovasz_strong_product}
    \vartheta(\Gr{G}_1\boxtimes\Gr{G}_2) = \vartheta(\Gr{G}_1) \, \vartheta(\Gr{G}_2).
  \end{align}
\end{theorem}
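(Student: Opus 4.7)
The plan is to establish both inequalities $\vartheta(\Gr{G}_1\boxtimes\Gr{G}_2) \le \vartheta(\Gr{G}_1)\vartheta(\Gr{G}_2)$ and $\vartheta(\Gr{G}_1\boxtimes\Gr{G}_2) \ge \vartheta(\Gr{G}_1)\vartheta(\Gr{G}_2)$ by constructing explicit feasible tuples for the two complementary representations of the Lov\'{a}sz function: the primal one from Definition~\ref{definition:lovasz_function} for the upper bound, and the dual one from Theorem~\ref{theorem: lovasz_equivalent_complement_formula} for the lower bound. In both cases the construction is the tensor (Kronecker) product of optimal orthonormal representations of the two factor graphs, paired with the tensor product of their handles.

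For the upper bound, I would take optimal orthonormal representations $\{\mathbf{u}_i\}_{i\in\V{\Gr{G}_1}}$ and $\{\mathbf{w}_j\}_{j\in\V{\Gr{G}_2}}$ of $\Gr{G}_1$ and $\Gr{G}_2$ with handles $\mathbf{c}$ and $\mathbf{d}$ respectively, and consider the family $\{\mathbf{u}_i\otimes\mathbf{w}_j\}$ indexed by $\V{\Gr{G}_1\boxtimes\Gr{G}_2}$ together with the handle $\mathbf{c}\otimes\mathbf{d}$. The verification hinges on the characterization of non-adjacency in $\Gr{G}_1\boxtimes\Gr{G}_2$: two distinct vertices $(i_1,j_1)$ and $(i_2,j_2)$ are non-adjacent precisely when $i_1,i_2$ are distinct and non-adjacent in $\Gr{G}_1$, or $j_1,j_2$ are distinct and non-adjacent in $\Gr{G}_2$. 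Using the bilinearity $(\mathbf{u}_{i_1}\otimes\mathbf{w}_{j_1})^{\mathrm{T}}(\mathbf{u}_{i_2}\otimes\mathbf{w}_{j_2}) = (\mathbf{u}_{i_1}^{\mathrm{T}}\mathbf{u}_{i_2})(\mathbf{w}_{j_1}^{\mathrm{T}}\mathbf{w}_{j_2})$, at least one factor vanishes, so orthonormality is inherited. Since $(\mathbf{c}\otimes\mathbf{d})^{\mathrm{T}}(\mathbf{u}_i\otimes\mathbf{w}_j) = (\mathbf{c}^{\mathrm{T}}\mathbf{u}_i)(\mathbf{d}^{\mathrm{T}}\mathbf{w}_j)$, the maximum of $1/[(\mathbf{c}\otimes\mathbf{d})^{\mathrm{T}}(\mathbf{u}_i\otimes\mathbf{w}_j)]^2$ factorizes as the product of the two separate maxima, yielding the bound $\vartheta(\Gr{G}_1\boxtimes\Gr{G}_2) \le \vartheta(\Gr{G}_1)\vartheta(\Gr{G}_2)$.

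For the matching lower bound, I would invoke Theorem~\ref{theorem: lovasz_equivalent_complement_formula} and take optimal orthonormal representations $\{\mathbf{v}_i\}$ and $\{\mathbf{x}_j\}$ of $\CGr{G}_1$ and $\CGr{G}_2$ with handles $\mathbf{d}_1,\mathbf{d}_2$, so that $\sum_i(\mathbf{d}_1^{\mathrm{T}}\mathbf{v}_i)^2 = \vartheta(\Gr{G}_1)$ and similarly for $\Gr{G}_2$. Then I would show that $\{\mathbf{v}_i\otimes\mathbf{x}_j\}$ is an orthonormal representation of $\overline{\Gr{G}_1\boxtimes\Gr{G}_2}$. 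The condition to verify is that orthogonality holds whenever $(i_1,j_1)$ and $(i_2,j_2)$ are distinct and non-adjacent in $\overline{\Gr{G}_1\boxtimes\Gr{G}_2}$, i.e.\ adjacent in $\Gr{G}_1\boxtimes\Gr{G}_2$; splitting into the three cases of Definition~\ref{definition:strong_product} and using that $\mathbf{v}_{i_1}\perp\mathbf{v}_{i_2}$ whenever $i_1\sim i_2$ in $\Gr{G}_1$ (and analogously for $\mathbf{x}$) handles each case. With handle $\mathbf{d}_1\otimes\mathbf{d}_2$, the sum factorizes as
\begin{equation*}
\sum_{i,j}\bigl[(\mathbf{d}_1\otimes\mathbf{d}_2)^{\mathrm{T}}(\mathbf{v}_i\otimes\mathbf{x}_j)\bigr]^2 = \Bigl(\sum_i(\mathbf{d}_1^{\mathrm{T}}\mathbf{v}_i)^2\Bigr)\Bigl(\sum_j(\mathbf{d}_2^{\mathrm{T}}\mathbf{x}_j)^2\Bigr) = \vartheta(\Gr{G}_1)\vartheta(\Gr{G}_2),
\end{equation*}
giving the reverse inequality.

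The main obstacle is the case analysis verifying that the Kronecker product construction yields a bona fide orthonormal representation in each of the two settings; the strong product's adjacency rule (three cases) together with its complement's rule require tracking which factor's inner product vanishes in each scenario. Once this is correctly carried out, combining the two inequalities yields the claimed equality.
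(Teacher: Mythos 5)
The paper does not supply a proof of this theorem; it only cites Lov\'asz's original 1979 paper, so there is no in-paper argument to compare against. Your proof is correct and is in fact the standard argument from that source: the tensor product of orthonormal representations together with the tensor product of their handles yields a feasible tuple for the minimization in Definition~\ref{definition:lovasz_function} applied to $\Gr{G}_1\boxtimes\Gr{G}_2$, giving the upper bound, while the tensor product of orthonormal representations of $\CGr{G}_1$ and $\CGr{G}_2$ yields an orthonormal representation of $\overline{\Gr{G}_1\boxtimes\Gr{G}_2}$, giving the matching lower bound via Theorem~\ref{theorem: lovasz_equivalent_complement_formula}. Both the identification of non-adjacency in $\Gr{G}_1\boxtimes\Gr{G}_2$ (at least one coordinate pair distinct and non-adjacent), the three-case verification for the complement, and the factorization of the max and of the sum are handled correctly, so the argument closes.
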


The following result was first stated and proved by Knuth (Section~18 of \cite{Knuth94}).
We suggest an alternative elementary proof in Appendix~\ref{appendix: original proof of Knuth}.
% ********************************** Theorem 6 *****************************************%
\begin{theorem}\label{theorem: lovasz_disjoint_union}
  \cite{Knuth94} Let $\Gr{G}$ and $\Gr{H}$ be simple graphs. Then,
  \begin{align}\label{eq:lovasz_disjoint_union}
    \vartheta(\Gr{G} + \Gr{H}) = \vartheta(\Gr{G}) + \vartheta(\Gr{H}).
  \end{align}
\end{theorem}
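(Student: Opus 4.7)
My plan is to prove $\vartheta(\Gr{G}+\Gr{H}) = \vartheta(\Gr{G}) + \vartheta(\Gr{H})$ by establishing each of the two inequalities separately, using the orthonormal-representation formulation from Theorem~\ref{theorem: lovasz_equivalent_complement_formula}. The key structural observation is that $\overline{\Gr{G}+\Gr{H}}$ is the graph join of $\CGr{G}$ and $\CGr{H}$: inside $\V{\Gr{G}}$ the adjacency pattern is exactly that of $\CGr{G}$, inside $\V{\Gr{H}}$ it is exactly that of $\CGr{H}$, and every pair across the two sides is adjacent. Consequently, any orthonormal representation of $\overline{\Gr{G}+\Gr{H}}$ restricts on each side to an orthonormal representation of the corresponding complement, while no orthogonality constraint is imposed between the two sides.

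For the upper bound, I take any orthonormal representation $\{\mathbf{v}_k\}$ of $\overline{\Gr{G}+\Gr{H}}$ together with any unit handle $\mathbf{d}$ and split
\[
  \sum_{k} (\mathbf{d}^{\mathrm{T}} \mathbf{v}_k)^2 = \sum_{i \in \V{\Gr{G}}} (\mathbf{d}^{\mathrm{T}} \mathbf{v}_i)^2 + \sum_{j \in \V{\Gr{H}}} (\mathbf{d}^{\mathrm{T}} \mathbf{v}_j)^2.
\]
Each partial sum is exactly of the form evaluated by Theorem~\ref{theorem: lovasz_equivalent_complement_formula} applied to $\Gr{G}$ (respectively $\Gr{H}$) with the same unit handle $\mathbf{d}$, so it is at most $\vartheta(\Gr{G})$ (respectively $\vartheta(\Gr{H})$). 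Taking the supremum over representations and handles yields $\vartheta(\Gr{G}+\Gr{H}) \leq \vartheta(\Gr{G}) + \vartheta(\Gr{H})$.

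For the reverse inequality, I construct one orthonormal representation of $\overline{\Gr{G}+\Gr{H}}$ whose objective value reaches $\vartheta(\Gr{G}) + \vartheta(\Gr{H})$. Fix optimal orthonormal representations $\{\mathbf{u}_i\}$ of $\CGr{G}$ in $\Reals^{d_1}$ with handle $\mathbf{c}_G$, and $\{\mathbf{w}_j\}$ of $\CGr{H}$ in $\Reals^{d_2}$ with handle $\mathbf{c}_H$. A naive block-diagonal embedding into $\Reals^{d_1+d_2}$ puts the two handles in orthogonal subspaces and, after normalizing the global handle to be a unit vector, only recovers $\max(\vartheta(\Gr{G}), \vartheta(\Gr{H}))$. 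The remedy is to align the two handles by a preliminary rotation: orthogonal changes of basis preserve all inner products and unit norms, so I may assume without loss of generality that $\mathbf{c}_G$ and $\mathbf{c}_H$ are the first standard basis vectors of their respective ambient spaces. Writing $\mathbf{u}_i = (a_i, \mathbf{u}_i')$ and $\mathbf{w}_j = (b_j, \mathbf{w}_j')$, this yields $\sum_i a_i^2 = \vartheta(\Gr{G})$ and $\sum_j b_j^2 = \vartheta(\Gr{H})$. I now embed everything into $\Reals^{1+(d_1-1)+(d_2-1)}$ via
\[
  \mathbf{v}_i = (a_i,\, \mathbf{u}_i',\, \mathbf{0}), \quad i \in \V{\Gr{G}}, \qquad \mathbf{v}_j = (b_j,\, \mathbf{0},\, \mathbf{w}_j'), \quad j \in \V{\Gr{H}}.
\]
A routine check shows that every $\mathbf{v}_k$ is a unit vector; within each block the inner products reduce to those of $\{\mathbf{u}_i\}$ or $\{\mathbf{w}_j\}$, so nonadjacent pairs remain orthogonal; across the two blocks $\mathbf{v}_i \cdot \mathbf{v}_j = a_i b_j$, which is unconstrained since all cross-pairs are adjacent in $\overline{\Gr{G}+\Gr{H}}$. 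Choosing the unit handle $\mathbf{d} = (1, \mathbf{0}, \mathbf{0})$ then produces the objective value $\sum_i a_i^2 + \sum_j b_j^2 = \vartheta(\Gr{G}) + \vartheta(\Gr{H})$, yielding the lower bound.

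The main delicate step is the handle-alignment trick in the lower-bound construction; once one recognizes that optimal orthonormal representations may be freely rotated so that the two handles share a common direction while preserving all internal orthogonalities, the rest is straightforward bookkeeping on the block structure of $\overline{\Gr{G}+\Gr{H}}$ and on the identification of its restrictions to each side with representations of $\CGr{G}$ and $\CGr{H}$.
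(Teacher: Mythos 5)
Your proof is correct and follows essentially the same route as the paper's Appendix~\ref{appendix: original proof of Knuth}: both proofs use Theorem~\ref{theorem: lovasz_equivalent_complement_formula}, establish the upper bound by splitting an optimal orthonormal representation of $\overline{\Gr{G}+\Gr{H}}$ into representations of $\CGr{G}$ and $\CGr{H}$, and establish the lower bound by first rotating one (or both) optimal representations so that the two handles coincide before concatenating. Your embedding into $\Reals^{1+(d_1-1)+(d_2-1)}$ with a shared first coordinate is a slightly more explicit packaging of the paper's device of applying an orthogonal matrix $\mathbf{A}$ with $\mathbf{A}\mathbf{d}=\mathbf{c}$, but the key idea -- handle alignment by an orthogonal change of basis -- is identical.
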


The following result is an easy consequence of Definition~\ref{definition:lovasz_function},
and it is an analog of Theorem~\ref{theorem: shannon_capacity_subgraphs}.
\begin{theorem}\label{theorem: lovasz_function_subgraphs}
    Let $\Gr{G}$ be a simple graph, and let $\Gr{H}_1$ and $\Gr{H}_2$ be induced and spanning subgraphs of $\Gr{G}$, respectively. Then,
  \begin{align}\label{eq:lovasz_function_subgraphs}
    \vartheta(\Gr{H}_1) \leq \vartheta(\Gr{G}) \leq \vartheta(\Gr{H}_2).
  \end{align}
\end{theorem}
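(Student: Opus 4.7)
The plan is to prove each inequality directly from Definition~\ref{definition:lovasz_function} by comparing feasible orthonormal representations of the two graphs, mirroring the structural roles of the two kinds of subgraphs: vertex deletion for $\Gr{H}_1$ and edge deletion for $\Gr{H}_2$.

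For the induced-subgraph bound $\vartheta(\Gr{H}_1)\leq\vartheta(\Gr{G})$, I would take an orthonormal representation $\{\mathbf{u}_i : i\in \V{\Gr{G}}\}$ of $\Gr{G}$ together with a unit handle $\mathbf{c}$ that achieves $\vartheta(\Gr{G})$, and simply restrict it to $\V{\Gr{H}_1}\subseteq \V{\Gr{G}}$. Because $\Gr{H}_1$ is induced, any distinct $i,j\in \V{\Gr{H}_1}$ with $\{i,j\}\notin \E{\Gr{H}_1}$ also satisfy $\{i,j\}\notin \E{\Gr{G}}$, so the required orthogonalities are inherited; the restricted family together with the same handle $\mathbf{c}$ is thus a feasible pair for $\Gr{H}_1$. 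Since the maximum of $1/(\mathbf{c}^{\mathrm{T}}\mathbf{u}_i)^2$ over the smaller index set $\V{\Gr{H}_1}$ cannot exceed the maximum over $\V{\Gr{G}}$, the inequality follows after minimizing on the left.

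For the spanning-subgraph bound $\vartheta(\Gr{G})\leq\vartheta(\Gr{H}_2)$, the direction reverses. Here $\V{\Gr{H}_2}=\V{\Gr{G}}$ and $\E{\Gr{H}_2}\subseteq \E{\Gr{G}}$, so every pair of distinct vertices that is nonadjacent in $\Gr{G}$ is also nonadjacent in $\Gr{H}_2$. Consequently, any orthonormal representation $\{\mathbf{v}_i : i\in \V{\Gr{H}_2}\}$ of $\Gr{H}_2$ with unit handle $\mathbf{d}$ is automatically an orthonormal representation of $\Gr{G}$ with the same handle, because the $\Gr{G}$-orthogonality conditions form a subset of the $\Gr{H}_2$-orthogonality conditions. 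Hence the min in Definition~\ref{definition:lovasz_function} for $\Gr{G}$ is taken over a superset of the feasible pairs for $\Gr{H}_2$, which gives $\vartheta(\Gr{G})\leq\vartheta(\Gr{H}_2)$ after taking the infimum over $\Gr{H}_2$-representations on the right.

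No substantive obstacle is anticipated; the only care needed is to keep the two opposite monotonicities straight. Vertex deletion contracts both the orthogonality-constraint set and the index set over which the maximum is taken, whereas edge deletion enlarges the constraint set without touching the index set, so the two mechanisms yield the two halves of the chain.
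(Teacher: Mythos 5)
Your proof is correct, and since the paper states this theorem as a background fact without supplying its own proof, there is nothing to compare against; your argument is the standard direct one from Definition~\ref{definition:lovasz_function}. Both halves are handled cleanly: restricting an optimal pair for $\Gr{G}$ gives a feasible pair for the induced subgraph $\Gr{H}_1$ with a no-larger maximum, and edge deletion strictly enlarges the orthogonality-constraint set so that the feasible region for $\Gr{H}_2$ is contained in that for $\Gr{G}$, giving the reversed inequality after taking minima.
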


Next, we present a few known formulas and bounds on the Lov\'{a}sz function.

\begin{theorem}\label{theorem: lovasz_bigger_n}
  \cite{Lovasz79,Sason23} Let $\Gr{G}$ be a simple graph on $n$ vertices. Then,
  \begin{align}\label{eq6:28.11.2024}
    \vartheta(\Gr{G}) \, \vartheta(\CGr{G}) \geq n,
  \end{align}
  with an equality in \eqref{eq6:28.11.2024} if $\Gr{G}$ is a vertex-transitive or strongly regular graph.
\end{theorem}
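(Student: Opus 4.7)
My plan is to prove the inequality $\vartheta(\Gr{G})\vartheta(\CGr{G}) \geq n$ via a short independent-set argument in the strong product $\Gr{G}\boxtimes\CGr{G}$, and then to deduce equality in the two stated cases by independent arguments: symmetrization for the vertex-transitive case, and a spectral SDP computation for the strongly regular case.

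For the lower bound, I would observe that the diagonal $D = \{(i,i) : i \in \V{\Gr{G}}\}$ is an independent set of size $n$ in $\Gr{G}\boxtimes\CGr{G}$. Indeed, for distinct $(i,i),(j,j)\in D$ both coordinates differ, so adjacency in the strong product (Definition~\ref{definition:strong_product}) would force $\{i,j\}\in\E{\Gr{G}}\cap\E{\CGr{G}}$, an empty intersection by Definition~\ref{definition:complement and s.c. graphs}. Hence $\indnum{\Gr{G}\boxtimes\CGr{G}} \geq n$, and combining the standard sandwich $\vartheta\geq\alpha$ (implicit in the discussion of $\vartheta$ as an upper bound on the Shannon capacity in Section~\ref{subsection:lovasz_function}) with the multiplicativity in Theorem~\ref{theorem: lovasz_strong_product} yields
\begin{equation*}
\vartheta(\Gr{G})\,\vartheta(\CGr{G}) = \vartheta(\Gr{G}\boxtimes\CGr{G}) \geq \indnum{\Gr{G}\boxtimes\CGr{G}} \geq n.
\end{equation*}

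For equality in the vertex-transitive case, I would start from an optimal pair $((\mathbf{v}_i),\mathbf{d})$ attaining $\vartheta(\Gr{G}) = \sum_i (\mathbf{d}^\mathrm{T}\mathbf{v}_i)^2$ via the complementary formula of Theorem~\ref{theorem: lovasz_equivalent_complement_formula}, and symmetrize it over $\mathrm{Aut}(\Gr{G}) = \mathrm{Aut}(\CGr{G})$. Using a block-diagonal direct-sum construction over the orbit, together with a carefully chosen invariant handle (justified by averaging an invariant optimal $\mathbf{B}$ of the SDP~\eqref{eq: SDP problem - Lovasz theta-function} and translating back to an orthonormal representation), transitivity forces $(\mathbf{d}^\mathrm{T}\mathbf{v}_i)^2$ to be constant in $i$, and the normalization pins this common value at $\vartheta(\Gr{G})/n$. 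Feeding the symmetrized representation into the min--max definition~\eqref{eq:lovasz_function} of $\vartheta(\CGr{G})$ then gives $\vartheta(\CGr{G})\leq n/\vartheta(\Gr{G})$, which combined with the lower bound yields equality.

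For equality in the strongly regular case, vertex-transitivity is not generally available, so the plan shifts to a spectral SDP computation using the coherent algebra. Denote the non-trivial eigenvalues of $\A(\Gr{G})$ by $r>0>s$; then $\CGr{G}$ is also strongly regular with non-trivial eigenvalues $-1-r$ and $-1-s$. I would show that an optimal $\mathbf{B}$ in~\eqref{eq: SDP problem - Lovasz theta-function} can be taken in the three-dimensional coherent algebra $\mathrm{span}\{\Id{n},\A,\J{n}-\Id{n}-\A\}$ by averaging an arbitrary optimal solution against the spectral structure of the SRG. Direct optimization over $\mathbf{B} = \alpha\Id{n} + \beta(\J{n}-\Id{n}-\A)$ yields the Hoffman-type closed form $\vartheta(\Gr{G}) = -ns/(d-s)$, and an analogous computation for $\CGr{G}$ gives $\vartheta(\CGr{G}) = n(1+r)/(n-d+r)$. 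The equality $\vartheta(\Gr{G})\vartheta(\CGr{G}) = n$ then reduces to the algebraic identity $n(-s)(1+r) = (d-s)(n-d+r)$, which I would verify using the SRG relations $r+s=\lambda-\mu$, $rs=\mu-d$, and $d(d-\lambda-1) = (n-d-1)\mu$. The main obstacle in both equality cases is making the reduction to a symmetric optimizer precise---in the VT case this requires constructing an invariant orthonormal representation that preserves the Lovász objective, and in the SRG case it requires justifying the restriction to the coherent algebra without a transitive automorphism group; both steps lean on invariance of the SDP under averaging over an appropriate symmetry.
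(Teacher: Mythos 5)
You should note at the outset that the paper does not prove this theorem at all---it is stated in the preliminaries with citations to \cite{Lovasz79} and \cite{Sason23}. So the comparison is against the known literature rather than an in-paper argument. With that caveat, your proposal is essentially sound, and all three parts (the inequality, the vertex-transitive equality, the strongly regular equality) are on solid footing. A few precision issues are worth flagging.

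Your lower-bound argument via the diagonal independent set in $\Gr{G}\boxtimes\CGr{G}$ together with $\vartheta\geq\alpha$ and multiplicativity of $\vartheta$ is correct and is arguably the cleanest route. For the vertex-transitive case, however, your phrasing mixes two distinct constructions: the ``block-diagonal direct-sum over the orbit'' and the SDP-averaging of $\mathbf{B}$. The naive direct sum $\tilde{\mathbf{v}}_i = \tfrac{1}{\sqrt{|\Gamma|}}\bigoplus_\sigma \mathbf{v}_{\sigma(i)}$, $\tilde{\mathbf{d}} = \tfrac{1}{\sqrt{|\Gamma|}}\bigoplus_\sigma \mathbf{d}$, does \emph{not} work: by Cauchy--Schwarz one only gets $(\tilde{\mathbf{d}}^{\mathrm{T}}\tilde{\mathbf{v}}_i)^2 \le \vartheta(\Gr{G})/n$, which is the wrong direction for bounding $\vartheta(\CGr{G})$ from above. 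The SDP route you mention parenthetically is the one that actually closes the argument: average a feasible optimal $\mathbf{B}$ over $\mathrm{Aut}(\Gr{G})$ to get an invariant optimizer with constant diagonal $1/n$ and constant row sums $\vartheta(\Gr{G})/n$, Gram-factor $\mathbf{B}=\mathbf{X}^{\mathrm{T}}\mathbf{X}$, set $\mathbf{v}_i=\sqrt{n}\,\mathbf{x}_i$ and $\mathbf{d}=\sum_j\mathbf{x}_j/\sqrt{\vartheta(\Gr{G})}$, and one gets $(\mathbf{d}^{\mathrm{T}}\mathbf{v}_i)^2=\vartheta(\Gr{G})/n$ exactly; then the min--max definition of $\vartheta(\CGr{G})$ gives the upper bound $n/\vartheta(\Gr{G})$. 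I would recommend you drop the direct-sum framing and present only the SDP averaging.

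For the strongly regular case, the algebra in your plan is correct: using $r+s=\lambda-\mu$, $rs=\mu-d$, and $d(d-\lambda-1)=(n-d-1)\mu$, one indeed verifies $-ns(1+r)=(d-s)(n-d+r)$, which is exactly the identity $\vartheta(\Gr{G})\vartheta(\CGr{G})=n$ once both thetas are expressed via the Hoffman-type formula. You do not actually need to prove that some optimal $\mathbf{B}$ lies in the Bose--Mesner algebra (though this projection argument does work, since orthogonal projection onto the span of the primitive idempotents preserves positive semidefiniteness, trace, the support constraint, and the objective). A shorter route is to combine the unconditional Hoffman \emph{upper} bound from Theorem~\ref{theorem: lovasz_hoffman_bound} with the explicit feasible matrix $\mathbf{B}=\tfrac{1}{n}\Id{n}+\tfrac{1}{n(1+r)}\bigl(\J{n}-\Id{n}-\A\bigr)$, whose objective value equals $\tfrac{n-d+r}{1+r}=\tfrac{-ns}{d-s}$; that matches the upper bound, so equality holds, and the same computation applied to $\CGr{G}$ gives $\vartheta(\CGr{G})=\tfrac{n(1+r)}{n-d+r}$. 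This avoids any symmetrization step in a setting where the automorphism group need not be transitive.
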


\begin{theorem}\label{theorem: lovasz_hoffman_bound}
  \cite{Lovasz79} Let $\Gr{G}$ be a $d$-regular graph of order $n$, and let $\lambda_n$ be its smallest eigenvalue. Then,
  \begin{align}\label{eq:lovasz_hoffman_bound}
    \vartheta(\Gr{G})\leq-\frac{n\lambda_n}{d - \lambda_n},
  \end{align}
  with an equality in \eqref{eq:lovasz_hoffman_bound} if $\Gr{G}$ is an edge-transitive graph.
\end{theorem}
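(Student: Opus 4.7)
The plan is to derive the upper bound by exhibiting an explicit dual certificate against the primal SDP formulation \eqref{eq: SDP problem - Lovasz theta-function}, and then to argue tightness in the edge-transitive case by a symmetrization argument. Concretely, for parameters $s, t^* \in \Reals$ to be chosen, I would introduce the matrix
\begin{align*}
  \mathbf{M} \dfn t^* \Id{n} - \J{n} - s \A(\Gr{G}),
\end{align*}
and use $d$-regularity to diagonalize it: the vector $\mathbf{1}$ is a common eigenvector of $\J{n}$ (with eigenvalue $n$) and $\A$ (with eigenvalue $d$), while every eigenvector of $\A$ orthogonal to $\mathbf{1}$, say with eigenvalue $\mu$, lies in the kernel of $\J{n}$. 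Hence the spectrum of $\mathbf{M}$ consists of $t^* - n - sd$ (from the $\mathbf{1}$-direction) together with the values $t^* - s\mu$ as $\mu$ ranges over the remaining adjacency eigenvalues. Choosing $s<0$ so that both $t^* - n - sd = 0$ and $t^* - s\lambda_n = 0$ simultaneously, namely $s = -\tfrac{n}{d-\lambda_n}$ and $t^* = -\tfrac{n\lambda_n}{d-\lambda_n}$, makes every remaining value $t^* - s\mu = s(\mu - \lambda_n) \geq 0$, so $\mathbf{M} \succeq 0$.

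Given any feasible $\mathbf{B}$ in \eqref{eq: SDP problem - Lovasz theta-function}, weak duality takes the form $\mathrm{Tr}(\mathbf{B}\mathbf{M}) \geq 0$, which expands to
\begin{align*}
  t^* \, \mathrm{Tr}(\mathbf{B}) - \mathrm{Tr}(\mathbf{B}\J{n}) - s \, \mathrm{Tr}(\mathbf{B}\A) \geq 0.
\end{align*}
Here $\mathrm{Tr}(\mathbf{B}) = 1$ by the trace constraint, while $\mathrm{Tr}(\mathbf{B}\A) = \sum_{i,j} B_{i,j} A_{i,j} = 0$ because $B_{i,j}$ vanishes on edges and $\A$ vanishes on the diagonal. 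Rearranging yields $\mathrm{Tr}(\mathbf{B}\J{n}) \leq t^*$, and supremizing the left-hand side over all feasible $\mathbf{B}$ gives $\vartheta(\Gr{G}) \leq -\tfrac{n\lambda_n}{d-\lambda_n}$.

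For the equality statement under edge-transitivity, the plan is to symmetrize any primal optimum $\mathbf{B}^*$ by averaging over the automorphism group of $\Gr{G}$: since the SDP is invariant under permutations of the vertex set induced by automorphisms, the averaged matrix remains optimal. Edge-transitivity together with $d$-regularity forces the symmetrized $\mathbf{B}^*$ into the low-dimensional algebra spanned by $\Id{n}$, $\A$, and $\J{n} - \Id{n} - \A$, so $\mathbf{B}^* = \tfrac{1}{n}\Id{n} + \beta\,(\J{n} - \Id{n} - \A)$ for some scalar $\beta$ (the diagonal coefficient is fixed by $\mathrm{Tr}(\mathbf{B}^*) = 1$, and the edge coefficient is forced to $0$). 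Complementary slackness with the certificate $\mathbf{M}$ above requires $\mathbf{B}^* \mathbf{M} = 0$, which pins $\mathbf{B}^*$ to the span of $\mathbf{1}$ and the $\lambda_n$-eigenspace of $\A$; this uniquely determines $\beta$ and, after verifying that the resulting matrix is PSD using the adjacency spectrum, produces a primal-feasible matrix achieving the value $-\tfrac{n\lambda_n}{d - \lambda_n}$.

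The inequality itself is essentially a one-line application of weak duality once the dual certificate $\mathbf{M}$ is guessed, so the main obstacle is the equality case. The delicate points there are (i) confirming that edge-transitivity is enough to collapse the optimal $\mathbf{B}^*$ to the three-parameter form above, including the bipartite edge-transitive subcase where vertex-transitivity fails, and (ii) checking that the candidate built from complementary slackness is in fact positive semidefinite rather than merely satisfying the scalar equation $\mathbf{B}^* \mathbf{M} = 0$.
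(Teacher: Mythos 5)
Your dual-certificate proof of the inequality is correct and clean: choosing $\mathbf{M} = t^*\Id{n} - \J{n} - s\A$ with $s = -\tfrac{n}{d-\lambda_n}$, $t^* = -\tfrac{n\lambda_n}{d-\lambda_n}$ makes $\mathbf{M}\succeq 0$ thanks to $d$-regularity (since $\J{n}$ and $\A$ then commute and share the eigenbasis), and $\mathrm{Tr}(\mathbf{B}\mathbf{M})\geq 0$ for feasible $\mathbf{B}$ immediately yields $\mathrm{Tr}(\mathbf{B}\J{n})\leq t^*$. This is essentially the same content as Lov\'{a}sz's original argument, repackaged as weak duality against the SDP in \eqref{eq: SDP problem - Lovasz theta-function}.

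The equality argument, however, has a genuine gap, and it is exactly at the point you flag as delicate. You claim that edge-transitivity plus $d$-regularity force the symmetrized $\mathbf{B}^*$ into the three-dimensional algebra $\mathrm{span}\{\Id{n},\A,\J{n}-\Id{n}-\A\}$. This is false. After averaging over $\mathrm{Aut}(\Gr{G})$, the entries of $\mathbf{B}^*$ are constant only on each orbit of $\mathrm{Aut}(\Gr{G})$ acting on $\V{\Gr{G}}\times\V{\Gr{G}}$, and edge-transitivity forces \emph{one} orbit (the edges); it says nothing about the number of orbits on non-edges, nor even that the diagonal forms a single orbit when the graph is not vertex-transitive. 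The $3$-cube $\Gr{Q}_3$ is edge-transitive and $3$-regular, yet its invariant algebra is the $4$-dimensional Bose--Mesner algebra (pairs at distance $0,1,2,3$), so the averaged $\mathbf{B}^*$ can carry two independent non-edge coefficients and does not reduce to your one-parameter family. More generally, any distance-transitive graph of diameter $\geq 3$ breaks this reduction. The collapse to three orbits is a rank-$3$ (strongly regular) phenomenon, not an edge-transitivity phenomenon.

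The reason Lov\'{a}sz's equality proof works without this issue is that he symmetrizes on the other side of the duality, in the $\min$-formulation $\vartheta(\Gr{G}) = \min\{\lambda_{\max}(\mathbf{Z}) : Z_{ij}=1 \text{ whenever } i=j \text{ or } i\not\sim j\}$. There, \emph{every} entry except the edge entries is pinned to $1$ by the constraints, so after averaging the feasible matrix necessarily has the form $\J{n}-t\A$ for a single scalar $t$, no matter how many orbits $\mathrm{Aut}(\Gr{G})$ has on non-edges. Computing $\min_t\lambda_{\max}(\J{n}-t\A)$ using $d$-regularity gives exactly $-\tfrac{n\lambda_n}{d-\lambda_n}$ at $t = \tfrac{n}{d-\lambda_n}$. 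If you want to stay inside the primal SDP, you would instead have to produce a feasible $\mathbf{B}$ achieving the value directly (which may need to use all the non-edge orbits, not just a single coefficient $\beta$), and verify its positive semidefiniteness; complementary slackness alone does not hand you such a $\mathbf{B}$ without the erroneous dimension-$3$ assumption.
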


\subsection{Concluding preliminaries} \label{subsection:additional_preliminaries}

We provide a useful lower bound for the fractional independence number and the fractional chromatic number (see
\cite[Proposition~3.1.1]{ScheinermanU13}).
\begin{theorem}\label{theorem: lower_bound_fractional_number}
  Let $\Gr{G}$ be a simple graph of order $n$. Then,
  \begin{align}
    \findnum{\Gr{G}} & \geq \frac{n}{\clnum{\Gr{G}}}, \\
    \fchrnum{\Gr{G}} & \geq \frac{n}{\indnum{\Gr{G}}}.
  \end{align}
  Both inequalities hold with equality for vertex-transitive graphs.
\end{theorem}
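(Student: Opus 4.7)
The plan is to attack both inequalities directly by exhibiting explicit feasible solutions to the linear programs in Definition~\ref{definition:fractional_invariants}, and then to obtain the vertex-transitive equality by an averaging argument over the automorphism group.

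For the first inequality, I would plug the constant function $f(v) \equiv 1/\clnum{\Gr{G}}$ into the LP for $\findnum{\Gr{G}}$ in \eqref{eq:fractional_independence_number}. For any clique $\set{C} \in \clset{\Gr{G}}$, one has $\sum_{v \in \set{C}} f(v) = |\set{C}|/\clnum{\Gr{G}} \leq 1$, so $f \in \set{F}_C(\Gr{G})$; the objective value $\sum_{v} f(v) = n/\clnum{\Gr{G}}$ then serves as the required lower bound on the supremum. For the second inequality, I would first invoke Theorem~\ref{theorem: fractional_invariants_strong_duality} to rewrite $\fchrnum{\Gr{G}} = \fclnum{\Gr{G}}$, and then play the same game with the constant function $f(v) \equiv 1/\indnum{\Gr{G}}$, which lies in $\set{F}_I(\Gr{G})$ because $\sum_{v \in \set{I}} f(v) = |\set{I}|/\indnum{\Gr{G}} \leq 1$ for every independent set $\set{I}$.

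For the equality statement under vertex-transitivity, the idea is symmetrization. Suppose $\Gr{G}$ is vertex-transitive with automorphism group $\mathrm{Aut}(\Gr{G})$. If $f \in \set{F}_C(\Gr{G})$ is any feasible point for the LP defining $\findnum{\Gr{G}}$, then for each $\sigma \in \mathrm{Aut}(\Gr{G})$ the function $f \circ \sigma$ is also feasible with the same objective value, since cliques are mapped to cliques under automorphisms. Averaging $f \circ \sigma$ over $\sigma \in \mathrm{Aut}(\Gr{G})$ yields a feasible $\bar f$ with the same value of $\sum_v f(v)$ that is invariant under $\mathrm{Aut}(\Gr{G})$; by vertex-transitivity, $\bar f$ is constant on $\V{\Gr{G}}$, say $\bar f \equiv c$. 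The clique constraint $c \, \clnum{\Gr{G}} \leq 1$ forces $c \leq 1/\clnum{\Gr{G}}$, and since one can always achieve a supremum within $\varepsilon$ by a feasible $f$, this gives $\findnum{\Gr{G}} \leq n/\clnum{\Gr{G}}$, matching the lower bound. The same symmetrization applied to the LP for $\fclnum{\Gr{G}} = \fchrnum{\Gr{G}}$ using the independence constraint yields $\fchrnum{\Gr{G}} \leq n/\indnum{\Gr{G}}$.

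The only genuinely delicate point is justifying that the supremum in \eqref{eq:fractional_independence_number} and \eqref{eq:fractional_clique_number} is attained (so that one can actually symmetrize an optimal $f$ rather than an $\varepsilon$-optimal one); this follows from the fact that both LPs are over a compact feasible set once one notes each $f(v)$ is bounded in $[0,1]$ by the trivial clique $\{v\}$, so the supremum is attained and the symmetrization argument goes through cleanly. Everything else is routine LP feasibility and the observation that automorphisms preserve cliques and independent sets.
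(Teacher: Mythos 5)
Your proof is correct, and it is essentially the standard argument: the paper itself does not give a proof but simply cites Scheinerman and Ullman, Proposition~3.1.1, where the same two ideas appear — plugging a constant feasible function into the LP to get the lower bound, and symmetrizing over the automorphism group to get the matching upper bound in the vertex-transitive case. Your attention to attainment of the supremum (or, equivalently, the $\varepsilon$-approximation variant you also sketch) is a valid way to close the loop, though it is not strictly necessary since the argument goes through with near-optimal feasible points.
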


The following sandwich theorem for the Lov\'{a}sz $\vartheta$-function is remarkable in that it provides a computable bound on
graph invariants that are NP-hard to compute.
\begin{theorem} \label{theorem: sandwich_theorem}
  Let $\Gr{G}$ be a simple graph. Then,
  \begin{align}  \label{eq: sandwich_theorem}
    \indnum{\Gr{G}} \leq \Theta(\Gr{G}) \leq \vartheta(\Gr{G}) \leq \findnum{\Gr{G}} \leq \chrnum{\CGr{G}} = \clcnum{\Gr{G}}.
  \end{align}
\end{theorem}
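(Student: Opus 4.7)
The plan is to establish the chain \eqref{eq: sandwich_theorem} one link at a time, from left to right, using the results already developed in the section. Each of the four inequalities admits a short argument; the main obstacle will be the middle link $\vartheta(\Gr{G}) \leq \findnum{\Gr{G}}$, which requires an explicit construction.

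For the first link $\indnum{\Gr{G}} \leq \Theta(\Gr{G})$, I would simply note that the supremum in Definition~\ref{definition:shannon_capacity} contains the $k=1$ term $\indnum{\Gr{G}}$. For the second link $\Theta(\Gr{G}) \leq \vartheta(\Gr{G})$, I would first establish the single-power inequality $\indnum{\Gr{G}} \leq \vartheta(\Gr{G})$ from the SDP \eqref{eq: SDP problem - Lovasz theta-function}: given a maximum independent set $\set{I}$, the matrix $\mathbf{B} \triangleq \card{\set{I}}^{-1} \mathbf{1}_{\set{I}} \mathbf{1}_{\set{I}}^{\mathrm{T}}$ is positive semidefinite, has trace one, and satisfies $B_{i,j}=0$ whenever $\{i,j\} \in \E{\Gr{G}}$ because $\set{I}$ is independent; its objective equals $\card{\set{I}} = \indnum{\Gr{G}}$. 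Applied to the $k$-fold strong power and combined with Theorem~\ref{theorem: lovasz_strong_product}, this yields $\indnum{\Gr{G}^k} \leq \vartheta(\Gr{G}^k) = \vartheta(\Gr{G})^k$; taking $k$-th roots and passing to the limit via Definition~\ref{definition:shannon_capacity} closes the link.

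The central inequality $\vartheta(\Gr{G}) \leq \findnum{\Gr{G}}$ is the technical heart of the proof. By Theorem~\ref{theorem: fractional_invariants_complement_equality} it suffices to show $\vartheta(\Gr{G}) \leq \fchrnum{\CGr{G}}$. Given an optimal fractional coloring $g\colon \indset{\CGr{G}} \to \Reals$ of $\CGr{G}$ and an orthonormal basis $\{e_{\set{I}}\}$ indexed by the support of $g$ (each such $\set{I}$ being independent in $\CGr{G}$, equivalently a clique in $\Gr{G}$), I would define
\begin{equation*}
\mathbf{u}_v \triangleq \Bigl(\sum_{\set{I} \ni v} g(\set{I})\Bigr)^{-1/2} \sum_{\set{I} \ni v} \sqrt{g(\set{I})}\, e_{\set{I}}, \qquad \mathbf{c} \triangleq \bigl(\fchrnum{\CGr{G}}\bigr)^{-1/2} \sum_{\set{I}} \sqrt{g(\set{I})}\, e_{\set{I}}.
\end{equation*}
The key point is that for nonadjacent $v, w$ in $\Gr{G}$ (hence adjacent in $\CGr{G}$), no independent set of $\CGr{G}$ can contain both, so $\langle \mathbf{u}_v, \mathbf{u}_w\rangle = 0$; unit-length of each $\mathbf{u}_v$ and of $\mathbf{c}$ is a direct computation using $\sum_{\set{I}} g(\set{I}) = \fchrnum{\CGr{G}}$, and $(\mathbf{c}^{\mathrm{T}} \mathbf{u}_v)^2 = \fchrnum{\CGr{G}}^{-1}\sum_{\set{I} \ni v} g(\set{I}) \geq \fchrnum{\CGr{G}}^{-1}$ by the fractional coloring constraint. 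Plugging into Definition~\ref{definition:lovasz_function} yields the bound.

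Finally, $\findnum{\Gr{G}} \leq \chrnum{\CGr{G}} = \clcnum{\Gr{G}}$ follows because a proper $\chrnum{\CGr{G}}$-coloring of $\CGr{G}$ furnishes an integral feasible solution of the LP~\eqref{eq:fractional_chromatic_number} for $\CGr{G}$ (set $g(\set{I})=1$ on each color class), giving $\fchrnum{\CGr{G}} \leq \chrnum{\CGr{G}}$; the identity $\chrnum{\CGr{G}} = \clcnum{\Gr{G}}$ is Definition~\ref{definition:clique-cover_number}, and Theorem~\ref{theorem: fractional_invariants_complement_equality} identifies $\fchrnum{\CGr{G}}$ with $\findnum{\Gr{G}}$, closing the sandwich.
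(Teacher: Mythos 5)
The paper states Theorem~\ref{theorem: sandwich_theorem} in the preliminaries without an accompanying proof, so there is no in-paper argument to compare against; I can only assess your proof on its own merits. Your proof is correct, and each of the four links is handled with a standard but complete argument. The first link is trivial from Definition~\ref{definition:shannon_capacity}. For the second link, the rank-one feasible point $\mathbf{B} = \card{\set{I}}^{-1}\mathbf{1}_{\set{I}}\mathbf{1}_{\set{I}}^{\mathrm{T}}$ indeed certifies $\indnum{\Gr{G}} \leq \vartheta(\Gr{G})$ in the SDP~\eqref{eq: SDP problem - Lovasz theta-function}, and combining this with Theorem~\ref{theorem: lovasz_strong_product} and a $k$-th root limit gives $\Theta(\Gr{G}) \leq \vartheta(\Gr{G})$. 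Your construction for the central link is the classical one (Lov\'{a}sz actually proved $\vartheta(\Gr{G}) \leq \chrnum{\CGr{G}}$ by essentially this device with an integral coloring; your use of an optimal fractional coloring $g$ tightens it to $\fchrnum{\CGr{G}} = \findnum{\Gr{G}}$): orthogonality of $\mathbf{u}_v$ and $\mathbf{u}_w$ for nonadjacent $v,w$ in $\Gr{G}$ holds because no clique of $\Gr{G}$ contains both, the normalizations are well-defined since $\sum_{\set{I}\ni v} g(\set{I}) \geq 1 > 0$, and $(\mathbf{c}^{\mathrm{T}}\mathbf{u}_v)^2 = \fchrnum{\CGr{G}}^{-1}\sum_{\set{I}\ni v} g(\set{I}) \geq \fchrnum{\CGr{G}}^{-1}$ gives the bound. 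The final link follows from LP relaxation and Definition~\ref{definition:clique-cover_number}, as you say. The only small remark is that you may wish to note explicitly that the LP for $\fchrnum{\CGr{G}}$ attains its infimum (it is a finite LP with a bounded, feasible value), so "optimal fractional coloring'' is legitimate—but this is a minor point of rigor and does not affect correctness.
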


In continuation to Definition~\ref{definition:kneser_graph}, we provide some known invariants of Kneser graphs.
\begin{theorem}\label{theorem: kneser_graph_invariants}
  Let $\Gr{G} = \KG{n}{r}$ be a Kneser graph with $n \geq 2r$. The invariants of $\Gr{G}$ are
  \begin{align}
    \indnum{\Gr{G}} & = \binom{n-1}{r-1}, \\
    \clnum{\Gr{G}} & = \bigg\lfloor\frac{n}{r}\bigg\rfloor, \\
    \Theta(\Gr{G}) & = \binom{n-1}{r-1}, \label{eq: Shannon capacity of Kneser graphs} \\
    \vartheta(\Gr{G}) & = \binom{n-1}{r-1}, \\
    \vartheta(\CGr{G}) & = \frac{n}{r}, \\
    \findnum{\Gr{G}} & = \frac{\binom{n}{r}}{\bigl\lfloor\frac{n}{r}\bigr\rfloor}, \\[0.1cm]
    \fchrnum{\Gr{G}} & = \frac{n}{r}, \\
    \chrnum{\Gr{G}} & = n - 2r + 2, \\
    \clcnum{\Gr{G}} & = \Bigg\lceil\frac{\binom{n}{r}}{\big\lfloor\frac{n}{r}\big\rfloor}\Bigg\rceil.
  \end{align}
\end{theorem}
\begin{proof}
  See \cite[Theorem~13]{Lovasz79}, \cite{BrouwerS79}, and Theorem~\ref{theorem: lower_bound_fractional_number}.
\end{proof}

\begin{theorem}\label{theorem: path_invariants}
  The path $\PathG{\ell}$, for $\ell \in \naturals$, is a universal graph with parameters
  \begin{align}\label{eq:path_invariants}
    \indnum{\PathG{\ell}} = \Theta(\PathG{\ell}) = \vartheta(\PathG{\ell}) = \findnum{\PathG{\ell}} = \clcnum{\PathG{\ell}} = \left\lceil\frac{\ell}{2}\right\rceil.
  \end{align}
\end{theorem}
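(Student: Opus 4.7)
The plan is to apply the sandwich theorem (Theorem~\ref{theorem: sandwich_theorem}) to collapse the entire chain of invariants by pinning down only its two endpoints, namely $\indnum{\PathG{\ell}}$ and $\clcnum{\PathG{\ell}}$. Since the sandwich theorem gives
\begin{equation*}
\indnum{\PathG{\ell}} \leq \Theta(\PathG{\ell}) \leq \vartheta(\PathG{\ell}) \leq \findnum{\PathG{\ell}} \leq \clcnum{\PathG{\ell}},
\end{equation*}
showing $\indnum{\PathG{\ell}} = \clcnum{\PathG{\ell}} = \lceil \ell/2 \rceil$ forces equality throughout.

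First I would compute $\indnum{\PathG{\ell}}$. Labeling the vertices $v_1, \ldots, v_\ell$ with $v_i \sim v_{i+1}$, the set $\{v_1, v_3, v_5, \ldots\}$ is independent and has size $\lceil \ell/2 \rceil$, which gives the lower bound. For the matching upper bound, partition the vertex set into the $\lceil \ell/2 \rceil$ consecutive pairs (or one singleton if $\ell$ is odd) $\{v_1,v_2\}, \{v_3,v_4\}, \ldots$; any independent set can contain at most one vertex from each part, yielding $\indnum{\PathG{\ell}} \leq \lceil \ell/2 \rceil$.

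Next I would compute $\clcnum{\PathG{\ell}}$. The cliques of $\PathG{\ell}$ are either singletons or single edges (there are no triangles in a path). The same pairing used above produces a clique cover of size $\lceil \ell/2 \rceil$, giving $\clcnum{\PathG{\ell}} \leq \lceil \ell/2 \rceil$. The reverse inequality follows from the sandwich chain and the independence-number computation. This closes the chain of equalities.

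Finally, the universality of $\PathG{\ell}$ follows immediately from Theorem~\ref{theorem: universal_findnum}, which says that a graph $\Gr{G}$ is universal iff $\indnum{\Gr{G}} = \findnum{\Gr{G}}$; this identity is one of the equalities we just established. No step should present an obstacle: the only mildly non-trivial ingredient is noticing that the same consecutive-pair partition serves simultaneously as the witness for the upper bound on $\indnum{\PathG{\ell}}$ and for the upper bound on $\clcnum{\PathG{\ell}}$, after which the sandwich theorem does all the remaining work.
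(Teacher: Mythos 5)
Your proof is correct and complete. The paper states this theorem in the preliminaries without a proof, so there is no argument in the text to compare against; your approach is the natural one, using the explicit consecutive-pair partition $\{v_1,v_2\},\{v_3,v_4\},\ldots$ to pin both endpoints $\indnum{\PathG{\ell}}$ and $\clcnum{\PathG{\ell}}$ of the sandwich chain of Theorem~\ref{theorem: sandwich_theorem} at $\lceil\ell/2\rceil$, which forces equality throughout, and then reading off universality from Theorem~\ref{theorem: universal_findnum}.
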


\begin{theorem}\label{theorem: cycle_invariants}
  Let $k \geq 4$. Then, the cycle graph $\CG{k}$ has
  \begin{itemize}
    \item If $k$ is an even number, then $\CG{k}$ is a universal graph with
    \begin{align}
      \indnum{\CG{k}} = \Theta(\CG{k}) = \vartheta(\CG{k}) =\frac{k}{2}.
    \end{align}
    \item If $k \geq 5$ is an odd number, then
    \begin{align}
      \indnum{\CG{k}} = \left\lfloor\frac{k}{2}\right\rfloor, \quad \vartheta(\CG{k}) = \frac{k}{1+\sec\frac{\pi}{k}}.
    \end{align}
  \end{itemize}
\end{theorem}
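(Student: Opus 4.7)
The plan is to handle the even and odd cases separately, leveraging the sandwich theorem (Theorem~\ref{theorem: sandwich_theorem}) in the even case and the Hoffman bound (Theorem~\ref{theorem: lovasz_hoffman_bound}) in the odd case. In both cases, the independence number is immediate from selecting every other vertex along the cycle.

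For even $k$, I would first note that $\CG{k}$ is bipartite, with the two color classes having size $k/2$ each; each class is an independent set, so $\indnum{\CG{k}} \ge k/2$, and the reverse inequality follows since an independent set can pick at most one endpoint of each edge in a perfect matching. Such a matching exists when $k$ is even (every other edge) and also covers $\V{\CG{k}}$ by $k/2$ cliques of size $2$, yielding $\clcnum{\CG{k}} \le k/2$. Substituting into Theorem~\ref{theorem: sandwich_theorem} forces
\[
\tfrac{k}{2} \;=\; \indnum{\CG{k}} \;\le\; \Theta(\CG{k}) \;\le\; \vartheta(\CG{k}) \;\le\; \findnum{\CG{k}} \;\le\; \clcnum{\CG{k}} \;\le\; \tfrac{k}{2},
\]
so all five invariants coincide. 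Since $\indnum{\CG{k}} = \findnum{\CG{k}}$, universality follows from Theorem~\ref{theorem: universal_findnum}.

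For odd $k \ge 5$, choosing every other vertex and skipping one to avoid closing the cycle gives $\indnum{\CG{k}} \ge \lfloor k/2 \rfloor$; the matching-based upper bound on any independent set yields equality. To evaluate $\vartheta(\CG{k})$, the key observation is that $\CG{k}$ is $2$-regular, vertex-transitive, and edge-transitive, so Theorem~\ref{theorem: lovasz_hoffman_bound} applies with equality:
\[
\vartheta(\CG{k}) \;=\; -\frac{k\,\lambda_k}{2 - \lambda_k},
\]
where $\lambda_k$ is the smallest eigenvalue of the adjacency matrix. That matrix is circulant, with eigenvalues $2\cos(2\pi j/k)$ for $j \in \{0, 1, \ldots, k-1\}$. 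For odd $k$, the minimum is attained at $j = (k\pm1)/2$, giving $\lambda_k = -2\cos(\pi/k)$. Substituting and simplifying,
\[
\vartheta(\CG{k}) \;=\; \frac{2k\cos(\pi/k)}{2 + 2\cos(\pi/k)} \;=\; \frac{k\cos(\pi/k)}{1 + \cos(\pi/k)} \;=\; \frac{k}{1 + \sec(\pi/k)}.
\]

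The main obstacle is verifying that the Hoffman-bound equality condition holds in the odd case, namely that $\CG{k}$ is edge-transitive; this follows from the obvious rotational and reflectional symmetries acting transitively on edges. A minor care point in the eigenvalue computation is that, since $\cos$ is monotone on $[0,\pi]$, the most negative eigenvalue corresponds to $j = (k\pm1)/2$ rather than to the Perron index $j=0$. Nothing about the Shannon capacity of odd cycles needs to be established here, consistent with the statement's silence about $\Theta(\CG{k})$ for odd~$k$.
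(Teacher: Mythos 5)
The paper states Theorem~\ref{theorem: cycle_invariants} as a background fact without proof (these are classical results, with the formula for $\vartheta(\CG{k})$ on odd cycles going back to Lov\'{a}sz's original paper), so there is no paper proof to compare against. Your derivation is essentially correct and is the standard one: sandwich theorem squeezing $\indnum$ against $\clcnum$ in the even case, and the Hoffman-type bound in Theorem~\ref{theorem: lovasz_hoffman_bound} (with equality by edge-transitivity) plus the circulant eigenvalue formula in the odd case, and the algebra from $-2\cos(\pi/k)$ to $k/(1+\sec(\pi/k))$ checks out.

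One small slip worth fixing: in the odd case you claim the ``matching-based upper bound on any independent set yields equality'' for $\indnum{\CG{k}} = \lfloor k/2\rfloor$, but a near-perfect matching in $\CG{k}$ (for $k$ odd) has $(k-1)/2$ edges plus one uncovered vertex, which only gives the non-tight bound $\indnum{\CG{k}} \le (k+1)/2$. A correct upper bound argument is, for instance, a double-count on a $2$-regular graph: every vertex of an independent set $\set{I}$ has both neighbors outside $\set{I}$, and each vertex outside $\set{I}$ has degree $2$, so $2\,\card{\set{I}} \le 2(k-\card{\set{I}})$, hence $\card{\set{I}} \le k/2$; equivalently, going around the cycle, consecutive chosen vertices must alternate with at least one unchosen vertex, so at most $\lfloor k/2\rfloor$ can be chosen. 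The rest of your argument does not depend on this step, so the fix is local.
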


\begin{lemma}\label{lemma:universal_capacity_independent}
    If $\Gr{G}$ is a universal graph, then $\Theta(\Gr{G}) = \indnum{\Gr{G}}$.
\end{lemma}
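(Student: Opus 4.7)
The plan is to combine the universality hypothesis with Lemma~\ref{lemma:universal_power} to show that $\indnum{\Gr{G}^k} = \indnum{\Gr{G}}^k$ for every $k \in \naturals$, and then conclude by evaluating the limit that defines $\Theta(\Gr{G})$.

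First I would apply Definition~\ref{definition:universal_graph} by substituting $\Gr{H} = \Gr{G}^{k-1}$ into the defining identity $\indnum{\Gr{G}\boxtimes\Gr{H}} = \indnum{\Gr{G}}\indnum{\Gr{H}}$, which is legitimate since $\Gr{G}^{k-1}$ is itself a simple graph. This yields
\begin{equation*}
\indnum{\Gr{G}^k} = \indnum{\Gr{G}\boxtimes\Gr{G}^{k-1}} = \indnum{\Gr{G}}\,\indnum{\Gr{G}^{k-1}}.
\end{equation*}
A straightforward induction on $k$ (with the trivial base case $k=1$) then gives $\indnum{\Gr{G}^k} = \indnum{\Gr{G}}^k$ for every $k \in \naturals$. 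Alternatively, one can invoke Lemma~\ref{lemma:universal_power} directly to know that every power $\Gr{G}^k$ is universal, which would bring in the same conclusion via Theorem~\ref{theorem: universal_findnum} together with Theorem~\ref{theorem: fractional_independence_strong_product}.

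Plugging this identity into Definition~\ref{definition:shannon_capacity} yields
\begin{equation*}
\Theta(\Gr{G}) = \lim_{k\to\infty} \sqrt[k]{\indnum{\Gr{G}^k}} = \lim_{k\to\infty} \sqrt[k]{\indnum{\Gr{G}}^k} = \indnum{\Gr{G}},
\end{equation*}
which is the desired equality. There is no genuine obstacle here: the only subtlety is recognizing that the universality property, which is stated for an arbitrary companion graph $\Gr{H}$, can be iterated by choosing $\Gr{H}$ to be a power of $\Gr{G}$ itself, and this is exactly what Lemma~\ref{lemma:universal_power} encapsulates.
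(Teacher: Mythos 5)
Your proof is correct, but it takes a genuinely different and more elementary route than the paper. You work straight from Definition~\ref{definition:universal_graph}: setting $\Gr{H} = \Gr{G}^{k-1}$ in the defining identity gives $\indnum{\Gr{G}^k} = \indnum{\Gr{G}}\,\indnum{\Gr{G}^{k-1}}$, hence $\indnum{\Gr{G}^k} = \indnum{\Gr{G}}^k$ by induction, and the limit in Definition~\ref{definition:shannon_capacity} collapses immediately. The paper instead invokes Hales' characterization (Theorem~\ref{theorem: universal_findnum}, universality $\iff \indnum{\Gr{G}} = \findnum{\Gr{G}}$) together with the sandwich theorem $\indnum{\Gr{G}} \leq \Theta(\Gr{G}) \leq \findnum{\Gr{G}}$, squeezing the capacity between two coinciding endpoints. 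Your argument is self-contained and needs nothing beyond the definition and Fekete-style monotonicity already built into $\Theta$; the paper's is a one-liner but leans on the nontrivial chain through $\vartheta$ and the fractional independence number. The two proofs are both valid; yours has the pedagogical advantage of showing explicitly \emph{why} the limit is already achieved at $k=1$, whereas the paper's is shorter given the preliminaries it has already assembled. One small remark: the alternative you sketch via Lemma~\ref{lemma:universal_power} plus Theorems~\ref{theorem: universal_findnum} and~\ref{theorem: fractional_independence_strong_product} is a detour --- once you have $\indnum{\Gr{G}^k} = \indnum{\Gr{G}}^k$ from the direct substitution, appealing to the universality of $\Gr{G}^k$ is unnecessary.
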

\begin{proof}
    If $\Gr{G}$ is a universal graph, then by Theorems \ref{theorem: universal_findnum} and \ref{theorem: sandwich_theorem},
    \begin{align}
      \Theta(\Gr{G}) = \indnum{\Gr{G}}.
    \end{align}
\end{proof}

\begin{remark}
    The converse of Lemma~\ref{lemma:universal_capacity_independent} is in general false, see Example~\ref{example:petersen_not_universal}.
\end{remark}

\begin{corollary} \label{corollary: when Kneser graphs are universal}
    Let $\Gr{G} = \KG{n}{r}$ be a Kneser graph with $n\geq2r$. Then, $\Gr{G}$ is universal if and only if $r \mid n$.
\end{corollary}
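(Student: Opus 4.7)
The plan is to reduce the claim to a direct calculation using the characterization of universality in Theorem~\ref{theorem: universal_findnum} together with the explicit formulas for the invariants of Kneser graphs collected in Theorem~\ref{theorem: kneser_graph_invariants}.

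First, I would invoke Theorem~\ref{theorem: universal_findnum}, which tells us that $\Gr{G} = \KG{n}{r}$ is universal if and only if $\indnum{\Gr{G}} = \findnum{\Gr{G}}$. Thus the task reduces to deciding exactly when the independence number and the fractional independence number of the Kneser graph coincide.

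Next, I would substitute the closed-form expressions from Theorem~\ref{theorem: kneser_graph_invariants}, namely
\begin{align*}
\indnum{\KG{n}{r}} = \binom{n-1}{r-1}, \qquad \findnum{\KG{n}{r}} = \frac{\binom{n}{r}}{\lfloor n/r \rfloor}.
\end{align*}
Using the identity $\binom{n}{r} = \tfrac{n}{r}\binom{n-1}{r-1}$, the equation $\indnum{\KG{n}{r}} = \findnum{\KG{n}{r}}$ becomes
\begin{align*}
\binom{n-1}{r-1} = \frac{n}{r\,\lfloor n/r \rfloor}\binom{n-1}{r-1},
\end{align*}
which is equivalent to $n = r\,\lfloor n/r \rfloor$, i.e.\ to $r \mid n$. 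Combining this equivalence with the one from the first step yields the desired characterization.

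There is no real obstacle here: everything is an immediate consequence of results stated earlier in the preliminaries, and the only computation is the elementary manipulation of binomial coefficients above. The only point worth flagging is that the formula $\findnum{\KG{n}{r}} = \binom{n}{r}/\lfloor n/r \rfloor$ from Theorem~\ref{theorem: kneser_graph_invariants} already packages the nontrivial content (via Theorem~\ref{theorem: lower_bound_fractional_number} and the vertex-transitivity of $\KG{n}{r}$), so the corollary itself is purely a bookkeeping statement.
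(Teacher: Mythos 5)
Your proposal follows exactly the same route as the paper: reduce universality to $\indnum{\Gr{G}} = \findnum{\Gr{G}}$ via Theorem~\ref{theorem: universal_findnum}, substitute the formulas from Theorem~\ref{theorem: kneser_graph_invariants}, and observe that the resulting equation holds if and only if $r \mid n$. The only difference is that you spell out the intermediate step $\binom{n}{r} = \tfrac{n}{r}\binom{n-1}{r-1}$, which the paper leaves implicit.
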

\begin{proof}
  By Theorem~\ref{theorem: kneser_graph_invariants},
  \begin{align}
    \findnum{\Gr{G}} = \frac{\binom{n}{r}}{\lfloor\frac{n}{r}\rfloor}.
  \end{align}
  For $\Gr{G}$ to be a universal graph, the equality $\indnum{\Gr{G}} = \findnum{\Gr{G}}$ should hold
  (see Theorem~\ref{theorem: universal_findnum}). Thus, $\Gr{G}$ is universal if and only if
  \begin{align}
    \binom{n-1}{r-1} = \frac{\binom{n}{r}}{\lfloor\frac{n}{r}\rfloor},
  \end{align}
  which holds if and only if $r \mid n$, as required.
\end{proof}

\begin{example}\label{example:petersen_not_universal}
    The Petersen graph $\Gr{G}=\KG{5}{2}$ satisfies $\indnum{\Gr{G}}=4=\Theta(\Gr{G})$, yet it is not universal
    by Corollary~\ref{corollary: when Kneser graphs are universal}.
\end{example}

% ################################################################################################################################ %
% ----------------------------------------------------- Section 3: Decomposition ------------------------------------------------- %
% ################################################################################################################################ %

\section{The Shannon capacity of polynomials in graphs}\label{section:polynomial_decomposition}

The Shannon capacity of the disjoint union of graphs is at least the sum of their individual capacities \cite{Shannon56};
however, it may in fact be significantly larger \cite{Alon98}. This phenomenon is strikingly demonstrated by a result of Alon \cite[Theorem~1.1]{Alon98},
which we restate here as Theorem~\ref{theorem: capacity of union}. Moreover, Alon showed in \cite[Theorem~1.2]{Alon98} that if we let $\Gr{G}$ be the
Schl\"{a}fli graph, then
\begin{align} \label{eq: Noga}
\Theta(\Gr{G}) + \Theta(\CGr{G}) \leq 10 < 2\sqrt{27} \leq \Theta(\Gr{G}+\CGr{G}).
\end{align}
We note that the simple counterexample constructed by Haemers~\cite{Haemers79} plays a key role in this argument and was later used by Alon~\cite{Alon98}
to establish~\eqref{eq: Noga}.

We provide sufficient conditions on a sequence of graphs under which the Shannon capacity of their disjoint union equals the sum
of their individual capacities. Our results rely on a recent result by Schrijver \cite{Schrijver23}, which we include with proof
due to its central role (see Theorem~\ref{theorem: shannon_union_equivalent_product}). It states that for any two simple graphs $\Gr{G}$ and $\Gr{H}$,
\begin{align}
  \Theta(\Gr{G}\boxtimes\Gr{H}) = \Theta(\Gr{G}) \, \Theta(\Gr{H}) \iff \Theta(\Gr{G} + \Gr{H}) = \Theta(\Gr{G}) + \Theta(\Gr{H}).
\end{align}
This result was independently proved by Wigderson and Zuiddam \cite{WigdersonZ23}, with credit to Holzman for personal communication.

Let $\naturals[x_1,\ldots,x_\ell]$ be the set of nonzero polynomials in the variables $x_1,\ldots,x_\ell$ with nonnegative integer coefficients.
A polynomial in graphs, $p(\Gr{G}_1,\ldots,\Gr{G}_\ell)$, is defined by interpreting graph addition as the disjoint union and graph multiplication
as the strong product.

In this section, we derive sufficient conditions for a sequence of graphs $\Gr{G}_1, \ldots, \Gr{G}_\ell$ to satisfy the property that, for every polynomial
$p \in \naturals[x_1, \ldots, x_\ell]$, the following equality holds:
\begin{align}\label{eq1:20.4.25}
  \Theta(p(\Gr{G}_1,\Gr{G}_2,\ldots,\Gr{G}_\ell)) = p(\Theta(\Gr{G}_1),\Theta(\Gr{G}_2) , \ldots,\Theta(\Gr{G}_\ell)).
\end{align}
By a corollary of the duality theorem (Theorem~\ref{theorem: shannon_union_equivalent_product}), which was proved by Schrijver
\cite{Schrijver23}, the following surprising result holds.
\begin{lemma}\label{lemma:structured_graphs_sum_polynomial}
  Let $\Gr{G}_1,\Gr{G}_2,\ldots,\Gr{G}_\ell$ be a sequence of simple graphs. Then, equality \eqref{eq1:20.4.25}
  holds for every $p\in\naturals[x_1,\ldots,x_\ell]$ if and only if
  \begin{align}\label{eq2:20.4.25}
  \Theta(\Gr{G}_1 + \Gr{G}_2 + \ldots + \Gr{G}_\ell) = \Theta(\Gr{G}_1) + \Theta(\Gr{G}_2) + \ldots + \Theta(\Gr{G}_\ell).
  \end{align}
\end{lemma}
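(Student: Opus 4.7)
The reverse implication follows immediately by substituting the linear polynomial $p(x_1,\ldots,x_\ell) = x_1+\cdots+x_\ell$ into \eqref{eq1:20.4.25}. The substantive direction is that the single additive identity \eqref{eq2:20.4.25} forces the polynomial identity \eqref{eq1:20.4.25} for every $p \in \naturals[x_1,\ldots,x_\ell]$. The plan is two-stage: first establish the monomial identity $\Theta(\Gr{G}_1^{\beta_1}\boxtimes\cdots\boxtimes\Gr{G}_\ell^{\beta_\ell}) = \prod_{i=1}^\ell \Theta(\Gr{G}_i)^{\beta_i}$ for every multi-index $\beta \in \naturals^\ell$ (allowing zero entries), and then lift this to arbitrary polynomials via a multinomial-expansion argument patterned on the proof of Theorem~\ref{theorem: shannon_union_equivalent_product}.

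For the monomial identity, I would expand $\bigl(\Gr{G}_1+\cdots+\Gr{G}_\ell\bigr)^k$ using distributivity of $\boxtimes$ over $+$ to obtain the multinomial decomposition $\sum_{|\beta|=k}\binom{k}{\beta}\bigl(\Gr{G}_1^{\beta_1}\boxtimes\cdots\boxtimes\Gr{G}_\ell^{\beta_\ell}\bigr)$, and then evaluate its Shannon capacity in two ways. On one side, Theorem~\ref{theorem: shannon_power} together with \eqref{eq2:20.4.25} and the numerical multinomial theorem give $\Theta\bigl((\Gr{G}_1+\cdots+\Gr{G}_\ell)^k\bigr) = \bigl(\sum_{i=1}^\ell \Theta(\Gr{G}_i)\bigr)^k = \sum_{|\beta|=k}\binom{k}{\beta}\prod_{i=1}^\ell \Theta(\Gr{G}_i)^{\beta_i}$. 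On the other side, Shannon's inequality (Theorem~\ref{theorem: shannon_disjoint_union}) followed by Theorems~\ref{theorem: shannon_strong_product} and~\ref{theorem: shannon_power} produce the reverse chain $\Theta\bigl((\Gr{G}_1+\cdots+\Gr{G}_\ell)^k\bigr) \geq \sum_{|\beta|=k}\binom{k}{\beta}\Theta\bigl(\Gr{G}_1^{\beta_1}\boxtimes\cdots\boxtimes\Gr{G}_\ell^{\beta_\ell}\bigr) \geq \sum_{|\beta|=k}\binom{k}{\beta}\prod_{i=1}^\ell \Theta(\Gr{G}_i)^{\beta_i}$. Since the two extremes coincide, every intermediate inequality must be an equality, and the term-by-term $\geq$-comparison with positive multinomial weights forces the monomial identity for each $\beta$ with $|\beta|=k$; letting $k$ vary over $\naturals$ covers every nonnegative multi-index.

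For the polynomial step, write $\Gr{M}_\alpha \triangleq \Gr{G}_1^{\alpha_1}\boxtimes\cdots\boxtimes\Gr{G}_\ell^{\alpha_\ell}$ and $\Gr{P} \triangleq p(\Gr{G}_1,\ldots,\Gr{G}_\ell) = \sum_\alpha c_\alpha \Gr{M}_\alpha$. The lower bound $\Theta(\Gr{P}) \geq p(\Theta(\Gr{G}_1),\ldots,\Theta(\Gr{G}_\ell))$ is immediate from Theorem~\ref{theorem: shannon_disjoint_union} combined with the monomial identity. For the matching upper bound, I would expand $\Gr{P}^k$ by iterated distributivity into $\sum_{(\alpha^{(1)},\ldots,\alpha^{(k)})}\bigl(\prod_{j=1}^k c_{\alpha^{(j)}}\bigr)\,\Gr{M}_{\alpha^{(1)}}\boxtimes\cdots\boxtimes\Gr{M}_{\alpha^{(k)}}$, then use the crucial observation that $\Gr{M}_{\alpha^{(1)}}\boxtimes\cdots\boxtimes\Gr{M}_{\alpha^{(k)}} = \Gr{M}_{\alpha^{(1)}+\cdots+\alpha^{(k)}}$ is itself a monomial whose capacity equals $\prod_{j=1}^k\Theta(\Gr{M}_{\alpha^{(j)}})$ by the first stage; combining $\indnum{\cdot}\leq\Theta(\cdot)$ with the additivity of $\indnum{\cdot}$ over disjoint union (Theorem~\ref{theorem: independence_number_strong_product}) then yields $\indnum{\Gr{P}^k}\leq \bigl(\sum_\alpha c_\alpha\Theta(\Gr{M}_\alpha)\bigr)^k = p(\Theta(\Gr{G}_1),\ldots,\Theta(\Gr{G}_\ell))^k$, and taking $k$-th roots with $k\to\infty$ closes the gap via Definition~\ref{definition:shannon_capacity}. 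The main obstacle I anticipate is not conceptual but notational: keeping the multi-index bookkeeping legible through the double expansion of $\Gr{P}^k$ while making transparent that every cross term lies in the family of monomials for which the first stage has already fixed the capacity.
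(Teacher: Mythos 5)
Your proof is correct. The paper does not supply its own proof of this lemma --- it cites Schrijver's Theorem~3 and presents the lemma as a corollary of the duality theorem (Theorem~\ref{theorem: shannon_union_equivalent_product}), whose proof the paper does give; your two-stage multinomial-expansion argument is precisely the natural $\ell$-variable generalization of that proof, with the squeeze in Stage~1 (extracting the monomial identity term by term from the sandwich, using the positivity of multinomial coefficients and the a priori inequality $\Theta(\Gr{M}_\beta)\geq\prod_i\Theta(\Gr{G}_i)^{\beta_i}$) replacing the assumed multiplicativity in the paper's direction~(2) of Theorem~\ref{theorem: shannon_union_equivalent_product}, and the $\indnum$-based upper bound in Stage~2 mirroring the paper's own computation.
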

By Lemma~\ref{lemma:structured_graphs_sum_polynomial} (see \cite[Theorem~3]{Schrijver23}), we focus on finding sufficient conditions for the satisfiability of \eqref{eq2:20.4.25}.
In this section, we present two main results that provide such sufficient conditions, discuss the differences between them, and illustrate their application.
We start by providing the following result.
\begin{theorem}\label{theorem: capacity_lovasz_bound}
  Let $\Gr{G}_1,\ldots,\Gr{G}_\ell$ be simple graphs, with $\ell\in\naturals$. Then, for every $p\in\naturals[x_1,\ldots,x_\ell]$,
  \begin{align}\label{eq1:9.12.2024}
    p(\Theta(\Gr{G}_1),\ldots,\Theta(\Gr{G}_\ell))\leq\Theta(p(\Gr{G}_1,\ldots,\Gr{G}_\ell))\leq p(\vartheta(\Gr{G}_1),\ldots,\vartheta(\Gr{G}_\ell)).
  \end{align}
\end{theorem}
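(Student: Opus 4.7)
The plan is to establish the two inequalities separately, using for each bound the behavior of a different graph invariant under disjoint union and strong product. Write $p$ as
\begin{equation*}
p(x_1,\ldots,x_\ell) = \sum_{j=1}^{N} c_j \, x_1^{k_{1,j}} x_2^{k_{2,j}} \cdots x_\ell^{k_{\ell,j}},
\end{equation*}
with $c_j \in \naturals$ and $k_{i,j} \in \naturals \cup \{0\}$. The corresponding polynomial in graphs is the disjoint union, with integer multiplicities $c_j$, of the strong-product monomial graphs $M_j \eqdef \Gr{G}_1^{k_{1,j}} \boxtimes \cdots \boxtimes \Gr{G}_\ell^{k_{\ell,j}}$.

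\textbf{Lower bound.} I would prove this via two nested iterations of the super-multiplicativity and super-additivity of $\Theta$. First, iterating Theorem~\ref{theorem: shannon_strong_product} and invoking Theorem~\ref{theorem: shannon_power} yields, for each monomial,
\begin{equation*}
\Theta(M_j) \geq \prod_{i=1}^{\ell} \Theta\bigl(\Gr{G}_i^{k_{i,j}}\bigr) = \prod_{i=1}^{\ell} \Theta(\Gr{G}_i)^{k_{i,j}}.
\end{equation*}
Then applying Theorem~\ref{theorem: shannon_disjoint_union} across the $N$ monomials, together with Theorem~\ref{theorem: shannon_scalar} to account for the integer coefficients $c_j$, gives
\begin{equation*}
\Theta\bigl(p(\Gr{G}_1,\ldots,\Gr{G}_\ell)\bigr) \geq \sum_{j=1}^{N} c_j \, \Theta(M_j) \geq \sum_{j=1}^{N} c_j \prod_{i=1}^{\ell} \Theta(\Gr{G}_i)^{k_{i,j}} = p\bigl(\Theta(\Gr{G}_1),\ldots,\Theta(\Gr{G}_\ell)\bigr),
\end{equation*}
which is the lower bound.

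\textbf{Upper bound.} The key observation is that the Lov\'{a}sz $\vartheta$-function commutes exactly with the polynomial operations: by Theorem~\ref{theorem: lovasz_strong_product} it is multiplicative under $\boxtimes$, and by Theorem~\ref{theorem: lovasz_disjoint_union} it is additive under $+$ (so in particular $\vartheta(c\Gr{G}) = c\,\vartheta(\Gr{G})$ for $c \in \naturals$). A straightforward induction on the number of monomials, using these two facts, yields
\begin{equation*}
\vartheta\bigl(p(\Gr{G}_1,\ldots,\Gr{G}_\ell)\bigr) = p\bigl(\vartheta(\Gr{G}_1),\ldots,\vartheta(\Gr{G}_\ell)\bigr).
\end{equation*}
Combining this with the sandwich bound $\Theta(\Gr{H}) \leq \vartheta(\Gr{H})$ from Theorem~\ref{theorem: sandwich_theorem}, applied to $\Gr{H} = p(\Gr{G}_1,\ldots,\Gr{G}_\ell)$, finishes the upper bound.

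There is no real obstacle here; the result is essentially a packaging statement. The only mild care required is notational: one must verify that the induction on polynomial structure can absorb integer scalar coefficients (via Theorem~\ref{theorem: shannon_scalar} on the $\Theta$-side and via additivity of $\vartheta$ on the $\vartheta$-side) and that Theorem~\ref{theorem: shannon_strong_product} can be iterated consistently across several factors. Both are routine.
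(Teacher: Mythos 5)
Your proof is correct and follows essentially the same route as the paper's: super-multiplicativity and super-additivity of $\Theta$ (Theorems~\ref{theorem: shannon_strong_product}, \ref{theorem: shannon_disjoint_union}) give the lower bound, and exact multiplicativity/additivity of $\vartheta$ (Theorems~\ref{theorem: lovasz_strong_product}, \ref{theorem: lovasz_disjoint_union}) combined with $\Theta\leq\vartheta$ give the upper bound. The paper states this more tersely, but the monomial decomposition you spell out is exactly what is implicit there.
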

\begin{proof}
  By Theorems~\ref{theorem: shannon_strong_product} and~\ref{theorem: shannon_disjoint_union},
  \begin{align}  \label{eq1: 26.01.26}
     p(\Theta(\Gr{G}_1),\ldots,\Theta(\Gr{G}_\ell))\leq\Theta(p(\Gr{G}_1,\ldots,\Gr{G}_\ell)),
  \end{align}
  and by Theorems~\ref{theorem: lovasz_strong_product} and \ref{theorem: lovasz_disjoint_union},
  \begin{align}
    \Theta(p(\Gr{G}_1,\ldots,\Gr{G}_\ell)) & \leq \vartheta(p(\Gr{G}_1,\ldots,\Gr{G}_\ell)) \nonumber \\
    & = p(\vartheta(\Gr{G}_1),\ldots,\vartheta(\Gr{G}_\ell)).  \label{eq2: 26.01.26}
  \end{align}
  Combining \eqref{eq1: 26.01.26} and \eqref{eq2: 26.01.26} gives both inequalities in \eqref{eq1:9.12.2024}.
\end{proof}

\begin{corollary}\label{corollary:capacity_lovasz_polynomial_eq}
  Let $\Gr{G}_1, \ldots, \Gr{G}_\ell$ be simple graphs, and let $p\in\naturals[x_1, \ldots, x_\ell]$ with $\ell \in \naturals$.
  If $\Theta(\Gr{G}_i) = \vartheta(\Gr{G}_i)$ for every $i \in \OneTo{\ell}$, then
  \begin{align}\label{eq2:9.12.2024}
    \Theta(p(\Gr{G}_1, \ldots, \Gr{G}_\ell)) = p\bigl(\vartheta(\Gr{G}_1), \ldots, \vartheta(\Gr{G}_\ell)\bigr).
  \end{align}
\end{corollary}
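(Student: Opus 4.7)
The plan is to observe that this corollary is an immediate sandwich argument using the chain of inequalities already established in Theorem~\ref{theorem: capacity_lovasz_bound}. That theorem provides, for any polynomial $p\in\naturals[x_1,\ldots,x_\ell]$, the two-sided bound
\begin{align*}
  p(\Theta(\Gr{G}_1),\ldots,\Theta(\Gr{G}_\ell)) \leq \Theta(p(\Gr{G}_1,\ldots,\Gr{G}_\ell)) \leq p(\vartheta(\Gr{G}_1),\ldots,\vartheta(\Gr{G}_\ell)).
\end{align*}
The hypothesis $\Theta(\Gr{G}_i)=\vartheta(\Gr{G}_i)$ for every $i\in\OneTo{\ell}$ forces the left and right members of this chain to coincide, since a polynomial with nonnegative integer coefficients is a well-defined function of the common values, so both reduce to $p(\vartheta(\Gr{G}_1),\ldots,\vartheta(\Gr{G}_\ell))$.

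Concretely, I would first invoke the lower bound half of Theorem~\ref{theorem: capacity_lovasz_bound} to get $p(\vartheta(\Gr{G}_1),\ldots,\vartheta(\Gr{G}_\ell)) = p(\Theta(\Gr{G}_1),\ldots,\Theta(\Gr{G}_\ell)) \leq \Theta(p(\Gr{G}_1,\ldots,\Gr{G}_\ell))$, then invoke the upper bound half to get the reverse inequality $\Theta(p(\Gr{G}_1,\ldots,\Gr{G}_\ell)) \leq p(\vartheta(\Gr{G}_1),\ldots,\vartheta(\Gr{G}_\ell))$, and conclude the desired equality~\eqref{eq2:9.12.2024}. No additional machinery beyond Theorem~\ref{theorem: capacity_lovasz_bound} is required.

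There is essentially no obstacle here; the corollary is a direct specialization of the preceding theorem. The only minor point worth being explicit about is that the substitution of equal arguments into $p$ is valid because $p$ is a polynomial in $\ell$ variables, hence a function of them, so equal inputs yield equal outputs. Thus the proof is a two-line deduction and does not require any additional argument involving the structure of the graphs $\Gr{G}_1,\ldots,\Gr{G}_\ell$ beyond the hypothesis $\Theta(\Gr{G}_i)=\vartheta(\Gr{G}_i)$.
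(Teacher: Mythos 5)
Your proof is correct and follows exactly the same sandwich argument as the paper's: substitute $\Theta(\Gr{G}_i)=\vartheta(\Gr{G}_i)$ to see that the two outer bounds of Theorem~\ref{theorem: capacity_lovasz_bound} coincide, forcing equality throughout. Nothing is missing.
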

\begin{proof}
  By assumption, it follows that
  \begin{align}
    p(\Theta(\Gr{G}_1), \ldots, \Theta(\Gr{G}_\ell)) = p\bigl(\vartheta(\Gr{G}_1), \ldots, \vartheta(\Gr{G}_\ell) \bigr).
  \end{align}
  Thus, equality \eqref{eq2:9.12.2024} holds by Theorem~\ref{theorem: capacity_lovasz_bound}.
\end{proof}

The next result readily follows.
\begin{theorem}\label{theorem: structured_first_result}
  Let $\Gr{G}_1, \ldots, \Gr{G}_\ell$ be simple graphs, with $\ell \in \naturals$. If $\Theta(\Gr{G}_i) = \vartheta(\Gr{G}_i)$
  for every $i \in \OneTo{\ell}$, then equality \eqref{eq1:20.4.25} holds for every $p \in \naturals[x_1, \ldots, x_\ell]$.
\end{theorem}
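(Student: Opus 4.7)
The plan is to observe that this theorem is essentially an immediate consequence of the sandwich bound in Theorem~\ref{theorem: capacity_lovasz_bound}, so no new machinery is needed. The earlier work establishing the two-sided polynomial inequality between $\Theta$ and $\vartheta$ does all the heavy lifting; the hypothesis $\Theta(\Gr{G}_i)=\vartheta(\Gr{G}_i)$ collapses the two bounds onto each other.

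Concretely, I would start by fixing an arbitrary polynomial $p\in\naturals[x_1,\ldots,x_\ell]$ and invoking Theorem~\ref{theorem: capacity_lovasz_bound} to write
\begin{align*}
  p(\Theta(\Gr{G}_1),\ldots,\Theta(\Gr{G}_\ell))
  \;\leq\; \Theta\bigl(p(\Gr{G}_1,\ldots,\Gr{G}_\ell)\bigr)
  \;\leq\; p(\vartheta(\Gr{G}_1),\ldots,\vartheta(\Gr{G}_\ell)).
\end{align*}
By the hypothesis, $\Theta(\Gr{G}_i)=\vartheta(\Gr{G}_i)$ for every $i\in\OneTo{\ell}$, so substituting coordinate-wise makes the leftmost and rightmost expressions identical. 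The middle term is therefore squeezed to their common value $p(\Theta(\Gr{G}_1),\ldots,\Theta(\Gr{G}_\ell))$, which is exactly equation \eqref{eq1:20.4.25}. Equivalently, one can quote Corollary~\ref{corollary:capacity_lovasz_polynomial_eq} directly and then rewrite $\vartheta(\Gr{G}_i)$ as $\Theta(\Gr{G}_i)$ on the right-hand side, which is essentially the same argument packaged differently.

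There is no real obstacle here: the nontrivial content is already packed into Theorem~\ref{theorem: capacity_lovasz_bound}, which itself relies on the multiplicativity of $\vartheta$ under strong products (Theorem~\ref{theorem: lovasz_strong_product}) and its additivity under disjoint unions (Theorem~\ref{theorem: lovasz_disjoint_union}), together with the monotonicity of $\vartheta$ as an upper bound on $\Theta$ via the sandwich theorem. Given those results, the proof reduces to a one-line squeeze argument, and no induction on the structure of $p$ is required because the polynomial bound already handles arbitrary combinations of sums and strong products.
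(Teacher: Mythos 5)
Your proof is correct, and it is genuinely shorter than the paper's. You prove the full polynomial equality directly from the two-sided sandwich of Theorem~\ref{theorem: capacity_lovasz_bound}: since $\Theta(\Gr{G}_i)=\vartheta(\Gr{G}_i)$ for every $i$, the lower bound $p(\Theta(\Gr{G}_1),\ldots,\Theta(\Gr{G}_\ell))$ and the upper bound $p(\vartheta(\Gr{G}_1),\ldots,\vartheta(\Gr{G}_\ell))$ coincide, so the middle term $\Theta(p(\Gr{G}_1,\ldots,\Gr{G}_\ell))$ is pinned to their common value, which is exactly \eqref{eq1:20.4.25}. This is essentially the paper's Corollary~\ref{corollary:capacity_lovasz_polynomial_eq} followed by the substitution $\vartheta(\Gr{G}_i)=\Theta(\Gr{G}_i)$, and there is nothing missing.

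The paper takes a different route: it only verifies the equality for the specific linear polynomial $p(x_1,\ldots,x_\ell)=x_1+\cdots+x_\ell$, using the additivity of $\vartheta$ under disjoint union (Theorem~\ref{theorem: lovasz_disjoint_union}) and the sandwich $\Theta \leq \vartheta$, and then invokes Schrijver's reduction (Lemma~\ref{lemma:structured_graphs_sum_polynomial}, which rests on the duality theorem) to upgrade that single identity to every $p\in\naturals[x_1,\ldots,x_\ell]$. Your approach bypasses Lemma~\ref{lemma:structured_graphs_sum_polynomial} entirely because Theorem~\ref{theorem: capacity_lovasz_bound} already handles arbitrary combinations of sums and strong products in one stroke. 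The trade-off: your route is more economical as a standalone proof of this theorem, while the paper's route showcases how Schrijver's duality machinery reduces the polynomial case to the much more tractable sum case, which is the organizing principle of the whole section and is reused (e.g.\ in Theorem~\ref{theorem: structured_second_result}, where only one of the graphs needs to satisfy the hypothesis, so a polynomial sandwich on each factor is unavailable and the reduction-to-sum strategy is the one that works).
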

\begin{proof}
  This holds by Corollary~\ref{corollary:capacity_lovasz_polynomial_eq} upon replacing $\Theta(\Gr{G}_i)$ with $\vartheta(\Gr{G}_i)$
  for all $i \in \OneTo{\ell}$.
\end{proof}

\begin{example}
  If $\Gr{G}_1,\ldots,\Gr{G}_\ell$ are all Kneser graphs or self-complementary vertex-transitive graphs, then by Theorem~\ref{theorem: structured_first_result},
  equality~\eqref{eq1:20.4.25} holds for every polynomial $p\in\naturals[x_1,\ldots,x_\ell]$.
  In particular, if $\Gr{G}_i = \KG{n_i}{r_i}$ (with $n_i\geq 2r_i$) for every $i\in\OneTo{\ell}$, then by Theorem~\ref{theorem: kneser_graph_invariants},
  \begin{align}
    \Theta(\KG{n_1}{r_1} + \ldots + \KG{n_\ell}{r_\ell}) = \binom{n_1 -1}{r_1 -1} + \ldots + \binom{n_\ell -1}{r_\ell -1}.
  \end{align}
  Similarly, if $\Gr{G}_i$ is a self-complementary and vertex-transitive graph for all $i\in\OneTo{\ell}$, then by \cite[Theorem~3.26]{Sason24},
  \begin{align}
    \Theta(\Gr{G}_1 + \ldots + \Gr{G}_\ell) = \sqrt{n_1} + \ldots + \sqrt{n_\ell},
  \end{align}
  where $n_i$ is the order of $\Gr{G}_i$.
\end{example}

Next, we give a second sufficient condition for equality~\eqref{eq1:20.4.25} to hold.
\begin{lemma}\label{lemma:structured_second_result}
  Let $\Gr{G}_1$ and $\Gr{G}_2$ be simple graphs. If $\Theta(\Gr{G}_1)=\findnum{\Gr{G}_1}$, then
  \begin{align}\label{eq:lemma_structured_second_result}
    \Theta(\Gr{G}_1 + \Gr{G}_2)=\Theta(\Gr{G}_1) + \Theta(\Gr{G}_2).
  \end{align}
\end{lemma}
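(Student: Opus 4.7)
The plan is to reduce the claim to the multiplicative version of the duality theorem (Theorem~\ref{theorem: shannon_union_equivalent_product}) and then to obtain the needed upper bound on $\Theta(\Gr{G}_1 \boxtimes \Gr{G}_2)$ by exploiting the hypothesis $\Theta(\Gr{G}_1) = \findnum{\Gr{G}_1}$ through the fractional bound in Theorem~\ref{theorem: fractional_independence_strong_product}.

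First, I would observe that by the duality theorem, \eqref{eq:lemma_structured_second_result} is equivalent to
\begin{align*}
  \Theta(\Gr{G}_1 \boxtimes \Gr{G}_2) = \Theta(\Gr{G}_1) \, \Theta(\Gr{G}_2).
\end{align*}
The inequality $\geq$ is immediate from Theorem~\ref{theorem: shannon_strong_product}, so only the reverse inequality is new.

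Next, for the upper bound I would use the inequality $\indnum{\Gr{G}\boxtimes \Gr{H}} \leq \findnum{\Gr{G}}\,\indnum{\Gr{H}}$ from Theorem~\ref{theorem: fractional_independence_strong_product}, together with the multiplicativity $\findnum{\Gr{G}_1^k} = \findnum{\Gr{G}_1}^k$, which follows by induction from the same theorem. Applied to $(\Gr{G}_1 \boxtimes \Gr{G}_2)^k = \Gr{G}_1^k \boxtimes \Gr{G}_2^k$, this yields
\begin{align*}
  \indnum{\Gr{G}_1^k \boxtimes \Gr{G}_2^k} \leq \findnum{\Gr{G}_1}^k \, \indnum{\Gr{G}_2^k}.
\end{align*}
Taking the $k$-th root and letting $k \to \infty$, using Definition~\ref{definition:shannon_capacity}, gives
\begin{align*}
  \Theta(\Gr{G}_1 \boxtimes \Gr{G}_2) \leq \findnum{\Gr{G}_1} \, \Theta(\Gr{G}_2).
\end{align*}
Here the hypothesis enters: replacing $\findnum{\Gr{G}_1}$ by $\Theta(\Gr{G}_1)$ produces exactly the desired bound $\Theta(\Gr{G}_1 \boxtimes \Gr{G}_2) \leq \Theta(\Gr{G}_1)\,\Theta(\Gr{G}_2)$, completing the equality in the strong product, and hence, via Theorem~\ref{theorem: shannon_union_equivalent_product}, in the disjoint union.

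There is no real obstacle here; the argument is essentially bookkeeping around the fractional bound and the duality theorem. The only point requiring a moment of care is the multiplicativity of $\findnum{\cdot}$ under the strong product, which is only stated in Theorem~\ref{theorem: fractional_independence_strong_product} for two factors but extends to arbitrary powers by a one-line induction. An alternative route, which gives the same conclusion more directly without invoking duality, is to expand $(\Gr{G}_1 + \Gr{G}_2)^k$ by the binomial theorem, bound each term $\binom{k}{\ell}\indnum{\Gr{G}_1^\ell \boxtimes \Gr{G}_2^{k-\ell}} \leq \binom{k}{\ell}\Theta(\Gr{G}_1)^\ell \Theta(\Gr{G}_2)^{k-\ell}$ via the same fractional inequality, sum to $(\Theta(\Gr{G}_1)+\Theta(\Gr{G}_2))^k$, and take the $k$-th root; this is essentially the proof already used in Theorem~\ref{theorem: shannon_union_equivalent_product}, adapted to the present setting.
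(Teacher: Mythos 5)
Your proposal matches the paper's proof essentially verbatim: both reduce via Theorem~\ref{theorem: shannon_union_equivalent_product} to the multiplicative statement, bound $\indnum{\Gr{G}_1^k \boxtimes \Gr{G}_2^k}$ by $\findnum{\Gr{G}_1}^k\,\indnum{\Gr{G}_2^k}$ using Theorem~\ref{theorem: fractional_independence_strong_product}, take the $k$-th root limit, and then apply the hypothesis $\Theta(\Gr{G}_1) = \findnum{\Gr{G}_1}$. The one thing you make explicit that the paper leaves implicit is that $\findnum{\Gr{G}_1^k} = \findnum{\Gr{G}_1}^k$ requires an induction on the multiplicativity result, which is a fair observation but does not change the argument.
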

\begin{proof}
  By Theorem~\ref{theorem: shannon_strong_product},
  \begin{align}\label{eq1:17.8.25}
    \Theta(\Gr{G}_1) \, \Theta(\Gr{G}_2)\leq\Theta(\Gr{G}_1\boxtimes\Gr{G}_2),
  \end{align}
  and by Theorem~\ref{theorem: fractional_independence_strong_product},
  \begin{align}
    \Theta(\Gr{G}_1\boxtimes\Gr{G}_2) & = \lim_{k\to\infty} \sqrt[k]{\indnum{(\Gr{G}_1\boxtimes\Gr{G}_2)^k}} \nonumber \\
    & \leq \lim_{k\to\infty} \sqrt[k]{\findnum{\Gr{G}_1^k} \, \indnum{\Gr{G}_2^k}} \nonumber \\
    & = \findnum{\Gr{G}_1} \, \Theta(\Gr{G}_2),   \label{eq2:17.8.25}
  \end{align}
  where \eqref{eq2:17.8.25} holds since the equality $\findnum{\Gr{G}_1} = \sqrt[k]{\findnum{\Gr{G}_1^k}}$ is satisfied for all $k \in \naturals$,
  and by definition $\Theta(\Gr{G}_2) = \underset{k \to \infty}{\lim} \sqrt[k]{\indnum{\Gr{G}_2^k} }$.
  Combining inequalities \eqref{eq1:17.8.25} and \eqref{eq2:17.8.25} gives
  \begin{align}
    \Theta(\Gr{G}_1) \, \Theta(\Gr{G}_2) & \leq\Theta(\Gr{G}_1\boxtimes\Gr{G}_2) \nonumber \\
    & \leq\findnum{\Gr{G}_1} \, \Theta(\Gr{G}_2).
  \end{align}
  Hence, by the assumption,
  \begin{align}
    \Theta(\Gr{G}_1) \, \Theta(\Gr{G}_2)=\Theta(\Gr{G}_1\boxtimes\Gr{G}_2),
  \end{align}
  and equality~\eqref{eq:lemma_structured_second_result} follows from Theorem~\ref{theorem: shannon_union_equivalent_product}.
\end{proof}

\begin{theorem}\label{theorem: structured_second_result}
  Let $\Gr{G}_1,\Gr{G}_2,\ldots,\Gr{G}_\ell$ be simple graphs. If $\Theta(\Gr{G}_i)=\findnum{\Gr{G}_i}$ for (at least) $\ell - 1$ of these graphs, then
  equality~\eqref{eq1:20.4.25} holds for every $p\in\naturals[x_1,\ldots,x_\ell]$.
\end{theorem}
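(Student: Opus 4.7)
The plan is to invoke Lemma~\ref{lemma:structured_graphs_sum_polynomial}, which reduces the theorem to the single additivity identity
\begin{align*}
  \Theta(\Gr{G}_1 + \Gr{G}_2 + \ldots + \Gr{G}_\ell) = \Theta(\Gr{G}_1) + \Theta(\Gr{G}_2) + \ldots + \Theta(\Gr{G}_\ell).
\end{align*}
Since disjoint union is commutative and Shannon capacity is invariant under graph isomorphism, I would, without loss of generality, relabel the graphs so that the $\ell - 1$ graphs satisfying $\Theta(\Gr{G}_i) = \findnum{\Gr{G}_i}$ are $\Gr{G}_1, \ldots, \Gr{G}_{\ell-1}$.

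The argument then proceeds by induction on $\ell \geq 2$. The base case $\ell = 2$ is exactly the content of Lemma~\ref{lemma:structured_second_result}. For the inductive step, set $\Gr{H} \triangleq \Gr{G}_2 + \ldots + \Gr{G}_\ell$. The graph $\Gr{G}_1$ satisfies $\Theta(\Gr{G}_1) = \findnum{\Gr{G}_1}$, so applying Lemma~\ref{lemma:structured_second_result} to the pair $(\Gr{G}_1, \Gr{H})$ yields
\begin{align*}
  \Theta(\Gr{G}_1 + \Gr{H}) = \Theta(\Gr{G}_1) + \Theta(\Gr{H}).
\end{align*}
On the other hand, the family $\Gr{G}_2, \ldots, \Gr{G}_\ell$ consists of $\ell - 1$ graphs, of which $\Gr{G}_2, \ldots, \Gr{G}_{\ell-1}$ (that is, $(\ell - 1) - 1$ of them) satisfy $\Theta = \findnum$; the inductive hypothesis therefore applies and gives $\Theta(\Gr{H}) = \Theta(\Gr{G}_2) + \ldots + \Theta(\Gr{G}_\ell)$. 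Chaining these two identities closes the induction, and an appeal to Lemma~\ref{lemma:structured_graphs_sum_polynomial} then promotes the additive identity to equality~\eqref{eq1:20.4.25} for every $p \in \naturals[x_1, \ldots, x_\ell]$.

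I do not expect a serious obstacle: the entire argument reduces, via the duality theorem (Theorem~\ref{theorem: shannon_union_equivalent_product}) already embedded in Lemma~\ref{lemma:structured_graphs_sum_polynomial}, to peeling off one graph at a time using Lemma~\ref{lemma:structured_second_result}. The only subtlety that must be tracked is that this lemma requires the pivot summand at each step to be one satisfying $\Theta = \findnum$; this is exactly why the assumption on $\ell - 1$ graphs suffices, since after removing one such pivot there always remains at least one graph in the residual union that still satisfies $\Theta = \findnum$, ready to serve as the pivot at the next inductive step.
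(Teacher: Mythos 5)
Your proposal is correct and follows essentially the same route as the paper: after the WLOG relabeling, the paper likewise peels off one pivot graph at a time via Lemma~\ref{lemma:structured_second_result} (stated there as "a recursive application") and then invokes Lemma~\ref{lemma:structured_graphs_sum_polynomial} to upgrade the additive identity to the polynomial statement. Your version merely makes the induction explicit and verifies the bookkeeping on how many graphs in the residual family still satisfy $\Theta = \alpha_{\mathrm{f}}$, which is a welcome clarification but not a different argument.
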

\begin{proof}
  Without loss of generality, assume $\Theta(\Gr{G}_i)=\findnum{\Gr{G}_i}$ for every $i\in\OneTo{\ell-1}$.
  By a recursive application of Lemma~\ref{lemma:structured_second_result},
  \begin{align}
    \Theta\left(\sum_{i=1}^{\ell}{\Gr{G}_i}\right) = \Theta(\Gr{G}_1) + \Theta\left(\sum_{i=2}^{\ell}{\Gr{G}_i}\right) = \ldots = \sum_{i=1}^{\ell}{\Theta(\Gr{G}_i)}.
  \end{align}
  Thus, by Lemma~\ref{lemma:structured_graphs_sum_polynomial}, equality~\eqref{eq1:20.4.25} holds for every $p\in\naturals[x_1,\ldots,x_\ell]$.
\end{proof}
It is worth noting that our sufficient conditions for equality~\eqref{eq1:20.4.25} to hold are less restrictive than those hinted at in Shannon's paper \cite{Shannon56},
which are presented next. The following theorem is derived directly from Lemma~\ref{lemma:structured_second_result} and the sandwich theorem. We provide the original
proof in Appendix~\ref{appendix: original proof by Shannon}.
\begin{theorem}\label{theorem: shannon_condition}
  Let $\Gr{G}_1$ and $\Gr{G}_2$ be simple graphs with $\indnum{\Gr{G}_1} = \chrnum{\CGr{G}_1}$. Then,
  \begin{align}
    \Theta(\Gr{G}_1 + \Gr{G}_2)=\Theta(\Gr{G}_1) + \Theta(\Gr{G}_2).
  \end{align}
\end{theorem}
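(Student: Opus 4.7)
The plan is to observe that the hypothesis $\indnum{\Gr{G}_1}=\chrnum{\CGr{G}_1}$ collapses the entire sandwich of Theorem~\ref{theorem: sandwich_theorem} into a single common value, and then invoke Lemma~\ref{lemma:structured_second_result} to conclude. This is essentially a bookkeeping argument: no new inequality has to be established, since every ingredient is already available in the preliminaries.

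First, I would rewrite the hypothesis using Definition~\ref{definition:clique-cover_number}, namely $\chrnum{\CGr{G}_1}=\clcnum{\Gr{G}_1}$, so that the assumption reads $\indnum{\Gr{G}_1}=\clcnum{\Gr{G}_1}$. Next, I would invoke Theorem~\ref{theorem: sandwich_theorem}, which yields the chain
\begin{equation*}
\indnum{\Gr{G}_1}\leq\Theta(\Gr{G}_1)\leq\vartheta(\Gr{G}_1)\leq\findnum{\Gr{G}_1}\leq\clcnum{\Gr{G}_1}.
\end{equation*}
Since the two extreme terms of this chain coincide by assumption, every quantity in between must equal this common value; in particular, $\Theta(\Gr{G}_1)=\findnum{\Gr{G}_1}$.

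Having verified the hypothesis of Lemma~\ref{lemma:structured_second_result}, the desired equality
\begin{equation*}
\Theta(\Gr{G}_1+\Gr{G}_2)=\Theta(\Gr{G}_1)+\Theta(\Gr{G}_2)
\end{equation*}
follows immediately by applying that lemma to the pair $(\Gr{G}_1,\Gr{G}_2)$. There is essentially no obstacle in this argument: the work has already been absorbed into Lemma~\ref{lemma:structured_second_result} (which uses the duality theorem together with the strong-product inequality $\indnum{\Gr{G}\boxtimes\Gr{H}}\leq\findnum{\Gr{G}}\,\indnum{\Gr{H}}$ from Theorem~\ref{theorem: fractional_independence_strong_product}), so the role of the present theorem is only to record the classical combinatorial sufficient condition $\indnum{\Gr{G}_1}=\chrnum{\CGr{G}_1}$ that implies $\Theta(\Gr{G}_1)=\findnum{\Gr{G}_1}$. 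The only mild subtlety worth spelling out is the use of the sandwich theorem as a squeeze, which is what makes the proof essentially a one-liner compared with Shannon's original combinatorial derivation presented in Appendix~\ref{appendix: original proof by Shannon}.
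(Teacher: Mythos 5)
Your proof is correct and is precisely the derivation the paper itself indicates, namely observing that the sandwich inequality of Theorem~\ref{theorem: sandwich_theorem} together with the hypothesis $\indnum{\Gr{G}_1}=\clcnum{\Gr{G}_1}$ forces $\Theta(\Gr{G}_1)=\findnum{\Gr{G}_1}$, after which Lemma~\ref{lemma:structured_second_result} gives the conclusion. The paper states this squeeze argument as the intended derivation and relegates Shannon's original adjacency-reducing-mapping proof to Appendix~\ref{appendix: original proof by Shannon}, so your route matches the paper's.
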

\begin{remark}
The sufficient condition in Theorem~\ref{theorem: shannon_condition} is more restrictive than the one in
Lemma~\ref{lemma:structured_second_result}. This holds since by Theorem~\ref{theorem: sandwich_theorem},
  \begin{align}
    \indnum{\Gr{G}} \leq \Theta(\Gr{G})\leq \findnum{\Gr{G}} \leq \chrnum{\CGr{G}}.
  \end{align}
\end{remark}
The next example suggests a family of graphs for which $\Theta(\Gr{G})=\findnum{\Gr{G}} < \chrnum{\CGr{G}}$ holds for every graph $\Gr{G}$ in that family,
thus showing the possible applicability of Lemma~\ref{lemma:structured_second_result} in cases where the conditions in Theorem~\ref{theorem: shannon_condition}
are not satisfied.
\begin{example}
  Let $\Gr{G}=\overline{\KG{n}{r}}$ be the complement of a Kneser graph where $n>2r$, $r>1$, and $r \mid n$. It is claimed that the graph $\Gr{G}$ satisfies
  \begin{align}
    \Theta(\Gr{G}) = \findnum{\Gr{G}} < \chrnum{\CGr{G}}.
  \end{align}
  Indeed, the invariants of the Kneser graph and its complement are known, and by Theorem~\ref{theorem: kneser_graph_invariants} (since $\indnum{\Gr{G}} = \clnum{\CGr{G}}$, where $\CGr{G} = \KG{n}{r}$ with $r$ and $n$ as above),
  \begin{align}
    \indnum{\Gr{G}} & = \frac{n}{r}, \\[0.1cm]
    \findnum{\Gr{G}} & = \frac{n}{r}, \\
    \chrnum{\CGr{G}} & = n - 2r + 2.
  \end{align}
  Since the independence number and the fractional independence number coincide and are both equal to $\frac{n}{r}$, then $\Theta(\Gr{G})=\frac{n}{r}$.
  Hence, the assumption $n>2r$ implies that
  \begin{align}
    \Theta(\Gr{G}) = \findnum{\Gr{G}} = \frac{n}{r} < n - 2r + 2 = \chrnum{\CGr{G}}.
  \end{align}
\end{example}

\begin{remark}\label{remark:structured_results_comparison}
  The sufficient conditions provided by Theorems~\ref{theorem: structured_first_result} and~\ref{theorem: structured_second_result}
  do not supersede each other (see Examples~\ref{example:structured_first_over_second} and \ref{example:structured_second_over_first}).
  More explicitly, for every graph $\Gr{G}$,
  \begin{align}
    \Theta(\Gr{G}) \leq \vartheta(\Gr{G}) \leq \findnum{\Gr{G}},
  \end{align}
  so the condition $\Theta(\Gr{G}) = \findnum{\Gr{G}}$ implies that the equality $\Theta(\Gr{G}) = \vartheta(\Gr{G})$ holds.
  Hence, Theorem~\ref{theorem: structured_second_result} imposes a stronger condition than the one in Theorem~\ref{theorem: structured_first_result}
  on $\ell - 1$ of the graphs, while no condition is imposed on the $\ell$th graph.
\end{remark}

The next two examples (Examples~\ref{example:structured_first_over_second} and~\ref{example:structured_second_over_first}) show that
neither Theorem~\ref{theorem: structured_first_result} nor Theorem~\ref{theorem: structured_second_result} implies the other.
\begin{example} \label{example:structured_first_over_second}
  Let $\Gr{G}_1,\ldots,\Gr{G}_\ell$ be Kneser graphs, where
  \begin{align}
    \Gr{G}_i = \KG{n_i}{r_i}, \quad n_i \geq 2 r_i.
  \end{align}
  For these non-empty graphs, by Theorem~\ref{theorem: kneser_graph_invariants},
  \begin{align}  \label{eq2: 30.12.25}
    \indnum{\Gr{G}_i} = \Theta(\Gr{G}_i) = \vartheta(\Gr{G}_i) = \binom{n_i - 1}{r_i -1}, \quad \forall \, i \in \OneTo{\ell},
  \end{align}
  and
  \begin{align}\label{eq10:18.5.25}
    \findnum{\Gr{G}_i} = \frac{\binom{n_i}{r_i}}{\bigl\lfloor\frac{n_i}{r_i}\bigr\rfloor}, \quad \forall \, i \in \OneTo{\ell}.
  \end{align}
  If $r_i \nmid n_i$, then it follows from \eqref{eq2: 30.12.25} and \eqref{eq10:18.5.25} that
  \begin{align}\label{eq11:18.5.25}
    \findnum{\Gr{G}_i} > \frac{\binom{n_i}{r_i}}{\frac{n_i}{r_i}} = \binom{n_i-1}{r_i-1} = \Theta(\Gr{G}_i),
  \end{align}
  so, if $r_i\nmid n_i$, then $\findnum{\Gr{G}_i} > \Theta(\Gr{G}_i)$.

  Let the parameters $n_1, \ldots, n_\ell$ and $r_1, \ldots, r_\ell$ be chosen such that $r_i \nmid n_i$
  for at least two of the Kneser graphs $\{\Gr{G}_i\}_{i=1}^{\ell}$. For each such index $i \in \OneTo{\ell}$,
  we have $\findnum{\Gr{G}_i} > \Theta(\Gr{G}_i)$, which violates the sufficient conditions of
  Theorem~\ref{theorem: structured_second_result}. By \eqref{eq2: 30.12.25}, it follows from
  Theorem~\ref{theorem: structured_first_result} that
  \begin{align}  \label{eq: 30.12.25}
    \Theta(\Gr{G}_1 + \ldots + \Gr{G}_\ell) = \sum_{i=1}^{\ell}\binom{n_i - 1}{r_i - 1},
  \end{align}
  whereas equality \eqref{eq: 30.12.25} does not follow from Theorem~\ref{theorem: structured_second_result}.
\end{example}

\begin{example} \label{example:structured_second_over_first}
  Let $\Gr{G}_i$ be a perfect graph for every $i\in\OneTo{\ell - 1}$, and let $\Gr{G}_\ell$
  be the complement of the Schl\"{a}fli graph. Then,
  \begin{itemize}
    \item $\Theta(\Gr{G}_i) = \findnum{\Gr{G}_i}$ for every $i \in \OneTo{\ell - 1}$.
    \item $\Theta(\Gr{G}_\ell) < \vartheta(\Gr{G}_\ell)$ (by \cite{Haemers79}).
  \end{itemize}
  Hence, the sufficient conditions of Theorem~\ref{theorem: structured_second_result} hold,
  in contrast to those of Theorem~\ref{theorem: structured_first_result}.
\end{example}

% ################################################################################################################################ %
% -------------------------------------------------------- Section 4: Tadpole graphs --------------------------------------------- %
% ################################################################################################################################ %

\section{The Shannon capacity of Tadpole graphs} \label{section:tadpole}

The present section explores Tadpole graphs, where exact values and bounds on their Shannon capacity are derived.

\begin{definition}[Tadpole graphs]
\label{definition:tadpole_graph}
  Let $k, \ell \in \naturals$ with $k \geq 3$ and $\ell\geq1$. The graph $\TG{k}{\ell}$, called the Tadpole graph of order $(k, \ell)$,
  is obtained by taking a cycle $\CG{k}$ of order $k$ and a path $\PathG{\ell}$ of order $\ell$, and then joining one pendant vertex of
  $\PathG{\ell}$ (i.e., one of its two vertices of degree~1) to a vertex of $\CG{k}$ by an edge.
  For completeness, if $\ell=0$, the Tadpole graph is defined trivially as a cycle graph of order $k$, $\TG{k}{0}=\CG{k}$.
\end{definition}

\begin{figure}[ht]
  \centering
  \includegraphics[width=0.6\textwidth]{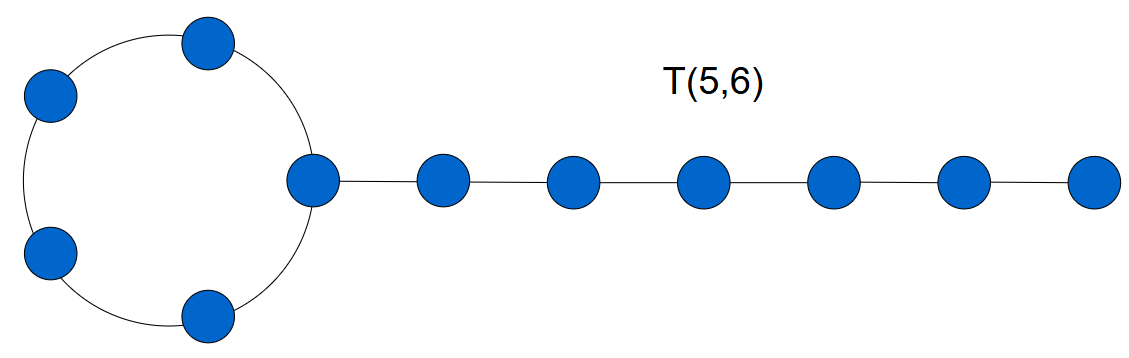}
  \caption{The Tadpole graph $\TG{5}{6}$.}
  \label{fig:tadpole}
\end{figure}

Every Tadpole graph is a connected graph with
\begin{align}
  \card{\V{\TG{k}{\ell}}} = \card{\E{\TG{k}{\ell}}} = k + \ell.
\end{align}
The Tadpole graph $\TG{k}{\ell}$ with $\ell\geq1$ is irregular since it has one vertex of degree~$3$, $k+\ell - 2$ vertices of degree~$2$, and one vertex of degree~$1$
(see Figure~\ref{fig:tadpole}).

The motivation for studying Tadpole graphs stems from their similarity to cycle graphs, both as graphs and in terms of their equivalent DMCs, as illustrated in Figure~\ref{fig:tadpole-DMCs}.
\begin{figure}[ht]
  \centering
  \includegraphics[width=0.9\textwidth]{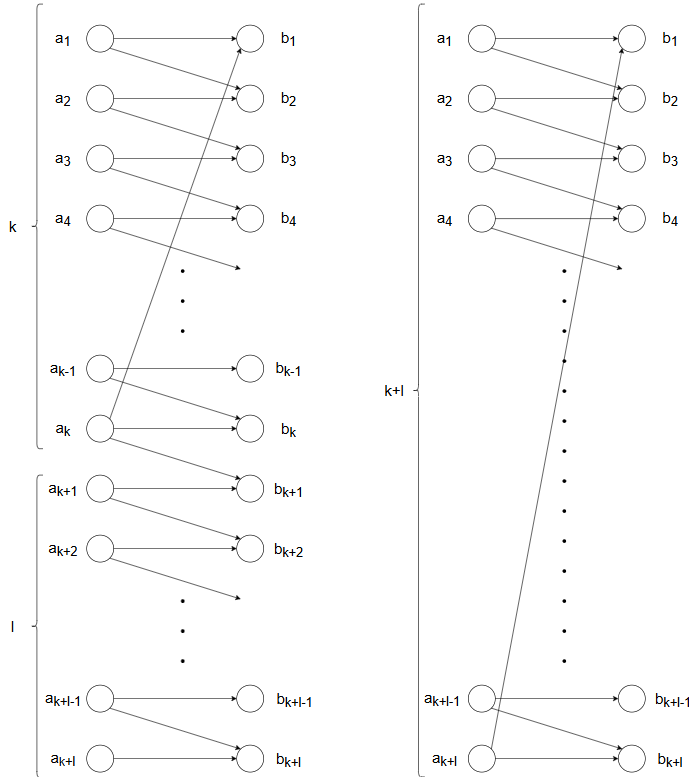}
  \caption{The DMCs of $\TG{k}{\ell}$ (left plot) and $\CG{k+\ell}$ (right plot).}
  \label{fig:tadpole-DMCs}
\end{figure}
\begin{lemma}\label{lemma:independence_number_tadpole_graph}
  Let $k\geq3$ and $\ell\geq0$. The independence number of the Tadpole graph $\TG{k}{\ell}$ is given by
  \begin{align}\label{eq:independence_number_tadpole_graph}
    \indnum{\TG{k}{\ell}} = \left\lfloor\frac{k}{2}\right\rfloor + \left\lceil\frac{\ell}{2}\right\rceil.
  \end{align}
\end{lemma}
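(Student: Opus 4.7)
The plan is to establish the equality by matching upper and lower bounds. Label the cycle vertices of $\CG{k}$ as $v_1, v_2, \ldots, v_k$ (with $v_i \sim v_{i+1}$ indices mod $k$), label the path vertices of $\PathG{\ell}$ as $u_1, u_2, \ldots, u_\ell$, and take $v_1$ and $u_1$ to be the endpoints of the extra bridging edge of $\TG{k}{\ell}$.

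For the upper bound, I would observe that any independent set $\Independentset$ in $\TG{k}{\ell}$ splits into $\Independentset \cap \V{\CG{k}}$ and $\Independentset \cap \V{\PathG{\ell}}$, each of which is an independent set in the corresponding subgraph (since all edges of $\TG{k}{\ell}$ apart from the bridging edge lie entirely in one of these subgraphs). Using Theorems~\ref{theorem: cycle_invariants} and~\ref{theorem: path_invariants}, this immediately gives $\card{\Independentset} \leq \indnum{\CG{k}} + \indnum{\PathG{\ell}} = \lfloor k/2 \rfloor + \lceil \ell/2 \rceil$.

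For the lower bound, I would construct an explicit independent set of this size while ensuring no violation along the bridging edge $\{v_1, u_1\}$. The key parity observation is that removing $v_1$ from $\CG{k}$ yields $\PathG{k-1}$, whose independence number $\lceil (k-1)/2 \rceil$ still equals $\lfloor k/2 \rfloor$; hence one can always select $\lfloor k/2 \rfloor$ pairwise nonadjacent cycle vertices that avoid $v_1$. On the path side I would split into two cases: if $\ell$ is even, choose $u_2, u_4, \ldots, u_\ell$ to obtain $\ell/2 = \lceil \ell/2 \rceil$ vertices while avoiding $u_1$; if $\ell$ is odd, choose $u_1, u_3, \ldots, u_\ell$, obtaining $(\ell+1)/2 = \lceil \ell/2 \rceil$ vertices. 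In either case the cycle part avoids $v_1$, so combining the two parts yields an independent set in $\TG{k}{\ell}$ of the claimed size.

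The main obstacle, modest as it is, lies in the case $\ell$ odd, where the path side is forced to use $u_1$; the construction succeeds precisely because one can always afford to omit $v_1$ on the cycle side without losing any vertices, so the bridging edge never becomes monochromatic. Matching the two bounds completes the proof.
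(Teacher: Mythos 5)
Your proof is correct and follows essentially the same strategy as the paper's: obtain the upper bound by observing that the Tadpole contains $\CG{k}+\PathG{\ell}$ as a spanning subgraph (so $\indnum{\TG{k}{\ell}} \leq \indnum{\CG{k}} + \indnum{\PathG{\ell}}$), and obtain the matching lower bound by building an independent set that avoids the cycle vertex incident to the bridging edge, which is always possible because $\indnum{\PathG{k-1}} = \lceil (k-1)/2 \rceil = \lfloor k/2 \rfloor$. The paper states the construction more tersely and omits the explicit parity case analysis on the path side, so your write-up is simply a more careful version of the same argument.
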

\begin{proof}
  The independence number of $\CG{k}$ is $\left\lfloor\frac{k}{2}\right\rfloor$, and the independence number of $\PathG{\ell}$ is $\left\lceil\frac{\ell}{2}\right\rceil$.
  One can select a maximal independent set by excluding the vertex of $\CG{k}$ that is adjacent to the vertex of $\PathG{\ell}$, which gives \eqref{eq:independence_number_tadpole_graph}.
\end{proof}

\begin{lemma}\label{lemma:lovasz_tadpole_graph}
  Let $k\geq3$ and $\ell\geq0$.
  \begin{enumerate}
    \item If one of the following two conditions holds:
    \begin{itemize}
      \item $k=3$ or $k\geq4$ is even,
      \item $k\geq5$ is odd and $\ell\geq1$ is odd,
    \end{itemize}
    then
    \begin{align}
      \vartheta(\TG{k}{\ell}) = \left\lfloor\frac{k}{2}\right\rfloor + \left\lceil\frac{\ell}{2}\right\rceil.
    \end{align}
    \item If $k\geq5$ is odd and $\ell\geq0$ is even, then
    \begin{align}
      \vartheta(\TG{k}{\ell}) = \frac{k}{1+\sec\frac{\pi}{k}}+\frac{\ell}{2}.
    \end{align}
  \end{enumerate}
\end{lemma}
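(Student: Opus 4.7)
For part~1 of the lemma, the plan is to sandwich $\vartheta(\TG{k}{\ell})$ between $\indnum{\TG{k}{\ell}}$ and $\clcnum{\TG{k}{\ell}}$ via Theorem~\ref{theorem: sandwich_theorem}, and then exhibit a clique cover whose size matches the independence number given by Lemma~\ref{lemma:independence_number_tadpole_graph}. In case~(a), $k=3$ or $k\geq 4$ even, I cover the cycle by $\lfloor k/2\rfloor$ cliques (the triangle itself when $k=3$, a perfect matching when $k$ is even) and the path by $\lceil\ell/2\rceil$ edge-cliques (possibly one singleton). In case~(b), $k\geq 5$ odd and $\ell\geq 1$ odd, the key is to recycle the connecting edge to save one clique: take $(k-1)/2$ disjoint edges on $\CG{k}$ that leave the attachment vertex $a$ uncovered, then the edge $\{a,b\}$ with $b$ the path pendant adjacent to $a$, then $(\ell-1)/2$ disjoint edges on the remaining $\ell-1$ path vertices, for a total of $(k+\ell)/2$. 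Combined with the general inequality $\indnum{\Gr{G}}\leq\clcnum{\Gr{G}}$, these constructions force $\indnum{\TG{k}{\ell}}=\vartheta(\TG{k}{\ell})=\clcnum{\TG{k}{\ell}}=\lfloor k/2\rfloor+\lceil\ell/2\rceil$.

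For part~2 the upper bound is immediate: $\CG{k}+\PathG{\ell}$ is a spanning subgraph of $\TG{k}{\ell}$ (delete the connecting edge), so Theorem~\ref{theorem: lovasz_function_subgraphs}, Theorem~\ref{theorem: lovasz_disjoint_union}, and Theorems~\ref{theorem: cycle_invariants}--\ref{theorem: path_invariants} give
\[\vartheta(\TG{k}{\ell})\leq\vartheta(\CG{k}+\PathG{\ell})=\vartheta(\CG{k})+\vartheta(\PathG{\ell})=\frac{k}{1+\sec(\pi/k)}+\frac{\ell}{2}.\]
For the matching lower bound I would use Theorem~\ref{theorem: lovasz_equivalent_complement_formula} to construct an orthonormal representation of $\overline{\TG{k}{\ell}}$ attaining this value. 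Let $\{v_i\}_{i=1}^{k}$ be an optimal orthonormal representation of $\overline{\CG{k}}$ with handle $d_G$ realizing $\vartheta(\CG{k})$, and for $\ell\geq 2$ even take the two-dimensional representation $w_{2j-1}=\mathbf{e}_1$, $w_{2j}=\mathbf{e}_2$ of $\overline{\PathG{\ell}}$ with handle $d_H=\mathbf{e}_2$; consecutive path vectors are orthogonal as the edges of $\PathG{\ell}$ require, the sum $\sum_{j=1}^{\ell}(d_H^{\mathrm{T}}w_j)^2=\ell/2=\vartheta(\PathG{\ell})$ since exactly $\ell/2$ indices are even, and labelling the pendant adjacent to the cycle as $b=1$ gives $w_b=\mathbf{e}_1\perp d_H$ (the case $\ell=0$ in part~2 follows directly from Theorem~\ref{theorem: cycle_invariants}).

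Tensoring the two representations, set $\tilde v_i\eqdef v_i\otimes d_H$, $\tilde w_j\eqdef d_G\otimes w_j$, and $\tilde d\eqdef d_G\otimes d_H$, all of unit norm. Orthogonality within the cycle block and within the path block is inherited from the factor representations, while the unique cross-edge $\{a,b\}$ satisfies $\tilde v_a\cdot\tilde w_b=(v_a^{\mathrm{T}}d_G)(d_H^{\mathrm{T}}w_b)=0$ because $w_b\perp d_H$. The handle $\tilde d$ then yields
\[\sum_{i}\bigl(\tilde d^{\mathrm{T}}\tilde v_i\bigr)^2+\sum_{j}\bigl(\tilde d^{\mathrm{T}}\tilde w_j\bigr)^2=\sum_{i}(d_G^{\mathrm{T}}v_i)^2+\sum_{j}(d_H^{\mathrm{T}}w_j)^2=\vartheta(\CG{k})+\vartheta(\PathG{\ell}),\]
matching the upper bound. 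The main obstacle is engineering the path representation so that the pendant is orthogonal to the handle \emph{without} sacrificing $\vartheta(\PathG{\ell})$; this ``free zeroing out'' of an endpoint is available precisely when $\ell$ is even, which is exactly why the closed-form expression for $\vartheta(\TG{k}{\ell})$ changes at this parity.
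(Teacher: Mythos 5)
Your part~1 argument matches the paper's exactly: sandwich $\vartheta(\TG{k}{\ell})$ between the independence number from Lemma~\ref{lemma:independence_number_tadpole_graph} and an explicit clique cover, saving one clique in the odd--odd case by covering the attachment vertex and the adjacent pendant with a single edge; your count $(k-1)/2 + 1 + (\ell-1)/2 = (k+\ell)/2$ is the same cover as the paper's. The upper bound in part~2 is also identical (delete the connecting edge, apply Theorem~\ref{theorem: lovasz_function_subgraphs} and Theorem~\ref{theorem: lovasz_disjoint_union}). Where you diverge is the lower bound in part~2: the paper simply observes that $\CG{k}+\PathG{\ell-1}$ is an \emph{induced} subgraph of $\TG{k}{\ell}$, and since $\ell-1$ is odd, $\vartheta(\PathG{\ell-1})=\lceil(\ell-1)/2\rceil=\ell/2=\vartheta(\PathG{\ell})$, so the induced-subgraph lower bound already meets the spanning-subgraph upper bound with no extra work. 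You instead build a tensor-product orthonormal representation of $\overline{\TG{k}{\ell}}$ out of optimal representations of $\overline{\CG{k}}$ and $\overline{\PathG{\ell}}$, arranging the two-dimensional path representation so that the pendant vector is orthogonal to the path handle; this kills the inner product across the connecting edge and recovers the same value $\vartheta(\CG{k})+\ell/2$. Your construction is correct and more hands-on—it makes the parity dependence visible as the availability of a ``free'' zeroing of one endpoint—but the paper's route is shorter, since it outsources all the representation-theoretic work to the already-proved additivity $\vartheta(\Gr{G}+\Gr{H})=\vartheta(\Gr{G})+\vartheta(\Gr{H})$. In effect, your tensor construction is a hand-unrolled special case of the proof of that additivity theorem (Appendix~\ref{appendix: original proof of Knuth}), specialized to the tadpole where the single cross edge can be absorbed because the path handle is orthogonal to the pendant.
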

\begin{proof}
  First, if $k=3$, then $\CG{3}$ is a clique of order $3$. Since the clique-cover number of $\PathG{\ell}$ is $\left\lceil\frac{\ell}{2}\right\rceil$, it follows that
  \begin{align}
    \clcnum{\TG{3}{\ell}} = 1 + \left\lceil\frac{\ell}{2}\right\rceil.
  \end{align}
  By Lemma~\ref{lemma:independence_number_tadpole_graph} and Theorem~\ref{theorem: sandwich_theorem}, the lower and upper bounds on
  $\vartheta(\TG{k}{\ell})$ coincide, which gives
  \begin{align}
    \vartheta(\TG{3}{\ell}) = 1 + \left\lceil\frac{\ell}{2}\right\rceil.
  \end{align}
  Next, if $k>3$, then the clique number of $\TG{k}{\ell}$ is $2$ (since there are no triangles), thus every clique in $\TG{k}{\ell}$ is either
  an edge or a single vertex. In this case, if $k$ is even, it is possible to cover the cycle $\CG{k}$ by $\frac{k}{2}$ edges (cliques),
  the path $\PathG{\ell}$ can be covered by $\left\lceil\frac{\ell}{2}\right\rceil$ edges, and thus
  $\clcnum{\TG{k}{\ell}} = \frac{k}{2} + \left\lceil\frac{\ell}{2}\right\rceil$. Moreover, if $k$ and $\ell$ are odd, then one can cover
  $\TG{k}{\ell}$ by $\frac{k+\ell}{2}$ edges (by covering $\CG{k}$ by $\frac{k-1}{2}$ edges, excluding the vertex of $\CG{k}$ that is adjacent to
  a pendant vertex of $\PathG{\ell}$, and then covering
  the remaining vertices of $\TG{k}{\ell}$ by $\frac{\ell+1}{2}$ edges). Overall, in both instances we obtain
  \begin{align}
    \clcnum{\TG{k}{\ell}} = \left\lfloor\frac{k}{2}\right\rfloor + \left\lceil\frac{\ell}{2}\right\rceil,
  \end{align}
  which gives, by again using the sandwich theorem in \eqref{eq: sandwich_theorem} and Lemma~\ref{lemma:independence_number_tadpole_graph},
  \begin{align}
    \vartheta(\TG{k}{\ell}) = \left\lfloor\frac{k}{2}\right\rfloor + \left\lceil\frac{\ell}{2}\right\rceil.
  \end{align}
  Finally, assume that $k\geq5$ is odd and $\ell\geq2$ is even (if $\ell=0$, then $\TG{k}{0}=\CG{k}$, and the result follows from Theorem~\ref{theorem: cycle_invariants}).
  By deleting the edge that connects the cycle $\CG{k}$ with the path $\PathG{\ell}$, it follows that $\CG{k} + \PathG{\ell}$
  is a spanning subgraph of $\TG{k}{\ell}$.
  On the other hand, by deleting the leftmost vertex of $\PathG{\ell}$ (i.e., the vertex that is of distance~$1$ from the cycle $\CG{k}$)
  and the two edges that are incident to it in $\TG{k}{\ell}$, it follows that $\CG{k} + \PathG{\ell-1}$ is an induced subgraph of $\TG{k}{\ell}$.
  Hence, by Theorem~\ref{theorem: lovasz_function_subgraphs},
  \begin{align}
    \vartheta(\CG{k}+\PathG{\ell-1}) & \leq \vartheta(\TG{k}{\ell}) \nonumber \\
    & \leq \vartheta(\CG{k}+\PathG{\ell}).
  \end{align}
  Furthermore, by Theorems \ref{theorem: path_invariants} and \ref{theorem: cycle_invariants},
  \begin{align}
    \vartheta(\CG{k}+\PathG{\ell-1}) & = \frac{k}{1+\sec\frac{\pi}{k}}+\frac{\ell}{2} \nonumber \\
    & = \vartheta(\CG{k}+\PathG{\ell}).
  \end{align}
  Thus,
  \begin{align}
    \vartheta(\TG{k}{\ell}) = \frac{k}{1+\sec\frac{\pi}{k}}+\frac{\ell}{2}.
  \end{align}
\end{proof}

\begin{theorem}\label{theorem: capacity_tadpole_graph}
  Let $k \geq 3$ and $\ell \geq 0$ be integers.
  \begin{enumerate}
    \item If one of the following two conditions holds:
    \begin{itemize}
      \item $k=3$ or $k \geq 4$ is even,
      \item $k\geq5$ is odd and $\ell \geq 1$ is odd,
    \end{itemize}
    then
    \begin{align}\label{eq:capacity_tadpole_graph_1}
      \Theta(\TG{k}{\ell})=\left\lfloor\frac{k}{2}\right\rfloor + \left\lceil\frac{\ell}{2}\right\rceil.
    \end{align}
    \item If $k\geq5$ is odd and $\ell\geq0$ is even, then
    \begin{align}\label{eq:capacity_tadpole_graph_2}
      \Theta(\TG{k}{\ell}) = \Theta(\CG{k}) + \frac{\ell}{2}.
    \end{align}
  \end{enumerate}
\end{theorem}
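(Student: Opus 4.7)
The plan has two parts, corresponding to the two cases of the statement.

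\textbf{Part 1.} Under the hypotheses of the first case, Lemma~\ref{lemma:independence_number_tadpole_graph} gives $\indnum{\TG{k}{\ell}} = \lfloor k/2 \rfloor + \lceil \ell/2 \rceil$, and Lemma~\ref{lemma:lovasz_tadpole_graph} gives $\vartheta(\TG{k}{\ell}) = \lfloor k/2 \rfloor + \lceil \ell/2 \rceil$ as well. The sandwich theorem (Theorem~\ref{theorem: sandwich_theorem}) then forces $\Theta(\TG{k}{\ell})$ to coincide with this common value, establishing~\eqref{eq:capacity_tadpole_graph_1}. No further work is needed here.

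\textbf{Part 2.} The case $\ell = 0$ is immediate since $\TG{k}{0} = \CG{k}$. For $\ell \geq 2$ even, the plan is to sandwich $\TG{k}{\ell}$ between two disjoint unions of a cycle and a path whose Shannon capacities coincide. As in the proof of Lemma~\ref{lemma:lovasz_tadpole_graph}, deleting the single edge joining $\CG{k}$ to $\PathG{\ell}$ exhibits $\CG{k} + \PathG{\ell}$ as a spanning subgraph of $\TG{k}{\ell}$, while deleting the pendant vertex of $\PathG{\ell}$ that is adjacent to $\CG{k}$ (together with its two incident edges) exhibits $\CG{k} + \PathG{\ell-1}$ as an induced subgraph of $\TG{k}{\ell}$. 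Theorem~\ref{theorem: shannon_capacity_subgraphs} then yields
\[
   \Theta(\CG{k} + \PathG{\ell-1}) \;\leq\; \Theta(\TG{k}{\ell}) \;\leq\; \Theta(\CG{k} + \PathG{\ell}).
\]

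To evaluate both sides, I would invoke Theorem~\ref{theorem: structured_second_result}. Since $\ell$ is even, both $\PathG{\ell}$ and $\PathG{\ell-1}$ satisfy $\Theta(\PathG{m}) = \findnum{\PathG{m}} = \ell/2$ by Theorem~\ref{theorem: path_invariants}, using $\lceil \ell/2 \rceil = \lceil (\ell-1)/2 \rceil = \ell/2$. Applying Theorem~\ref{theorem: structured_second_result} to each disjoint union, with the path as the graph meeting the hypothesis $\Theta = \findnum$, gives
\[
   \Theta(\CG{k} + \PathG{\ell-1}) \;=\; \Theta(\CG{k}) + \tfrac{\ell}{2} \;=\; \Theta(\CG{k} + \PathG{\ell}),
\]
and the sandwich collapses to~\eqref{eq:capacity_tadpole_graph_2}.

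The main obstacle is the upper bound in Part~2. The naive upper bound via the Lov\'{a}sz function, $\Theta(\TG{k}{\ell}) \leq \vartheta(\TG{k}{\ell}) = k/(1+\sec(\pi/k)) + \ell/2$, is generally strictly larger than $\Theta(\CG{k}) + \ell/2$ whenever $\Theta(\CG{k}) < \vartheta(\CG{k})$, which is expected for odd $k \geq 7$ (where $\Theta(\CG{k})$ remains unknown). The whole point of the argument is therefore to route the upper bound through the spanning subgraph $\CG{k} + \PathG{\ell}$ and to exploit Theorem~\ref{theorem: structured_second_result}, which transfers universality of the path into additivity of the Shannon capacity under disjoint union; this is precisely what produces the sharp bound $\Theta(\CG{k}) + \ell/2$ on both sides of the sandwich.
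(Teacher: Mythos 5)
Your proof is correct and follows essentially the same route as the paper: Part~1 combines Lemma~\ref{lemma:independence_number_tadpole_graph}, Lemma~\ref{lemma:lovasz_tadpole_graph} and the sandwich theorem, and Part~2 sandwiches $\TG{k}{\ell}$ between the induced subgraph $\CG{k}+\PathG{\ell-1}$ and the spanning subgraph $\CG{k}+\PathG{\ell}$, then uses universality of paths via Theorem~\ref{theorem: structured_second_result} to collapse both bounds to $\Theta(\CG{k})+\ell/2$. The only (minor) difference is that you make the trivial $\ell=0$ case explicit, which the paper absorbs by reference to the lemma's proof.
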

\begin{proof}
  First, if $k=3$, $k\geq4$ is even, or $k\geq5$ is odd and $\ell\geq1$ is odd, then by Lemmas~\ref{lemma:independence_number_tadpole_graph} and \ref{lemma:lovasz_tadpole_graph},
  \begin{align}
    \Theta(\TG{k}{\ell})=\left\lfloor\frac{k}{2}\right\rfloor + \left\lceil\frac{\ell}{2}\right\rceil.
  \end{align}
  Second, if $k\geq5$ is odd and $\ell\geq0$ is even, then by using the same subgraphs of $\TG{k}{\ell}$ from the proof of Lemma~\ref{lemma:lovasz_tadpole_graph}, and by Theorem~\ref{theorem: shannon_capacity_subgraphs},
  \begin{align}  \label{eq1: 09.01.25}
    \Theta(\CG{k}+\PathG{\ell-1}) \leq \Theta(\TG{k}{\ell}) \leq \Theta(\CG{k}+\PathG{\ell}).
  \end{align}
  Furthermore, as path graphs are universal (see Theorem~\ref{theorem: path_invariants}), it follows from Theorem~\ref{theorem: structured_second_result} that
  \begin{align}
    \Theta(\CG{k}+\PathG{\ell-1}) & = \Theta(\CG{k}) + \Theta(\PathG{\ell-1}) \\
    & = \Theta(\CG{k}) + \frac{\ell}{2} \\
    & = \Theta(\CG{k}+\PathG{\ell}).  \label{eq2: 09.01.25}
  \end{align}
  Combining \eqref{eq1: 09.01.25} and \eqref{eq2: 09.01.25} gives equality \eqref{eq:capacity_tadpole_graph_2}.
\end{proof}

The next result follows from equality \eqref{eq:capacity_tadpole_graph_2}.
\begin{corollary}\label{corollary:capacity_tadpole_bounds}
  Let $\ell \geq 0$ be an even number and let $k \geq 5$ be an odd number.
  \begin{enumerate}
    \item If $k=5$, then
    \begin{align} \label{eq: 27.12.2025}
      \Theta(\TG{5}{\ell}) = \sqrt{5} + \frac{\ell}{2}.
    \end{align}
    \item If $k\geq7$, then
    \begin{align}
    \label{eq1: 19.08.25}
      \frac{k+\ell-1}{2}\leq\Theta(\TG{k}{\ell})\leq\frac{k}{1+\sec\frac{\pi}{k}}+\frac{\ell}{2}.
    \end{align}
  \end{enumerate}
\end{corollary}
\begin{proof}
  If $k=5$, then $\Theta(\CG{5}) = \sqrt{5}$ \cite{Lovasz79}, and by Theorem~\ref{theorem: capacity_tadpole_graph},
  \begin{align}
    \Theta(\TG{5}{\ell}) & = \Theta(\CG{5}) + \frac{\ell}{2} \nonumber \\
    & = \sqrt{5} + \frac{\ell}{2}.
  \end{align}
  If $k\geq7$, then by Theorem \ref{theorem: cycle_invariants},
  \begin{align}
    \frac{k-1}{2} & = \indnum{\CG{k}} \nonumber \\
    & \leq\Theta(\CG{k}) \nonumber \\
    & \leq\vartheta(\CG{k}) = \frac{k}{1+\sec\frac{\pi}{k}}.
  \end{align}
  Thus, by Theorem~\ref{theorem: capacity_tadpole_graph},
  \begin{align}
    \frac{k+\ell-1}{2} & \leq\Theta(\TG{k}{\ell}) \nonumber \\
    & \leq\frac{k}{1+\sec\frac{\pi}{k}}+\frac{\ell}{2}.
  \end{align}
\end{proof}

\begin{example}\label{example:capacity_tadpole_bounds}
  Let $\Gr{G}_1 = \TG{5}{6}$. Then, by Corollary~\ref{corollary:capacity_tadpole_bounds},
  \begin{align}
    \Theta(\TG{5}{6}) = 3 + \sqrt{5} = 5.23607\ldots
  \end{align}
  For comparison, using the SageMath software \cite{SageMath} gives the values
  \begin{align}
    & \sqrt{\indnum{\Gr{G}_1\boxtimes\Gr{G}_1}}=\sqrt{26}= 5.09902, \\
    & \sqrt[3]{\indnum{\Gr{G}_1\boxtimes\Gr{G}_1\boxtimes\Gr{G}_1}}=\sqrt[3]{136}= 5.14256.
  \end{align}
  Let $\Gr{G}_2 = \TG{7}{6}$. By the lower bound in \cite{PolakS19} on the Shannon capacity of the cycle $\CG{7}$,
  \begin{align}  \label{eq: Polak}
  \Theta(\CG{7}) \geq \sqrt[5]{\indnum{\CG{7}^5}} \geq \sqrt[5]{367},
  \end{align}
  which implies by Corollary~\ref{corollary:capacity_tadpole_bounds} and \eqref{eq: Polak} that
  \begin{align}
    6.2578659\ldots = \sqrt[5]{367} + 3 & \leq \Theta(\CG{7}) + 3 \nonumber \\
    & = \Theta(\TG{7}{6}) \nonumber \\
    & \leq \frac{7}{1+\sec\frac{\pi}{7}} + 3 = 6.3176672.
  \end{align}
  The above lower bound on $\Theta(\TG{7}{6})$ improves the previous lower bound in the leftmost term of \eqref{eq1: 19.08.25},
  thus closing the gap between the upper and lower bounds from $0.3176$ to $0.0598$.
\end{example}

The last example shows that the lower bound in Corollary~\ref{corollary:capacity_tadpole_bounds} is not tight. The following result provides an improved lower bound.
\begin{theorem}\label{theorem: capacity_tadpole_graph_bound}
  If $k\geq5$ is odd and $\ell\geq0$ is even, then
  \begin{align}\label{eq:capacity_tadpole_graph_bound}
    \sqrt{\left(\frac{k-1}{2}\right)^2+\left\lfloor\frac{k-1}{4}\right\rfloor}+\frac{\ell}{2}\leq\Theta(\TG{k}{\ell})\leq\frac{k}{1+\sec\frac{\pi}{k}}+\frac{\ell}{2}.
  \end{align}
\end{theorem}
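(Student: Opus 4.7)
The upper bound is immediate: Theorem~\ref{theorem: capacity_tadpole_graph}(2) gives $\Theta(\TG{k}{\ell}) = \Theta(\CG{k}) + \ell/2$ for odd $k\geq 5$ and even $\ell$; combining this with the sandwich inequality $\Theta(\CG{k}) \leq \vartheta(\CG{k})$ from Theorem~\ref{theorem: sandwich_theorem} and the evaluation $\vartheta(\CG{k}) = k/(1+\sec(\pi/k))$ from Theorem~\ref{theorem: cycle_invariants} yields the right-hand inequality.

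For the lower bound, write $m \triangleq (k-1)/2$. The same identity $\Theta(\TG{k}{\ell}) = \Theta(\CG{k}) + \ell/2$ reduces the task to proving $\Theta(\CG{k}) \geq \sqrt{m^2 + \lfloor m/2 \rfloor}$. Since $\Theta(\CG{k}) \geq \sqrt{\indnum{\CG{k}^2}}$ directly from Definition~\ref{definition:shannon_capacity}, the plan is to construct an independent set of size $m^2 + \lfloor m/2 \rfloor$ in $\CG{k}^2$. Identifying $\V{\CG{k}}$ with $\mathbb{Z}_k$, two distinct pairs in $\mathbb{Z}_k^2$ are adjacent in $\CG{k}^2$ iff each of their coordinates differs by at most~$1$ in circular distance, so $\CG{k}^2$ is the king's graph on the $k \times k$ torus.

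The construction I would use is based on ``slope-$2$ lines'' $L_r = \{(u,\,2u + r \bmod k) : u \in \mathbb{Z}_k\}$. Each $L_r$ is an independent set of size~$k$, since slope $2$ is not congruent to $\pm 1$ mod $k$ for $k \geq 5$. A short calculation shows that $L_{r_1}$ and $L_{r_2}$ have no edges between them iff $|r_1 - r_2|$ is at least $4$ in circular distance mod~$k$, so the maximum number of pairwise compatible slope-$2$ lines equals $\lfloor k/4 \rfloor = \lfloor m/2 \rfloor$, producing an independent set of size $\lfloor m/2 \rfloor \cdot k = \lfloor m/2 \rfloor (2m+1)$. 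A direct arithmetic check gives $\lfloor m/2 \rfloor(2m+1) = m^2 + \lfloor m/2 \rfloor$ exactly when $k \equiv 1 \pmod 4$ (equivalently $m$ even), settling this case.

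The hard part of the proof is the complementary case $k \equiv 3 \pmod 4$, in which the slope-$2$ packing falls short of the target $m^2 + \lfloor m/2 \rfloor = \lfloor k(k-1)/4 \rfloor$ by exactly $m$ vertices. To overcome this, I would augment the slope-$2$ construction by $m$ carefully chosen additional vertices, placed inside the ``slack'' left by the cyclic mismatch when $k$ is not divisible by $4$, and verify the required independence (from the chosen lines and pairwise) via a direct case analysis that exploits the cyclic automorphisms of $\CG{k}^2$. Combining both cases gives $\indnum{\CG{k}^2} \geq m^2 + \lfloor m/2 \rfloor$, and hence the left-hand inequality in the theorem.
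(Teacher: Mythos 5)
The upper bound part of your argument is identical to the paper's and is correct. Your lower bound also starts the same way the paper does: use $\Theta(\TG{k}{\ell}) = \Theta(\CG{k}) + \ell/2$ from Theorem~\ref{theorem: capacity_tadpole_graph} and then the elementary bound $\Theta(\CG{k}) \geq \sqrt{\indnum{\CG{k}^2}}$. The divergence is in how the two arguments evaluate $\indnum{\CG{k}^2}$. The paper simply cites Hales \cite[Theorem~7.1]{Hales73}, which gives the exact value $\indnum{\CG{k}^2} = \bigl(\tfrac{k-1}{2}\bigr)^2 + \bigl\lfloor\tfrac{k-1}{4}\bigr\rfloor$ for odd $k$; in fact this same reference is already invoked in the proof of Corollary~\ref{corollary:universal_watanabe}, so the value was available to you essentially for free. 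You instead attempt to reconstruct the lower bound $\indnum{\CG{k}^2} \geq m^2 + \lfloor m/2 \rfloor$, with $m = (k-1)/2$, from scratch.

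Your slope-$2$ line analysis is correct as far as it goes: each $L_r$ is independent (the decisive point is that if $|u_1-u_2|_k = 1$ then $|2(u_1-u_2)|_k = 2 > 1$), two lines $L_{r_1}, L_{r_2}$ are mutually independent iff the circular distance $|r_1-r_2|_k \geq 4$, and one can pack $\lfloor k/4\rfloor = \lfloor m/2\rfloor$ such lines, giving $\lfloor m/2\rfloor\, k$ vertices. Your arithmetic check that this equals $m^2 + \lfloor m/2\rfloor$ precisely when $k\equiv 1 \pmod 4$ is also right, and that it falls short by exactly $m$ when $k \equiv 3\pmod 4$.

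The gap is that the case $k\equiv 3\pmod 4$ is not actually proved. ``Augment by $m$ carefully chosen additional vertices, placed inside the slack, and verify via a direct case analysis'' is a plan, not an argument: you neither exhibit the $m$ extra vertices nor show they avoid all $\lfloor k/4\rfloor$ chosen lines and each other. This is not a trivial finish. Already for $k=7$ the slope-$2$ packing gives only $7$ vertices and you must find $3$ more to reach $\indnum{\CG{7}^2}=10$, and the verification cannot be waved away by symmetry since the extra vertices necessarily break the translation-invariance of the line construction. As it stands your proposal establishes the lower bound only for $k\equiv 1\pmod 4$; for $k\equiv 3\pmod 4$ you should either supply the explicit augmentation and independence check, or do what the paper does and cite Hales' exact evaluation of $\indnum{\CG{k}^2}$.
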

\begin{proof}
  The upper bound was proved in Corollary~\ref{corollary:capacity_tadpole_bounds}. Next, we prove the improved lower bound.
  By Hales' result \cite[Theorem~7.1]{Hales73}, if $k \geq 3$ is odd, then
  \begin{align} \label{eq1: 0.1.01.26}
    \indnum{\CG{k}^2} = \left(\frac{k-1}{2}\right)^2+\left\lfloor\frac{k-1}{4}\right\rfloor,
  \end{align}
  which implies that
  \begin{align}
    \Theta(\CG{k}) & \geq \sqrt{\indnum{\CG{k}^2}} \nonumber \\
    & = \sqrt{\left(\frac{k-1}{2}\right)^2+\left\lfloor\frac{k-1}{4}\right\rfloor}.  \label{eq2: 01.01.26}
  \end{align}
  Thus, by combining \eqref{eq2: 01.01.26} with Theorem~\ref{theorem: capacity_tadpole_graph}, it follows that
  \begin{align}
    \Theta(\TG{k}{\ell}) & = \Theta(\CG{k}) + \frac{\ell}{2} \nonumber \\
    & \geq \sqrt{\left(\frac{k-1}{2}\right)^2 + \left\lfloor\frac{k-1}{4}\right\rfloor} + \frac{\ell}{2}. \label{eq3: 01.01.26}
  \end{align}
\end{proof}
The gap between the upper and lower bounds on $\Theta(\TG{k}{\ell})$ in \eqref{eq:capacity_tadpole_graph_bound}, for an arbitrary
odd $k \geq 7$ and even $\ell \geq 0$, satisfies
\begin{align}
  & \hspace*{-0.2cm} \left(\frac{k}{1+\sec\frac{\pi}{k}} + \frac{\ell}{2}\right) - \left(\sqrt{\left( \frac{k-1}{2}\right)^2 + \left\lfloor \frac{k-1}{4} \right\rfloor}
  + \frac{\ell}{2}\right) \nonumber \\
  & < \frac{k+\ell}{2}-\frac{k+\ell-1}{2} \label{eq1:27.6.25} \\[0.1cm]
  & = \frac{1}{2}, \nonumber
\end{align}
where inequality \eqref{eq1:27.6.25} follows by dropping the term $\left\lfloor \frac{k-1}{4} \right\rfloor$
inside the square root of the first line, and since $\sec\frac{\pi}{k}>1$.

\begin{remark}\label{remark:tadpole_bound_holzman}
  Theorem \ref{theorem: capacity_tadpole_graph_bound} gives an improved lower bound that relies on a lower
  bound on the Shannon capacity of odd cycles that was constructed by Hales (see \cite{Hales73}). Since Hales'
  result, better lower bounds for odd cycles were constructed, and can be used to improve the lower bound in
  special cases. For better lower bounds on higher powers of odd cycles see \cite{Bohman05b, BohmanHN13}.
  The next theorem shows an improvement to the bound in \eqref{eq:capacity_tadpole_graph_bound} for special
  cases using a result from \cite{Bohman05b}.
\end{remark}

\begin{theorem}\label{theorem: capacity_tadpole_graph_bound_bohman}
Let $n, d \in \naturals$, let $\ell \geq 0$ be an even number, and define
\begin{align}
k = n 2^d + 2^{d-1} + 1.  \label{eq: k}
\end{align}
Then,
\begin{align}\label{eq:capacity_tadpole_graph_bound_bohman}
\sqrt[d]{nk^{d-1}+\left(\frac{k-1}{2}\right)k^{d-2}}+\frac{\ell}{2}\leq\Theta(\TG{k}{\ell}).
\end{align}
\end{theorem}
\begin{proof}
By Theorem~\ref{theorem: capacity_tadpole_graph},
\begin{align}\label{eq1:31.8.25}
\Theta(\TG{k}{\ell}) = \frac{\ell}{2} + \Theta(\CG{k}),
\end{align}
and, by \cite[Theorem~1.4]{Bohman05b},
\begin{align}\label{eq2:31.8.25}
\Theta(\CG{k}) & \geq \sqrt[d]{\indnum{\CG{k}^d}} \nonumber \\[0.1cm]
& \geq \sqrt[d]{nk^{d-1}+\left(\frac{k-1}{2}\right)k^{d-2}}.
\end{align}
Finally, combining \eqref{eq1:31.8.25} and \eqref{eq2:31.8.25} gives \eqref{eq:capacity_tadpole_graph_bound_bohman}.
\end{proof}

\begin{remark}\label{remark:cycle_bound_bohman_holzman}
The proof of Theorem~\ref{theorem: capacity_tadpole_graph_bound_bohman} relies on the lower bound on $\indnum{\CG{k}^d}$ given
in \eqref{eq2:31.8.25}, as presented in \cite[Theorem~1.4]{Bohman05b}.
It is worth noting that in \cite{Bohman05b}, Bohman conjectured that the lower bound is the exact value of the independence
number (see \cite[Conjecture~1.5]{Bohman05b}). In a subsequent paper by Bohman, Holzman, and Natarajan \cite{BohmanHN13},
the conjecture was confirmed for a countably infinite subset of these values. In particular, by substituting $d=3$, which
gives $k=8n+5$, it was proved in \cite{BohmanHN13} that if $8n+5$ is a prime number, then the lower bound coincides with the exact
value of the independence number. Specifically, by \cite[Theorem~1]{BohmanHN13}, the independence number of the strong cubed power
of the cycle graph $\CG{8n+5}$ is given by
\begin{align}\label{eq:cycle_bound_bohman_holzman}
\indnum{\CG{8n+5}^3}= \tfrac12 (8n + 5) \, \bigl[(2n+1)(8n+5) - 1\bigr].
\end{align}
\end{remark}

% ################################################################################################################################ %
% ---------------------------------------------------- Section 5: Unattainability ------------------------------------------------ %
% ################################################################################################################################ %

\section{When the graph capacity is not attained by the independence number of any finite power?}
\label{section: unattainability of Shannon capacity at any finite power}

In Section~\ref{section:tadpole}, we proved that if $k=3$, or if $k \geq 4$ is even, or if $k \geq 5$ and $\ell \geq 1$ are odd,
then the capacity of the Tadpole graph $\TG{k}{\ell}$ coincides with its independence number. Additional families of graphs also
share this property; e.g., by \cite{Lovasz79}, the capacities of all Kneser graphs are equal to their independence numbers.
It is, however, well known that not all graphs posses this property; e.g., $\Theta(\CG{5}) = \sqrt{\indnum{\CG{5}^2}} = \sqrt{5}$
and $\indnum{\CG{5}} = 2$. The property that the Shannon capacity coincides with the square root of the independence number of the
second (strong) power of the graph was proved, more generally for all self-complementary vertex transitive graphs \cite{Lovasz79}
and for all self-complementary strongly regular graphs \cite{Sason24}, provided that the order of the graph is not a square of an integer.
Families of graphs whose capacity is attained at a finite strong power that is strictly larger than $2$ are yet unknown.
In this section, we explore sufficient conditions on a graph $\Gr{G}$ such that its Shannon capacity is unattainable by the
independence number of any finite strong power of $\Gr{G}$, i.e.,
\begin{align} \label{eq3:20.4.25}
  \Theta(\Gr{G}) > \sqrt[k]{\indnum{\Gr{G}^k}}, \quad \forall \, k \in \naturals.
\end{align}

This problem was explored by Guo and Watanabe \cite{GuoW90}, and a family of disconnected graphs that satisfies \eqref{eq3:20.4.25}
was constructed. In this section, we start by presenting the method from \cite{GuoW90}, and then we provide two original approaches.
The first original approach relies on Dedekind's lemma in number theory (see Lemma~\ref{lemma:dedekind}), and it uses a similar concept of proof to the one
in Example~\ref{example:unattinabilty_C5+K1}. The second original approach uses the result from \cite{GuoW90} to construct a countably
infinite family of {\em connected} graphs whose capacity is strictly larger than the independence number of any finite (strong) power
of the graph.

The following example exhibits a disconnected graph whose capacity is not attained by the independence number of any strong finite power
of the graph. The proof of this example relies on an important concept that is later employed in our first original approach. This example,
appearing explicitly in \cite{WigdersonZ23}, was essentially already contained in \cite{GuoW90} and was later explicitly pointed out in the
1998~paper on zero-error information theory by K\"{o}rner and Orlitsky \cite{KornerO98} (at the end of Section~IV), where it is attributed
to Arikan.
\begin{example}\label{example:unattinabilty_C5+K1}
  Let $\Gr{G} = \Gr{C}_5 + \Gr{K}_1$. We have $\Gr{C}_5 \boxtimes \Gr{K}_1 \cong \Gr{C}_5$, so
  \begin{align}
  \label{eq4: 01.01.26}
    \Theta(\Gr{C}_5 \boxtimes \Gr{K}_1) = \Theta(\Gr{C}_5) = \Theta(\Gr{C}_5)\,\Theta(\Gr{K}_1).
  \end{align}
  Consequently, by Theorem~\ref{theorem: shannon_union_equivalent_product},
  \begin{align}
  \label{eq5: 01.01.26}
    \Theta(\Gr{G}) = \Theta(\Gr{C}_5) + \Theta(\Gr{K}_1) = \sqrt{5} + 1,
  \end{align}
  and, for every $k\in\naturals$,
  \begin{align}
    \Theta(\Gr{G})^k & = \bigl(\sqrt{5} + 1\bigr)^k \nonumber \\
    & = \sum_{i = 0}^{k}{\binom{k}{i} \, 5^{\frac{i}{2}}} \nonumber \\
    & = \sum_{0\leq\ell\leq\lfloor\frac{k}{2}\rfloor} \binom{k}{2\ell} \, 5^\ell
    + \sum_{0\leq\ell\leq\lfloor\frac{k - 1}{2}\rfloor}\binom{k}{2\ell+1} \, 5^{\ell + \frac12} \nonumber \\
    & = c_k + d_k\sqrt{5} \notin \naturals, \label{eq6: 01.01.26}
  \end{align}
  where
  \begin{align}
    c_k = \sum_{\ell = 0}^{\bigl \lfloor \frac{k}{2} \bigr\rfloor} \binom{k}{2\ell} \, 5^\ell \in \naturals, \qquad
    d_k = \sum_{\ell = 0}^{\bigl \lfloor \frac{k - 1}{2} \bigr\rfloor}\binom{k}{2\ell + 1} \, 5^\ell \in \naturals. \label{eq8: 01.01.26}
  \end{align}
  Therefore, it follows that for all $k\in\naturals$, $\Theta(\Gr{G})^k \neq\indnum{\Gr{G}^k}$ since the independence
  number is an integer. Consequently, \eqref{eq3:20.4.25} holds.
\end{example}
The interested reader is referred to \cite[Section~5]{CharpenayT2020}, where Example~\ref{example:unattinabilty_C5+K1} is studied in the context
of zero-error variable-length coding.

Next, we consider three approaches to the construction of families of graphs whose Shannon capacities satisfy the condition in \eqref{eq3:20.4.25}.
The first approach is due to Guo and Watanabe \cite{GuoW90} with a simplified proof (the original proof from \cite{GuoW90} is provided in
Appendix~\ref{appendix: original proof of Watanabe}), and the second and third approaches of constructing such graph families are original.

\subsection{First approach}  \label{subsection: 1st approach}

\begin{theorem}\label{theorem: universal_watanabe}
  \cite{GuoW90} Let $\Gr{G}$ be a universal graph (see Definition~\ref{definition:universal_graph}), and let $\Gr{H}$ satisfy the inequality
  $\Theta(\Gr{H}) > \indnum{\Gr{H}}$. Then, the Shannon capacity of $\Gr{K} \triangleq \Gr{G}+\Gr{H}$ is not attained at any finite power of $\Gr{K}$.
\end{theorem}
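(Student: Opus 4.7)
The plan is to compute $\Theta(\Gr{K})$ in closed form using the universality of $\Gr{G}$, and then compare $\Theta(\Gr{K})^k$ term-by-term with $\indnum{\Gr{K}^k}$ after binomially expanding $\Gr{K}^k = (\Gr{G}+\Gr{H})^k$.

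First I would establish that $\Theta(\Gr{K}) = \indnum{\Gr{G}} + \Theta(\Gr{H})$. Since $\Gr{G}$ is universal, Definition~\ref{definition:universal_graph} gives $\indnum{\Gr{G}\boxtimes\Gr{H}^j} = \indnum{\Gr{G}}\,\indnum{\Gr{H}^j}$ for every $j$, so
\begin{align*}
  \Theta(\Gr{G}\boxtimes\Gr{H}) = \lim_{j\to\infty}\sqrt[j]{\indnum{\Gr{G}^j}\,\indnum{\Gr{H}^j}} = \indnum{\Gr{G}}\,\Theta(\Gr{H}) = \Theta(\Gr{G})\,\Theta(\Gr{H}),
\end{align*}
using Lemma~\ref{lemma:universal_power} (universality of $\Gr{G}^j$) and Lemma~\ref{lemma:universal_capacity_independent}. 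The duality theorem (Theorem~\ref{theorem: shannon_union_equivalent_product}) then yields $\Theta(\Gr{K}) = \indnum{\Gr{G}} + \Theta(\Gr{H})$.

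Next I would expand both sides of the desired inequality $\Theta(\Gr{K})^k > \indnum{\Gr{K}^k}$ using the binomial identity $(\Gr{G}+\Gr{H})^k = \sum_{\ell=0}^{k}\binom{k}{\ell}\Gr{G}^\ell\boxtimes\Gr{H}^{k-\ell}$ (already used in the proof of Theorem~\ref{theorem: shannon_union_equivalent_product}). By additivity of the independence number under disjoint union (Theorem~\ref{theorem: independence_number_strong_product}) and universality of $\Gr{G}^\ell$ (which gives $\indnum{\Gr{G}^\ell\boxtimes\Gr{H}^{k-\ell}} = \indnum{\Gr{G}}^{\ell}\,\indnum{\Gr{H}^{k-\ell}}$),
\begin{align*}
  \indnum{\Gr{K}^k} = \sum_{\ell=0}^{k}\binom{k}{\ell}\indnum{\Gr{G}}^{\ell}\,\indnum{\Gr{H}^{k-\ell}},
\end{align*}
while, on the other hand,
\begin{align*}
  \Theta(\Gr{K})^k = \bigl(\indnum{\Gr{G}} + \Theta(\Gr{H})\bigr)^k = \sum_{\ell=0}^{k}\binom{k}{\ell}\indnum{\Gr{G}}^{\ell}\,\Theta(\Gr{H})^{k-\ell}.
\end{align*}
Subtracting,
\begin{align*}
  \Theta(\Gr{K})^k - \indnum{\Gr{K}^k} = \sum_{\ell=0}^{k-1}\binom{k}{\ell}\indnum{\Gr{G}}^{\ell}\bigl(\Theta(\Gr{H})^{k-\ell} - \indnum{\Gr{H}^{k-\ell}}\bigr),
\end{align*}
with the $\ell=k$ term vanishing (using the convention $\Gr{G}^0 = \Gr{H}^0 = \CoG{1}$, consistent with the expansion in Theorem~\ref{theorem: shannon_union_equivalent_product}).

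Each summand is nonnegative because $\Theta(\Gr{H})^{m} \geq \indnum{\Gr{H}^{m}}$ for every $m\in\naturals$ by Definition~\ref{definition:shannon_capacity}. The key point is that the $\ell=k-1$ term equals $k\,\indnum{\Gr{G}}^{k-1}\bigl(\Theta(\Gr{H}) - \indnum{\Gr{H}}\bigr)$, which is strictly positive by the hypothesis $\Theta(\Gr{H}) > \indnum{\Gr{H}}$. Hence $\Theta(\Gr{K})^k > \indnum{\Gr{K}^k}$ for every $k\in\naturals$, which is precisely \eqref{eq3:20.4.25} applied to $\Gr{K}$. There is no real obstacle here; the only subtlety is making sure the universality of $\Gr{G}^\ell$ is applied correctly for every $\ell$ in the binomial expansion, for which Lemma~\ref{lemma:universal_power} is tailor-made.
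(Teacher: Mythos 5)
Your argument is correct, and it shares the same core mechanism as the paper's proof: expand $\Gr{K}^k = (\Gr{G}+\Gr{H})^k$ binomially, use universality of $\Gr{G}^\ell$ (via Lemma~\ref{lemma:universal_power}) and Lemma~\ref{lemma:universal_capacity_independent} to reduce $\indnum{\Gr{K}^k}$ to a sum over powers of $\Gr{H}$, and observe that each summand satisfies $\indnum{\Gr{H}^m} \leq \Theta(\Gr{H})^m$ with at least one strict inequality at $m=1$. The one place you diverge is the wrap-up: you invoke the duality theorem (Theorem~\ref{theorem: shannon_union_equivalent_product}) to pin down $\Theta(\Gr{K}) = \indnum{\Gr{G}} + \Theta(\Gr{H})$ exactly and then compare $\Theta(\Gr{K})^k$ term-by-term against $\indnum{\Gr{K}^k}$, whereas the paper never computes $\Theta(\Gr{K})$; it only needs the one-sided bound $\Theta(\Gr{G})+\Theta(\Gr{H}) \leq \Theta(\Gr{K})$ from Shannon's inequality (Theorem~\ref{theorem: shannon_disjoint_union}) after showing $\indnum{\Gr{K}^k} < (\Theta(\Gr{G})+\Theta(\Gr{H}))^k$. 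Your route is slightly heavier (it proves a stronger intermediate fact), but it also yields the closed form $\Theta(\Gr{K}) = \indnum{\Gr{G}}+\Theta(\Gr{H})$ for free, which is a nice byproduct; the paper's route is more economical in the tools it assumes. Both are valid, and the binomial-expansion heart of the argument is identical.
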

We present here a new and shortened proof, while the proof from \cite{GuoW90} is presented in Section~\ref{appendix: original proof of Watanabe}.
\begin{proof}
  Let $k\in\naturals$. By the universality of $\Gr{G}$,
  \begin{align}
    \indnum{(\Gr{G}+\Gr{H})^k} & = \sum_{i=0}^{k} \binom{k}{i} \, \indnum{\Gr{G}^i \boxtimes \Gr{H}^{k-i}} \nonumber \\
    & = \sum_{i=0}^{k} \binom{k}{i} \, \indnum{\Gr{G}}^i \, \indnum{\Gr{H}^{k-i}} \nonumber \\
    & = \sum_{i=0}^{k} \binom{k}{i} \, \Theta(\Gr{G})^i \, \indnum{\Gr{H}^{k-i}},  \label{eq1}
  \end{align}
  where the last equality holds by Lemma~\ref{lemma:universal_capacity_independent}.
  Next, by the assumption that $\Theta(\Gr{H}) > \indnum{\Gr{H}}$ and since $\Theta(\Gr{H}^m) = \Theta(\Gr{H})^m \geq \indnum{\Gr{H}}^m$ for all $m \in \naturals$
  with strict inequality if $m=1$ (by assumption), it follows that for all $k \in \naturals$,
  \begin{align}
    \sum_{i=0}^{k} \binom{k}{i} \, \Theta(\Gr{G})^i \, \indnum{\Gr{H}^{k-i}}
    & < \sum_{i=0}^{k} \binom{k}{i} \, \Theta(\Gr{G})^i \, \Theta(\Gr{H})^{k-i} \nonumber \\
    & = \bigl( \Theta(\Gr{G}) + \Theta(\Gr{H}) \bigr)^k. \label{eq2}
  \end{align}
  Finally, by Shannon's inequality (see Theorem~\ref{theorem: shannon_disjoint_union}),
  \begin{align}
    \Theta(\Gr{G})+\Theta(\Gr{H}) \leq \Theta(\Gr{G}+\Gr{H}).   \label{eq3}
  \end{align}
  Combining \eqref{eq1}--\eqref{eq3} and raising both sides of the resulting inequality to the power of $\frac{1}{k}$ gives
  \begin{align}
    \sqrt[k]{\indnum{(\Gr{G}+\Gr{H})^k}} < \Theta(\Gr{G}+\Gr{H}),  \label{eq10: 01.01.26}
  \end{align}
  thus confirming the satisfiability of the condition in \eqref{eq3:20.4.25}.
\end{proof}

\begin{corollary} \label{corollary:universal_watanabe}
  \cite{GuoW90} Let $\Gr{G}$ be a universal graph, and let $\Gr{K} = \Gr{G} + \CG{2k + 1}$ with $k\geq 2$ be the disjoint
  union of a universal graph and an odd cycle of length at least $5$. Then, $\Theta(\Gr{K})$ is not attained at any finite
  strong power of $\Gr{K}$.
\end{corollary}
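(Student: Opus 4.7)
The plan is to deduce the corollary as a direct application of Theorem~\ref{theorem: universal_watanabe}, by taking $\Gr{H} = \CG{2k+1}$ with $k \geq 2$. Since $\Gr{G}$ is assumed universal, the only hypothesis that remains to be verified is the strict inequality $\Theta(\CG{2k+1}) > \indnum{\CG{2k+1}}$.

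First I would recall from Theorem~\ref{theorem: cycle_invariants} that for every odd integer $n \geq 5$ one has $\indnum{\CG{n}} = \lfloor n/2 \rfloor$; applied with $n = 2k+1$ this gives $\indnum{\CG{2k+1}} = k$. Hence the task reduces to showing $\Theta(\CG{2k+1}) > k$ for every $k \geq 2$.

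To obtain this strict lower bound, I would invoke Hales' evaluation of the independence number of the second strong power of an odd cycle (already quoted in the proof of Theorem~\ref{theorem: capacity_tadpole_graph_bound}): for odd $n \geq 3$,
\begin{align*}
  \indnum{\CG{n}^2} = \left(\frac{n-1}{2}\right)^{\!2} + \left\lfloor \frac{n-1}{4} \right\rfloor.
\end{align*}
Substituting $n = 2k+1$ yields $\indnum{\CG{2k+1}^2} = k^2 + \lfloor k/2 \rfloor$, and since $k \geq 2$ ensures $\lfloor k/2 \rfloor \geq 1$, we get $\indnum{\CG{2k+1}^2} \geq k^2 + 1$. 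By the definition of the Shannon capacity (Definition~\ref{definition:shannon_capacity}),
\begin{align*}
  \Theta(\CG{2k+1}) \geq \sqrt{\indnum{\CG{2k+1}^2}} \geq \sqrt{k^2 + 1} > k = \indnum{\CG{2k+1}},
\end{align*}
which is exactly the hypothesis on $\Gr{H}$ required by Theorem~\ref{theorem: universal_watanabe}.

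Once this inequality is in place, Theorem~\ref{theorem: universal_watanabe} applies verbatim to $\Gr{K} = \Gr{G} + \CG{2k+1}$ and yields $\Theta(\Gr{K}) > \sqrt[m]{\indnum{\Gr{K}^m}}$ for every $m \in \naturals$, which is the asserted unattainability. There is essentially no obstacle here: the whole argument is a plug-in into the already-proved Theorem~\ref{theorem: universal_watanabe}, and the only nontrivial ingredient is the classical Hales lower bound on $\indnum{\CG{2k+1}^2}$, which immediately separates $\Theta(\CG{2k+1})$ from its integer independence number.
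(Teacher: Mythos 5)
Your proposal is correct and follows essentially the same route as the paper's proof: reduce to verifying $\Theta(\CG{2k+1}) > \indnum{\CG{2k+1}} = k$, invoke Hales' formula $\indnum{\CG{2k+1}^2} = k^2 + \lfloor k/2 \rfloor$, and conclude via $\Theta(\CG{2k+1}) \geq \sqrt{\indnum{\CG{2k+1}^2}} > k$ before plugging into Theorem~\ref{theorem: universal_watanabe}. The only cosmetic difference is that you quote Hales' result in the $n$-parameterization already used for Theorem~\ref{theorem: capacity_tadpole_graph_bound} rather than the $(j,k)$-form cited in the paper, but these are the same identity.
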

\begin{proof}
  This follows from Theorem~\ref{theorem: universal_watanabe} by showing that $\Theta(\CG{2k+1}) > \indnum{\CG{2k+1}}$. For an odd cycle graph of length $2k+1$, $\indnum{\CG{2k+1}} = k$. By \cite[Theorem 7.1]{Hales73}, for all integers $j$ and $k$ such that $2 \leq j \leq k$,
  \begin{align} \label{eq12: 01.01.26}
    \indnum{\CG{2j + 1}\boxtimes\CG{2k + 1}} = jk+\Big\lfloor\frac{j}{2}\Big\rfloor,
  \end{align}
  which yields
  \begin{align}
  \Theta(\CG{2k + 1}) \geq \sqrt{\indnum{\CG{2k + 1}^2}}
  = \sqrt{k^2 + \bigg\lfloor\frac{k}{2}\bigg\rfloor}
  > k = \indnum{\CG{2k + 1}}. \label{eq13: 01.01.26}
  \end{align}
\end{proof}

\begin{remark}
  Corollary~\ref{corollary:universal_watanabe} applies in particular to $\Gr{K}_1 + \CG{5}$ since $\Gr{K}_1$ is universal.
\end{remark}

\subsection{Second approach}  \label{subsection: 2nd approach}
The next approach is based on a classical number-theoretic lemma established by Dedekind in 1858.
The lemma is stated below, and for completeness we present the simple and elegant proof given in \cite[p.~309]{AignerZ18}.
\begin{lemma}\label{lemma:dedekind}
If the square-root of a natural number is rational, then it must be an integer; equivalently, the square-root of a natural number
is either an integer or an irrational number.
\end{lemma}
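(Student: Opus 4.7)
The plan is to argue by contradiction using infinite descent on the denominator, which is essentially Dedekind's original argument and the one presented in Aigner--Ziegler. Suppose that $\sqrt{n}$ is rational but not an integer, and write $\sqrt{n} = a/b$ where $b$ is the \emph{smallest} positive integer for which some integer $a$ realizes this equality. Since $\sqrt{n}$ is not an integer, there exists an integer $m$ with $m < \sqrt{n} < m+1$, namely $m = \lfloor \sqrt{n} \rfloor$; in particular, $0 < \sqrt{n} - m < 1$.

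The key trick is to manufacture a representation of $\sqrt{n}$ with a strictly smaller positive denominator. The natural candidate is
\begin{align*}
\sqrt{n} \;=\; \frac{\sqrt{n}\,(a - bm)}{a - bm} \;=\; \frac{a\sqrt{n} - bm\sqrt{n}}{a - bm} \;=\; \frac{nb - am}{a - bm},
\end{align*}
where in the last step we used $a\sqrt{n} = (a/b)\cdot b\sqrt{n} = \sqrt{n}\cdot b\sqrt{n} = nb$. Both the numerator $nb - am$ and the denominator $a - bm$ are integers, and the denominator satisfies $a - bm = b(\sqrt{n} - m)$, so $0 < a - bm < b$. This contradicts the minimality of $b$, and therefore $\sqrt{n}$ must be an integer.

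The routine calculations are all algebra and the only substantive step is spotting the descent construction; the main conceptual point is that if $\sqrt{n}\notin\naturals$ one can always subtract off the integer part $m$ to shrink the denominator, which is what forces the dichotomy ``integer or irrational.'' There is no real obstacle, but care must be taken to verify that $a - bm$ is a \emph{positive} integer strictly less than $b$, which is exactly where the hypothesis $0 < \sqrt{n} - m < 1$ is used.
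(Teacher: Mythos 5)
Your proof is correct and is essentially the same infinite-descent argument as the paper's (which the paper cites from Aigner--Ziegler): the paper takes $n_0$ to be the smallest natural number with $n_0\sqrt{m}\in\naturals$ and descends to $n_1 = n_0(\sqrt{m}-\ell)$, which is exactly your denominator $b$ and your $a-bm = b(\sqrt{n}-m)$ in different notation. The only cosmetic difference is that you phrase the descent in terms of minimal denominators of a fraction $a/b$, while the paper phrases it in terms of the minimal multiplier clearing $\sqrt{m}$ to an integer.
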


\begin{proof}
Let $m \in \naturals$ be such that $\sqrt{m} \in \Rationals$.
Let $n_0 \in \naturals$ be the smallest natural number such that $n_0 \sqrt{m} \in \naturals$.
Suppose that $\sqrt{m} \notin \naturals$. Then, there exists $\ell \in \naturals$ such that $0 < \sqrt{m}-\ell < 1$.
Define $n_1 = n_0 (\sqrt{m} - \ell)$. Since $n_0 \sqrt{m} \in \naturals$ and $n_0 \ell \in \naturals$, it follows that
$n_1 \in \naturals$, and also $0 < n_1 < n_0$. In addition, $n_1 \sqrt{m} = n_0 m - \ell (n_0 \sqrt{m}) \in \naturals$,
which contradicts the minimality of $n_0$. Hence, $\sqrt{n} \in \naturals$.
\end{proof}

Next, we show the main result of this subsection.
\begin{theorem}\label{theorem: original_unattainability}
  Let $r\geq 2$, and let $\Gr{G}_1, \Gr{G}_2, \ldots, \Gr{G}_r$ be graphs such that for every $\ell\in\OneTo{r}$:
  \begin{enumerate}
    \item The graph $\Gr{G}_\ell$ is either a Kneser graph, a self-complementary strongly regular graph, or a self-complementary vertex-transitive graph.
    \item There exists a single $\ell_0 \in \OneTo{r}$ such that $\Gr{G}_{\ell_0}$ is either self-complementary vertex-transitive or self-complementary
    strongly regular on $n_{\ell_0}$ vertices, where $n_{\ell_0}$ is not the square of an integer.
  \end{enumerate}
  Let $p \in \naturals[x_1 , \ldots , x_r]$ be a polynomial with nonzero coefficients in $\naturals$ such that the following hold:
  \begin{enumerate}
    \item There exists a monomial in $p(x_1,\ldots,x_r)$ whose degree in $x_{\ell_0}$ is odd.
    \item The variable $x_{\ell_0}$ does not occur in at least one monomial of $p(x_1,\ldots,x_r)$.
  \end{enumerate}
  Let $\Gr{G} = p(\Gr{G}_1 , \ldots , \Gr{G}_r)$. Then, the Shannon capacity $\Theta(\Gr{G})$ is not attained at any finite strong power of $\Gr{G}$
  (i.e., condition \eqref{eq3:20.4.25} holds).
\end{theorem}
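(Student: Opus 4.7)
The plan is to reduce the theorem to an elementary irrationality argument using the polynomial decomposition result of Section~\ref{section:polynomial_decomposition} together with Dedekind's Lemma~\ref{lemma:dedekind}. First, I would verify that each of the individual graphs satisfies $\Theta(\Gr{G}_\ell)=\vartheta(\Gr{G}_\ell)$: for Kneser graphs this is recorded in Theorem~\ref{theorem: kneser_graph_invariants}, and for self-complementary vertex-transitive or strongly regular graphs of order~$n_\ell$ one has $\vartheta(\Gr{G}_\ell)^2=\vartheta(\Gr{G}_\ell)\,\vartheta(\CGr{G}_\ell)=n_\ell$ by Theorem~\ref{theorem: lovasz_bigger_n}, and the matching lower bound $\Theta(\Gr{G}_\ell)=\sqrt{n_\ell}$ is the cited result \cite[Theorem~3.26]{Sason24}. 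Hence Theorem~\ref{theorem: structured_first_result} applies to the whole family and gives
\begin{align*}
  \Theta(\Gr{G})=p\bigl(\Theta(\Gr{G}_1),\ldots,\Theta(\Gr{G}_r)\bigr).
\end{align*}

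Next, I would set $c_\ell\eqdef\Theta(\Gr{G}_\ell)$ for every $\ell\in\OneTo{r}$. For $\ell\neq\ell_0$ the graph $\Gr{G}_\ell$ is either a Kneser graph (integer capacity $\binom{n_\ell-1}{r_\ell-1}$) or self-complementary on a perfect-square number of vertices, so $c_\ell\in\naturals$. For $\ell=\ell_0$ one has $c_{\ell_0}=\sqrt{n_{\ell_0}}$ with $n_{\ell_0}$ not a perfect square, hence $c_{\ell_0}$ is irrational by Lemma~\ref{lemma:dedekind}. I would then split the polynomial according to the parity of the degree in $x_{\ell_0}$, writing
\begin{align*}
  p(x_1,\ldots,x_r)=q(x_1,\ldots,x_r)+x_{\ell_0}\,s(x_1,\ldots,x_r),
\end{align*}
where every monomial of $q$ has even degree in $x_{\ell_0}$ and every monomial of $s$ also has even degree in $x_{\ell_0}$. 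Substituting and using $c_{\ell_0}^{2j}=n_{\ell_0}^{j}\in\naturals$, this produces $\Theta(\Gr{G})=A+B\sqrt{n_{\ell_0}}$ with $A,B\in\naturals\cup\{0\}$.

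I would then establish $A>0$ and $B>0$: condition~2 on $p$ furnishes a monomial not containing $x_{\ell_0}$, which contributes a strictly positive integer to $A$ since the other $c_j$ are positive integers; condition~1 furnishes a monomial with odd degree in $x_{\ell_0}$, which contributes a strictly positive integer to $B$. A short induction on $k$, based on the identity
\begin{align*}
  (A+B\sqrt{n_{\ell_0}})(A_k+B_k\sqrt{n_{\ell_0}})=(AA_k+BB_k\,n_{\ell_0})+(AB_k+BA_k)\sqrt{n_{\ell_0}},
\end{align*}
shows that $\Theta(\Gr{G})^k=A_k+B_k\sqrt{n_{\ell_0}}$ with $A_k,B_k\in\naturals$ for every $k\in\naturals$. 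Finally, irrationality of $\sqrt{n_{\ell_0}}$ (via Lemma~\ref{lemma:dedekind}) forces $\Theta(\Gr{G})^k\notin\naturals$, while $\indnum{\Gr{G}^k}\in\naturals$ and $\indnum{\Gr{G}^k}\le\Theta(\Gr{G})^k$ always; thus the inequality must be strict, yielding $\sqrt[k]{\indnum{\Gr{G}^k}}<\Theta(\Gr{G})$ for every $k\in\naturals$, which is exactly \eqref{eq3:20.4.25}.

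The main obstacle I anticipate is of a bookkeeping nature rather than mathematical depth: one must make sure that the hypotheses (a single self-complementary factor on a non-square order, a monomial of odd $x_{\ell_0}$-degree, and a monomial free of $x_{\ell_0}$) translate cleanly into the two positivity statements $A>0$ and $B>0$, since without either of these the final $(A+B\sqrt{n_{\ell_0}})^k$ could collapse to a rational number for some $k$ (for instance, $p=x_{\ell_0}+x_{\ell_0}^3$ would yield an integer square at $k=2$). Verifying that Theorem~\ref{theorem: structured_first_result} can indeed be applied, given only the hypothesis~$\Theta(\Gr{G}_\ell)=\vartheta(\Gr{G}_\ell)$ for each factor, is the other point to state carefully.
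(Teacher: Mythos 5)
Your proposal is correct and follows essentially the same route as the paper's proof: reduce via Theorem~\ref{theorem: structured_first_result} (the paper invokes the equivalent Corollary~\ref{corollary:capacity_lovasz_polynomial_eq}) to the algebraic identity $\Theta(\Gr{G})=a\sqrt{n_{\ell_0}}+b$ with $a,b$ positive integers, propagate this form to all powers, and conclude irrationality from Lemma~\ref{lemma:dedekind}. Your explicit even/odd decomposition $p=q+x_{\ell_0}\,s$ and the verification that both hypotheses on $p$ are needed precisely to force $a>0$ and $b>0$ are details the paper states but does not spell out, so your write-up is, if anything, slightly more complete on the one genuinely delicate point.
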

\begin{proof}
  For all $\ell \in \OneTo{r}$, $\Theta(\Gr{G}_\ell) = \vartheta(\Gr{G}_\ell)$ (this follows from the first assumption on $\Gr{G}_\ell$). Thus, by Corollary~\ref{corollary:capacity_lovasz_polynomial_eq},
  \begin{align} \label{eq3: 30.12.25}
    \Theta(\Gr{G}) = p\bigl(\vartheta(\Gr{G}_1), \ldots , \vartheta(\Gr{G}_r) \bigr).
  \end{align}
  Furthermore, for $\ell \in \OneTo{r}$, if $\Gr{G}_\ell$ is a Kneser graph, then $\vartheta(\Gr{G}_\ell) = \binom{n_\ell - 1}{r_\ell - 1}\in\naturals$
  (by \cite[Theorem~13]{Lovasz79}), and if $\Gr{G}_\ell$ is self-complementary and vertex-transitive or self-complementary and
  strongly regular, then $\vartheta(\Gr{G}_\ell) = \sqrt{n_\ell}$ (by \cite[Theorem~3.26]{Sason24}). By the second assumption on the graphs,
  it follows that if $\ell \neq \ell_0$, then $\vartheta(\Gr{G}_\ell) \in \naturals$, and $\vartheta(\Gr{G}_{\ell_0}) \notin \naturals$.
  Next, by the assumptions of the polynomial $p \in \naturals[x_1 , \ldots , x_r]$, there exists a monomial in $p$ whose degree in $x_{\ell_0}$
  is odd, and there exists a monomial in $p$ where the variable $x_{\ell_0}$ does not occur.
  Thus, there exist $a, b \in \naturals$ such that
  \begin{align}  \label{eq14: 01.01.26}
    p\bigl(\vartheta(\Gr{G}_1), \ldots, \vartheta(\Gr{G}_r)\bigr) = a \sqrt{n_{\ell_0}} + b.
  \end{align}
  Similarly to the concept of proof in Example~\ref{example:unattinabilty_C5+K1}, for every $k \in \naturals$,
  it follows from \eqref{eq14: 01.01.26} that there exist $c_k, d_k \in \naturals$ such that
  \begin{align}  \label{eq15: 01.01.26}
    p\bigl(\vartheta(\Gr{G}_1), \ldots , \vartheta(\Gr{G}_r)\bigr)^k = c_k \sqrt{n_{\ell_0}} + d_k.
  \end{align}
  Since $\sqrt{n_{\ell_0}} \notin \naturals$ (by assumption), we have $\sqrt{n_{\ell_0}} \notin \Rationals$ by Lemma~\ref{lemma:dedekind}.
  By \eqref{eq3: 30.12.25} and \eqref{eq15: 01.01.26}, it implies that
  \begin{align} \label{eq16: 01.01.26}
    \Theta(\Gr{G})^k \notin \naturals,
  \end{align}
  so, for all $k \in \naturals$,
  \begin{align} \label{eq17: 01.01.26}
    \Theta(\Gr{G}) > \sqrt[k]{\indnum{\Gr{G}^k}}.
  \end{align}
  This shows that the Shannon capacity of $\Gr{G}$ is not attained by the independence number of any finite strong power of $\Gr{G}$.
\end{proof}

\begin{corollary}\label{corollary:unattainability_disjoint_union}
    Let $r\geq 2$, and let $\Gr{G}_1, \Gr{G}_2, \ldots, \Gr{G}_r$ be graphs such that for every $\ell \in \OneTo{r}$:
    \begin{enumerate}
        \item The graph $\Gr{G}_\ell$ is self-complementary on $n_\ell$ vertices, and it is either strongly regular or vertex-transitive.
        \item There exists a single $\ell_0 \in \OneTo{r}$ such that $n_{\ell_0}$ is not a square of an integer.
    \end{enumerate}
    Then, the Shannon capacity of the disjoint union of these graphs is not attained at any finite strong power.
\end{corollary}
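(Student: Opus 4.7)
The plan is to apply Theorem~\ref{theorem: original_unattainability} to the linear polynomial
\begin{align*}
p(x_1, \ldots, x_r) \;=\; x_1 + x_2 + \cdots + x_r,
\end{align*}
so that $p(\Gr{G}_1, \ldots, \Gr{G}_r) = \Gr{G}_1 + \cdots + \Gr{G}_r$ is exactly the disjoint union in question. The corollary then reduces to checking that the hypotheses of Theorem~\ref{theorem: original_unattainability} are fulfilled under the stronger assumptions of the corollary.

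First I would verify the two conditions on the graphs. Each $\Gr{G}_\ell$ is assumed to be self-complementary and either strongly regular or vertex-transitive; this places every $\Gr{G}_\ell$ inside the allowed list of Theorem~\ref{theorem: original_unattainability} (Kneser, self-complementary vertex-transitive, or self-complementary strongly regular). The second condition of Theorem~\ref{theorem: original_unattainability} requires a distinguished index $\ell_0$ for which $\Gr{G}_{\ell_0}$ is self-complementary vertex-transitive or strongly regular with $n_{\ell_0}$ not a perfect square; this is handed to us directly by assumption~2 of the corollary.

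Second I would verify the two conditions on the polynomial $p$. Since $p$ is the sum of all variables, the monomial $x_{\ell_0}$ has degree~$1$ in $x_{\ell_0}$, which is odd, so the first polynomial condition holds. Because $r \geq 2$, there exists at least one index $j \in \OneTo{r}$ with $j \neq \ell_0$, and the monomial $x_j$ does not involve $x_{\ell_0}$, which gives the second polynomial condition.

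No real obstacle arises; the corollary is essentially a transcription of Theorem~\ref{theorem: original_unattainability} in the special case of the sum polynomial. The only subtle point worth flagging is the role of the word ``single'' in assumption~2: for every $\ell \neq \ell_0$ the order $n_\ell$ must be a perfect square, ensuring $\vartheta(\Gr{G}_\ell) = \sqrt{n_\ell} \in \naturals$ (by \cite[Theorem~3.26]{Sason24}). This is what lets the proof of Theorem~\ref{theorem: original_unattainability} write $\Theta(\Gr{G})^k$ in the form $c_k \sqrt{n_{\ell_0}} + d_k$ with $c_k, d_k \in \naturals$ and $c_k > 0$, so that $\Theta(\Gr{G})^k \notin \naturals$ by Lemma~\ref{lemma:dedekind}, hence $\Theta(\Gr{G}) > \sqrt[k]{\indnum{\Gr{G}^k}}$ for every $k \in \naturals$.
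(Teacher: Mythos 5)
Your proposal is correct and matches the paper's proof exactly: the paper likewise applies Theorem~\ref{theorem: original_unattainability} to the linear polynomial $p(x_1,\ldots,x_r)=\sum_{j=1}^r x_j$, though it states this in a single line without spelling out the hypothesis checks that you carry out. Your additional observation about the role of the word ``single'' is an accurate reading of what makes the theorem's proof go through, and is a helpful gloss on a point the paper leaves implicit.
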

\begin{proof}
  The result is achieved by applying Theorem~\ref{theorem: original_unattainability} to the linear polynomial
  \begin{align}  \label{eq18: 01.01.26}
    p(x_1,\ldots,x_r) = \sum_{j=1}^{r}x_j.
  \end{align}
\end{proof}

\begin{example}
    The special case of $\Gr{G} = \CG{5} + \CoG{1}$ (presented earlier) is obtained by selecting in the previous corollary
    \begin{itemize}
      \item $\Gr{G}_1 = \CG{5}$, which is a self-complementary and vertex-transitive graph.
      \item $\Gr{G}_2 = \CoG{1}$, which is a Kneser graph.
      \item $r=2$.
    \end{itemize}
    Hence, the Shannon capacity of $\Gr{G}$ is not attained at any finite power of $\Gr{G}$.
\end{example}

Next, we present an additional example based on Paley graphs.
\begin{example}
Let $q_1,\ldots,q_\ell$ be integer powers of prime numbers, $p_i\equiv 1\text{ mod 4}$ for
every $i\in\OneTo{\ell}$, where only one of the $q_i$'s is an odd power of the prime $p_i$.
Define $\Gr{G}_i = P(q_i)$ (where $P(q)$ is the Paley graph of order $q$) for every $i\in\OneTo{\ell}$.
Then, the Shannon capacity of the disjoint union of these graphs,
\begin{align} \label{eq19: 01.01.26}
\Gr{G}=\Gr{G}_1 + \Gr{G}_2 + \ldots + \Gr{G}_\ell
\end{align}
is not attained by the independence number of any finite strong power of $\Gr{G}$.
This follows from Corollary~\ref{corollary:unattainability_disjoint_union}, since the Paley
graphs are self-complementary and strongly regular, and the restrictions on $q_1, \ldots,q_\ell$
guarantee that the conditions of Corollary~\ref{corollary:unattainability_disjoint_union} hold.
\end{example}

\subsection{Third approach}   \label{subsection: 3rd approach}
In \eqref{eq: 27.12.2025}, one already observes a novel example of a countably infinite family of {\em connected} graphs,
where the Shannon capacity of every such graph is not achieved by the independence number of any finite strong power.
Indeed, this holds since raising the right-hand side of \eqref{eq: 27.12.2025} to any power $k \in \naturals$ does not result in an integer
(similarly to \eqref{eq6: 01.01.26}).
In the present last approach, we build a generalized family of {\em{connected}} graphs, whose capacity is not attained
at any of their finite strong powers. In particular, we prove that an extended infinite family of Tadpole graphs
is not attained at a finite strong power.
\begin{theorem}\label{theorem: connected_unattainable}
Let $\Gr{H}$ be a graph with $\indnum{\Gr{H}}<\Theta(\Gr{H})$, let $\ell\geq2$ be an even number,
and let $v\in\V{\Gr{H}}$ be an arbitrary vertex in $\Gr{H}$. Define the graph $\Gr{G}$ as the
disjoint union of $\Gr{H}$ and $\PathG{\ell}$ with an extra edge between $v$ and one of the two
endpoints of $\PathG{\ell}$. Then, the capacity of $\Gr{G}$ is unattainable by any of its finite
strong powers.
\end{theorem}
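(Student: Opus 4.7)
The plan is to sandwich $\Gr{G}$ between two disjoint-union graphs to which the Guo--Watanabe result (Theorem~\ref{theorem: universal_watanabe}) applies. Let $u \in \V{\Gr{H}}$ be the specified vertex and let $w_1$ be the endpoint of $\PathG{\ell}$ joined to $u$ by the extra edge. Deleting the bridge edge $\{u, w_1\}$ produces the spanning subgraph $\Gr{S} \triangleq \Gr{H} + \PathG{\ell}$ of $\Gr{G}$, while deleting the vertex $w_1$ together with its two incident edges produces the induced subgraph $\Gr{I} \triangleq \Gr{H} + \PathG{\ell-1}$ of $\Gr{G}$. Since $\ell$ is even, one has $\lceil \ell/2 \rceil = \lceil (\ell-1)/2 \rceil = \ell/2$, so by Theorem~\ref{theorem: path_invariants} both $\PathG{\ell}$ and $\PathG{\ell-1}$ are universal graphs of Shannon capacity $\ell/2$.

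Because paths are universal, each satisfies $\Theta(\,\cdot\,) = \findnum{\,\cdot\,}$, so Theorem~\ref{theorem: structured_second_result} gives
\begin{align*}
\Theta(\Gr{S}) \; = \; \Theta(\Gr{I}) \; = \; \Theta(\Gr{H}) + \tfrac{\ell}{2},
\end{align*}
and Theorem~\ref{theorem: shannon_capacity_subgraphs} sandwiches $\Theta(\Gr{G})$ between these two equal values, yielding as a byproduct $\Theta(\Gr{G}) = \Theta(\Gr{H}) + \ell/2$. Now Theorem~\ref{theorem: universal_watanabe} applies to $\Gr{S} = \PathG{\ell} + \Gr{H}$, since $\PathG{\ell}$ is universal and $\Theta(\Gr{H}) > \indnum{\Gr{H}}$ by hypothesis; raising both sides of its conclusion to the $k$-th power yields $\indnum{\Gr{S}^k} < \Theta(\Gr{S})^k$ for every $k \in \naturals$. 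Since $\Gr{S}$ is a spanning subgraph of $\Gr{G}$, Theorem~\ref{theorem: independence_number_subgraphs} gives $\indnum{\Gr{G}^k} \leq \indnum{\Gr{S}^k}$, so we assemble
\begin{align*}
\indnum{\Gr{G}^k} \; \leq \; \indnum{\Gr{S}^k} \; < \; \Theta(\Gr{S})^k \; = \; \Theta(\Gr{G})^k,
\end{align*}
and extracting the $k$-th root establishes the desired condition~\eqref{eq3:20.4.25}.

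The conceptual point is that the bridge edge connecting $\Gr{H}$ to $\PathG{\ell}$ affects neither side of the sandwich: removing it as an edge produces the spanning subgraph $\Gr{S}$ still covered by Guo--Watanabe, while removing its pendant endpoint $w_1$ produces an induced subgraph of matching Shannon capacity. The parity hypothesis on $\ell$ is essential, for it is precisely what forces $\Theta(\PathG{\ell}) = \Theta(\PathG{\ell-1})$ and makes the two bounds on $\Theta(\Gr{G})$ coincide; no further technical obstacle arises, as the heavy lifting is already packaged in Theorems~\ref{theorem: universal_watanabe} and~\ref{theorem: structured_second_result}.
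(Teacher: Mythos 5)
Your proof is correct and follows the same skeleton as the paper's: sandwich $\Gr{G}$ between the induced subgraph $\Gr{H}+\PathG{\ell-1}$ and the spanning subgraph $\Gr{H}+\PathG{\ell}$, exploit the universality of paths (with the parity hypothesis making their independence numbers agree) to equate the capacities of the two sandwiching graphs, and finish with the Guo--Watanabe theorem. The only slight economy in your version is at the final step: the paper first proves the power-wise \emph{equality} $\indnum{\Gr{G}^k}=\indnum{(\Gr{H}+\PathG{\ell})^k}$ by comparing both inclusions, whereas you need only the one-sided spanning-subgraph inequality $\indnum{\Gr{G}^k}\leq\indnum{(\Gr{H}+\PathG{\ell})^k}$ once $\Theta(\Gr{G})=\Theta(\Gr{H}+\PathG{\ell})$ has been established (which you obtain directly from Theorem~\ref{theorem: structured_second_result}); both routes are valid, and yours is marginally shorter.
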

\begin{proof}
Since $\Gr{H}+\PathG{\ell-1}$ is an induced subgraph of $\Gr{G}$, and $\Gr{H}+\PathG{\ell}$ is a spanning
subgraph of $\Gr{G}$, it follows that
\begin{align} \label{eq4}
\Theta(\Gr{H}+\PathG{\ell-1}) \leq \Theta(\Gr{G}) \leq \Theta(\Gr{H}+\PathG{\ell}).
\end{align}
Path graphs are universal, and for an even $\ell\geq2$, the capacities of $\PathG{\ell-1}$ and $\PathG{\ell}$ coincide
(see Theorem~\ref{theorem: path_invariants}).
The latter holds because path graphs are bipartite and therefore perfect, so their Shannon capacities coincide with their
independence numbers, and also $\indnum{\PathG{\ell-1}} = \frac{\ell}{2} = \indnum{\PathG{\ell}}$ if $\ell \geq 2$ is even.
By the universality of path graphs, and since the independence numbers of $\PathG{\ell}$ and $\PathG{\ell-1}$ coincide
for an even $\ell \geq 2$ (see Theorem~\ref{theorem: path_invariants}), for every $k \in \naturals$,
\begin{align}
\indnum{(\Gr{H} + \PathG{\ell-1})^k} & = \sum_{i=0}^{k} \, \binom{k}{i} \, \indnumbig{\Gr{H}^i\boxtimes\PathG{\ell-1}^{k-i}} \nonumber \\
& = \sum_{i=0}^{k} \, \binom{k}{i} \, \indnum{\Gr{H}^i} \, \indnum{\PathG{\ell-1}}^{k-i} \nonumber \\
& = \sum_{i=0}^{k} \, \binom{k}{i} \, \indnum{\Gr{H}^i} \, \indnum{\PathG{\ell}}^{k-i} \nonumber \\
& = \indnumbig{(\Gr{H}+\PathG{\ell})^k}.  \label{eq5}
\end{align}
Consequently, by raising both sides of \eqref{eq5} to the power $\frac{1}{k}$ and letting $k \to \infty$, it follows that
$\Theta(\Gr{H}+\PathG{\ell-1}) = \Theta(\Gr{H}+\PathG{\ell})$. Combining the last equality with \eqref{eq4}, hence gives
\begin{align}\label{eq3:17.8.25}
\Theta(\Gr{G}) = \Theta(\Gr{H}+\PathG{\ell}).
\end{align}
By the same argument that yields \eqref{eq4}, for all $k \in \naturals$, $(\Gr{H}+\PathG{\ell-1})^k$ is an
induced subgraph of $\Gr{G}^k$, and $(\Gr{H}+\PathG{\ell})^k$ is a spanning subgraph of $\Gr{G}^k$, so
\begin{align} \label{eq6}
\indnum{(\Gr{H}+\PathG{\ell-1})^k} \leq \indnum{\Gr{G}^k} \leq \indnum{(\Gr{H}+\PathG{\ell})^k}.
\end{align}
Thus, by combining \eqref{eq5} and \eqref{eq6}, it follows that for every $k \in \naturals$,
\begin{align}\label{eq4:17.8.25}
\indnum{\Gr{G}^k} = \indnumbig{(\Gr{H}+\PathG{\ell})^k}.
\end{align}
Finally, by Theorem~\ref{theorem: universal_watanabe} and equalities~\eqref{eq3:17.8.25} and \eqref{eq4:17.8.25}, for every $k\in\naturals$,
\begin{align}
\Theta(\Gr{G})^{\, k} = \bigl(\Theta(\Gr{H}+\PathG{\ell})\bigr)^{k} > \indnumbig{(\Gr{H}+\PathG{\ell})^k} = \indnum{\Gr{G}^k},
\end{align}
so $\Gr{G}$ is not attained at any of its finite strong powers.
\end{proof}

\begin{corollary}\label{corollary:tadpole_unattainable}
  Let $k\geq5$ be an odd number, and let $\ell\geq2$ be an even number. Then, the Shannon capacity of the Tadpole graph $\TG{k}{\ell}$ is unattainable by any of its strong powers.
\end{corollary}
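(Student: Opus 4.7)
The plan is to recognize that $\TG{k}{\ell}$ is literally the construction in Theorem~\ref{theorem: connected_unattainable} applied with $\Gr{H}=\CG{k}$ and the same path $\PathG{\ell}$: by Definition~\ref{definition:tadpole_graph}, the tadpole is obtained from the disjoint union $\CG{k}+\PathG{\ell}$ by adding a single edge between a chosen vertex $v\in\V{\CG{k}}$ and a pendant vertex of $\PathG{\ell}$, which matches the hypothesis of the theorem verbatim. The parity assumption $\ell\geq 2$ even is already present in both statements, so the only thing that needs to be checked is the assumption $\indnum{\CG{k}}<\Theta(\CG{k})$ for odd $k\geq 5$.

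For this verification I would invoke exactly the argument already used inside the proof of Corollary~\ref{corollary:universal_watanabe}. By Theorem~\ref{theorem: cycle_invariants}, $\indnum{\CG{k}}=\tfrac{k-1}{2}$ for odd $k\geq 5$, while Hales' formula (used in the proof of Theorem~\ref{theorem: capacity_tadpole_graph_bound}) gives
\begin{equation*}
\indnum{\CG{k}^{2}} = \left(\tfrac{k-1}{2}\right)^{2}+\left\lfloor\tfrac{k-1}{4}\right\rfloor > \left(\tfrac{k-1}{2}\right)^{2} = \indnum{\CG{k}}^{2},
\end{equation*}
since $\lfloor (k-1)/4\rfloor\geq 1$ for $k\geq 5$. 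Taking square roots yields $\Theta(\CG{k})\geq\sqrt{\indnum{\CG{k}^{2}}}>\indnum{\CG{k}}$, as required. (For the small case $k=5$ one could alternatively just cite $\Theta(\CG{5})=\sqrt{5}>2$.)

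With the hypothesis of Theorem~\ref{theorem: connected_unattainable} now confirmed, the theorem applies directly and gives $\Theta(\TG{k}{\ell})^{k'}>\indnum{\TG{k}{\ell}^{k'}}$ for every $k'\in\naturals$, which is precisely the statement of the corollary. There is no real obstacle here since all the heavy lifting, namely the trapping of $\TG{k}{\ell}$ between the induced subgraph $\CG{k}+\PathG{\ell-1}$ and the spanning subgraph $\CG{k}+\PathG{\ell}$, was already carried out in Theorem~\ref{theorem: connected_unattainable}; the corollary is essentially the statement that odd cycles of length at least $5$ supply a concrete infinite family of graphs $\Gr{H}$ satisfying the theorem's hypothesis, producing a countably infinite family of connected examples.
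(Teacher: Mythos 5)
Your proposal is correct and follows exactly the same route as the paper: invoke Theorem~\ref{theorem: connected_unattainable} with $\Gr{H}=\CG{k}$, and note that the hypothesis $\indnum{\CG{k}}<\Theta(\CG{k})$ holds for odd $k\geq 5$. The only difference is that you spell out the verification of $\indnum{\CG{k}}<\Theta(\CG{k})$ via Hales' formula, whereas the paper simply asserts it (having already established it within the proof of Corollary~\ref{corollary:universal_watanabe}).
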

\begin{proof}
This follows directly from Theorem~\ref{theorem: connected_unattainable} and Definition~\ref{definition:tadpole_graph}, by selecting $\Gr{H} = \CG{k}$ for an odd $k \geq 5$.
In the latter case, $\indnum{\CG{k}} < \Theta(\CG{k})$, as required by Theorem~\ref{theorem: connected_unattainable}.
\end{proof}

\begin{remark}\label{remark:connected_unattainable}
  Corollary \ref{corollary:tadpole_unattainable} provides a countably infinite set of {\em{connected}} graphs whose Shannon capacities are unattainable by any of its strong powers.
  This is the first infinite family of connected graphs with that property. All previous constructions with that property were disconnected graphs.
\end{remark}

% ################################################################################################################################ %
% -------------------------------------------------------- Section 6: q-Kneser --------------------------------------------------- %
% ################################################################################################################################ %

\section{The Shannon capacity of \texorpdfstring{$q$}{q}-Kneser graphs}
\label{section:q-kneser}

In this section, we determine the exact Shannon capacity of the family of $q$-Kneser graphs.
For the classical Kneser graphs, the Shannon capacity was determined by Lov\'{a}sz \cite[Theorem~13]{Lovasz79} (see \eqref{eq: Shannon capacity of Kneser graphs}).
\begin{definition}\label{definition:q-gaussian_coefficient}
Let $n, k \in \naturals$ with $k \leq n$, let $p$ be a prime, let $q = p^m$ be a prime power with $m \in \naturals$, and let
$\mathbb{F}_q$ denote the Galois field of order $q$. The Gaussian coefficient, denoted by $\gbinom{n}{k}{q}$, is given by
\begin{align}\label{eq:q-gaussian_coefficient}
\gbinom{n}{k}{q} = \frac{(q^n - 1)(q^n-q)\cdots(q^n - q^{k-1})}{(q^k - 1)(q^k - q)\cdots (q^k - q^{k-1})}.
\end{align}
\end{definition}
The Gaussian coefficient admits the following combinatorial interpretation.
Let $V$ be an $n$-dimensional vector space over $\mathbb{F}_q$. Then, the number of distinct $k$-dimensional subspaces of $V$
is equal to $\gbinom{n}{k}{q}$. Consequently, we define $\gbinom{0}{0}{q} \triangleq 1$.

Letting $q$ vary continuously and taking the limit $q \to 1$ gives
\begin{align}
\lim_{q \to 1} \, \gbinom{n}{k}{q} &= \frac{n}{k} \cdot \frac{n-1}{k-1} \cdots \frac{n-(k-1)}{k-(k-1)} \nonumber \\
\label{eq: limit to binomial coeff.}
& = \binom{n}{k},
\end{align}
thus converging to the binomial coefficient.

\begin{definition}[$q$-Kneser graphs]
\label{definition:q_kneser_graph}
Let $V(n,q)$ denote the $n$-dimensional vector space over the finite field $\mathbb{F}_q$, where $q$ is a prime power.
The $q$-Kneser graph $\qKG{n}{k}{q}$ has as its vertices the $k$-dimensional subspaces of $V(n,q)$, and any two vertices
are adjacent if and only if their intersection is the zero vector.
\end{definition}
Some of the properties of the $q$-Kneser graphs are presented next (see \cite[Proposition~3.4]{Mussche09}).
\begin{theorem}\label{theorem: q-kneser_graph_properties}
  Let $\Gr{G}=\qKG{n}{k}{q}$ be a $q$-Kneser graph, where $n \geq 2k$ and $q$ is a prime power. Then, the following hold:
  \begin{enumerate}
    \item The order of $\qKG{n}{k}{q}$ is given by
    \begin{align} \label{eq: order q-Kneser}
    \card{\V{\qKG{n}{k}{q}}} = \gbinom{n}{k}{q}.
    \end{align}
    \item The size of $\qKG{n}{k}{q}$ is given by
    \begin{align}  \label{eq: size q-Kneser}
    \card{\E{\qKG{n}{k}{q}}} = \tfrac{1}{2} \, q^{k^2} \, \gbinom{n-k}{k}{q} \, \gbinom{n}{k}{q}.
    \end{align}
    \item $\qKG{n}{k}{q}$ is $d$-regular with $d = q^{k^2} \, \gbinom{n-k}{k}{q}$.
    \item $\qKG{n}{k}{q}$ is vertex-transitive and edge-transitive.
  \end{enumerate}
\end{theorem}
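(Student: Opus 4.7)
The plan is to verify each of the five claims in turn, with (1) being essentially definitional, (3) arising from a direct counting of subspaces in general position, (2) following from (1) and (3) via the handshaking lemma, and (4)--(5) coming from the natural action of $\mathrm{GL}(n,\mathbb{F}_q)$ on $V(n,q)$.

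Claim (1) is immediate from the combinatorial interpretation of the Gaussian coefficient noted right after Definition~\ref{definition:q-gaussian_coefficient}: the vertex set of $\qKG{n}{k}{q}$ is, by definition, the set of $k$-dimensional subspaces of $V(n,q)$, whose cardinality is $\gbinom{n}{k}{q}$.

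For (3), I would fix an arbitrary vertex $U$ and count its neighbors $W$, i.e., the $k$-dimensional subspaces with $U\cap W = \{0\}$. This is done by first counting ordered bases $(w_1,\ldots,w_k)$ of such $W$: since $w_1$ must lie outside the $k$-dimensional subspace $U$, there are $q^n - q^k$ choices; given $w_1,\ldots,w_i$ linearly independent and independent from $U$, the vector $w_{i+1}$ must lie outside the $(k+i)$-dimensional subspace $U+\mathrm{span}(w_1,\ldots,w_i)$, giving $q^n - q^{k+i}$ choices. Dividing by the number $\prod_{i=0}^{k-1}(q^k-q^i)$ of ordered bases of a fixed $k$-dimensional space, and pulling a factor of $q^k$ out of each term in the numerator, yields
\begin{equation*}
\frac{\prod_{i=0}^{k-1}(q^n - q^{k+i})}{\prod_{i=0}^{k-1}(q^k-q^i)} \; = \; q^{k^2} \gbinom{n-k}{k}{q},
\end{equation*}
which is independent of $U$, establishing both regularity and the stated degree. (The hypothesis $n\geq 2k$ ensures this count is positive, so the graph actually has edges.) Claim (2) then follows at once from (1) and (3) by the handshaking lemma.

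For (4) and (5), I use that any $g\in\mathrm{GL}(n,\mathbb{F}_q)$ acts on subspaces of $V(n,q)$ preserving both dimension and intersection, and therefore induces an automorphism of $\qKG{n}{k}{q}$. For vertex-transitivity, given $k$-dimensional subspaces $U_1,U_2$, pick a basis of each, extend both to bases of $V(n,q)$, and the linear map identifying these two bases of $V$ sends $U_1$ to $U_2$. For edge-transitivity, given edges $\{U_1,W_1\}$ and $\{U_2,W_2\}$ with $U_i\cap W_i = \{0\}$, concatenating a basis of $U_i$ with a basis of $W_i$ produces a basis of the $2k$-dimensional subspace $U_i\oplus W_i$; extending to bases of $V(n,q)$ and identifying them yields an element of $\mathrm{GL}(n,\mathbb{F}_q)$ simultaneously sending $U_1\mapsto U_2$ and $W_1\mapsto W_2$. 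I do not expect any serious obstacle; the only place requiring careful bookkeeping is the algebraic simplification in (3), where the $q^{k^2}$ factor must be tracked through the Gaussian-coefficient manipulation, while everything else is routine linear algebra over $\mathbb{F}_q$.
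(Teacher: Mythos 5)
Your proof is correct. The paper itself does not prove this theorem: it states it with a pointer to \cite[Proposition~3.4]{Mussche09} and moves on. Your proposal supplies a complete, self-contained, elementary argument, so there is no paper proof to compare against line by line; nonetheless the decomposition you chose is exactly the natural one. A few remarks on the details. The counting for claim (3) is right: requiring $W\cap U=\{0\}$ for a $k$-dimensional $W$ spanned by $(w_1,\ldots,w_k)$ is equivalent to requiring that each $w_{i+1}$ avoid $U+\mathrm{span}(w_1,\ldots,w_i)$ (a subspace of dimension $k+i$, since the previous $w_j$ were already chosen independent of $U$), which gives the product $\prod_{i=0}^{k-1}(q^n-q^{k+i})$; dividing out the ordered-basis count and extracting $q^k$ from each numerator factor does yield $q^{k^2}\gbinom{n-k}{k}{q}$ as you wrote. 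Claims (1), (2), (4), (5) are handled exactly as they should be: (1) is definitional, (2) follows by the handshaking lemma once regularity and order are known, and (4)--(5) follow from the transitivity of the $\mathrm{GL}(n,\mathbb{F}_q)$-action on $k$-flags and on complementary pairs of $k$-spaces (which requires only $n\geq 2k$ for the direct sum $U\oplus W$ to embed into $V$, and the two orderings of an edge are symmetric, so unordered edge-transitivity follows). This gives the paper's readers a proof they otherwise would have to chase down in a reference, which is a genuine gain.
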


Next, we determine the Shannon capacity of $\qKG{n}{k}{q}$ in a manner analogous to the Lov\'{a}sz calculation
of the Shannon capacity of the Kneser graph $\KG{n}{k}$ (see \cite[Theorem~13]{Lovasz79}). To that end,
we derive the independence number and the Lov\'{a}sz $\vartheta$-function of the $q$-Kneser graph and show
that they coincide, thereby yielding the Shannon capacity of the graph.
\begin{lemma}\label{lemma:q-kneser_graph_independence_number} \cite{GodsilK16}
Let $\Gr{G}=\qKG{n}{k}{q}$ be a $q$-Kneser graph, where $n\geq 2k$ and $q$ is a prime power. Then,
\begin{align}\label{eq:q-kneser_graph_independence_number}
\indnum{\qKG{n}{k}{q}}=\gbinom{n-1}{k-1}{q}.
\end{align}
\end{lemma}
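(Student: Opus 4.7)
The plan is to prove equality by matching lower and upper bounds, following the pattern Lov\'{a}sz used for the ordinary Kneser graphs.

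For the lower bound, I would exhibit an explicit independent set. Fix a $1$-dimensional subspace $L \subseteq V(n,q)$, and let $\set{I}_L$ be the collection of all $k$-dimensional subspaces of $V(n,q)$ that contain $L$. Any two such subspaces share the subspace $L$, which is nonzero, so by Definition~\ref{definition:q_kneser_graph} no two vertices of $\set{I}_L$ are adjacent in $\qKG{n}{k}{q}$; hence $\set{I}_L \in \indset{\qKG{n}{k}{q}}$. The subspaces containing $L$ are in bijection with the $(k-1)$-dimensional subspaces of the quotient space $V(n,q)/L$, which is an $(n-1)$-dimensional vector space over $\mathbb{F}_q$. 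Counting these gives $\card{\set{I}_L} = \gbinom{n-1}{k-1}{q}$, so $\indnum{\qKG{n}{k}{q}} \geq \gbinom{n-1}{k-1}{q}$.

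For the upper bound, I would invoke the $q$-analog of the Erd\H{o}s--Ko--Rado theorem, which is the source \cite{GodsilK16} cited just before the lemma. That result states that for $n \geq 2k$, every intersecting family of $k$-dimensional subspaces of $V(n,q)$ (meaning any two members intersect nontrivially) has cardinality at most $\gbinom{n-1}{k-1}{q}$. Since an independent set in $\qKG{n}{k}{q}$ is by definition an intersecting family of $k$-subspaces, this yields $\indnum{\qKG{n}{k}{q}} \leq \gbinom{n-1}{k-1}{q}$, completing the proof.

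The main obstacle is the upper bound, since the lower bound is a short counting argument while the $q$-EKR bound is a nontrivial extremal theorem. The cleanest route (and the one most consistent with the surrounding sections of the paper, which emphasize spectral methods and Theorem~\ref{theorem: lovasz_hoffman_bound}) is to apply the Hoffman ratio bound to $\qKG{n}{k}{q}$ using its known spectrum from \cite{LvW12}. The graph is $q^{k^{2}}\gbinom{n-k}{k}{q}$-regular on $\gbinom{n}{k}{q}$ vertices by Theorem~\ref{theorem: q-kneser_graph_properties}, and its smallest eigenvalue is $-q^{k^{2}-1}\gbinom{n-k}{k-1}{q}$; substituting these into $\alpha(\Gr{G}) \leq -n\lambda_{n}/(d-\lambda_{n})$ and simplifying via the Gaussian coefficient identities $\gbinom{n}{k}{q} = \frac{q^{n}-1}{q^{k}-1}\gbinom{n-1}{k-1}{q}$ collapses the right-hand side to $\gbinom{n-1}{k-1}{q}$. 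In fact, this is precisely what is needed for the subsequent capacity computation, since the graph is edge-transitive and hence equality holds in the Hoffman/Lov\'{a}sz bound \eqref{eq:lovasz_hoffman_bound}, which will later give $\vartheta(\qKG{n}{k}{q}) = \gbinom{n-1}{k-1}{q}$ and force the Shannon capacity to coincide with the independence number via Theorem~\ref{theorem: sandwich_theorem}.
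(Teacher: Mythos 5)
Your primary argument---exhibiting the star $\set{I}_L$ of $k$-subspaces through a fixed line $L$ for the lower bound, and invoking the $q$-analog of Erd\H{o}s--Ko--Rado for the upper bound---is exactly the paper's proof, which cites \cite[Theorem~9.8.1]{GodsilK16} for both the extremal bound and the count of the star. So that part is correct and matches.

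The alternative ``cleanest route'' you sketch contains an error, however. The smallest eigenvalue of $\qKG{n}{k}{q}$ is not $-q^{k^{2}-1}\gbinom{n-k}{k-1}{q}$; according to \cite{LvW12} (and as used in the paper's Lemma~\ref{lemma:q-kneser_graph_lovasz_function}) it is $\lambda_{\min}=-q^{k^{2}-k}\gbinom{n-k-1}{k-1}{q}$. Your expression fails a sanity check: for $n=4$, $k=2$, $q=2$ the graph is $16$-regular on $35$ vertices, but $-q^{k^{2}-1}\gbinom{n-k}{k-1}{q}=-8\cdot 3=-24$, which has modulus exceeding the degree and hence cannot be an eigenvalue; the correct value $-q^{k^{2}-k}\gbinom{n-k-1}{k-1}{q}=-4$ gives $\vartheta=\tfrac{35\cdot 4}{16+4}=7=\gbinom{3}{1}{2}$ as desired. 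With the corrected eigenvalue the Hoffman computation does collapse to $\gbinom{n-1}{k-1}{q}$ exactly as you describe, but the paper reserves that spectral computation for the separate $\vartheta$-function lemma (Lemma~\ref{lemma:q-kneser_graph_lovasz_function}) and keeps the independence-number lemma purely combinatorial via $q$-EKR. Either route is valid once the eigenvalue is corrected; the paper's split simply mirrors Lov\'{a}sz's original treatment of Kneser graphs.
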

\begin{proof}
By the version of the Erd\H{o}s-Ko-Rado theorem for finite vector spaces (see \cite[Theorem~9.8.1]{GodsilK16}), it follows that
\begin{align}
\indnum{\qKG{n}{k}{q}} \leq \gbinom{n-1}{k-1}{q}.
\end{align}
Moreover, we can construct a family of $k$-subspaces of $V(n,q)$ containing a fixed $1$-dimensional subspace of $V(n,q)$.
This family of subspaces has $\gbinom{n-1}{k-1}{q}$ subspaces (see Definition~\ref{definition:q-gaussian_coefficient}
and \cite[Theorem~9.8.1]{GodsilK16}), and it is an independent set in $\qKG{n}{k}{q}$, thus
\begin{align}
\indnum{\qKG{n}{k}{q}} \geq \gbinom{n-1}{k-1}{q},
\end{align}
which proves equality~\eqref{eq:q-kneser_graph_independence_number}.
\end{proof}

\begin{lemma}\label{lemma:q-kneser_graph_lovasz_function}
  Let $\Gr{G}=\qKG{n}{k}{q}$ be a $q$-Kneser graph, where $n\geq 2k$ and $q$ is a prime power. Then,
  \begin{align}\label{eq:q-kneser_graph_lovasz_function}
    \vartheta(\qKG{n}{k}{q})=\gbinom{n-1}{k-1}{q}.
  \end{align}
\end{lemma}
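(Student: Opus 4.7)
The plan is to sandwich $\vartheta(\qKG{n}{k}{q})$ between $\gbinom{n-1}{k-1}{q}$ on both sides. The lower bound is immediate: by the sandwich theorem (Theorem~\ref{theorem: sandwich_theorem}) and Lemma~\ref{lemma:q-kneser_graph_independence_number},
\begin{equation*}
\vartheta(\qKG{n}{k}{q}) \geq \indnum{\qKG{n}{k}{q}} = \gbinom{n-1}{k-1}{q}.
\end{equation*}

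For the matching upper bound, I would invoke the Hoffman-type bound (Theorem~\ref{theorem: lovasz_hoffman_bound}), which holds with equality for edge-transitive regular graphs. By Theorem~\ref{theorem: q-kneser_graph_properties}, $\qKG{n}{k}{q}$ is edge-transitive and $d$-regular on $N$ vertices with
\begin{equation*}
N = \gbinom{n}{k}{q}, \qquad d = q^{k^{2}} \gbinom{n-k}{k}{q}.
\end{equation*}
From the spectrum of $q$-Kneser graphs (see \cite{LvW12}), the smallest eigenvalue is
\begin{equation*}
\lambda_{\min} = -q^{k(k-1)} \gbinom{n-k-1}{k-1}{q},
\end{equation*}
so Theorem~\ref{theorem: lovasz_hoffman_bound} yields
\begin{equation*}
\vartheta(\qKG{n}{k}{q}) = \frac{\gbinom{n}{k}{q} \, \gbinom{n-k-1}{k-1}{q}}{q^{k} \gbinom{n-k}{k}{q} + \gbinom{n-k-1}{k-1}{q}}.
\end{equation*}

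The remaining step is a purely combinatorial identity: I claim the right-hand side equals $\gbinom{n-1}{k-1}{q}$. Using the $q$-Pascal identity $\gbinom{n}{k}{q} = \gbinom{n-1}{k-1}{q} + q^{k} \gbinom{n-1}{k}{q}$ in the numerator, cross-multiplication reduces the required equality to
\begin{equation*}
\gbinom{n-1}{k}{q} \, \gbinom{n-k-1}{k-1}{q} = \gbinom{n-1}{k-1}{q} \, \gbinom{n-k}{k}{q},
\end{equation*}
which is verified directly from the factorial expression $\gbinom{m}{r}{q} = \tfrac{[m]_{q}!}{[r]_{q}! \, [m-r]_{q}!}$ since both sides equal $\tfrac{[n-1]_{q}!}{[k]_{q}! \, [k-1]_{q}! \, [n-2k]_{q}!}$. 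Combining the two matching bounds gives \eqref{eq:q-kneser_graph_lovasz_function}.

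The main obstacle is locating and correctly quoting the smallest eigenvalue of $\qKG{n}{k}{q}$; once the spectral input from \cite{LvW12} is in hand, the rest is a short Hoffman-bound computation together with the $q$-binomial identity above. An alternative route, which I would keep as a backup, is to exploit vertex-transitivity via Theorem~\ref{theorem: lovasz_bigger_n}: since $\qKG{n}{k}{q}$ is vertex-transitive one has $\vartheta(\qKG{n}{k}{q}) \, \vartheta(\overline{\qKG{n}{k}{q}}) = \gbinom{n}{k}{q}$, so an independent computation of $\vartheta(\overline{\qKG{n}{k}{q}}) = \gbinom{n}{k}{q}/\gbinom{n-1}{k-1}{q}$ via a fractional clique-cover of the complement by pencils of $k$-subspaces through a common line would close the argument without requiring the full spectrum.
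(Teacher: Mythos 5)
Your proof is correct and takes essentially the same route as the paper: the Hoffman-type bound (Theorem~\ref{theorem: lovasz_hoffman_bound}) holding with equality by edge-transitivity, the smallest eigenvalue of $\qKG{n}{k}{q}$ from \cite{LvW12}, and then a $q$-binomial simplification to $\gbinom{n-1}{k-1}{q}$. The only cosmetic difference is the algebra at the end (the paper uses the identity $\gbinom{m}{r}{q}=\frac{q^m-1}{q^r-1}\gbinom{m-1}{r-1}{q}$ to collapse the denominator, while you use the $q$-Pascal rule plus cross-multiplication); also, the separate lower bound via $\indnum{\qKG{n}{k}{q}}$ is redundant once you are citing the Hoffman bound as an equality for edge-transitive graphs.
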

\begin{proof}
By Theorem~\ref{theorem: lovasz_hoffman_bound}, and since the $q$-Kneser graphs are edge-transitive
(see Theorem~\ref{theorem: q-kneser_graph_properties}), it follows that
\begin{align}\label{eq2:24.8.25}
\vartheta(\qKG{n}{k}{q}) = -\frac{\card{\V{\qKG{n}{k}{q}}}\;\lambda_{\min}}{\lambda_{\max} - \lambda_{\min}},
\end{align}
where $\lambda_{\max}$ and $\lambda_{\min}$ are, respectively, the largest and smallest eigenvalues of the adjacency matrix of $\qKG{n}{k}{q}$.
By \cite[Theorem 2]{LvW12}, these eigenvalues are given by
\begin{align}
\lambda_{\max} & = q^{k^2} \, \gbinom{n-k}{k}{q} \label{eq3:24.8.25}\\
\lambda_{\min} & = -q^{k^2 - k} \, \gbinom{n-k-1}{k-1}{q}. \label{eq4:24.8.25}
\end{align}
Substituting \eqref{eq: order q-Kneser}, \eqref{eq3:24.8.25}, and \eqref{eq4:24.8.25} into \eqref{eq2:24.8.25} gives
\begin{align}\label{eq5:24.8.25}
\vartheta(\qKG{n}{k}{q}) & = \frac{\gbinom{n}{k}{q} \cdot q^{k^2 - k} \, \gbinom{n-k-1}{k-1}{q}}{q^{k^2} \,
\gbinom{n-k}{k}{q} + q^{k^2 - k} \, \gbinom{n-k-1}{k-1}{q}} \nonumber \\
& = \frac{\gbinom{n}{k}{q}\cdot\gbinom{n-k-1}{k-1}{q}}{q^k \, \gbinom{n-k}{k}{q}+\gbinom{n-k-1}{k-1}{q}}.
\end{align}
This expression can be simplified, based on the following identity:
\begin{align} \label{eq6:24.8.25}
\gbinom{m}{r}{q} = \frac{q^m - 1}{q^r - 1} \, \gbinom{m-1}{r-1}{q},
\end{align}
where using \eqref{eq6:24.8.25} with $m=n-k$ and $r=k$ simplifies the denominator on the right-hand side of \eqref{eq5:24.8.25} to
\begin{align} \label{eq7:24.8.25}
& q^k \, \gbinom{n-k}{k}{q} + \gbinom{n-k-1}{k-1}{q} \nonumber \\
&= q^k \cdot \frac{q^{n-k}-1}{q^k -1} \cdot \gbinom{n-k-1}{k-1}{q} + \gbinom{n-k-1}{k-1}{q} \nonumber \\
%&= \gbinom{n-k-1}{k-1}{q}\cdot\left(\frac{q^n-q^k}{q^k -1}+1\right) \nonumber \\
&= \frac{q^n-1}{q^k -1} \cdot \gbinom{n-k-1}{k-1}{q}.
\end{align}
Combining \eqref{eq5:24.8.25} and \eqref{eq7:24.8.25} finally gives the simplified form
\begin{align}
\vartheta(\qKG{n}{k}{q}) & = \gbinom{n}{k}{q}\cdot\frac{q^k - 1}{q^n - 1} \nonumber \\
& = \gbinom{n-1}{k-1}{q},  \label{eq20: 01.01.26}
\end{align}
where \eqref{eq20: 01.01.26} holds by \eqref{eq6:24.8.25}.
\end{proof}
Due to the coincidence of the independence number and the Lov\'{a}sz $\vartheta$-function in
Lemmata~\ref{lemma:q-kneser_graph_independence_number} and~\ref{lemma:q-kneser_graph_lovasz_function},
their common value is equal to the Shannon capacity of the graph.
This gives the following closed-form expression for the Shannon capacity of $q$-Kneser graphs.
\begin{theorem}\label{theorem: q-kneser_graph_shannon_capacity}
The Shannon capacity of the $q$-Kneser graph $\qKG{n}{k}{q}$, where $n \geq 2k$ and $q$ is a prime power, is given by
\begin{align}\label{eq:q-kneser_graph_shannon_capacity}
\Theta(\qKG{n}{k}{q}) = \gbinom{n-1}{k-1}{q}.
\end{align}
\end{theorem}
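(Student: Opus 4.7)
The plan is to invoke the sandwich theorem in \eqref{eq: sandwich_theorem}, which gives the chain
$\indnum{\Gr{G}} \leq \Theta(\Gr{G}) \leq \vartheta(\Gr{G})$ for any simple graph $\Gr{G}$. Applied to $\Gr{G} = \qKG{n}{k}{q}$, this immediately reduces the theorem to showing that the independence number and the Lov\'{a}sz $\vartheta$-function of the $q$-Kneser graph coincide, since the capacity is squeezed between them.

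Fortunately, both of these quantities have already been computed in Lemma~\ref{lemma:q-kneser_graph_independence_number} and Lemma~\ref{lemma:q-kneser_graph_lovasz_function}, where each is shown to equal $\gbinom{n-1}{k-1}{q}$. So the proof I would write is essentially a one-line chain of inequalities:
\begin{align*}
\gbinom{n-1}{k-1}{q} = \indnum{\qKG{n}{k}{q}} \leq \Theta(\qKG{n}{k}{q}) \leq \vartheta(\qKG{n}{k}{q}) = \gbinom{n-1}{k-1}{q},
\end{align*}
which forces equality throughout and yields \eqref{eq:q-kneser_graph_shannon_capacity}.

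There is essentially no obstacle in this final step; all the genuine work has been done in the two preceding lemmas. The hard part is actually behind us, namely the tight upper bound $\vartheta(\qKG{n}{k}{q}) \leq \gbinom{n-1}{k-1}{q}$, which relied on edge-transitivity of $q$-Kneser graphs (Theorem~\ref{theorem: q-kneser_graph_properties}), the Hoffman-type bound in Theorem~\ref{theorem: lovasz_hoffman_bound}, and the explicit extremal eigenvalues of the adjacency matrix from \cite{LvW12}, together with the Gaussian-coefficient identity \eqref{eq6:24.8.25}. The matching lower bound $\indnum{\qKG{n}{k}{q}} \geq \gbinom{n-1}{k-1}{q}$ came from the $q$-analogue of the Erd\H{o}s--Ko--Rado theorem applied to the family of $k$-subspaces containing a fixed line. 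Thus the theorem itself is best presented as a short corollary of these two lemmas and the sandwich inequality.
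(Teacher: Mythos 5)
Your argument is exactly the paper's: both lemmas establish that the independence number and the Lov\'{a}sz $\vartheta$-function equal $\gbinom{n-1}{k-1}{q}$, and the sandwich theorem \eqref{eq: sandwich_theorem} squeezes $\Theta(\qKG{n}{k}{q})$ between them. Correct and identical in approach.
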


% ################################################################################################################################ %
% ------------------------------------------------------- Section 7: Inequality -------------------------------------------------- %
% ################################################################################################################################ %

\section{A new inequality for the capacity of graphs}\label{section:inequality}
The following result provides a relation between the Shannon capacity of any strong product of graphs, and the capacity of the disjoint
union of the component graphs. If these component graphs are connected, then their strong product is a connected graph on a number of
vertices that is equal to the product of the number of vertices in each component graph, whereas the disjoint union of these component
graphs is a disconnected graph on a number of vertices that is equal to the sum of the orders of the component graphs (the latter order
is typically much smaller than the former). The motivation for our inequality comes from the following result.
% **************************************************************** Theorem 10 **************************************************************** %
\begin{theorem}[Unique Prime Factorization for Connected Graphs]
\label{theorem: prime_factorization}
Every connected graph has a unique prime factor decomposition with respect to the strong product.
\end{theorem}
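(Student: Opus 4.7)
The plan is to separate existence from uniqueness, then reduce uniqueness to the analogous (and classical) theorem for the Cartesian product. Call a connected graph $\Gr{G}$ \emph{prime} with respect to $\boxtimes$ if $\card{\V{\Gr{G}}} \geq 2$ and whenever $\Gr{G} \cong \Gr{A} \boxtimes \Gr{B}$ one of the factors is the one-vertex graph $\CoG{1}$, which acts as the identity element for $\boxtimes$. Note first that connectedness is preserved in both directions: $\Gr{A} \boxtimes \Gr{B}$ is connected if and only if both $\Gr{A}$ and $\Gr{B}$ are, so any decomposition of a connected graph yields connected factors.

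Existence is straightforward by induction on the order $n = \card{\V{\Gr{G}}}$. If $\Gr{G}$ is prime, we are done; otherwise $\Gr{G} \cong \Gr{A} \boxtimes \Gr{B}$ with $\card{\V{\Gr{A}}}, \card{\V{\Gr{B}}} \geq 2$, and since orders multiply under the strong product, both factors have order strictly less than $n$, so the inductive hypothesis applies to each. Iterating produces a decomposition of $\Gr{G}$ into prime factors.

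For uniqueness I would follow the Cartesian-skeleton approach of D\"{o}rfler-Imrich, as developed in the textbook \cite{HammackIK11}. The idea is to assign to every connected graph $\Gr{G}$ a canonical spanning subgraph $S(\Gr{G})$, the \emph{Cartesian skeleton}, whose edges are singled out by a purely combinatorial criterion on closed neighborhoods designed to separate the "Cartesian" edges from the "diagonal" edges of a strong product. The two key ingredients are the compatibility identity
\begin{equation*}
S(\Gr{A} \boxtimes \Gr{B}) \;\cong\; S(\Gr{A}) \,\square\, S(\Gr{B}),
\end{equation*}
where $\square$ denotes the Cartesian product of graphs, together with the fact that $S(\Gr{G})$ is connected whenever $\Gr{G}$ is, and that the prime $\boxtimes$-factorization of $\Gr{G}$ is in canonical bijection with the prime $\square$-factorization of $S(\Gr{G})$. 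Combining these two ingredients reduces the uniqueness claim to the classical Sabidussi-Vizing theorem on unique prime factorization of connected graphs under the Cartesian product.

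The main obstacle is establishing the compatibility identity above, which is the technical heart of the argument: one must verify that the neighborhood-ordering criterion used to define $S$ picks out exactly the Cartesian edges of $\Gr{A} \boxtimes \Gr{B}$ and neither loses such an edge nor accidentally includes a diagonal one. Once this identity is in hand and $S$ is shown to preserve connectedness, the reduction to the Cartesian case is clean, and invoking Sabidussi-Vizing closes the proof.
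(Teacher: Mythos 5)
The paper does not actually present a proof of this theorem; it merely cites D\"{o}rfler--Imrich (1970), McKenzie (1971), and the textbook treatment in \cite{HammackIK11} (Theorem~7.14), which is precisely the Cartesian-skeleton route you sketch. So you are aimed at the same argument the paper points to, and your existence step and overall reduction strategy to Sabidussi--Vizing are right.

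However, your sketch contains a genuine gap rather than just a deferred technicality: the compatibility identity $S(\Gr{A} \boxtimes \Gr{B}) \cong S(\Gr{A}) \,\square\, S(\Gr{B})$ is \emph{false} in general. Take $\Gr{A} = \CoG{2}$ and $\Gr{B} = \CoG{3}$, so $\Gr{A} \boxtimes \Gr{B} = \CoG{6}$. In a complete graph all vertices have the same closed neighborhood, so the dispensability criterion defining the Cartesian skeleton never removes any edge, giving $S(\CoG{6}) = \CoG{6}$, $S(\CoG{2}) = \CoG{2}$, $S(\CoG{3}) = \CoG{3}$; yet $\CoG{2} \,\square\, \CoG{3}$ is the triangular prism (nine edges), not $\CoG{6}$ (fifteen edges). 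The identity you rely on holds only when both factors are $S$-thin (no two vertices share a closed neighborhood), and complete graphs, complete multipartite graphs, and many others fail this badly. The actual proof in \cite{HammackIK11} therefore has two halves: first, uniqueness for connected $S$-thin graphs via the skeleton and Sabidussi--Vizing; second, reduction of an arbitrary connected $\Gr{G}$ to its $S$-thin quotient $\Gr{G}/S$, followed by an argument that a prime factorization of $\Gr{G}/S$ lifts back to one of $\Gr{G}$, using the fact that the $S$-classes of a strong product are Cartesian products of $S$-classes of the factors. That second half is a substantial part of the proof and cannot be folded into ``verify the identity,'' because the identity is simply not true without the thinness hypothesis.
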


The proof of Theorem~\ref{theorem: prime_factorization} was introduced by D\"{o}rfler and Imrich \cite{DorflerImrich70}, and Mckenzie \cite{Mckenzie71}.
See Section~7.3 in the comprehensive book on graph products \cite{HammackIK11} (Theorem~7.14).
In a paper by Feigenbaum and Sch\"{a}ffer \cite{FeigenbaumS91}, a polynomial-time algorithm was introduced for finding that unique
prime factorization (with respect to strong products).

Next, we provide and prove the main result of this section.
% **************************************************************** Theorem 11 **************************************************************** %
\begin{theorem}\label{theorem: the_inequality}
  Let $\Gr{G}_1, \Gr{G}_2, \ldots, \Gr{G}_\ell$ be simple graphs. Then,
  \begin{align}\label{eq:the_inequality}
    \Theta(\Gr{G}_1\boxtimes\ldots\boxtimes \Gr{G}_\ell)\leq\left(\frac{\Theta(\Gr{G}_1+\ldots+\Gr{G}_\ell)}{\ell}\right)^\ell.
  \end{align}
  Furthermore, if $\Theta(\Gr{G}_i) = \vartheta(\Gr{G}_i)$ for every $i \in \OneTo{\ell}$, then inequality~\eqref{eq:the_inequality}
  holds with equality if and only if
  \begin{align}\label{eq2:11.09.2024}
    \Theta(\Gr{G}_1)=\Theta(\Gr{G}_2)=\ldots=\Theta(\Gr{G}_\ell).
  \end{align}
  In particular, if for every $i \in \OneTo{\ell}$, one of the following statements holds:
  \begin{itemize}
    \item $\Gr{G}_i$ is a perfect graph,
    \item $\Gr{G}_i = \KG{n}{r}$ for some $n,r \in \naturals$ with $n \geq 2r$,
    \item $\Gr{G}_i = \qKG{n}{r}{q}$ for a prime factor $q$ and some $n,r \in \naturals$ with $n \geq 2r$,
    % \item $\Gr{G}_i$ is a complete graph,  (a complete graph is a special case of a Kneser graph)
    \item $\Gr{G}_i$ is vertex-transitive and self-complementary,
    \item $\Gr{G}_i$ is strongly regular and self-complementary,
  \end{itemize}
  then inequality \eqref{eq:the_inequality} holds with equality if and only if the condition in \eqref{eq2:11.09.2024} is satisfied.
\end{theorem}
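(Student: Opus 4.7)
My approach to the main inequality is an asymptotic multinomial-expansion argument. Since the strong product distributes over the disjoint union of graphs, for every positive integer $m$ the $\ell m$-fold strong power of $\Gr{G}_1+\cdots+\Gr{G}_\ell$ expands into a disjoint union
\[
(\Gr{G}_1+\cdots+\Gr{G}_\ell)^{\ell m} = \sum_{\substack{a_1,\ldots,a_\ell\geq 0\\ a_1+\cdots+a_\ell=\ell m}} \binom{\ell m}{a_1,\ldots,a_\ell}\,\Gr{G}_1^{a_1}\boxtimes\cdots\boxtimes\Gr{G}_\ell^{a_\ell},
\]
where each summand is counted with its true multiplicity. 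Isolating the balanced term $a_1=\cdots=a_\ell=m$ and using commutativity of the strong product to identify $\Gr{G}_1^m\boxtimes\cdots\boxtimes\Gr{G}_\ell^m\cong(\Gr{G}_1\boxtimes\cdots\boxtimes\Gr{G}_\ell)^m$, I conclude that $\binom{\ell m}{m,\ldots,m}\,(\Gr{G}_1\boxtimes\cdots\boxtimes\Gr{G}_\ell)^m$ is an induced subgraph (in fact, a summand) of $(\Gr{G}_1+\cdots+\Gr{G}_\ell)^{\ell m}$.

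Applying Theorem~\ref{theorem: shannon_capacity_subgraphs} (monotonicity under induced subgraphs), Theorem~\ref{theorem: shannon_scalar} (scalar multiplier), and Theorem~\ref{theorem: shannon_power} (strong powers), the previous inclusion yields
\[
\binom{\ell m}{m,\ldots,m}\,\Theta(\Gr{G}_1\boxtimes\cdots\boxtimes\Gr{G}_\ell)^m \leq \Theta(\Gr{G}_1+\cdots+\Gr{G}_\ell)^{\ell m}.
\]
Taking $m$-th roots and letting $m\to\infty$, Stirling's approximation gives $\binom{\ell m}{m,\ldots,m}^{1/m}\to\ell^{\ell}$, from which the desired inequality~\eqref{eq:the_inequality} follows at once.

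For the equality part, I would exploit the hypothesis $\Theta(\Gr{G}_i)=\vartheta(\Gr{G}_i)$ for every $i$, together with Theorems~\ref{theorem: lovasz_strong_product}, \ref{theorem: lovasz_disjoint_union}, and the sandwich Theorem~\ref{theorem: sandwich_theorem}, to upgrade the general lower bounds of Theorems~\ref{theorem: shannon_strong_product} and \ref{theorem: shannon_disjoint_union} to equalities:
\[
\Theta(\Gr{G}_1\boxtimes\cdots\boxtimes\Gr{G}_\ell)=\prod_{i=1}^{\ell}\Theta(\Gr{G}_i),\qquad \Theta(\Gr{G}_1+\cdots+\Gr{G}_\ell)=\sum_{i=1}^{\ell}\Theta(\Gr{G}_i).
\]
Substituting these identities into \eqref{eq:the_inequality} reduces the claim to the classical AM--GM inequality $\prod_i \Theta(\Gr{G}_i) \leq \bigl((\sum_i \Theta(\Gr{G}_i))/\ell\bigr)^\ell$, which holds with equality precisely when all $\Theta(\Gr{G}_i)$ coincide. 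For the special graph classes listed (perfect, Kneser, $q$-Kneser, self-complementary vertex-transitive, and self-complementary strongly regular), the identity $\Theta(\Gr{G})=\vartheta(\Gr{G})$ is already at hand: via Theorems~\ref{theorem: kneser_graph_invariants} and~\ref{theorem: q-kneser_graph_shannon_capacity} for the Kneser/$q$-Kneser cases, via Theorem~\ref{theorem: lovasz_bigger_n} combined with self-complementarity for the vertex-transitive and strongly regular cases, and via the perfect graph theorem together with the sandwich bound for perfect graphs. Hence the equality characterization applies in each case.

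The main obstacle I anticipate is the careful handling of the asymptotic step: justifying via Stirling that $\binom{\ell m}{m,\ldots,m}^{1/m}\to\ell^{\ell}$ so that polynomial prefactors become irrelevant after extracting the $1/m$ root, and making sure that restricting to the balanced summand of $(\Gr{G}_1+\cdots+\Gr{G}_\ell)^{\ell m}$ is indeed legitimate as an induced subgraph for the purposes of Theorem~\ref{theorem: shannon_capacity_subgraphs}. Everything else amounts to a clean combination of the distributive law for $\boxtimes$ over $+$, subgraph monotonicity, the multiplicative/additive identities for $\Theta$ under the stated hypotheses, and AM--GM.
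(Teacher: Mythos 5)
Your proof is correct and follows essentially the same route as the paper's: both expand $(\Gr{G}_1+\cdots+\Gr{G}_\ell)^{\ell m}$ multinomially, isolate the balanced summand with multiplicity $\binom{\ell m}{m,\ldots,m}$, take $m$-th roots and let $m\to\infty$ using $\binom{\ell m}{m,\ldots,m}^{1/m}\to\ell^\ell$, and then reduce the equality case to AM--GM under the hypothesis $\Theta(\Gr{G}_i)=\vartheta(\Gr{G}_i)$. The only cosmetic difference is that you invoke induced-subgraph monotonicity together with the scalar rule where the paper iterates Shannon's inequality across all summands before discarding the unbalanced ones; both routings yield the identical intermediate bound.
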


\begin{proof}
  Let $k\in \naturals$. By~\eqref{eq:shannon_power},
  \begin{align}
    & \Theta(\Gr{G}_1+\ldots +\Gr{G}_\ell)^{\ell k} \nonumber \\[0.1cm]
    & = \Theta((\Gr{G}_1+\ldots+\Gr{G}_\ell)^{\ell k}) \nonumber \\
    & = \Theta\left(\sum_{k_1, \ldots, k_\ell: \; k_1 + \ldots + k_{\ell} = \ell k}{\binom{\ell k}{k_1,\ldots,k_\ell} \;
    \Gr{G}_{1}^{k_1} \boxtimes \ldots \boxtimes \Gr{G}_{\ell}^{k_{\ell}}}\right).  \label{eq7}
  \end{align}
  By~\eqref{eq:shannon_power}--\eqref{eq:shannon_strong_product},
  \begin{align}
    & \Theta\left(\sum_{k_1, \ldots, k_\ell: \; k_1 + \ldots + k_{\ell} = \ell k}{\binom{\ell k}{k_1,\ldots,k_\ell}
    \; \Gr{G}_{1}^{k_1} \boxtimes \ldots \boxtimes \Gr{G}_{\ell}^{k_\ell}}\right) \nonumber \\
    & \geq \sum_{k_1, \ldots, k_\ell: \; k_1 + \ldots + k_{\ell} = \ell k}{\binom{\ell k}{k_1,\ldots,k_\ell} \;
    \Theta \bigl(\Gr{G}_{1}^{k_1} \boxtimes \ldots \boxtimes \Gr{G}_{\ell}^{k_\ell}\bigr)} \nonumber \\[0.1cm]
    & \geq \binom{\ell k}{k,\ldots,k} \; \Theta \bigl(\Gr{G}_{1}^{k} \boxtimes \ldots \boxtimes \Gr{G}_{\ell}^{k} \bigr) \nonumber \\[0.1cm]
    & = \binom{\ell k}{k,\ldots,k} \; \Theta\bigl(\Gr{G}_1 \boxtimes \ldots \boxtimes \Gr{G}_\ell \bigr)^k,  \label{eq8}
  \end{align}
  and, thus, by combining \eqref{eq7} and \eqref{eq8}, it follows that
  \begin{equation*}
    \binom{\ell k}{k,\ldots,k} \; \Theta(\Gr{G}_1 \boxtimes \ldots \boxtimes \Gr{G}_{\ell})^{k} \leq\Theta(\Gr{G}_1+\ldots+\Gr{G}_\ell)^{\ell k}.
  \end{equation*}
  Raising both sides of the inequality to the power $\frac{1}{k}$ and letting $k$ tend to infinity gives
  \begin{align}
    \Theta(\Gr{G}_1\boxtimes\ldots\boxtimes \Gr{G}_\ell)\leq\left(\frac{\Theta(\Gr{G}_1+\ldots+\Gr{G}_\ell)}{\ell}\right)^\ell,
  \end{align}
  which holds by the equality
  \begin{align}
  \lim_{k \to \infty} \sqrt[k]{\binom{\ell k}{k,\ldots,k}} = \ell^\ell, \quad \forall \, \ell \in \naturals.
  \end{align}

  If $\Theta(\Gr{G}_i) = \vartheta(\Gr{G}_i)$ for every $i=1,\ldots,\ell$, then by Theorem~\ref{theorem: structured_first_result},
  \begin{align} \label{eq10}
    \Theta(\Gr{G}_1+\ldots + \Gr{G}_\ell) = \Theta(\Gr{G}_1)+\ldots + \Theta(\Gr{G}_\ell).
  \end{align}
  By \eqref{eq10}, inequality~\eqref{eq:the_inequality} is equivalent to
  \begin{align} \label{eq11}
    \sqrt[\ell]{\Theta(\Gr{G}_1)\ldots\Theta(\Gr{G}_\ell)} \leq \frac{\Theta(\Gr{G}_1) + \ldots + \Theta(\Gr{G}_\ell)}{\ell},
  \end{align}
  and, by the conditions for equality in the AM-GM inequality, equality holds in \eqref{eq11} if and only if \eqref{eq2:11.09.2024}
  holds. Finally, all the graphs that are listed in this theorem (Theorem~\ref{theorem: the_inequality}) satisfy the equality
  $\Theta(\Gr{G}_{i}) = \vartheta(\Gr{G}_i)$ for $i \in \OneTo{\ell}$. Thus, if $\Gr{G}_i$ is one of these graphs for each
  $i \in \OneTo{\ell}$, then inequality \eqref{eq:the_inequality} holds with equality if and only if the condition in \eqref{eq2:11.09.2024}
  is satisfied.
\end{proof}

\begin{remark}
An anonymous reviewer brought to our attention that inequality \eqref{eq:the_inequality} can be related to Eq.~(1)
in the preprint \cite{CharpenayTR26} for the complementary graph entropy, via the results of \cite[Lemma 1]{Marton93} and
\cite[Equation (11.2)]{CsiszarK01}.
\end{remark}

The next result strengthens \cite[Theorem~2.1]{Alon98} by identifying several sufficient conditions under which the inequality holds with equality.  
\begin{corollary}\label{corollary:alon_inequality}
  Let $\Gr{G}$ be a graph on $n$ vertices. Then,
  \begin{align}\label{eq4:11.09.2024}
    \Theta(\Gr{G}+\CGr{G}) \geq 2\sqrt{n},
  \end{align}
  with equality in \eqref{eq4:11.09.2024} if the graph $\Gr{G}$ is either self-complementary and vertex-transitive,
  self-complementary and strongly regular, a conference graph, a Latin square graph, or the complement of any of these graphs.
\end{corollary}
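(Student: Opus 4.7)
The plan is to apply Theorem~\ref{theorem: the_inequality} with $\ell = 2$ to the pair $\Gr{G}_1 = \Gr{G}$ and $\Gr{G}_2 = \CGr{G}$, and to lower-bound the resulting capacity of the strong product by exhibiting a natural independent set of size $n$.

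The key observation is that the diagonal $\{(v,v) : v \in \V{\Gr{G}}\}$ is an independent set in $\Gr{G} \boxtimes \CGr{G}$: two distinct diagonal vertices $(v_1,v_1)$ and $(v_2,v_2)$ are adjacent in the strong product only if the pair $\{v_1,v_2\}$ lies simultaneously in $\E{\Gr{G}}$ and $\E{\CGr{G}}$, which is impossible. Hence $\indnum{\Gr{G} \boxtimes \CGr{G}} \geq n$, so $\Theta(\Gr{G} \boxtimes \CGr{G}) \geq n$. Combining this with the inequality
\begin{align*}
\Theta(\Gr{G} \boxtimes \CGr{G}) \leq \left(\tfrac{1}{2}\,\Theta(\Gr{G}+\CGr{G})\right)^2
\end{align*}
supplied by Theorem~\ref{theorem: the_inequality}, and taking square roots, yields \eqref{eq4:11.09.2024}.

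For the equality cases, the chain forces $\Theta(\Gr{G} \boxtimes \CGr{G}) = n$ together with equality in Theorem~\ref{theorem: the_inequality}. For each of the listed graph classes I would invoke Theorem~\ref{theorem: lovasz_bigger_n}, which gives $\vartheta(\Gr{G}) \, \vartheta(\CGr{G}) = n$; together with the multiplicativity of $\vartheta$ under the strong product (Theorem~\ref{theorem: lovasz_strong_product}) and the sandwich theorem, this pins $\Theta(\Gr{G} \boxtimes \CGr{G})$ to~$n$. In the self-complementary cases, the isomorphism $\Gr{G} \cong \CGr{G}$ gives $\vartheta(\Gr{G}) = \vartheta(\CGr{G}) = \sqrt{n}$, while the diagonal argument applied to $\Gr{G} \boxtimes \CGr{G} \cong \Gr{G}^2$ yields $\Theta(\Gr{G}) \geq \sqrt{n}$; the sandwich theorem then forces $\Theta(\Gr{G}) = \vartheta(\Gr{G}) = \sqrt{n}$, which simultaneously secures the hypothesis $\Theta(\Gr{G}_i) = \vartheta(\Gr{G}_i)$ needed to invoke the equality clause of Theorem~\ref{theorem: the_inequality} and gives $\Theta(\Gr{G}) = \Theta(\CGr{G})$, i.e., condition~\eqref{eq2:11.09.2024}.

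The main obstacle I expect is the last bullet, since conference and Latin-square graphs (and their complements) need not be self-complementary. Here I would rely on the strong regularity of these graphs to apply Theorem~\ref{theorem: lovasz_bigger_n}, compute $\vartheta(\Gr{G})$ and $\vartheta(\CGr{G})$ explicitly from their parameters, and combine the resulting identity $\vartheta(\Gr{G}) \, \vartheta(\CGr{G}) = n$ with the diagonal lower bound $\indnum{\Gr{G}\boxtimes\CGr{G}} \geq n$ to conclude that $\Theta(\Gr{G}) \, \Theta(\CGr{G}) = n$ and, by the AM-GM equality clause together with the duality theorem (Theorem~\ref{theorem: shannon_union_equivalent_product}), that $\Theta(\Gr{G}) + \Theta(\CGr{G}) = 2\sqrt{n}$.
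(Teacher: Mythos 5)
Your proof of the inequality itself is correct and matches the paper exactly: the paper likewise specializes Theorem~\ref{theorem: the_inequality} to $\ell=2$ with $\Gr{G}_1=\Gr{G}$, $\Gr{G}_2=\CGr{G}$, and lower-bounds $\Theta(\Gr{G}\boxtimes\CGr{G})$ by the diagonal independent set of size $n$.

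For the equality cases, the paper dispatches the claim by citing \cite[Theorems 3.23, 3.26, 3.28]{Sason24}, so your attempt to argue it directly is reasonable. Your argument for the two self-complementary cases is correct (one can shorten it by observing that once $\vartheta(\Gr{G})=\vartheta(\CGr{G})=\sqrt{n}$ is established, the upper bound already follows from the sandwich theorem and $\vartheta(\Gr{G}+\CGr{G})=\vartheta(\Gr{G})+\vartheta(\CGr{G})$ via Theorem~\ref{theorem: lovasz_disjoint_union}, with no need for the full equality clause of Theorem~\ref{theorem: the_inequality}).

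However, your plan for conference graphs and Latin-square graphs has a genuine gap. From $\vartheta(\Gr{G})\,\vartheta(\CGr{G})=n$ and $\indnum{\Gr{G}\boxtimes\CGr{G}}\geq n$ you can conclude only that $\Theta(\Gr{G}\boxtimes\CGr{G})=n$ (via the sandwich theorem and multiplicativity of $\vartheta$); this does \emph{not} yield $\Theta(\Gr{G})\,\Theta(\CGr{G})=n$, since the strong-product capacity may strictly exceed the product of the individual capacities. Moreover, even if you had $\Theta(\Gr{G})\,\Theta(\CGr{G})=n$, the AM--GM equality clause only pins down $\Theta(\Gr{G})+\Theta(\CGr{G})=2\sqrt{n}$ when you additionally know $\Theta(\Gr{G})=\Theta(\CGr{G})$, which is not automatic when $\Gr{G}\not\cong\CGr{G}$. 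The route that actually works (and is what the cited results in \cite{Sason24} establish) is the $\vartheta$-level computation you allude to but do not carry out: from the strongly-regular parameters of conference and Latin-square graphs and their complements one shows $\vartheta(\Gr{G})=\vartheta(\CGr{G})=\sqrt{n}$ directly, after which the upper bound $\Theta(\Gr{G}+\CGr{G})\leq\vartheta(\Gr{G}+\CGr{G})=\vartheta(\Gr{G})+\vartheta(\CGr{G})=2\sqrt{n}$ follows without any reference to $\Theta(\Gr{G})$ or $\Theta(\CGr{G})$ individually, and combines with your lower bound to give equality.
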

\begin{proof}
By Theorem~\ref{theorem: the_inequality},
\begin{align}\label{eq5:11.09.2024}
\Theta(\Gr{G}+\CGr{G}) \geq 2 \sqrt{\Theta(\Gr{G} \boxtimes \CGr{G})}.
\end{align}
Since $\bigl\{(1,1), \ldots, (n,n)\bigr\}$ is a trivial independent set of $\Gr{G} \boxtimes \CGr{G}$, it follows that
\begin{align}\label{eq6:11.09.2024}
\Theta(\Gr{G} \boxtimes \CGr{G}) \geq \indnum{\Gr{G} \boxtimes \CGr{G}} \geq n.
\end{align}
Combining inequalities \eqref{eq5:11.09.2024} and \eqref{eq6:11.09.2024} yields \eqref{eq4:11.09.2024}.
Furthermore, suppose that the graph $\Gr{G}$ is one of the following: (1)~self-complementary and vertex-transitive,
(2)~self-complementary and strongly regular, (3)~a conference graph, (4)~a Latin square graph, or (5)~the complement
of any of the graphs in (1)--(4). Then, by Theorem~\ref{theorem: the_inequality} and \cite[Theorems 3.23, 3.26, and 3.28]{Sason24},
inequalities \eqref{eq5:11.09.2024} and \eqref{eq6:11.09.2024} hold with equality, and consequently inequality \eqref{eq4:11.09.2024}
also holds with equality.
\end{proof}

\begin{remark}\label{remark:alon_inequality}
The original proof of inequality \eqref{eq4:11.09.2024} in \cite[Theorem~2.1]{Alon98} is presented in Appendix~\ref{appendix: Noga's proof}.
Our proof of Corollary~\ref{corollary:alon_inequality} follows a different approach, which enables us to identify sufficient conditions for 
equality in \eqref{eq4:11.09.2024}.
\end{remark}

By \eqref{eq:the_inequality}, some additional inequalities are derived in the next two corollaries.
\begin{corollary}\label{corollary:general_the_inequality}
  Let $\Gr{G}_1,\ldots,\Gr{G}_\ell$ be simple graphs, let $m_1,\ldots,m_\ell \in \naturals$, and let $m=\sqrt[\ell]{m_1 \ldots m_\ell}$. Then,
  \begin{align}\label{eq:general_the_inequality}
    \Theta(\Gr{G}_1\boxtimes\ldots\boxtimes \Gr{G}_\ell)\leq (m\ell)^{-\ell}\,\Theta(m_1 \Gr{G}_1 +\ldots+m_\ell \Gr{G}_\ell)^\ell.
  \end{align}
\end{corollary}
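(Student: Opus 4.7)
The plan is to derive this corollary as a direct consequence of Theorem~\ref{theorem: the_inequality} applied not to the original sequence $\Gr{G}_1, \ldots, \Gr{G}_\ell$, but to the rescaled sequence $m_1 \Gr{G}_1, \ldots, m_\ell \Gr{G}_\ell$. The key algebraic observation will be that disjoint union distributes over strong product, which will allow rewriting a strong product of scalar multiples as a scalar multiple of the strong product.

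First, I would establish the identity
\begin{align*}
(m_1 \Gr{G}_1) \boxtimes (m_2 \Gr{G}_2) \boxtimes \ldots \boxtimes (m_\ell \Gr{G}_\ell) \;\cong\; (m_1 m_2 \cdots m_\ell)\bigl(\Gr{G}_1 \boxtimes \Gr{G}_2 \boxtimes \ldots \boxtimes \Gr{G}_\ell\bigr).
\end{align*}
This follows inductively from the two-factor case, where one checks directly from Definition~\ref{definition:strong_product} that no edges are introduced between the copies of $\Gr{G}_i$ forming $m_i \Gr{G}_i$ after the strong product is applied (because $m_i \Gr{G}_i$ has no edges between its $m_i$ components). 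Applying Theorem~\ref{theorem: shannon_scalar} then yields
\begin{align*}
\Theta\bigl((m_1 \Gr{G}_1) \boxtimes \ldots \boxtimes (m_\ell \Gr{G}_\ell)\bigr) = (m_1 \cdots m_\ell)\,\Theta(\Gr{G}_1 \boxtimes \ldots \boxtimes \Gr{G}_\ell) = m^\ell\, \Theta(\Gr{G}_1 \boxtimes \ldots \boxtimes \Gr{G}_\ell),
\end{align*}
where $m = \sqrt[\ell]{m_1 \cdots m_\ell}$.

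Next, I would apply Theorem~\ref{theorem: the_inequality} to the graphs $m_1 \Gr{G}_1, \ldots, m_\ell \Gr{G}_\ell$, which gives
\begin{align*}
\Theta\bigl((m_1 \Gr{G}_1) \boxtimes \ldots \boxtimes (m_\ell \Gr{G}_\ell)\bigr) \leq \left(\frac{\Theta(m_1 \Gr{G}_1 + \ldots + m_\ell \Gr{G}_\ell)}{\ell}\right)^\ell.
\end{align*}
Substituting the identity from the previous step into the left-hand side and dividing both sides by $m^\ell$ gives exactly inequality~\eqref{eq:general_the_inequality}, completing the proof. There is no real obstacle here: the only subtlety is the distributivity of $\boxtimes$ over $+$, which is a standard structural property of these graph operations and can be verified directly from the adjacency rules in Definitions~\ref{definition:strong_product} and~\ref{definition:disjoint_union}.
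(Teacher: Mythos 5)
Your proposal is correct and matches the paper's own approach: apply Theorem~\ref{theorem: the_inequality} to the rescaled graphs $m_1\Gr{G}_1,\ldots,m_\ell\Gr{G}_\ell$ and then pull the scalars out of the strong product via $\Theta(m\Gr{G})=m\Theta(\Gr{G})$. The only difference is that you explicitly justify the distributivity identity $(m_1\Gr{G}_1)\boxtimes\ldots\boxtimes(m_\ell\Gr{G}_\ell)\cong(m_1\cdots m_\ell)(\Gr{G}_1\boxtimes\ldots\boxtimes\Gr{G}_\ell)$, which the paper uses implicitly when invoking \eqref{eq:shannon_scalar}.
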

\begin{proof}
  By~\eqref{eq:shannon_scalar}, inequality \eqref{eq:general_the_inequality} is equivalent to
  \begin{align}
    \Theta(m_1\Gr{G}_1\boxtimes\ldots\boxtimes m_\ell \Gr{G}_\ell) \leq \ell^{-\ell}\,\Theta(m_1 \Gr{G}_1 +\ldots+m_\ell \Gr{G}_\ell)^\ell.
  \end{align}
  Thus, by Theorem~\ref{theorem: the_inequality}, inequality~\eqref{eq:general_the_inequality} holds.
\end{proof}

% **************************************************************** Corollary 6 **************************************************************** %
\begin{theorem} \label{theorem: entropy_inequality}
  Let $\Gr{G}_1,\ldots, \Gr{G}_\ell$ be simple graphs, with some $\ell \in \naturals$, and let
  $\underline{\alpha} = (\alpha_1, \ldots, \alpha_\ell)$ be a probability vector with
  $\alpha_j \in \Rationals$ for all $j\in \OneTo{\ell}$. Let
  \begin{align} \label{eq: K set}
    K(\underline{\alpha}) \triangleq \bigl\{ k \in \naturals: \, \alpha_j \, k \in \naturals, \; \forall \, j \in \OneTo{\ell} \bigr\}.
  \end{align}
  Then, for all $k\in K(\underline{\alpha})$,
  \begin{align}\label{eq:entropy_the_inequality}
    \Theta(\Gr{G}_{1}^{\alpha_1 \, k} \boxtimes \ldots \boxtimes \Gr{G}_{\ell}^{\alpha_\ell \, k})
    \leq \exp\bigl(-k \, H(\underline{\alpha}) \bigr) \, \Theta(\Gr{G}_1 + \ldots + \Gr{G}_\ell)^k,
  \end{align}
  where the entropy function $H$ is given by
  \begin{align} \label{eq: entropy function}
    H(\underline{\alpha})\triangleq -\sum_{j=1}^{\ell}\alpha_j \log{\alpha_j}, \quad \forall \, \underline{\alpha}: \,
    \alpha_1, \ldots, \alpha_\ell \geq 0, \; \; \sum_{j=1}^\ell \alpha_j = 1.
  \end{align}
\end{theorem}

\begin{proof}
  Let
  \begin{equation*}
    A \triangleq \lcm(\alpha_{1}k,\ldots,\alpha_{\ell}k),
  \end{equation*}
  and, for every $j \in \OneTo{\ell}$, define
  \begin{align}
    & n_j \triangleq \frac{A}{\alpha_j \, k}, \label{eq1:21.11.2024} \\
    & a_j \triangleq \sum_{i=1}^j \alpha_i \, k \in \naturals,  \label{eq3:28.11.2024}
  \end{align}
  with $a_0 \triangleq 0$.
  For each $i \in \OneTo{k}$, let $j \in \OneTo{\ell}$ be the index that satisfies
  \begin{align}
  a_{j-1} + 1 \leq i \leq a_j,  \label{eq: j index}
  \end{align}
  and define
  \begin{align} \label{eq: m_i}
    m_i \triangleq n_j, \quad \forall \, i \in \OneTo{k}.
  \end{align}
  By Corollary~\ref{corollary:general_the_inequality} (note that the strong product in the next line involves $k$ graphs),
  \begin{align}
    & \Theta\bigl(\Gr{G}_{1}^{\alpha_1 k} \boxtimes \ldots \boxtimes \Gr{G}_\ell^{\alpha_\ell k} \bigr) \nonumber \\
    & \leq (mk)^{-k}\,\Theta \bigl(m_1 \Gr{G}_1+\ldots+m_{a_1}\Gr{G}_1+ \ldots + m_{a_{\ell-1} +1}\Gr{G}_\ell +\ldots +m_k \Gr{G}_\ell \bigr)^k \nonumber \\
    & = (mk)^{-k} \, \Theta\bigl((m_1 + \ldots + m_{a_1}) \, \Gr{G}_1 + \ldots + (m_{a_{\ell - 1} + 1} + \ldots + m_k) \, \Gr{G}_\ell \bigr)^k  \nonumber \\
    & = (mk)^{-k} \, \Theta(\alpha_1 \, n_1 \, k \Gr{G}_1+\ldots + \alpha_{\ell} \, n_\ell \, k \Gr{G}_\ell)^k,  \label{eq: UB Theta}
  \end{align}
  where the last equality holds since, by \eqref{eq1:21.11.2024} and \eqref{eq3:28.11.2024},
  \begin{align}
  & m_1 = \ldots = m_{a_1} = n_1, \label{eq1: m} \\
  & m_{a_1+1} = \ldots = m_{a_2} = n_2, \label{eq2: m} \\
  & \vdots \nonumber \\
  & m_{a_{\ell-1}+1} = \ldots = m_k = n_\ell, \quad a_\ell=k. \label{eq3: m}
  \end{align}
  So, for every $j \in \OneTo{\ell}$,
  \begin{align}
  m_{a_{j-1} + 1} + \ldots + m_{a_j} &= (a_j - a_{j-1}) \, n_j \nonumber \\
  &= \alpha_j \, n_j \, k.  \label{eq4: m}
  \end{align}
  Calculation of $m$ on the right-hand side of \eqref{eq: UB Theta} gives, by \eqref{eq1:21.11.2024} and \eqref{eq1: m}--\eqref{eq4: m},
  \begin{align}
    m & = \sqrt[k]{m_1 \ldots m_k} \nonumber \\
    & = \sqrt[k]{n_{1}^{a_1 - a_0} \ldots n_{\ell}^{a_{\ell} - a_{\ell-1}}} \nonumber \\
    & = \sqrt[k]{n_{1}^{\alpha_{1}k}\ldots n_{\ell}^{\alpha_{\ell}k}} \nonumber \\
    & = \sqrt[k]{\left(\frac{A}{\alpha_{1}k}\right)^{\alpha_{1}k}\ldots \left(\frac{A}{\alpha_{\ell}k}\right)^{\alpha_{\ell}k}} \nonumber \\
    & = \frac{A}{k} \left(\prod_{i=1}^{\ell}{\alpha_{i}^{\alpha_{i}k}}\right)^{-\frac{1}{k}},  \label{eq5: m}
  \end{align}
  and, also by \eqref{eq:shannon_scalar} and \eqref{eq1:21.11.2024},
  \begin{align}
    \Theta(n_1 \alpha_{1}k\Gr{G}_1+\ldots+n_\ell \alpha_{\ell}k\Gr{G}_\ell)^k & = \Theta(A\Gr{G}_1 + \ldots + A\Gr{G}_\ell)^k \nonumber \\
    & = A^k \, \Theta(\Gr{G}_1+\ldots+\Gr{G}_\ell)^k. \label{eq6: 30.12.25}
  \end{align}
  Finally, combining \eqref{eq: UB Theta}, \eqref{eq5: m}, and \eqref{eq6: 30.12.25}, we get
  \begin{align}
    \Theta(\Gr{G}_{1}^{\alpha_{1}k}\boxtimes\ldots\boxtimes \Gr{G}_{\ell}^{\alpha_{\ell}k}) & \leq (mk)^{-k} A^k\,\Theta(\Gr{G}_1+\ldots+\Gr{G}_\ell)^k \nonumber \\
    & = \left(A\left(\prod_{i=1}^{\ell}{\alpha_{i}^{\alpha_{i}k}}\right)^{-\frac{1}{k}}\right)^{-k} A^k\,\Theta(\Gr{G}_1+\ldots+\Gr{G}_\ell)^k \nonumber \\
    & = \left(\prod_{i=1}^{\ell}{\alpha_{i}^{\alpha_{i}k}}\right) \Theta(\Gr{G}_1+\ldots+\Gr{G}_\ell)^k \nonumber \\
    & = \exp\bigl(-k \, H(\underline{\alpha})\bigr)\,\Theta(\Gr{G}_1+\ldots+\Gr{G}_\ell)^k.
  \end{align}
\end{proof}

% **************************************************************** Remark 3 **************************************************************** %
\begin{remark}
  Let $\alpha_j = \frac{p_j}{q_j}$ with $(p_j , q_j) = 1$ for all $j\in \OneTo{\ell}$. Then,
  \begin{equation*}
    K(\underline{\alpha})=\lcm(q_1,\ldots,q_\ell) \, \naturals.
  \end{equation*}
  This is true because if we choose $k$ that is not a multiple of $\lcm(q_1,\ldots,q_\ell)$,
  then there exists $i \in \OneTo{\ell}$ such that $\alpha_i \notin \naturals$.
\end{remark}

In analogy to Corollary~\ref{corollary:alon_inequality}, we next derive upper and lower bounds on $\vartheta(\Gr{G}+\CGr{G})$.
% **************************************************************** Theorem 12 **************************************************************** %
\begin{theorem}\label{lovasz_complement_union_bound}
  For every simple graph $\Gr{G}$ on $n$ vertices
  \begin{align}\label{eq1:24.11.2024}
    \vartheta(\Gr{G}+\CGr{G})\geq 2\sqrt{n} + \frac{(\vartheta(\Gr{G})-\sqrt{n})^2}{\vartheta(\Gr{G})},
  \end{align}
  and for every $d$-regular graph $\Gr{G}$ whose eigenvalues are ordered in non-increasing order as
  \begin{align}
  \lambda_1 \geq\lambda_2 \geq\ldots\geq\lambda_n,
  \end{align}
  the following holds:
  \begin{align}
    2+\frac{n-d-1}{1+\lambda_2}-\frac{d}{\lambda_n} & \leq \vartheta(\Gr{G}+\CGr{G}) \nonumber \\
    & \leq  \frac{n(1+\lambda_2)}{n-d+\lambda_2}-\frac{n\lambda_n}{d-\lambda_n}.  \label{eq9:28.11.2024}
  \end{align}
  Furthermore,
  \begin{enumerate}
    \item If $\Gr{G}$ is either vertex-transitive or strongly regular, then inequality \eqref{eq1:24.11.2024} holds with equality.
    \item If $\Gr{G}$ is strongly regular, then the first inequality in \eqref{eq9:28.11.2024} is attained with equality.
    \item If both $\Gr{G}$ and $\CGr{G}$ are edge-transitive, or if $\Gr{G}$ is strongly regular, then the second inequality
    in \eqref{eq9:28.11.2024} is attained with equality.
  \end{enumerate}
\end{theorem}

\begin{proof}
  Using Theorems~\ref{theorem: lovasz_disjoint_union} and \ref{theorem: lovasz_bigger_n}, we get
  \begin{align}\label{eq2:24:11:2024}
    \vartheta(\Gr{G} + \CGr{G}) & = \vartheta(\Gr{G}) + \vartheta(\CGr{G})\nonumber \\[0.1cm]
     & \geq \vartheta(\Gr{G}) + \frac{n}{\vartheta(\Gr{G})}\nonumber \\[0.1cm]
     & = 2\sqrt{n} + \frac{(\vartheta(\Gr{G})-\sqrt{n})^2}{\vartheta(\Gr{G})}.
  \end{align}
  In addition, by Theorem~\ref{theorem: lovasz_bigger_n}, inequality~\eqref{eq2:24:11:2024} holds with equality if $\Gr{G}$ is vertex-transitive or strongly regular.
  Furthermore, if $\Gr{G}$ is a $d$-regular graph, then by \cite[Proposiotion~1]{Sason23},
  \begin{align}\label{eq8:28.11.24}
    \vartheta(\Gr{G}+\CGr{G}) = \vartheta(\Gr{G}) + \vartheta(\CGr{G})
    \leq \frac{n(1+\lambda_2)}{n-d+\lambda_2}-\frac{n\lambda_n}{d-\lambda_n},
  \end{align}
  and
  \begin{align}\label{eq1:8.12.24}
    \vartheta(\Gr{G}+\CGr{G}) & = \vartheta(\Gr{G}) + \vartheta(\CGr{G})\nonumber \\
    & \geq 2+\frac{n-d-1}{1+\lambda_2}-\frac{d}{\lambda_n},
  \end{align}
  which proves \eqref{eq9:28.11.2024}.
  Finally, by \cite[Proposition~1]{Sason23}, if both $\Gr{G}$ and $\CGr{G}$ are edge-transitive, or if $\Gr{G}$ is strongly regular, then
  inequality \eqref{eq8:28.11.24} holds with equality. Moreover, if $\Gr{G}$ is strongly regular, then inequality \eqref{eq1:8.12.24} also holds with equality.
  This completes the proof of the stated sufficient conditions for the attainability with equality of the two inequalities in \eqref{eq9:28.11.2024}.
\end{proof}

\section{Outlook} \label{section: outlook}
This section suggests some potential directions for further research that are related to the findings in this paper.
\begin{enumerate}
\item In Theorem~\ref{theorem: original_unattainability}, a construction of graphs was provided whose Shannon
capacity is not attained by the independence number of any of their finite strong powers. The proof relied on
Dedekind's lemma from number theory (see Lemma~\ref{lemma:dedekind}), showing that the capacity of this construction
equals a value whose finite powers are not natural numbers, and hence cannot correspond to the finite root of an
independence number. This method of proof also applies to rational numbers that are not integers, since none of
their finite powers are natural numbers either. This raises an interesting question: {\em Does there exist a graph
whose Shannon capacity is a rational number that is not an integer?}. At present, no such graphs are known. However,
if the answer is positive, it would immediately follow that these graphs also possess the property of having a
Shannon capacity that is not attained by the independence number of any of their finite strong powers (thus potentially
leading to a fourth approach in Section~\ref{section: unattainability of Shannon capacity at any finite power}).
\item It was proved in \cite{CsonkaS24} that if the Shannon capacity of a graph is attained at some finite power,
then the Shannon capacity of its Mycielskian is strictly larger than that of the original graph. In view of the
constructions presented in Section~\ref{section: unattainability of Shannon capacity at any finite power}, which
yield graph families whose Shannon capacity is not attained at any finite power, it would be interesting to determine
whether this property also holds in such cases. If it does not, then the graph constructions from
Section~\ref{section: unattainability of Shannon capacity at any finite power} could provide potential candidates
for a counterexample.
\item In light of earlier studies on the Shannon capacity of graphs, as well as the present work, it remains unknown
whether there exists a finite, undirected, and simple graph whose Shannon capacity is attained by the independence number
of some finite strong power, but not by that of the first or second strong powers. Further study of this problem is therefore of interest.
\item By combining Theorems~\ref{theorem: fractional_independence_strong_product} and~\ref{theorem: universal_findnum},
the equality $\indnum{\Gr{G}\boxtimes\Gr{H}} = \indnum{\Gr{G}}\,\indnum{\Gr{H}}$ holds for every simple graph $\Gr{H}$ if
$\indnum{\Gr{G}} = \findnum{\Gr{G}}$. Moreover, by Theorem~\ref{theorem: shannon_union_equivalent_product}
and Lemma~\ref{lemma:structured_second_result}, if $\Theta(\Gr{G})=\findnum{\Gr{G}}$, then
$\Theta(\Gr{G}\boxtimes\Gr{H}) = \Theta(\Gr{G})\,\Theta(\Gr{H})$ holds for all $\Gr{H}$. From these results,
a natural question arises: {\em Is it true that the equality $\Theta(\Gr{G}\boxtimes\Gr{H}) =
\Theta(\Gr{G})\,\Theta(\Gr{H})$ holds for all $\Gr{H}$ if and only if $\Theta(\Gr{G})=\findnum{\Gr{G}}$?}.
This question was already discussed to some extent in \cite{Acin17}, showing that
\begin{align}  \label{eq12}
\sup_{\Gr{H}}\frac{\Theta(\Gr{G}\boxtimes\Gr{H})}{\Theta(\Gr{H})} \leq \findnum{\Gr{G}},
\end{align}
while raising a question about a possible gap between the two sides of~\eqref{eq12}.
\item The new inequality in Theorem~\ref{theorem: the_inequality} can be viewed as an analogue of the arithmetic–geometric mean
inequality for the Shannon capacity of graphs, relating the Shannon capacity of the strong product of graphs to that of their
disjoint union. Part of the interest in this inequality arises from the fact that every finite, undirected, simple, and connected
graph admits a unique prime factorization with respect to the strong product (see Theorem~\ref{theorem: prime_factorization}).
Several applications of the inequality in Theorem~\ref{theorem: the_inequality} are presented in Section~\ref{section:inequality},
and additional applications are likely to follow.
\item This last open issue, while not directly related to the results of this paper, is mentioned here because of its fundamental importance.
The random graph $G(n, \tfrac12)$ is a graph on $n$ vertices in which each pair of distinct vertices is joined by
an edge independently with probability $\tfrac12$. For sufficiently large $n$, with high probability, the independence number of
such a graph is equal to $2 \log_2 n + O(\log \log n)$ \cite[Chapter~11]{Bollobas01}, and its Lov\'{a}sz $\vartheta$-function
ranges between $\bigl(\tfrac12 + o(1) \bigr) \sqrt{n}$ and $\bigl(2 + o(1) \bigr) \sqrt{n}$ \cite{Juhasz82}.
In contrast, a comparable probabilistic result for the Shannon capacity of $G(n, \tfrac12)$ is yet unknown. It has been conjectured
by Alon that there exists a positive constant $b$, independent of $n$, such that the Shannon capacity of $G(n, \tfrac12)$ is at most
$b \log_2 n$ almost surely (i.e., with probability tending to~1 as $n \to \infty$) \cite[Conjecture~2.4]{Alon19}.
\end{enumerate}
This sample of open problems illustrates that, despite the existence of classical results on the Shannon capacity of graphs and a
growing body of recent work, many fundamental avenues of research remain open.

\bigskip
\begin{center}
{\bf{\Large{Appendices}}}
\end{center}

\appendix

% ################################################################################################################################ %
% ------------------------------------------------------------- Appendix A ------------------------------------------------------- %
% ################################################################################################################################ %

\section{Proof of the duality result in Theorem~\ref{theorem: shannon_union_equivalent_product}}
\label{appendix: proof of duality theorem}
\setcounter{equation}{0}
\renewcommand{\theequation}{\thesection.\arabic{equation}}

  By Theorems~\ref{theorem: shannon_strong_product} and \ref{theorem: shannon_disjoint_union}, the claim is equivalent to
  \begin{align}\label{eq1:28.11.2024}
    \Theta(\Gr{G}+\Gr{H}) > \Theta(\Gr{G}) + \Theta(\Gr{H}) \iff \Theta(\Gr{G}\boxtimes \Gr{H}) > \Theta(\Gr{G}) \, \Theta(\Gr{H}).
  \end{align}
  Next, we prove both directions of the equivalence in \eqref{eq1:28.11.2024}.
  \begin{enumerate}
  \item Assume that $\Theta(\Gr{G}\boxtimes \Gr{H}) > \Theta(\Gr{G}) \, \Theta(\Gr{H})$. By equality~\eqref{eq:shannon_power},
  \begin{align}
    & \hspace*{-0.2cm} \Theta(\Gr{G}+\Gr{H})^2 \nonumber \\
    & = \Theta((\Gr{G}+\Gr{H})^2) \nonumber \\
    & = \Theta(\Gr{G}^2 + 2\Gr{G}\boxtimes\Gr{H} + \Gr{H}^2),  \label{eq2: 24.12.25}
  \end{align}
  where \eqref{eq2: 24.12.25} holds by the commutative semiring argument in \cite{Schrijver23}.
  By Theorems~\ref{theorem: shannon_strong_product} and \ref{theorem: shannon_disjoint_union},
  \begin{align}
    \Theta(\Gr{G}^2 + 2\Gr{G}\boxtimes\Gr{H} + \Gr{H}^2) \geq \Theta(\Gr{G})^2 + 2\Theta(\Gr{G}\boxtimes \Gr{H}) + \Theta(\Gr{H})^2.
  \end{align}
  Using the above assumption, we get
  \begin{align}
    & \hspace*{-0.2cm} \Theta(\Gr{G})^2 + 2\Theta(\Gr{G}\boxtimes \Gr{H}) + \Theta(\Gr{H})^2 \nonumber  \\
    & > \Theta(\Gr{G})^2 +2\Theta(\Gr{G}) \, \Theta(\Gr{H}) + \Theta(\Gr{H})^2 \nonumber \\
    & = (\Theta(\Gr{G}) + \Theta(\Gr{H}))^2,
  \end{align}
  which gives
  $\Theta(\Gr{G}+\Gr{H}) > \Theta(\Gr{G}) + \Theta(\Gr{H})$.
  \item Next, assume $\Theta(\Gr{G}\boxtimes \Gr{H}) = \Theta(\Gr{G}) \, \Theta(\Gr{H})$. Then, for all $i,j \in \naturals$, we get
  \begin{align}
    & \hspace*{-0.2cm} \Theta(\Gr{G}^i\boxtimes \Gr{H}^j)\,\Theta(\Gr{G})^j\,\Theta(\Gr{H})^i & \nonumber \\
    & = \Theta(\Gr{G}^i\boxtimes \Gr{H}^j)\,\Theta(\Gr{G}^j)\,\Theta(\Gr{H}^i) \nonumber \\
    & \leq \Theta(\Gr{G}^{i+j}\boxtimes \Gr{H}^{i+j}) \nonumber \\
    & = \Theta(\Gr{G}\boxtimes \Gr{H})^{i+j} \nonumber \\
    & = \Theta(\Gr{G})^{i+j}\,\Theta(\Gr{H})^{i+j}.
  \end{align}
  Thus,
  $\Theta(\Gr{G}^i\boxtimes \Gr{H}^j) \leq \Theta(\Gr{G})^i \, \Theta(\Gr{H})^j$,
  which implies that for all $k\in \naturals$,
  \begin{align}
    \indnumbig{(\Gr{G}+\Gr{H})^k} & =\indnumBigg{\sum_{\ell = 0}^{k}\binom{k}{\ell}\Gr{G}^\ell\boxtimes \Gr{H}^{k-\ell}} \nonumber \\
    & = \sum_{\ell = 0}^{k}\binom{k}{\ell}\indnum{\Gr{G}^\ell\boxtimes \Gr{H}^{k-\ell}} \nonumber \\
    & \leq \sum_{\ell = 0}^{k}\binom{k}{\ell}\Theta\left(\Gr{G}^\ell\boxtimes \Gr{H}^{k-\ell}\right) \nonumber \\
    & \leq \sum_{\ell = 0}^{k}\binom{k}{\ell}\Theta\left(\Gr{G}^\ell\right)\,\Theta\left(\Gr{H}^{k-\ell}\right) \nonumber \\
    & = \sum_{\ell = 0}^{k}\binom{k}{\ell}\Theta(\Gr{G})^\ell\,\Theta(\Gr{H})^{k-\ell} \nonumber \\
    & = \left(\Theta(\Gr{G})+\Theta(\Gr{H})\right)^k.
  \end{align}
  Finally, letting $k\to\infty$ gives
  $\Theta(\Gr{G}+\Gr{H})\leq\Theta(\Gr{G})+\Theta(\Gr{H})$,
  and by Theorem~\ref{theorem: shannon_disjoint_union},
  \begin{align}
  \Theta(\Gr{G}+\Gr{H})=\Theta(\Gr{G})+\Theta(\Gr{H}).
  \end{align}
  \end{enumerate}

% ################################################################################################################################ %
% ------------------------------------------------------------- Appendix B ------------------------------------------------------- %
% ################################################################################################################################ %

\section{A new proof of Theorem~\ref{theorem: lovasz_disjoint_union}}
\label{appendix: original proof of Knuth}
\setcounter{equation}{0}
\renewcommand{\theequation}{\thesection.\arabic{equation}}

\begin{theorem}
  \cite{Knuth94} Let $\Gr{G}$ and $\Gr{H}$ be simple graphs. Then,
  \begin{align}
    \vartheta(\Gr{G} + \Gr{H}) = \vartheta(\Gr{G}) + \vartheta(\Gr{H}).
  \end{align}
\end{theorem}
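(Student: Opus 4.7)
The plan is to prove the two inequalities $\vartheta(\Gr{G}+\Gr{H}) \leq \vartheta(\Gr{G})+\vartheta(\Gr{H})$ and $\vartheta(\Gr{G}+\Gr{H}) \geq \vartheta(\Gr{G})+\vartheta(\Gr{H})$ separately, each by an explicit construction that combines optimizers of the two summands. I would use the primal formulation (Definition~\ref{definition:lovasz_function}) for the upper bound and the complement formulation (Theorem~\ref{theorem: lovasz_equivalent_complement_formula}) for the lower bound.

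For the upper bound, let $\{\mathbf{u}_i\}_{i\in\V{\Gr{G}}}\subset\Reals^{d_1}$ with unit handle $\mathbf{c}_1$ attain $\vartheta(\Gr{G})$, and similarly $\{\mathbf{v}_j\}_{j\in\V{\Gr{H}}}\subset\Reals^{d_2}$ with handle $\mathbf{c}_2$ attain $\vartheta(\Gr{H})$. Placing them in orthogonal subspaces of $\Reals^{d_1+d_2}$ via $\tilde{\mathbf{u}}_i=(\mathbf{u}_i,\mathbf{0})$ and $\tilde{\mathbf{v}}_j=(\mathbf{0},\mathbf{v}_j)$ yields a valid orthonormal representation of $\Gr{G}+\Gr{H}$: within-block orthogonality is inherited, and between-block inner products vanish automatically, matching the fact that $\Gr{G}+\Gr{H}$ has no edges between $\V{\Gr{G}}$ and $\V{\Gr{H}}$. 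With the handle $\tilde{\mathbf{c}}=(\alpha\mathbf{c}_1,\beta\mathbf{c}_2)$ and $\alpha^2+\beta^2=1$, the quantity inside the minimum in Definition~\ref{definition:lovasz_function} becomes $\max\bigl(\vartheta(\Gr{G})/\alpha^2,\vartheta(\Gr{H})/\beta^2\bigr)$, which is minimized by equalizing the two terms (i.e.\ $\alpha^2/\beta^2=\vartheta(\Gr{G})/\vartheta(\Gr{H})$), giving exactly $\vartheta(\Gr{G})+\vartheta(\Gr{H})$.

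For the lower bound, take optimal orthonormal representations $\{\mathbf{v}_i\}\subset\Reals^{d_1}$ of $\CGr{G}$ with unit handle $\mathbf{d}_1$ satisfying $\vartheta(\Gr{G})=\sum_i(\mathbf{d}_1^{\mathrm{T}}\mathbf{v}_i)^2$, and analogously $\{\mathbf{w}_j\}\subset\Reals^{d_2}$ with $\mathbf{d}_2$ for $\CGr{H}$. After rotating each representation so that $\mathbf{d}_1$ and $\mathbf{d}_2$ become the first standard basis vector in $\Reals^{d_1}$ and $\Reals^{d_2}$ respectively, merge them into $\Reals^{d_1+d_2-1}$ by identifying only the first coordinate axis, i.e., set $\tilde{\mathbf{v}}_i=(v_{i,1},v_{i,2},\ldots,v_{i,d_1},0,\ldots,0)$ and $\tilde{\mathbf{w}}_j=(w_{j,1},0,\ldots,0,w_{j,2},\ldots,w_{j,d_2})$. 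The critical observation is that $\overline{\Gr{G}+\Gr{H}}$ equals the graph-theoretic join of $\CGr{G}$ and $\CGr{H}$, so in any orthonormal representation of $\overline{\Gr{G}+\Gr{H}}$ no orthogonality is required between a vector indexed by $\V{\Gr{G}}$ and one indexed by $\V{\Gr{H}}$. Hence unit norms and within-block orthogonalities are preserved under the embedding, and the family $\{\tilde{\mathbf{v}}_i\}\cup\{\tilde{\mathbf{w}}_j\}$ is a valid orthonormal representation of $\overline{\Gr{G}+\Gr{H}}$. Using the common handle $\tilde{\mathbf{d}}$ equal to the first basis vector of $\Reals^{d_1+d_2-1}$, the functional in Theorem~\ref{theorem: lovasz_equivalent_complement_formula} evaluates to $\sum_i v_{i,1}^2+\sum_j w_{j,1}^2=\vartheta(\Gr{G})+\vartheta(\Gr{H})$, yielding the required lower bound.

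The main obstacle is the lower bound: a naive placement of the two dual optimizers in orthogonal blocks of a larger space (mirroring the construction used for the upper bound) forces the unit handle to split its norm between the two parts and only produces $\max(\vartheta(\Gr{G}),\vartheta(\Gr{H}))$. Exploiting the join structure of $\overline{\Gr{G}+\Gr{H}}$---which allows the vectors representing $\V{\Gr{G}}$ and $\V{\Gr{H}}$ to share nonzero inner products---is precisely what permits rotating both optimizers so that their handles align along a single common coordinate axis, thereby extracting the full sum of the individual optima.
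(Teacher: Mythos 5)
Your proof is correct. For the inequality $\vartheta(\Gr{G})+\vartheta(\Gr{H})\leq\vartheta(\Gr{G}+\Gr{H})$ your route is essentially the same as the paper's: both exploit that $\overline{\Gr{G}+\Gr{H}}$ is the join of $\CGr{G}$ and $\CGr{H}$ (so no cross orthogonalities are required), and both rotate the two dual optimizers so that their handles coincide. The paper does this by padding both representations to a common ambient dimension and applying a single Householder reflection taking $\mathbf{d}$ to $\mathbf{c}$, after which the two families simply sit together in the same space with one handle; you instead normalize both handles to $\mathbf{e}_1$ and glue the two spaces along that axis. These are interchangeable implementations of the identical key idea, and you correctly diagnose why a naive block-diagonal placement would fail here. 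For the reverse inequality $\vartheta(\Gr{G}+\Gr{H})\leq\vartheta(\Gr{G})+\vartheta(\Gr{H})$, however, your argument genuinely differs from the paper's. You work with the primal ``umbrella'' formulation of Definition~\ref{definition:lovasz_function}: place optimal representations of $\Gr{G}$ and $\Gr{H}$ in orthogonal subspaces, take the tilted handle $(\alpha\mathbf{c}_1,\beta\mathbf{c}_2)$, and optimize the weights $\alpha^2+\beta^2=1$ to balance $\vartheta(\Gr{G})/\alpha^2$ against $\vartheta(\Gr{H})/\beta^2$, landing at the sum. The paper instead stays in the complement formulation throughout: it takes a single optimal orthonormal representation of $\overline{\Gr{G}+\Gr{H}}$ and observes that its restriction to $\V{\Gr{G}}$ (resp.\ $\V{\Gr{H}}$) is a feasible orthonormal representation of $\CGr{G}$ (resp.\ $\CGr{H}$) with the same handle, so the two partial sums are each bounded by the respective $\vartheta$-values. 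The paper's restriction argument is shorter and avoids the weight optimization; your primal construction is more explicit and has the pedagogical appeal of showing how to physically build a Lov\'{a}sz umbrella for a disjoint union from umbrellas of the parts, making the two directions of the proof pleasingly dual to one another.
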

\begin{proof}
  By Theorem~\ref{theorem: lovasz_equivalent_complement_formula}, let $(\mathbf{u}_1,\ldots,\mathbf{u}_n)$ be an
  orthonormal representation of $\CGr{G}$, and let $\mathbf{c}$ be a unit vector such that
  \begin{align}
  \label{eq1: 06.05.25}
    \vartheta(\Gr{G}) = \sum_{i=1}^{n}(\mathbf{c}^{\mathrm{T}} \mathbf{u}_i)^2.
  \end{align}
  Likewise, let $(\mathbf{v}_1,\ldots,\mathbf{v}_r)$ be an orthonormal representation of $\CGr{H}$, and let $\mathbf{d}$
  be a unit vector such that
  \begin{align}
  \label{eq2: 06.05.25}
    \vartheta(\Gr{H}) = \sum_{i=1}^{r}(\mathbf{d}^{\mathrm{T}} \mathbf{v}_i)^2.
  \end{align}
  Assume without loss of generality that the dimensions of $\mathbf{u}_i,\mathbf{c},\mathbf{v}_j$, and $\mathbf{d}$ are identical and equal to $m$
  (if the dimensions are distinct, the vectors of the lower dimension can be padded by zeros). Next, let $\mathbf{A}$ be an
  orthogonal matrix of order $m \times m$ such that
  ${\mathbf{A}} {\mathbf{d}} = {\mathbf{c}}$.
  Such a matrix $\mathbf{A}$, satisfying ${\mathbf{A}}^{\mathrm{T}} {\mathbf{A}} = \Id{m}$, exists (e.g., the householder
  matrix defined as
  ${\mathbf{A}} = \Id{m} - \frac{2(\mathbf{c}-\mathbf{d})(\mathbf{c}-\mathbf{d})^{\mathrm{T}}}{\|\mathbf{c}-\mathbf{d}\|^2}$
  provided that $\mathbf{c} \neq \mathbf{d}$, and ${\mathbf{A}} = \Id{m}$ if $\mathbf{c} = \mathbf{d}$).
   Let $w = (\mathbf{w}_1,\ldots,\mathbf{w}_r)$ be defined as $\mathbf{w}_i = \mathbf{A}\mathbf{v}_i$ for every $i\in\OneTo{r}$.
   Since the pairwise inner products are preserved under an orthogonal (orthonormal) transformation and $(\mathbf{v}_1, \ldots, \mathbf{v}_r)$
   is an orthonormal representation of $\CGr{H}$, so is $(\mathbf{w}_1,\ldots,\mathbf{w}_r)$. Likewise, we get
  \begin{align}
  \label{eq4: 06.05.25}
    \sum_{i=1}^{r}(\mathbf{c}^{\mathrm{T}} \mathbf{w}_i)^2=\sum_{i=1}^{r}((\mathbf{A}\mathbf{d})^{\mathrm{T}} (\mathbf{A}\mathbf{v}_i))^2
    = \sum_{i=1}^{r}(\mathbf{d}^{\mathrm{T}} \mathbf{v}_i)^2 = \vartheta(\Gr{H}).
  \end{align}
  Next, the representation $(\mathbf{x}_1,\ldots,\mathbf{x}_{n+r}) = (\mathbf{u}_1, \ldots, \mathbf{u}_n, \mathbf{w}_1, \ldots, \mathbf{w}_r)$
  is an orthonormal representation of $\overline{\Gr{G}+\Gr{H}}$ (since there are no additional non-adjacencies in the graph $\overline{\Gr{G}+\Gr{H}}$
  in comparison to the disjoint union of the pairs of nonadjacent vertices in $\CGr{G}$ and $\CGr{H}$). Hence, by
  Theorem~\ref{theorem: lovasz_equivalent_complement_formula}, the equality
  \begin{align}
  \label{eq5: 06.05.25}
    \vartheta(\Gr{G}) + \vartheta(\Gr{H}) = \sum_{i=1}^{n}(\mathbf{c}^{\mathrm{T}} \mathbf{u}_i)^2 + \sum_{i=1}^{r}(\mathbf{c}^{\mathrm{T}} \mathbf{w}_i)^2 = \sum_{i=1}^{n+r}(\mathbf{c}^{\mathrm{T}} \mathbf{x}_i)^2,
  \end{align}
  yields the inequality
  \begin{align}
  \label{eq1:5.5.25}
    \vartheta(\Gr{G}) + \vartheta(\Gr{H}) \leq \vartheta(\Gr{G} + \Gr{H}).
  \end{align}
  Next, by Theorem~\ref{theorem: lovasz_equivalent_complement_formula}, let $(\mathbf{u}_1,\ldots,\mathbf{u}_n,\mathbf{v}_1,\ldots,\mathbf{v}_r)$
  be an orthonormal representation of $\overline{\Gr{G}+\Gr{H}}$, where the vectors $(\mathbf{u}_1,\ldots,\mathbf{u}_n)$ correspond to the
  vertices of $\Gr{G}$ and the vectors $(\mathbf{v}_1,\ldots,\mathbf{v}_r)$ correspond to the vertices of $\Gr{H}$, and let $\mathbf{c}$ be a
  unit vector such that
  \begin{align}
  \label{eq6: 06.05.25}
    \vartheta(\Gr{G}+\Gr{H}) = \sum_{i=1}^{n}(\mathbf{c}^{\mathrm{T}} \mathbf{u}_i)^2
    + \sum_{i=1}^{r} (\mathbf{c}^{\mathrm{T}} \mathbf{v}_i)^2.
  \end{align}
   By definition, since $(\mathbf{u}_1,\ldots,\mathbf{u}_n,\mathbf{v}_1,\ldots,\mathbf{v}_r)$ is an orthonormal representation of
   $\overline{\Gr{G}+\Gr{H}}$, it follows that if $i$ and $j$ are nonadjacent vertices in $\CGr{G}$, then they are nonadjacent in
   $\overline{\Gr{G}+\Gr{H}}$ and thus, $\mathbf{u}_i^{\mathrm{T}} \mathbf{u}_j = 0$. Hence, $(\mathbf{u}_1,\ldots,\mathbf{u}_n)$
   is an orthonormal representation of $\CGr{G}$. Similarly, it follows that $(\mathbf{v}_1,\ldots,\mathbf{v}_r)$ is an orthonormal
   representation of $\CGr{H}$. Thus, by Theorem~\ref{theorem: lovasz_equivalent_complement_formula},
  \begin{align}\label{eq2:5.5.25}
    \vartheta(\Gr{G}+\Gr{H}) = \sum_{i=1}^{n} (\mathbf{c}^{\mathrm{T}}\mathbf{u}_i)^2 + \sum_{i=1}^{r} (\mathbf{c}^{\mathrm{T}}
    \mathbf{v}_i)^2 \leq \vartheta(\Gr{G}) + \vartheta(\Gr{H}).
  \end{align}
  Combining inequalities~\eqref{eq1:5.5.25} and \eqref{eq2:5.5.25} gives the equality in \eqref{eq:lovasz_disjoint_union}.
\end{proof}

% ################################################################################################################################ %
% ------------------------------------------------------------- Appendix C ------------------------------------------------------- %
% ################################################################################################################################ %

\section{Proof of Theorem~\ref{theorem: shannon_condition}}
\label{appendix: original proof by Shannon}
\setcounter{equation}{0}
\renewcommand{\theequation}{\thesection.\arabic{equation}}

Theorem~\ref{theorem: shannon_condition} is a direct corollary of \cite[Theorem 4]{Shannon56}. To show this, we define an adjacency-reducing mapping.
\begin{definition}
  Let $\Gr{G}$ be a simple graph, and let $f \colon \V{\Gr{G}} \to \set{A}$ be a mapping from the vertices of $\Gr{G}$ to a subset $\set{A}$ of $\V{\Gr{G}}$.
  The mapping $f$ is called {\em adjacency-reducing} if for every pair of nonadjacent vertices $u, v\in \V{\Gr{G}}$, the vertices $f(u)$ and $f(v)$ are
  also nonadjacent.
\end{definition}

\begin{proof}
Let $\set{U} = \{u_1,u_2,\ldots,u_k\}$ be a maximal independent set of vertices in $\Gr{G}_1$. By assumption,
$\V{\Gr{G}_1}$ can be partitioned into $k$ cliques. Let $\set{C} = \{C_1,\ldots,C_k\}$ be such cliques.
Obviously, every $u_i,u_j\in \set{U}$ cannot be in the same clique because they are nonadjacent. So, every clique
in $\set{C}$ contains exactly one vertex from $\set{U}$. For simplicity, assume that $\set{U}$ and $\set{C}$ are
ordered in a way that $u_j \in C_j$ for every $j \in \OneTo{k}$.
Next, define the mapping $f:\V{\Gr{G}_1}\to \set{U}$ as follows. Let $v\in \V{\Gr{G}_1}$ be a vertex, and let
$C_j$ be the clique that has $v\in C_j$. Then, $f(v) = u_j$.
If $v_1$ and $v_2$ are nonadjacent, then they belong to different cliques in $\set{C}$. Thus, $f(v_1)$ and $f(v_2)$
are mapped to different vertices in $\set{U}$. Since $\set{U}$ is an independent set, $f(v_1)$ and $f(v_2)$ are
nonadjacent. Thus, $f$ is an adjacency-reducing mapping of $\Gr{G}_1$ into $\set{U}$. Then, by \cite[Theorem~4]{Shannon56},
equality~\eqref{eq:lemma_structured_second_result} holds.
\end{proof}

% ################################################################################################################################ %
% ------------------------------------------------------------- Appendix D ------------------------------------------------------- %
% ################################################################################################################################ %

\section{The original proof from \texorpdfstring{\cite{GuoW90}}{} of Theorem~\ref{theorem: universal_watanabe}}
\label{appendix: original proof of Watanabe}
\setcounter{equation}{0}
\renewcommand{\theequation}{\thesection.\arabic{equation}}

\begin{theorem}
  \cite{GuoW90} Let $\Gr{G}$ be a universal graph, and let $\Gr{H}$ satisfy $\Theta(\Gr{H}) > \indnum{\Gr{H}}$. The Shannon capacity of $\Gr{K}\triangleq\Gr{G}+\Gr{H}$ is not attained at any finite power of $\Gr{K}$.
\end{theorem}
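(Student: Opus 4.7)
The plan is to prove the equivalent statement $\indnum{\Gr{K}^k} < \Theta(\Gr{K})^k$ for every $k \in \naturals$. The central idea is that the universality of $\Gr{G}$ makes the independence number of $\Gr{G}^i \boxtimes \Gr{H}^{k-i}$ factor cleanly, while the hypothesis $\Theta(\Gr{H}) > \indnum{\Gr{H}}$ supplies a strict slack that survives a binomial combination.

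First I would pin down $\Theta(\Gr{K})$ on the nose. Since $\Gr{G}$ is universal, Theorem~\ref{theorem: universal_findnum} together with Lemma~\ref{lemma:universal_capacity_independent} gives $\Theta(\Gr{G}) = \indnum{\Gr{G}} = \findnum{\Gr{G}}$, so $\Gr{G}$ meets the hypothesis of Lemma~\ref{lemma:structured_second_result} and we get $\Theta(\Gr{K}) = \Theta(\Gr{G}) + \Theta(\Gr{H})$. Next, I would expand $\Gr{K}^k$ via the standard binomial identity in the semiring of graphs under $(\boxtimes,+)$ and apply additivity of $\alpha$ under disjoint unions (Theorem~\ref{theorem: independence_number_strong_product}) to obtain
\begin{align*}
\indnum{\Gr{K}^k} = \sum_{i=0}^{k}\binom{k}{i}\indnum{\Gr{G}^i \boxtimes \Gr{H}^{k-i}}.
\end{align*}

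Now I would exploit universality at every level. By Lemma~\ref{lemma:universal_power}, each $\Gr{G}^i$ is universal, so $\indnum{\Gr{G}^i \boxtimes \Gr{H}^{k-i}} = \indnum{\Gr{G}^i}\,\indnum{\Gr{H}^{k-i}} = \Theta(\Gr{G})^i\,\indnum{\Gr{H}^{k-i}}$. Using the trivial bound $\indnum{\Gr{H}^m} \leq \Theta(\Gr{H})^m$ (from Definition~\ref{definition:shannon_capacity}), with strict inequality at $m=1$ by hypothesis, a termwise comparison yields
\begin{align*}
\indnum{\Gr{K}^k} \;<\; \sum_{i=0}^{k}\binom{k}{i}\Theta(\Gr{G})^{i}\,\Theta(\Gr{H})^{k-i} = \bigl(\Theta(\Gr{G})+\Theta(\Gr{H})\bigr)^k = \Theta(\Gr{K})^k,
\end{align*}
which is exactly what we need.

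The main obstacle I expect is certifying that the strict inequality actually propagates through the sum rather than collapsing to a non-strict one. This reduces to exhibiting a single index $i$ at which the term is strictly improved and whose binomial coefficient is nonzero. The choice $i = k-1$ produces the factor $\indnum{\Gr{H}} < \Theta(\Gr{H})$ with coefficient $\binom{k}{k-1} = k \geq 1$, and for the base case $k=1$ the expansion has only the two summands $i=0,1$, the former of which is already the strict one. Hence the strict inequality survives at every $k \in \naturals$, and $\Theta(\Gr{K})$ is unattained at any finite strong power of $\Gr{K}$.
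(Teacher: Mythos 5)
Your proposal is correct and follows essentially the same route as the paper's own (simplified) proof: a binomial expansion of $\Gr{K}^k$, universality of $\Gr{G}^i$ (via Lemma~\ref{lemma:universal_power}) to factor $\indnum{\Gr{G}^i\boxtimes\Gr{H}^{k-i}}$, and a termwise strict comparison $\indnum{\Gr{H}^{k-i}}\leq\Theta(\Gr{H})^{k-i}$ made strict at $i=k-1$. The only cosmetic difference is that you invoke Lemma~\ref{lemma:structured_second_result} to nail down $\Theta(\Gr{K})=\Theta(\Gr{G})+\Theta(\Gr{H})$ exactly, whereas the paper gets away with the one-sided bound from Shannon's inequality (Theorem~\ref{theorem: shannon_disjoint_union}); both suffice for the conclusion.
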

\begin{proof}
  Let $\Gr{G}$ be a universal graph, and let $k \in \naturals$. Then, since $\Gr{G}$ is universal,
  \begin{align}
    \indnum{\Gr{K}^{2k}} & = \sum_{\ell=0}^{2k} \, \binom{2k}{\ell} \, \indnum{\Gr{G}^{2k-\ell}} \, \indnum{\Gr{H}^\ell} \nonumber \\
    & = \sum_{\ell=0}^{2k}\sum_{0\leq i,j\leq k ; i+j=\ell}\binom{k}{i} \, \binom{k}{j} \, \indnum{\Gr{G}^{2k-(i+j)}} \, \indnum{\Gr{H}^{i+j}} \nonumber \\
    & = \sum_{\ell=0}^{2k}\sum_{0\leq i,j\leq k ; i+j=\ell}\binom{k}{i} \, \binom{k}{j} \, \indnum{\Gr{G}}^{2k-(i+j)} \, \indnum{\Gr{H}^{i+j}} \nonumber \\
    & = \sum_{i=0}^{k}\sum_{j=0}^{k} \binom{k}{i} \, \binom{k}{j} \, \indnum{\Gr{G}}^{2k-i-j} \, \indnum{\Gr{H}^{i+j}}.  \label{eq1: 07.01.26}
  \end{align}
  Similarly,
  \begin{align}
    \indnum{\Gr{K}^k}^2 & =\left(\sum_{i=0}^{k} \binom{k}{i} \, \indnum{\Gr{G}}^{k-i} \, \indnum{\Gr{H}^i}\right)^2 \nonumber \\
    & = \sum_{i=0}^{k}\sum_{j=0}^{k} \binom{k}{i} \, \binom{k}{j} \, \indnum{\Gr{G}}^{2k-i-j} \, \indnum{\Gr{H}^i} \, \indnum{\Gr{H}^j}. \label{eq2: 07.01.26}
  \end{align}
  Subtracting \eqref{eq2: 07.01.26} from \eqref{eq1: 07.01.26} gives
  \begin{align}
  \label{eq3: 07.01.26}
    \indnum{\Gr{K}^{2k}} - \indnum{\Gr{K}^k}^2
    = \sum_{i=0}^k \sum_{j=0}^k \binom{k}{i} \, \binom{k}{j} \, \indnum{\Gr{G}}^{2k-i-j} \, \left(\indnum{\Gr{H}^{i+j}}-\indnum{\Gr{H}^i} \, \indnum{\Gr{H}^j} \right).
  \end{align}
  Since by assumption $\Theta(\Gr{H}) > \indnum{\Gr{H}}$, there exists $i_0,j_0\in\naturals$ such that
  \begin{align}
    \indnum{\Gr{H}^{i_0+j_0}}-\indnum{\Gr{H}^{i_0}} \, \indnum{\Gr{H}^{j_0}} > 0.  \label{eq4: 07.01.26}
  \end{align}
  Otherwise, by \eqref{eq:independence_number_strong_product} and \eqref{eq4: 07.01.26}, for all $m \in \naturals$,
  $\indnum{\Gr{H}^m}=\indnum{\Gr{H}}^m$, so $\Theta(\Gr{H}) = \indnum{\Gr{H}}$, contradicting our assumption.
  Hence, $\indnum{\Gr{K}^{2k}} > \indnum{\Gr{K}^k}^2$ for all $k \geq \max\{i_0,j_0\}$. Set $k_0 \triangleq \max\{i_0,j_0\}$.

  Suppose by contradiction that $\Theta(\Gr{K})=\sqrt[k]{\indnum{\Gr{K}^{k}}}$ for some $k \geq k_0$. Then,
  by \eqref{eq:independence_number_strong_product}, \eqref{eq3: 07.01.26}, and \eqref{eq4: 07.01.26}, we get
  $\indnum{\Gr{K}^{2k}} > \indnum{\Gr{K}^{k}}^2$, which gives
  \begin{align}
    \Theta(\Gr{K}) \geq \sqrt[2k]{\indnum{\Gr{K}^{2k}}} > \sqrt[k]{\indnum{\Gr{K}^{k}}} = \Theta(\Gr{K}),
  \end{align}
  thus leading to a contradiction. Hence, the equality $\Theta(\Gr{K})=\sqrt[k]{\indnum{\Gr{K}^{k}}}$ cannot hold for any $k \geq k_0$.

  Note that if we assume that the equality $\Theta(\Gr{K})=\sqrt[k]{\indnum{\Gr{K}^{k}}}$ holds for some $k < k_0$, then for every $n \in \naturals$,
  \begin{align}
    \Theta(\Gr{G}) \geq \sqrt[nk]{\indnum{\Gr{K}^{nk}}}\geq\sqrt[k]{\indnum{\Gr{K}^k}} = \Theta(\Gr{G}),
  \end{align}
  which, using the same argument for $nk \geq k_0$, leads to a contradiction. Hence, the Shannon capacity of the graph $\Gr{K}$ is not attained
  by the independence number of any finite strong power of this graph.
\end{proof}

% ################################################################################################################################ %
% ------------------------------------------------------------- Appendix E ------------------------------------------------------- %
% ################################################################################################################################ %

\section{The original proof from \texorpdfstring{\cite{Alon98}}{} of inequality \texorpdfstring{\eqref{eq4:11.09.2024}}{(Eq.~\ref*{eq4:11.09.2024})}}
\label{appendix: Noga's proof}

\begin{theorem}
  \cite{Alon98} Let $\Gr{G}$ be a simple graph on $n$ vertices. Then,
  \begin{align}\label{eq20:24.11.2024}
    \Theta(\Gr{G}+\CGr{G}) \geq 2 \sqrt{n}.
  \end{align}
\end{theorem}
\begin{proof}
  Let $\set{A} \cup \set{B} = \{a_1, \ldots, a_n, b_1, \ldots, b_n\}$ be the vertex set of $\Gr{G}+\CGr{G}$, where $\set{A}$ and $\set{B}$ 
  are the vertex sets of a copy of $\Gr{G}$ and of its complement $\CGr{G}$, respectively. The labeling is chosen so that, for $i \neq j$, 
  $\{a_i, a_j\} \in \E{\Gr{G}}$ if and only if $\{b_i, b_j\} \notin \E{\CGr{G}}$. Moreover, since $\Gr{G} + \CGr{G}$ is a disjoint union, 
  there are no edges of the form $\{a_i, b_j\}$.
  For each $k \in \naturals$, we construct an independent set in the strong power $(\Gr{G}+\CGr{G})^{2k}$, which is then used to bound the 
  Shannon capacity of $\Gr{G}+\CGr{G}$.
  For every $k$, define $\set{S}_k$ as the set of all vectors $\textbf{v}=(v_1,v_2,\ldots,v_{2k})$ with coordinates in $\set{A} \cup \set{B}$
  that satisfy the following two conditions:
  \begin{enumerate}
    \item $\bigcard{\{i: v_i \in \set{A}\}} = \bigcard{\{j: v_j \in \set{B}\}} = k$,
    \item For every $i \in \OneTo{k}$, if $a_r$ is the $i$th coordinate of $\textbf{v}$ (from left to right) belonging to $\set{A}$, and $b_s$ 
    is the $i$th coordinate of $\textbf{v}$ belonging to $\set{B}$, then $r = s$.
  \end{enumerate}
  Next, we prove that $\set{S}_k$ is an independent set in $(\Gr{G}+\CGr{G})^{2k}$. Let $\textbf{u}$ and $\textbf{v}$ be two distinct vectors in $\set{S}_k$.
  We consider the following two cases:
  \begin{itemize}
  \item
  Case 1: If there exists an index $t$ such that $u_t \in \set{A}$ and $v_t \in \set{B}$ (or vice versa), then the vertices $\textbf{u}$ and $\textbf{v}$ are clearly nonadjacent in $(\Gr{G}+\CGr{G})^{2k}$.
  \item
  Case 2: If there is no such an index, then there exist two indices $1 \leq i,j \leq k$, and two positions $1 \leq r,s \leq 2k$ such that $u_r = a_i$, $u_s = b_i$,
  $v_r = a_j$, and $v_s = b_j$. Since $\{a_i,a_j\} \in \E{\Gr{G}}$ if and only if $\{b_i,b_j\} \notin \E{\CGr{G}}$, the vertices $\textbf{u}$ and $\textbf{v}$ are
  again nonadjacent in $(\Gr{G}+\CGr{G})^{2k}$.
  \end{itemize}
  Hence, $\set{S}_k$ is an independent set in $(\Gr{G}+\CGr{G})^{2k}$.
  The cardinality of $\set{S}_k$ is $\binom{2k}{k} \, n^{k}$. Indeed, there are $\binom{2k}{k}$ ways to choose the positions of the coordinates from $\set{A}$,
  which uniquely determines the positions of the coordinates from $\set{B}$. For each such choice, there are $n$ possible values for each of the $k$ coordinates,
  yielding $\card{\set{S}_k} = \binom{2k}{k} \, n^{k}$. Consequently, 
  \begin{align}
    \indnum{(\Gr{G}+\CGr{G})^{2k}} \geq \card{\set{S}_k} = \binom{2k}{k} \, n^k,
  \end{align}
  which yields 
  \begin{align}
    \Theta(\Gr{G}+\CGr{G}) & \geq \lim_{k\to\infty}{\sqrt[2k]{\indnum{(\Gr{G}+\CGr{G})^{2k}}}} \nonumber \\
    & \geq \lim_{k \to \infty}{\sqrt[2k]{\binom{2k}{k} \, n^k}} \nonumber \\
    & = 2\sqrt{n}.
  \end{align}
\end{proof}

\begin{remark} \label{remark: appendix E}
It is noted in \cite{Alon98} that the inequality $\Theta(\Gr{G} + \CGr{G}) \geq \sqrt{2n}$, which is a loosened 
version of inequality \eqref{eq20:24.11.2024}, easily follows for every simple graph $\Gr{G}$ on $n$ vertices. 
This holds since 
\[
\bigl\{ (a_i, b_i) \bigr\}_{i \in \OneTo{n}} \bigcup \bigl\{ (b_i, a_i) \bigr\}_{i \in \OneTo{n}}
\]
is an independent set of the square $(\Gr{G} + \CGr{G})^2$, consisting of $2n$ elements. 
Furthermore, it is also noted in \cite{Alon98} that inequality \eqref{eq20:24.11.2024} is easily obtained for every 
self-complementary and vertex-transitive graph $\Gr{G}$. Indeed, by
\cite[Theorem~12]{Lovasz79}, $\Theta(\Gr{G}) = \sqrt{n} = \Theta(\CGr{G})$ for every such graph on $n$ vertices, 
so it follows from Shannon's inequality \eqref{eq:shannon_disjoint_union} that $\Theta(\Gr{G} + \CGr{G}) \geq 2 \sqrt{n}$.
Our strengthened result in Corollary~\ref{corollary:alon_inequality} asserts that inequality \eqref{eq20:24.11.2024} holds with equality
for all self-complementary vertex-transitive graphs, as well as for some additional graph families. 
\end{remark}

\subsection*{Acknowledgments}
The authors gratefully acknowledge the timely and constructive reports of the three anonymous referees,
which contributed to improving the presentation.

% ################################################################################################################################ %
% ------------------------------------------------------------- References ------------------------------------------------------- %
% ################################################################################################################################ %

\end{document}